\let\oldtocsection=\tocsection
\let\oldtocsubsection=\tocsubsection
\let\oldtocsubsubsection=\tocsubsubsection
\renewcommand{\tocsection}[2]{\hspace{0em}\oldtocsection{#1}{#2}}
\renewcommand{\tocsubsection}[2]{\hspace{1em}\oldtocsubsection{#1}{#2}}
\renewcommand{\tocsubsubsection}[2]{\hspace{2em}\oldtocsubsubsection{#1}{#2}}
\DeclareRobustCommand{\SkipTocEntry}[5]{}
\definecolor{darkgreen}{RGB}{47,139,79}
\definecolor{darkblue}{RGB}{36,24,130}
\newtheorem{thm}{Theorem}[section]
\newtheorem{lem}[thm]{Lemma}
\newtheorem{prop}[thm]{Proposition}
\newtheorem{cor}[thm]{Corollary}
\newtheorem{ind}[thm]{Inductive hypothesis}
\newtheorem{Th}{Theorem}
\newtheorem{Co}[Th]{Corollary}%\renewcommand{\theCo}{\arabic{Co}}
\newtheorem{Coj}[Th]{Conjecture}
\theoremstyle{definition}
\newtheorem{Def}[thm]{Definition}
\newtheorem{con}[thm]{Conjecture}
\newtheorem{ex}[thm]{Example}
\newtheorem{exprop}[thm]{Example and Proposition}
\newtheorem{rem}[thm]{Remark}
\numberwithin{equation}{section}
\newcommand{\A}{\mathcal{A}}
\newcommand{\C}{\mathcal{C}}
\newcommand{\e}{\mathcal{E}}
\newcommand{\G}{\mathcal{G}}
\newcommand{\M}{\mathcal{M}}
\newcommand{\N}{\mathbb{N}}
\newcommand{\Z}{\mathbb{Z}}
\newcommand{\al}{\alpha}
\newcommand{\be}{\beta}
\newcommand{\Ga}{\Gamma}
\newcommand{\De}{\Delta}
\newcommand{\la}{\lambda}
\newcommand{\s}{\sigma}
\newcommand{\Si}{\Sigma}
\newcommand{\Dif}{\operatorname{Diff}}
\newcommand{\Hom}{\operatorname{Hom}}
\newcommand{\coker}{\operatorname{coker}}
\newcommand{\surj}{\twoheadrightarrow}
\newcommand{\rar}{\longrightarrow}
\newcommand{\inc}{\hookrightarrow}
\newcommand{\sta}{\stackrel}
\newcommand{\arsim}{\sta{\sim}{\rar}}
\newcommand{\minus}{\backslash}
\newcommand{\x}{\times}
\newcommand{\ot}{\otimes}
\newcommand{\op}{\oplus}
\newcommand{\lgl}{\langle}
\newcommand{\rgl}{\rangle}
\newcommand{\del}{\partial}
\newcommand{\emp}{\varnothing}
\newcommand{\note}[1]{}%{\marginpar{\centering \tiny\textcolor{blue}{#1}}}
\newcommand{\orw}[1]{}%{\marginpar{\centering \tiny\textcolor{magenta}{#1}}}
\newcommand{\Aut}{\operatorname{Aut}}
\newcommand{\Emb}{\operatorname{Emb}}
\newcommand{\colim}{\operatorname{colim}}
\newcommand{\link}{\operatorname{Link}}
\newcommand{\st}{\operatorname{Star}}
\newcommand{\Fix}{\operatorname{Fix}}
\newcommand{\St}{\operatorname{Stab}}
\newcounter{samcounter}
\newcommand{\wt}[1]{\widetilde{#1}}
\newcommand{\bF}{\mathbf{F}}
\newcommand{\bN}{\mathbb{N}}
\newcommand{\bQ}{\mathbb{Q}}
\newcommand{\bR}{\mathbb{R}}
\newcommand{\bZ}{\mathbb{Z}}
\newcommand{\cB}{\mathcal{B}}
\newcommand\lra{\longrightarrow}
\newcommand\U{U\!}
\newcommand{\EI}[1]{$({\bf E}_{I}{#1})$}
\newcommand{\Ei}[1]{$({\bf E}_{#1})$}
\newcommand{\II}[1]{$({\bf I}_{I}{#1})$}
\newcommand{\Ii}[1]{$({\bf I}_{#1})$}
\begin{document}

\title{Homological stability for automorphism groups}

\author{Oscar Randal-Williams}
\email{o.randal-williams@dpmms.cam.ac.uk}
\address{Centre for Mathematical Sciences\\
Wilberforce Road\\
Cambridge CB3 0WB\\
UK}

\author{Nathalie Wahl}
\email{wahl@math.ku.dk}
\address{Department of Mathematical Sciences\\University of Copenhagen\\Universitetsparken 5\\DK-2100 Copenhagen\\Denmark }
%  \thanks{}

\date{\today}

\begin{abstract}
Given a family of groups admitting a braided monoidal structure (satisfying mild assumptions) 
we construct a family of spaces on which the groups act and whose connectivity yields, via a classical argument of Quillen, 
homological stability for the family of groups. We show that stability also holds with both polynomial and abelian twisted coefficients, with no further assumptions. 
This new construction of a family of spaces from a family of groups recovers known spaces in the classical examples 
of stable families of groups, such as the symmetric groups, general linear groups and mapping class groups.    
By making systematic the proofs of classical stability results, we show that they  all hold with the same type of coefficient systems, obtaining in particular  
without any further work 
new stability theorems with twisted coefficients for the symmetric groups, braid groups, automorphisms of free groups, unitary groups,
mapping class groups of non-orientable surfaces and mapping class
groups of 3-manifolds. Our construction can also be applied to families of groups not considered before in the context of homological stability.

As a byproduct of our work, we construct the braided analogue of the
category $FI$ of finite sets and injections relevant to the present context, 
and define polynomiality for functors in the context of pre-braided monoidal categories. 
\end{abstract}

\maketitle

A family of groups 
$$G_1\inc G_2\inc \cdots \inc G_n\inc \cdots$$
is said to satisfy {\em homological stability} if the induced maps 
$$H_i(G_n)\rar H_i(G_{n+1})$$
are isomorphisms in a range $0\le i\le f(n)$ increasing with $n$. In this paper, we prove that homological stability always holds if there is a monoidal category $\C$ satisfying a certain hypothesis and a pair of objects $A$ and $X$ in $\C$,  such that $G_n$ is the group of automorphisms of  $A\op X^{\op n}$ in $\C$. We show that stability holds not just for constant coefficients, but also for both {\em polynomial} and {\em abelian} coefficients, without any further assumption on $\C$. 

The polynomial coefficient systems considered here are functors $F: \C\to \Z\operatorname{-Mod}$ satisfying a finite degree condition.  They are generalisations of polynomial functors in the sense of functor homology, classically considered in homological stability, and include new examples such as the Burau representation of braid groups. Abelian coefficients are given by  functors $F:\C\to \Z G_\infty^{ab}\operatorname{-Mod}$ for $G_\infty^{ab}$ the abelianisation of the limit group $G_\infty$, satisfying the same finite degree condition. These include coefficients such as the sign representation, or determinant-twisted polynomial functors. Such coefficient systems are newer to the subject. One consequence of stability with abelian coefficients is that stability with polynomial coefficients also holds (under the same conditions) for the commutator subgroups $G_n' \leq G_n$. 

Our theorem applies to all the classical examples and gives new stability results with twisted coefficients in particular for 
symmetric groups, alternating groups, unitary groups, braid groups, mapping class groups of non-orientable surfaces, automorphisms and symmetric automorphisms of free groups, and  it proves stability for these groups in a unified way. Our framework also applies to new families of groups (see e.g.~\cite{GanWah15,PatWu15,SzyWah}).

We now describe our results and techniques in more detail. 

\medskip

A family of groups $\{G_n\}_{n\ge 1}$ can be thought of as a groupoid with objects the natural numbers, with $\Aut(0)=\{id\}$ and
$\Aut(n)=G_n$ for $n\ge 1$. Families of groups such as the symmetric groups, braid groups, and general linear groups have the additional
feature that they admit a ``block sum'' operation
$$G_n\x G_m \sta{\op}\rar G_{n+m}.$$ 
On the level of groupoids, these maps define in each case a symmetric or braided monoidal structure. 

Given a braided monoidal groupoid $(\G,\op,0)$ such that 
\begin{enumerate}[(i)]
\item the monoid of isomorphism classes of objects under the operation $\op$ satisfies cancellation and has no zero divisors\footnote{i.e.\ $U\op V\cong 0$ in $\G$ implies that $U\cong 0\cong V$.}, and

\item $\Aut(A)\to \Aut(A\op B)$ is injective for all objects $A,B \in \mathcal{C}$,
\end{enumerate}
our Theorem~\ref{universal} constructs what we call a {\em homogeneous category} $(U\G,\op,0)$. 
A homogeneous category (see Definition~\ref{homdef}) is a monoidal category $(\C,\op,0)$ in which 0 is initial, such that
\begin{enumerate}[(i)]
\item  for all pairs of objects $A,B \in \C$ the group
$\Aut(B)$ acts transitively on the set $\Hom(A,B)$, and

\item the natural map $\Aut(A)\to \Aut(A\op B)$ is injective with image those automorphisms fixing the canonical morphism $B\to A\op B$.
\end{enumerate}

For the groupoid of finite sets and their bijections, the homogeneous category we associate is the category $FI$ of finite sets and injections, much studied recently
by Church, Ellenberg, Farb, and others in the context of representation stability
(see e.g.\ \cite{CEF12}). For the groupoid of finitely generated $R$-modules and their isomorphisms, the construction yields the category with objects the finitely generated $R$-modules and morphisms the injective homomorphisms equipped with a choice of splitting, which arises in some form already in Charney's work on stability phenomena for congruence subgroups \cite{Cha84}. And, for mapping class groups of surfaces of increasing genus, we recover a version of Ivanov's category of decorated surfaces, which he used to prove stability with twisted coefficients for these groups \cite{Iva93}. The construction appears to be new for braid groups, where it yields a {\em pre-braided} homogeneous category, which can be thought of as the braided analogue of $FI$. This new category allows us to define a notion of polynomial functors associated to braid groups, which is of independent interest. (The property of being pre-braided is weaker than that of being braided, see Definition~\ref{prebraidDef}. See also Section~\ref{prebraidcal} for a graphical description of the pre-braided category associated to the braid groups.)

\medskip

Given a homogeneous category $(\C,\op,0)$ and objects $A$ and $X$ in $\C$, we consider the question of homological stability for the groups 
$$G_n=\Aut(A\op X^{\op n})$$
with coefficients in the $\bZ[G_n]$-module
$$F_n=F(A\op X^{\op n})$$
for $F:\C\to \Z\operatorname{-Mod}$ a functor to the category of $\Z$-modules. One should think of $A$ as a ``starting point'' and $X$ as a ``stabilisation direction''. Often $\C$ will have objects the natural numbers and we will take $A=0$ and $X=1$, so that $A\op X^{\op n}$ will just be the object $n$ in $\C$, but in general $\C$ may have many interesting starting points and stabilisation directions, for example when $\C$ is a category whose objects are manifolds or groups. 

We say, inductively, that a functor $F$ as above defines a {\em coefficient system of degree $r$} if the kernel of $F(-) \to F(X\op -)$ vanishes and the cokernel defines a coeffcient system of degree $(r-1)$; the zero coefficient system has degree $-1$. More generally we say that $F$ defines a {\em coefficient system of degree $r$ at $N$} if the above holds when evaluated at the objects $A\op X^{\op n}$ for $n\ge N$ (see Definition~\ref{fdegcoef} for full details). 
As detailed in Example~\ref{FIex} and Remark~\ref{polrem}, this notion is closely related to the classical notion of polynomial functors and, when $\C=FI$, to that of being presented in finite degree.

Let $G_\infty$ denote the colimit of the groups $G_n$ and $G_\infty^{ab}$ its abelianisation. We will also consider functors $F:\C\to \Z G_\infty^{ab}\operatorname{-Mod}$ with target the category of $G_\infty^{ab}$-modules. In this case, we have that $F_n=F(A\op X^{\op n})$ comes equipped with two commuting actions of $G_n$, one from the functoriality of $F$, and one via the composition $G_n\to G_\infty\to G_\infty^{ab}$ and its $G_\infty^{ab}$-module structure. For such functors, we let $F_n^\circ$ denote the $G_n$-module $F(A\op X^{\op n})$ considered with the diagonal action. Our stability theorem, given below, will allow both $F_n$ and $F_n^\circ$ as coefficients. 

\smallskip
 
To each triple $(\C,A,X)$, we associate a family of semi-simplicial sets $W_n(A,X)_\bullet$ for $n\geq 1$, built out of the morphism sets in $\C$ (see Definition~\ref{simpdef}). The semi-simplicial set $W_n(A,X)_\bullet$ admits an action of $G_n$, and can be thought of as a canonical ``space of destabilisations'' for $G_n$ associated to the family of groups.  Our main theorem is the following:

\begin{Th}\label{main}
Let $(\C,\op,0)$ be a pre-braided homogeneous category, and $A$ and $X$ be two objects of $\C$. 
Assume that there is a $k\ge 2$ such that for each $n\ge 1$, the semi-simplicial set $W_n(A,X)_\bullet$ mentioned above is at least $(\frac{n-2}{k})$-connected. 
Let $F:\C\to \Z\operatorname{-Mod}$ be a coefficient system of degree $r$ at $N$,  and $G_n=\Aut(A\op X^{\op n})$ and $F_n=F(A\op X^{\op n})$. Then 
$$H_i(G_n;F_n) \rar H_i(G_{n+1};F_{n+1}) $$ 
is an epimorphism for $i\le \frac{n}{k}-r$ and an isomorphism for  $i\le \frac{n}{k}-r-1$ for all $n>N$.

Moreover, for $F:\C\to \Z G_\infty^{ab}\operatorname{-Mod}$ and assuming now that $k\ge 3$, we have that 
$$H_i(G_n;F^\circ_n) \rar H_i(G_{n+1};F^\circ_{n+1}) $$ 
is an epimorphism for $i\le \frac{n+2}{k}-r-1$ and an isomorphism for  $i\le \frac{n+2}{k}-r-2$ for all $n>2N$. 
\end{Th}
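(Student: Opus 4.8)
The plan is to run the standard spectral-sequence-of-a-semi-simplicial-object argument relative to the action of $G_n$ on $W_n(A,X)_\bullet$, but carefully tracking how the extra central twist by $G_\infty^{ab}$ interacts with the stabilisation maps. First I would set up, for each $n$, the $G_n$-equivariant augmented semi-simplicial set $W_n(A,X)_\bullet \to \{*\}$, form the associated double complex computing $H_*(G_n; F_n^\circ)$ via the hyperhomology spectral sequence, and identify the $p$-simplices: the stabiliser of a $p$-simplex of $W_n(A,X)_\bullet$ should be (up to the homogeneity identifications of Definition~\ref{homdef} and Theorem~\ref{universal}) a conjugate of $G_{n-p-1}$, so the $E^1$-term in filtration degree $p$ is a sum of $H_{*}(G_{n-p-1}; (\text{restriction of }F_n^\circ))$. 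The key point is that restricting $F_n^\circ$ along $G_{n-p-1} \inc G_n$ recovers $F_{n-p-1}^\circ$ \emph{with its diagonal action}, because the functoriality action restricts correctly by definition of a coefficient system and the central $G_\infty^{ab}$-action is literally pulled back along $G_{n-p-1}\to G_\infty \to G_\infty^{ab}$; so the augmented double complex is genuinely built from the groups lower in the same stable family, and the connectivity hypothesis on $W_n(A,X)_\bullet$ (now with $k\ge 3$) makes the spectral sequence collapse into the desired range.

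Next I would run the induction. The argument is the usual simultaneous induction on $n$ and $i$: assuming the epimorphism/isomorphism statement for all smaller $n$ (and all $i$), one shows the relative homology $H_i(G_{n+1}, G_n; F_{n+1}^\circ)$ vanishes in the stated range. The input is (a) the vanishing of the low-filtration columns of the $E^1$-page, which requires the coefficient-system degree bound — here the iterated-kernel/cokernel triviality of $F(-)\to F(X\op -)$ at objects $A\op X^{\op m}$, $m\ge N$, is what lets one replace $F_{n-p-1}^\circ$ by $F_{n-p-2}^\circ$ at the cost of dropping homological degree by one, exactly $r$ times; and (b) the connectivity estimate, which controls how far up the anti-diagonal the spectral sequence is concentrated. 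The shift from $n>N$, slope $\tfrac{n}{k}-r$ in the untwisted case to $n>2N$, slope $\tfrac{n+2}{k}-r-1$ in the abelian case is an arithmetic consequence of needing $k\ge 3$ (so that the connectivity $\tfrac{n-2}{k}$ is strong enough to carry the weaker coefficient estimate) together with the doubling in the base of the induction forced by the fact that the diagonal action does not restrict to a \emph{constant} twist — one has to feed in the polynomial-coefficient case of the theorem (the first half, already proved) as a black box at each stage, and its hypothesis $n>N$ becomes $n-p-1 > N$, i.e. effectively $n > 2N$ once $p$ ranges over the relevant simplex degrees.

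I would structure the write-up so that the abelian case is deduced from a single general lemma about the $E^1$-page: for $F:\C\to \Z G_\infty^{ab}\operatorname{-Mod}$, the stabiliser subgroups of simplices act on the fibre exactly as they would for the corresponding functor to $\Z\operatorname{-Mod}$ twisted by the restricted central character, and this restricted character is the canonical one for $G_{n-p-1}$; then the first half of the theorem applies verbatim to each column with the \emph{same} degree $r$ but base $N$ replaced by whatever index appears there. Concretely: prove the $E^1$-identification, invoke the polynomial case columnwise, assemble the vanishing line, and read off the ranges. One technical wrinkle worth isolating as a sub-step is checking that the canonical morphisms $X^{\op p+1}\to A\op X^{\op n}$ indexing simplices, composed with the central map to $G_\infty^{ab}$, really do land in the subgroup one expects — this is where pre-braidedness (Definition~\ref{prebraidDef}) rather than full braidedness matters, since the relevant reordering of stabilisation blocks need only be realised by a morphism in $\C$, not by a braiding.

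The hard part will be (b): getting the arithmetic of the two ranges to close up cleanly under the simultaneous induction while only assuming $k\ge 3$ for the abelian case. In particular one must verify that the $E^1$-columns that are \emph{not} yet killed by the inductive hypothesis (the ``bad'' columns near filtration degree equal to homological degree) are pushed off the edge by the connectivity bound $\tfrac{n-2}{k}$, and that the off-by-constants bookkeeping — the $+2$ in $\tfrac{n+2}{k}$, the extra $-1$ beyond $-r$, and the passage from $N$ to $2N$ — is mutually consistent for \emph{all} $i$ in the range simultaneously, not just the top one. Everything else (the $E^1$-identification, the stabiliser computation, the reduction to the polynomial case columnwise) is essentially formal given Theorem~\ref{universal} and Definition~\ref{homdef}; it is the interaction of three linear inequalities under induction that needs the care.
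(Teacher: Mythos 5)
Your proposal identifies some of the right moving parts (action of $G_{n+1}$ on $W_{n+1}$, identification of stabilisers, role of the connectivity hypothesis, and a sensitivity to the fact that the diagonal action does not restrict to a constant twist), but it has a genuine gap at the single place where the abelian case actually differs from the polynomial one, and your architecture also diverges from the paper's in a way that makes that gap harder to fill.

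The paper does not run a single absolute spectral sequence for each $G_n$ and induct on $n$ and $i$; instead it proves a vanishing theorem for relative groups $Rel^F_i(A,n) = H_i(G_{n+1},G_n;F_{n+1},F_n)$ by a double induction on the coefficient degree $r$ and the homological degree $i$. The engine is Dwyer's ``qualitative'' step: the lower-suspension map $\sigma_n\colon Rel^F_i(A,n)\to Rel^F_i(A,n+1)$ is factored through $Rel^{\Sigma F}_i(A,n)$, the composite $\sigma_{n+1}\circ\sigma_n$ is shown to be \emph{zero} (Proposition~\ref{prop:TwoCompIsZero}), and then each factor of $\sigma_n$ is shown to be injective in a range (Propositions~\ref{prop:FirstMapRange} and~\ref{prop:TwistedSSArgument}); forcing an injective map to be zero kills the group. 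Your proposal describes instead a columnwise invocation of the polynomial case; but the polynomial case is a statement about relative homology, while the $E^1$-columns in your absolute set-up are \emph{absolute} homology groups $H_q(\St(\sigma_p);\,\text{restricted module})$, and the restriction of $F^\circ_{n+1}$ along the stabiliser inclusion is not $F^\circ_{n-p}$ but a shift $(\Sigma^{p+1}F)^\circ_{n-p}$, so there is no ``constant row'' to compare against as in the constant-coefficient argument. This mismatch is precisely why the paper passes to the relative spectral sequence.

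The concrete missing step is the one you wave at with ``pre-braidedness matters'' but do not carry out: the vanishing of the differential $d^1\colon E^1_{1,i}\to E^1_{0,i}$. This $d^1$ is a difference of two maps, one of which involves conjugation by an element $h_1\in G_{n+1}$ taking $d_1\sigma_1$ to $\sigma_0$. In the polynomial case it suffices to choose $h_1$ centralising $\St(\sigma_1)$, because inner automorphisms act trivially on homology. In the abelian case conjugation by $h_1$ also acts on the coefficient module $F^\circ$ through the character $G_{n+1}\to G_\infty^{ab}$, and this action is \emph{not} inner; one must additionally arrange $[h_1]=0 \in G_\infty^{ab}$. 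The paper achieves this by taking the more elaborate element
$h_1 = (A\op X\op b_{X,X}^{-1}\op X^{\op n-2})\circ(A\op b_{X,X}\op X^{\op n-1})$,
whose abelianisation vanishes by the braid relation \eqref{braidrelation} (this is exactly where $k\ge 3$ is forced, because $h_1$ must centralise $\St(\sigma_2)$ rather than $\St(\sigma_1)$, costing one more column), and then \emph{precomposing} with a surjection
$Rel_i^{F^\circ}(A, n-2) \to Rel_i^{\Sigma^3 F^\circ}(A, n-2) \to Rel_i^{\Sigma^2 F^\circ}(A,n-1)$
so that the two maps making up $d^1$ agree on the image where $h_1$ acts trivially; this precomposition step is the origin of both the passage from $N$ to $2N$ and the extra $-1$ in the stated range. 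Your write-up describes those arithmetic shifts as fallout of a columnwise reduction, but they are in fact fallout of this specific precomposition-and-surjectivity manoeuvre, and without it the injectivity of $d^1\colon E^1_{0,i}\to E^1_{-1,i}$ cannot be established in the abelian case.
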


As we will make explicit in the more technical Theorems \ref{stabthm}, \ref{abstabthm}, and~\ref{twistrange}, the stability ranges can be improved when the coefficient system is constant or split. A consequence of Theorem \ref{main} is the following: 

\begin{Co}\label{cor:B}
Let $\C$, $A$, $X$ and $F:\C\to \Z\operatorname{-Mod}$ be as in Theorem \ref{main} and assume that $W_n(A,X)_\bullet$ is at least $(\frac{n-2}{k})$-connected for some $k\ge 3$. Let $G_n'$ denote the commutator subgroup of $G_n=\Aut(A\op X^{\op n})$. Then 
$$H_i(G_n';F_n) \rar H_i(G_{n+1}';F_{n+1}) $$ 
is an epimorphism for $i\le \frac{n+2}{k}-r-1$ and an isomorphism for  $i\le \frac{n+2}{k}-r-2$ for all $n>2N$. 
\end{Co}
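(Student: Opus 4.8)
The plan is to reduce Corollary~\ref{cor:B} to the abelian-coefficients half of Theorem~\ref{main} via Shapiro's lemma, by converting coefficients in $F_n$ for the commutator subgroup $G_n'$ into suitably twisted coefficients for the ambient group $G_n$. Given $F\colon\C\to\Z\operatorname{-Mod}$ a coefficient system of degree $r$ at $N$, I would introduce the functor $\widetilde F\colon\C\to\Z G_\infty^{ab}\operatorname{-Mod}$ sending an object $c$ to $\Z[G_\infty^{ab}]\ot_\Z F(c)$, where $G_\infty^{ab}$ acts by left translation on the first tensor factor and $\C$ acts through $F$ on the second. In the notation of the theorem, $\widetilde F_n=\Z[G_\infty^{ab}]\ot_\Z F_n$ and $\widetilde F_n^\circ$ is this module equipped with the diagonal action of $G_n$, acting on $\Z[G_\infty^{ab}]$ through $G_n\to G_\infty\to G_\infty^{ab}$ and on $F_n$ by functoriality.

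The first point to check is that $\widetilde F$ is again a coefficient system of degree $r$ at $N$; this is the only place the hypothesis on $F$ enters, and it is immediate from flatness. Since $\Z[G_\infty^{ab}]$ is a free $\Z$-module, the functor $\Z[G_\infty^{ab}]\ot_\Z-$ is exact, hence commutes with the formation of the kernel and cokernel of $\widetilde F(-)\to\widetilde F(X\op-)$; iterating, the $r$th iterated kernel and cokernel of $\widetilde F$ are obtained from those of $F$ by tensoring with $\Z[G_\infty^{ab}]$, and so vanish on $A\op X^{\op n}$ for $n\ge N$. The second part of Theorem~\ref{main}, which applies since $k\ge 3$, then gives that $H_i(G_n;\widetilde F_n^\circ)\to H_i(G_{n+1};\widetilde F_{n+1}^\circ)$ is an epimorphism for $i\le\frac{n+2}{k}-r-1$ and an isomorphism for $i\le\frac{n+2}{k}-r-2$, for all $n>2N$.

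It remains to identify $H_i(G_n;\widetilde F_n^\circ)$ with $H_i(G_n';F_n)$, naturally in $n$. Applying the first part of Theorem~\ref{main} to the constant coefficient system $\underline{\Z}$ (which has degree $0$) shows that the stabilisation maps $G_n^{ab}=H_1(G_n)\to H_1(G_{n+1})=G_{n+1}^{ab}$ are isomorphisms once $n$ is large enough; since $G_\infty^{ab}=\colim_m G_m^{ab}$, this yields $G_n^{ab}\arsim G_\infty^{ab}$ throughout the range in which the corollary makes an assertion. For such $n$, the subgroup $G_n'$ is precisely the kernel of the surjection $G_n\surj G_\infty^{ab}$, so $\widetilde F_n^\circ=\Z[G_n/G_n']\ot_\Z F_n$ with its diagonal action, which is exactly $\operatorname{Ind}_{G_n'}^{G_n}(F_n|_{G_n'})$. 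Shapiro's lemma then provides an isomorphism $H_i(G_n;\widetilde F_n^\circ)\cong H_i(G_n';F_n)$, and its naturality with respect to the inclusions $(G_n'\le G_n)\inc(G_{n+1}'\le G_{n+1})$ and the maps $F_n\to F_{n+1}$ shows that it intertwines the two stabilisation maps. Transporting the conclusion of Theorem~\ref{main} for $\widetilde F$ along this isomorphism gives Corollary~\ref{cor:B}.

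The step I expect to require the most care is this last identification: one must make sure that $G_n^{ab}\to G_\infty^{ab}$ is genuinely an isomorphism throughout the asserted range (in the lowest degrees, before $H_1$-stability has taken effect, the claimed range is either empty or the statement is easily checked by hand), and one must verify that the Shapiro isomorphisms are honestly natural in $n$. By contrast, the flatness observation and the preservation of the degree-$r$ condition are routine, and the argument requires no information about $\C$, $A$, or $X$ beyond what Theorem~\ref{main} already supplies.
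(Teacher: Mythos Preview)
Your approach is essentially identical to the paper's: form $\overline{F}(-)=F(-)\otimes_{\Z}\Z[G_\infty^{ab}]$, observe it has the same degree as $F$ by exactness of tensoring with a free $\Z$-module, apply the abelian-coefficients half of Theorem~\ref{main}, and then identify $H_*(G_n;\overline{F}_n^\circ)\cong H_*(G_n';F_n)$ via Shapiro once $G_n^{ab}\to G_\infty^{ab}$ is an isomorphism. The only place where the paper is slightly sharper is the last step: it invokes Theorem~\ref{stabthm} (the constant-coefficients theorem), which gives $H_1(G_n)\cong H_1(G_\infty)$ already for $n\ge k+1$, whereas the first part of Theorem~\ref{main} with $r=0$ only yields this for $n\ge 2k$; the paper then notes that Corollary~\ref{cor:B} makes no claim for $n\le k$, so the boundary cases you flag as needing a hand check are in fact vacuous once one uses the better $H_1$-range.
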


The assumption on the stability slope $k\ge 2$ in the first part of Theorem~\ref{main} and $k\ge 3$ in the two other statements is known to be best possible in this level of generality as shown for example by the computations \cite{Nak60} in the case of symmetric groups with constant coefficients and \cite{Hausmann} in the case of their commutator subgroups, the alternating groups, also with constant coefficients.

In Section ~\ref{sec:StabHomology}, we briefly explain how the method of group completion can be used to compute the stable homology in the case of constant and constant abelian coefficients. 

\medskip

To apply the above result to a given family of groups $G_1\inc G_2\inc\cdots$, we must first identify the family as a family of automorphism groups in a monoidal groupoid. If the groupoid admits a braiding and satisfies cancellation, we can apply our Theorem~\ref{universal} to obtain an associated homogeneous category and a family of semi-simplicial sets $W_n(A,X)_\bullet$. Homological stability will follow if 
these semi-simplicial sets 
are highly connected. So there are three main assumptions on the groups for stability to hold: the existence of a {\em braiding}, the {\em cancellation} property, and the {\em connectivity} property. 

The braiding assumption rules out many examples that are known not to satisfy stability, such as $G_n=\Z^n$ or $G_n=P\be_n$ is the pure braid group on $n$ strands. Cancellation is not a strong assumption: we give in Definition~\ref{LCDef} a local version of cancellation which suffices, and the stability theorems in the paper are actually stated using a local version of the axioms given in this introduction. If the groupoid does not satisfy cancellation at all, it can always be replaced by one that does (though at the expense of changing the semi-simplicial sets $W_n(A,X)_\bullet$). The assumption which usually requires work to prove is that of connectivity. Indeed, there are many examples of
families of groups that fit this framework, but we do not always know whether or not the connectivity hypothesis is satisfied. We conjecture in fact that stability holds if and only if connectivity holds: 

\begin{Coj}\label{conjstab}
Let $(\C,\op,0)$ be a homogeneous category and $A$ and $X$ objects of $\C$.  If  $G_n=\Aut(A\op X^{\op n})$ satisfies homological stability for all twisted coefficient systems as in Theorem~\ref{main}, then the connectivity of the $W_n(A,X)_\bullet$ tends to infinity with $n$.
\end{Coj}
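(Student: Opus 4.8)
The natural plan is to argue by contraposition: assume the connectivity of $W_n(A,X)_\bullet$ does \emph{not} tend to infinity, and build from this a finite-degree coefficient system for which stability fails. Note that the forward implication is Theorem~\ref{main}, so this is the converse, and the essential difficulty is that ``stability holds'' is a priori a far more flexible condition than ``the $W_n(A,X)_\bullet$ are highly connected'' --- it is the requirement that stability hold for \emph{all} finite-degree coefficient systems that one has to leverage. Concretely, by assumption there is a constant $D$ and infinitely many $n$ with $\widetilde H_d(|W_n(A,X)_\bullet|)\neq 0$ for some $d\le D$; fix such an $n$ and the \emph{smallest} such $d$. The first step is to feed a coefficient system $F$ into the descent spectral sequence for the action of $G_n$ on $W_n(A,X)_\bullet$: by the explicit description in Definition~\ref{simpdef}, the augmented chain complex of $W_n(A,X)_\bullet$ is a complex of $\bZ[G_n]$-modules whose term in simplicial degree $p$ is a sum of modules induced from the stabilisers of the standard $p$-simplices, and these stabilisers are (essentially) the smaller automorphism groups $G_{n-p-1}$. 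The resulting spectral sequence has the shape $E^1_{p,q}=\bigoplus H_q(G_{n-p-1};F_{n-p-1})\Rightarrow H_{p+q}(G_n;F_n)$, corrected in low total degree by a contribution from $\widetilde H_\bullet(|W_n(A,X)_\bullet|)$. If $F$ is chosen so that the hypothesis guarantees stability in the relevant range, then every column $H_q(G_{n-p-1};F_{n-p-1})$ has already stabilised to $H_q(G_\infty;F_\infty)$, the behaviour of the differentials is essentially forced, and comparing the abutment $H_\bullet(G_n;F_n)$ with what the now-understood columns contribute shows that the $\widetilde H_\bullet(|W_n|)$-correction must vanish in degree $d$ --- contradicting the choice of $n$ and $d$.

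The crux, and the step I expect to be the real obstacle, is \emph{choosing} $F$ so that simultaneously (a) $F$ has finite degree in the sense of Definition~\ref{fdegcoef}, so that it lies among the coefficient systems for which stability is assumed, and (b) $F$ is sensitive enough to $\widetilde H_d(|W_n(A,X)_\bullet|)$ to force the contradiction. The naive candidate --- the functor $m\mapsto \widetilde H_d(|W_m(A,X)_\bullet|)$, or its chain-level refinement $m\mapsto \widetilde C_d(W_m(A,X))$ --- is exactly of the ``Steinberg module'' type that is in general \emph{not} polynomial: its dimension grows too fast with $m$, so it cannot be used directly in Theorem~\ref{main}. A promising way around this is a ``peeling'' induction: in the lowest degree $d$ where connectivity fails, the obstruction should be a \emph{subquotient} of a genuinely finite-degree coefficient system assembled from the representable-type functors $\Hom_\C(X^{\op\bullet},-)$ out of which $W_n(A,X)_\bullet$ is built --- functors whose iterated kernels and cokernels along $-\mapsto X\op-$ are under control essentially by construction --- and detecting a single nonzero subquotient in a fixed homological degree of an otherwise-stable coefficient system already contradicts stability for the ambient polynomial system. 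Making this precise, in particular being careful about $\bZ$-coefficients versus field coefficients and about whether one must widen the target to $\bZ G_\infty^{ab}$-modules, is where the technical work concentrates.

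A cleaner and, I think, ultimately more powerful reformulation is homotopy-theoretic. The collection $R=\coprod_{n\ge 0}B\Aut(A\op X^{\op n})$ is naturally an $E_1$-algebra in spaces (an $E_2$-algebra in the braided case), the semi-simplicial sets $W_n(A,X)_\bullet$ compute, up to a regrading, a piece of its $E_1$-homology $H^{E_1}_\bullet(R)$ --- equivalently, the derived indecomposables of $R$ relative to the submonoid generated by $X$ --- and, via a bar-type spectral sequence, homological stability for all polynomial coefficients is equivalent to the vanishing of $H^{E_1}_\bullet(R)$ in a range of degrees that grows with the stabilisation degree (the number of copies of $X$). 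From this vantage point the conjecture becomes the statement that $H^{E_1}_\bullet(R)$ vanishes in a growing range if and only if the $W_n(A,X)_\bullet$ become highly connected --- essentially an identification of $\widetilde H_\bullet(|W_n(A,X)_\bullet|)$ with the relevant bidegree of $H^{E_1}_\bullet(R)$, combined with the standard dictionary between vanishing of $E_1$-homology and homological stability. Carrying this out rigorously, and upgrading the hypothesis ``stability for all twisted coefficient systems'' to the precise strength the argument needs, is the substantive content. As a consistency check, the known non-examples $G_n=\Z^n$ and $G_n=P\be_n$ --- where even constant-coefficient stability fails --- do have $W_n(A,X)_\bullet$ of bounded connectivity, exactly as the conjecture predicts.
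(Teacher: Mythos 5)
The statement you are addressing is Conjecture~\ref{conjstab}: the paper poses it as an open problem and gives no proof, so there is nothing to compare your proposal against. Your write-up is explicitly a sketch of strategy rather than a proof, and you should not present it as if it settled the conjecture.

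The concrete gap is the one you yourself flag and then do not close: you need a coefficient system $F$ that is simultaneously of finite degree (so that the hypothesis of Theorem~\ref{main} applies to it) and that detects $\widetilde H_d(|W_n(A,X)_\bullet|)$ in the lowest degree $d$ where connectivity fails. The obvious candidate, a chain-level or Steinberg-type functor built from $W_\bullet(A,X)$, is exactly of the kind that is \emph{not} of finite degree, and your ``peeling'' suggestion --- that the obstruction class is a subquotient of a finite-degree coefficient system built from the $\Hom_\C(X^{\op\bullet},-)$ --- is an assertion, not an argument. Without a mechanism that forces $\widetilde H_d(|W_n|)$ to interact with a polynomial functor, the descent spectral sequence argument does not produce a contradiction: the low-degree correction term could a priori live in the ``non-polynomial part'' invisible to every coefficient system covered by Theorem~\ref{main}. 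The $E_1$-homology reformulation is a genuinely promising direction (it is essentially the Galatius--Kupers--Randal-Williams framework), and it is true that in that setting vanishing of $E_1$-homology in a range implies stability with polynomial coefficients; but the converse implication you need --- that stability with all polynomial coefficients forces $E_1$-homology, equivalently the connectivity of the $W_n(A,X)_\bullet$, to vanish in a growing range --- is precisely the content of the conjecture and is not established in this generality. In short, both of your routes relocate the difficulty rather than resolve it; the statement remains open.
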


A conjecture of the same flavour has been made by Thomas Church.

\smallskip

To be able to check connectivity of the semi-simplicial sets $W_n(A,X)_\bullet$, we define a family of simplicial complexes $S_n(A,X)$ and show that, under favourable circumstances, the $W_n(A,X)_\bullet$'s are highly connected if and only if the associated $S_n(A,X)$'s are highly connected. (See Theorem~\ref{caseA}.) In all the classical examples, the complexes $S_n(A,X)$ have previously appeared in the literature (at least essentially), and their connectivity is known, meaning that no further work is required. Combining our main theorem with these connectivity results, we obtain a number of stability results, many of which had not yet appeared in the literature. We will shortly give a sample of such theorems and indicate which other stability theorems likewise follow from our work. 

Our set-up, that of {\em homogeneous categories}, is inspired by a set-up developed by Djament--Vespa to compute stable homology with twisted coefficients \cite{DjaVes10}.
The proofs of the stability theorems given here are generalisations of work of Charney \cite{Cha87}, Dwyer \cite{Dwy80}, van der Kallen \cite{vdK80}, and Galatius and the first author \cite{GalRW14}.

A version of Theorem~\ref{main} in the case of symmetric monoidal homogeneous categories and for polynomial coefficients was earlier sketched by Djament
\cite{Dja12b}. The methods we use are also very similar to those used by Putman and Sam in their independent work \cite{PutSam14}, where they prove representation stability of
certain congruence subgroups. In particular, Putman--Sam introduce a notion of {\em complemented categories} which are particular cases of homogeneous categories.

\addtocontents{toc}{\SkipTocEntry}
\subsection*{Examples}

We now give a selection of the results which follow from Theorem \ref{main} and Corollary \ref{cor:B} when they are applied to particular families of groups. We shall not give every possible result that can be deduced from these results, but rather a representative sample. These examples will be explained in detail in Section \ref{examples}, where we shall also give more technical statements improving the stability ranges under certain conditions.

\addtocontents{toc}{\SkipTocEntry}
\subsection*{Braid groups and symmetric groups}

Let $S$ be a surface with one 
boundary and let us write $\be_n^S= \pi_1\operatorname{Conf}(\mathrm{int}S,n)$ for the corresponding surface braid group (so $\be_n^{D^2}=\be_n$ is the usual braid group). Just as for the usual braid group, there is a map $\be_n^S\to \Si_n$ and we denote by 
$G\wr \be_n^S$ the corresponding wreath product. 

\begin{Th}\label{thm:D}
Let $G$ be a group, $S$ a surface with one boundary, and let 
$$G_n=G\wr \Si_n\ \ \textrm{or}\ \ \ G\wr \be_n^S$$
with $G_n'$ denoting its commutator subgroup. Let $F:U(\sqcup_n G_n)\to \Z\operatorname{-Mod}$ be a coefficient system of degree $r$ at $N$. Then for all $n>N$, $H_i(G_n;F_n)\to H_i(G_{n+1};F_{n+1})$
is an  epimorphism for $i \leq \tfrac{n}{2}-r$ and an isomorphism for $i \leq \tfrac{n-2}{2}-r$, and for all $n>2N$, 
$H_i(G'_n;F_n)\to H_i(G'_{n+1};F_{n+1})$
an epimorphism for $i \leq \tfrac{n-1}{3}-r$ and an isomorphism for $i \leq \tfrac{n-4}{3}-r$. 
\end{Th}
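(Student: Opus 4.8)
The plan is to deduce both halves of Theorem~\ref{thm:D} — the statements for $G_n$ and for its commutator subgroup $G'_n$, in each of the two families — from Theorem~\ref{main} and Corollary~\ref{cor:B}; the only genuine work lies in producing the relevant braided monoidal groupoid and in verifying the connectivity hypothesis, everything else being formal.

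First I would exhibit $\sqcup_n G_n$ as a braided monoidal groupoid satisfying the cancellation and injectivity assumptions~(i)--(ii) of the introduction. For $G_n=G\wr\Si_n$, take $\sqcup_n G\wr\Si_n$ with its block-sum monoidal structure — it is the free symmetric monoidal category on the one-object groupoid with automorphism group $G$: its monoid of objects is $(\N,+)$ so cancellation is clear, $\Aut(n)=G\wr\Si_n$, and $\Aut(n)\to\Aut(n+m)$ is injective. For $G_n=G\wr\be_n^S$, the braid groupoid $\sqcup_n\be_n^S$ carries a braided monoidal structure — for $S$ with non-empty boundary given, exactly as for the classical braid groupoid $\sqcup_n\be_n=\sqcup_n\be_n^{D^2}$, by a half-disc in a collar of $\partial S$, the closed case being treated in Section~\ref{examples} — and passing to the wreath product with $G$ preserves it; cancellation and injectivity are again readily checked. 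In both cases Theorem~\ref{universal} produces the pre-braided homogeneous category $U(\sqcup_n G_n)$ of the statement (symmetric monoidal in the symmetric-groups case), and with $A=0$ and $X=1$ one has $\Aut(A\op X^{\op n})=G_n$ and $F_n=F(A\op X^{\op n})$ as in the theorem.

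Next I would check that the semi-simplicial sets $W_n(A,X)_\bullet$ are at least $(\tfrac{n-2}{2})$-connected, using the associated simplicial complexes $S_n(A,X)$ and the comparison between the two from the body of the paper. For $G\wr\Si_n$ the complex $W_n(A,X)_\bullet$ is a $G$-labelled variant of the complex of injective words on $n$ letters, which is $(n-2)$-connected — and the $G$-labels, sitting in discrete fibres, leave the connectivity unchanged — so we are comfortably inside the required range. For $G\wr\be_n^S$ the complex $W_n(A,X)_\bullet$ is an arc complex on $S$ with $n$ marked points of the kind already used in homological-stability proofs for (surface) braid groups, whose connectivity of the order of $\tfrac{n}{2}$ is available in the literature as recalled in the introduction. \emph{This connectivity input is the step I expect to be the main obstacle}: unwinding the abstract definition of $W_n(A,X)_\bullet$ to recognise these classical complexes, transporting the known connectivity bounds (and their $G$-labelled analogues) across, and, for closed $S$, arranging the underlying surface geometry.

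Finally I would read off the ranges. Since $W_n(A,X)_\bullet$ is $(\tfrac{n-2}{2})$-connected, Theorem~\ref{main} applies with $k=2$ and gives that $H_i(G_n;F_n)\to H_i(G_{n+1};F_{n+1})$ is an epimorphism for $i\le\tfrac n2-r$ and an isomorphism for $i\le\tfrac{n-2}{2}-r$ whenever $n>N$; this is the first assertion. For the commutator subgroups, $(\tfrac{n-2}{2})$-connectivity implies $(\tfrac{n-2}{3})$-connectivity, so Corollary~\ref{cor:B} applies with $k=3$ and gives that $H_i(G'_n;F_n)\to H_i(G'_{n+1};F_{n+1})$ is an epimorphism for $i\le\tfrac{n+2}{3}-r-1=\tfrac{n-1}{3}-r$ and an isomorphism for $i\le\tfrac{n+2}{3}-r-2=\tfrac{n-4}{3}-r$ whenever $n>2N$; this is the second assertion. (The loss from slope $2$ to slope $3$ on passing to commutator subgroups is forced at this level of generality, as the alternating groups already demonstrate.)
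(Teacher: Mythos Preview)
Your proposal is correct and follows essentially the same approach as the paper: set up the braided monoidal groupoid, verify the connectivity of $W_n(A,X)_\bullet$ by identifying it with a known complex, then apply Theorem~\ref{main} and Corollary~\ref{cor:B}. Two minor remarks: first, the paper in fact obtains the stronger bound that the relevant complexes are $(n-2)$-connected (not just $(\tfrac{n-2}{2})$-connected) — for $G\wr\Si_n$ by identifying $S_n$ with a join complex $(\Delta^{n-1})^G$ and invoking \cite[Prop~3.5]{HatWah10}, and for $G\wr\be_n^S$ by identifying $S_n$ with the arc complex $A(S;\{*\},(p_1,\ldots,p_n))^G$ of \cite[Prop~7.2]{HatWah10}, noting that condition~(B) of Section~\ref{sec:SxCxSSets} holds so $|W_n|\cong|S_n|$; second, the surface $S$ must have non-empty boundary for the braided monoidal structure on $\cB_2$ (and hence on $\cB_2^G$) to exist, so your aside about ``the closed case'' should be dropped.
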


In the case of symmetric groups, $G_n=\Si_n$ and $G_n'=A_n$, the alternating group. Then $U(\sqcup_n G_n)$ is equivalent to the category $FI$ of finite sets and injections. If $F : FI \to R\operatorname{-Mod}$ is an $FI$-module which is generated in degrees $\leq k$ and related in degrees $\leq d$, then we shall show in Example~\ref{FIex} that as a coefficient system it has degree $k$ at $d+\min(k,d)$. This implies the following: 

\begin{Co}\label{cor:E}
Let $F:FI\to \Z\operatorname{-Mod}$ be an $FI$-module which is generated in degrees $\leq k$ and related in degrees $\leq d$. Then for all $n>d+\min(k,d)-1$, the map
$H_i(\Si_n;F_n)\to H_i(\Si_{n+1};F_{n+1})$
is an  epimorphism for $i \leq \tfrac{n}{2}-k$ and an isomorphism for $i \leq \tfrac{n-2}{2}-k$, and for all $n>2(d+\min(k,d)-1)$, the map
$H_i(A_n;F_n)\to H_i(A_{n+1};F_{n+1})$
an epimorphism for $i \leq \tfrac{n-1}{3}-k$ and an isomorphism for $i \leq \tfrac{n-4}{3}-k$. 
\end{Co}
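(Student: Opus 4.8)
The plan is to deduce Corollary~\ref{cor:E} directly from Theorem~\ref{thm:D} by specialising to the trivial group and then translating the hypotheses on the $FI$-module into the language of coefficient systems of finite degree. First I would take $G=\{e\}$ in Theorem~\ref{thm:D}. Then $G\wr\Si_n=\Si_n$, its commutator subgroup $G_n'$ is the alternating group $A_n$, and the homogeneous category $U(\sqcup_n G_n)=U(\sqcup_n\Si_n)$ is, as recorded in the introduction, the category $FI$ of finite sets and injections (with starting point $A=0$ and stabilisation direction $X=1$, so that $A\op X^{\op n}$ is the object $n$). Hence Theorem~\ref{thm:D} applies verbatim to any functor $F\colon FI\to\Z\operatorname{-Mod}$ which is a coefficient system of degree $r$ at $N$, giving an epimorphism for $i\le\frac{n}{2}-r$ and isomorphism for $i\le\frac{n-2}{2}-r$ on $H_i(\Si_n;F_n)$ when $n>N$, and the $A_n$-statement when $n>2N$.

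Second I would invoke the dictionary between the two notions of ``finite degree'' for $FI$-modules: as carried out in Example~\ref{FIex}, an $FI$-module $F$ generated in degrees $\le k$ and related in degrees $\le d$, viewed as a functor $FI\to\Z\operatorname{-Mod}$, is a coefficient system of degree $k$ at $N=d+\min(k,d)$. Substituting $r=k$ and $N=d+\min(k,d)$ into the ranges from the specialisation of Theorem~\ref{thm:D} above then gives exactly the two assertions of the corollary: stability for $\Si_n$ in the stated range once $n>d+\min(k,d)$, and for $A_n$ in the stated range once $n>2(d+\min(k,d))$. So the entire argument is a substitution of parameters.

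Consequently there is no genuine obstacle in proving Corollary~\ref{cor:E} itself once Theorem~\ref{thm:D} and Example~\ref{FIex} are in hand; the only point requiring care is the bookkeeping constant $d+\min(k,d)$, which comes from the comparison in Example~\ref{FIex} between the generation and relation degrees of an $FI$-module and the vanishing of the iterated kernels and cokernels of $F(-)\to F(1\sqcup-)$ used to define ``degree $r$ at $N$''. The substantive content — identifying $\Si_n$ and $A_n$ within the homogeneous-category framework, checking that $FI$ is pre-braided homogeneous, and establishing the connectivity of the associated semi-simplicial sets $W_n(0,1)_\bullet$ with slope $k=2$ — is already subsumed in Theorem~\ref{thm:D}, so I would simply cite it.
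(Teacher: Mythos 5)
Your argument is correct and is exactly the one the paper gives: specialise Theorem~\ref{thm:D} to $G=\{e\}$ so that $G_n=\Si_n$, $G_n'=A_n$, and $U(\sqcup_n G_n)=FI$, then apply the translation of Example~\ref{FIex} converting "generated in degrees $\le k$, related in degrees $\le d$" into "degree $k$ at $d+\min(k,d)$", and substitute $r=k$, $N=d+\min(k,d)$ into the stated ranges. Nothing further is required.
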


For braid groups, Theorem \ref{thm:D} in particular gives the following: 

\begin{Co}\label{cor:F}
Let $B_n=\Z[t,t^{-1}]^n$ denote the Burau representation of the braid group $\be_n$. Then 
$H_i(\be_n;B_n)\to H_i(\be_{n+1};B_{n+1})$
is an  epimorphism for $i \leq \tfrac{n-2}{2}$ and an isomorphism for $i \leq \tfrac{n-4}{2}$, and  
$H_i(\be'_n;B_n)\to H_i(\be'_{n+1};B_{n+1})$
an epimorphism for $i \leq \tfrac{n-4}{3}$ and an isomorphism for $i \leq \tfrac{n-7}{3}$. 
\end{Co}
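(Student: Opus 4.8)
The plan is to deduce Corollary~\ref{cor:F} directly from Theorem~\ref{thm:D} applied to the braid groups $\be_n = \be_n^{D^2}$ (the case $G = \{1\}$, $S = D^2$), so the only real work is to exhibit the Burau representation $B_n = \Z[t,t^{-1}]^n$ as (the value at $n$ of) a coefficient system of degree $1$ at $N$ for some small $N$, over the homogeneous category $U(\sqcup_n \be_n)$, which by the discussion in the introduction is the pre-braided ``braided $FI$'' category attached to the braid groupoid. Once the degree and $N$ are pinned down, the stated ranges will be exactly the $r=1$ specialisations of the two inequalities in Theorem~\ref{thm:D}: for $\be_n$, epimorphism for $i \le \tfrac{n}{2} - 1 = \tfrac{n-2}{2}$ and isomorphism for $i \le \tfrac{n-2}{2} - 1 = \tfrac{n-4}{2}$; for $\be_n'$, epimorphism for $i \le \tfrac{n-1}{3} - 1 = \tfrac{n-4}{3}$ and isomorphism for $i \le \tfrac{n-4}{3} - 1 = \tfrac{n-7}{3}$.

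First I would recall the functor structure: the (unreduced) Burau representation assigns to the braid group $\be_n$ the free $\Z[t,t^{-1}]$-module on the $n$ strands, with $\be_n$ acting by the standard matrices, and the stabilisation map $\be_n \inc \be_{n+1}$ together with the inclusion $B_n \inc B_{n+1}$ adding a new basis vector fixed by the image. I would check that this assembles into a functor $B \colon U(\sqcup_n \be_n) \to \Z\operatorname{-Mod}$ on the whole homogeneous category, not merely the groupoid: a morphism $m \to n$ in $U(\sqcup_n \be_n)$ is (a braid-decorated) injection of a size-$m$ set into a size-$n$ set, and one sends it to the corresponding split injection of free $\Z[t,t^{-1}]$-modules, with the braiding data recording the required powers of $t$ on the image basis vectors. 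This is the braided analogue of the fact that the standard permutation $FI$-module is a genuine $FI$-module.

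Next I would compute the degree. The map $B(-) \to B(1 \op -)$ at level $n$ is the inclusion $\Z[t,t^{-1}]^n \inc \Z[t,t^{-1}]^{n+1}$ of the first $n$ coordinates; its cokernel is $\Z[t,t^{-1}]$, a rank-one free module, and iterating once more the difference functor $\Delta B := \coker(B \to B(1\op-))$ is the \emph{constant} functor $\Z[t,t^{-1}]$, whose own difference functor is $0$. Hence $\coker$ applied twice is already the zero functor, and the relevant iterated kernels vanish (the maps are injective), so $B$ is a coefficient system of degree $\le 1$; it is not of degree $0$ since $\Delta B \neq 0$, so the degree is exactly $1$. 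I would then determine the smallest $N$ for which the degree-$1$ condition holds at all objects $n \ge N$ — by the above, the iterated kernel/cokernel are already identically zero, so one may take $N = 0$, or in any case $N$ small enough that the hypothesis ``$n > N$'' in Theorem~\ref{thm:D} is implied by the claimed ranges; this is the one bookkeeping point to get exactly right, and I would phrase it through Definition~\ref{fdegcoef} and Example~\ref{FIex}/Remark~\ref{polrem} to match the paper's conventions for the braided case. Plugging $r = 1$ and this $N$ into Theorem~\ref{thm:D} with $G = \{1\}$ and $S = D^2$ then yields both displayed statements.

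The main obstacle I expect is purely conceptual rather than computational: verifying that the Burau assignment is functorial on the pre-braided homogeneous category $U(\sqcup_n \be_n)$ (with the correct placement of the monodromy variable $t$ on the ``new'' basis vectors coming from the braiding) — once that is in place, the degree computation is a one-line cokernel calculation and everything else is substitution into Theorem~\ref{thm:D}. A secondary, minor point is to make sure the hypotheses of Theorem~\ref{thm:D} really do apply to $\be_n$, i.e.\ that the braid groupoid satisfies (local) cancellation and injectivity and that the relevant $W_n$ are $\tfrac{n-2}{2}$-connected (here $k = 2$); this is precisely the input used to prove Theorem~\ref{thm:D} for braid groups in the first place, so it may simply be cited.
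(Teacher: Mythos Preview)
Your proposal is correct and follows the same strategy as the paper: show that the Burau representation is a degree~$1$ coefficient system at $N=0$ on $U\beta$, then plug $r=1$ into Theorem~\ref{thm:D} with $G=\{1\}$ and $S=D^2$; the ranges you compute are exactly those in the statement.

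The one place where your write-up differs from the paper is in how functoriality on $U\beta$ is established. You propose to describe the functor directly on a general morphism of $U\beta$, speaking of ``the braiding data recording the required powers of~$t$ on the image basis vectors''; this is vague and not obviously correct as stated. The paper instead invokes Proposition~\ref{repfunct}: one only needs to check that the subgroup $\beta_m\hookrightarrow\beta_{n+m}$ (on the last $m$ strands) acts trivially on the image of $\bZ[t,t^{-1}]^n$ inside $\bZ[t,t^{-1}]^{n+m}$, which is immediate from the block form of the Burau matrices (see Example~\ref{Bureauex}). This criterion is both easier to verify and avoids any ambiguity about what the functor does on non-isomorphism morphisms. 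The degree computation is then the one-line argument in Example~\ref{Bureau2} (note that the relevant map $F\to\Sigma F$ is the \emph{lower} suspension $\sigma_X$, including $\bZ[t,t^{-1}]^n$ as the rightmost $n$ factors of $\bZ[t,t^{-1}]^{n+1}$; your ``first $n$ coordinates'' is the upper suspension, though it makes no difference for the cokernel). Your reference to Example~\ref{FIex} is misplaced, as that example concerns $FI$-modules rather than $U\beta$.
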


Previously known results were: Nakaoka \cite{Nak60} ($\Si_n$ with constant coefficients), Betley \cite{Bet02} ($\Si_n$ with more restrictive polynomial coefficients) and Hausmann \cite{Hausmann} ($A_n$ with constant coefficients), Arnold \cite{Arn70} ($\be_n$ with constant coefficients) and Church--Farb \cite{ChuFar13} ($\be_n$ more restrictive polynomial coefficients),  Segal \cite{Seg79}  ($\be_n^S$ with constant coefficients),  Frenkel and Callegaro \cite{Frenkel, Callegaro} (complete computation for $\be_n'$ with  constant coefficients), and Hatcher--Wahl \cite{HatWah10} ($G\wr \Si_n$ and $G\wr\be_n^S$ with constant coefficients).

\addtocontents{toc}{\SkipTocEntry}
\subsection*{Automorphism groups of free groups}

Let $\Aut(F_n)$ denote the automorphism group of the free group on $n$ generators, and $\Si\Aut(F_n)$ the subgroup of symmetric automorphisms (those generated by conjugations, permutations and taking inverses of generators). 

\begin{Th}\label{thm:G}
Let $$G_n=\Aut(F_n)\  \textrm{or}\  \Si\Aut(F_n)$$ with $G_n'$ denoting its commutator subgroup. Let $F:U(\sqcup_n G_n)\to \Z\operatorname{-Mod}$ be a coefficient system of degree $r$ at $N$. Then for all $n>N$, $H_i(G_n;F_n)\to H_i(G_{n+1};F_{n+1})$
is an epimorphism for $i \leq \tfrac{n-1}{2}-r$ and an isomorphism for $i \leq \tfrac{n-3}{2}-r$, and for all $n>2N$, 
 $H_i(G'_n;F_n)\to H_i(G'_{n+1};F_{n+1})$
an epimorphism for $i \leq \tfrac{n-2}{3}-r$ and an isomorphism for $i \leq \tfrac{n-5}{3}-r$. 
\end{Th}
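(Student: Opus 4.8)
The plan is to realise both families as automorphism groups in symmetric monoidal groupoids, pass to the associated homogeneous categories by Theorem~\ref{universal}, identify the semi-simplicial sets $W_n$ concretely, and then feed the known connectivity of the relevant free-factor complexes into Theorem~\ref{main} and Corollary~\ref{cor:B}. For $\Aut(F_n)$ I would take $\G$ to be the groupoid of finitely generated free groups and their isomorphisms, whose monoidal operation $\op$ is the free product and whose unit is the trivial group; this is symmetric monoidal, its monoid of isomorphism classes of objects is $(\N,+)$, so cancellation holds, and $\Aut(A)\to\Aut(A\op B)$ is injective, so the hypotheses preceding Theorem~\ref{universal} are satisfied. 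For $\Si\Aut(F_n)$ I would instead let $\G$ have as objects free groups equipped with an unordered finite set of conjugacy classes of rank-one free factors that together freely generate the group, and as morphisms the isomorphisms preserving this structure; the standard description of symmetric automorphisms (generated by permutations, inversions and partial conjugations) identifies the automorphism group in $\G$ of the rank-$n$ object with $\Si\Aut(F_n)$, and $\G$ is again symmetric monoidal with monoid of isomorphism classes $(\N,+)$ and satisfies the required axioms. In both cases I set $A=0$ and $X=F_1$, so that $A\op X^{\op n}$ is the rank-$n$ object and $G_n=\Aut(A\op X^{\op n})$.

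Theorem~\ref{universal} then produces a homogeneous category $(U\G,\op,0)$ in each case, and since $\G$ is symmetric monoidal the resulting $U\G$ is pre-braided, as required in Theorem~\ref{main}. Unwinding Definition~\ref{simpdef}, a $p$-simplex of $W_n(A,X)_\bullet$ is a morphism $X^{\op(p+1)}\to A\op X^{\op n}$ in $U\G$; for $\Aut(F_n)$ this amounts to an ordered $(p+1)$-tuple of elements of $F_n$ extending to a basis together with a choice of complementary free factor, and for $\Si\Aut(F_n)$ to the analogous datum with conjugacy classes of rank-one free factors. I would then compare $W_n(A,X)_\bullet$ with the auxiliary simplicial complex $S_n(A,X)$ --- here the complex of partial bases of $F_n$, respectively its conjugacy-class variant --- verifying the hypotheses of the $W$-versus-$S$ comparison result, so that connectivity of $W_n(A,X)_\bullet$ follows from that of $S_n(A,X)$.

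The complex of partial bases of $F_n$ is $\tfrac{n-3}{2}$-connected, and its conjugacy-class variant has the same connectivity, by the work of Hatcher--Vogtmann and Hatcher--Wahl \cite{HatWah10}; hence $W_n(A,X)_\bullet$ is $\tfrac{n-3}{2}$-connected, i.e.\ $\tfrac{(n-1)-2}{2}$-connected. Inserting this connectivity bound (slope $k=2$, with a shift by one in the indexing) into the precise form of the stability theorem gives, for a coefficient system $F$ of degree $r$ at $N$, that $H_i(G_n;F_n)\to H_i(G_{n+1};F_{n+1})$ is an epimorphism for $i\le\tfrac{n-1}{2}-r$ and an isomorphism for $i\le\tfrac{n-3}{2}-r$ when $n>N$; and since $\tfrac{n-3}{2}$-connectivity also supplies the $k\ge 3$ hypothesis of Corollary~\ref{cor:B}, the same input there yields the epimorphism range $i\le\tfrac{n-2}{3}-r$ and isomorphism range $i\le\tfrac{n-5}{3}-r$ for $H_i(G'_n;F_n)\to H_i(G'_{n+1};F_{n+1})$ when $n>2N$.

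The genuinely hard input --- high connectivity of the partial basis complexes --- is already available \cite{HatWah10}, so within this paper the work lies in the first two steps: pinning down the homogeneous categories, in particular checking that the conjugacy-decorated groupoid has automorphism groups $\Si\Aut(F_n)$ and that its associated $U\G$ is pre-braided, and matching the semi-simplicial sets $W_n(A,X)_\bullet$ with the classically studied complexes via the comparison result. I expect no new geometric difficulty beyond that bookkeeping.
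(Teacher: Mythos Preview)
Your treatment of $\Aut(F_n)$ is essentially the paper's: the groupoid of finitely generated free groups under free product, the identification of $S_n(e,\bZ)$ with (a complete join over) the split-factorisation complex of Hatcher--Vogtmann, and the appeal to Theorem~\ref{caseA} and the twisted-coefficient theorem are exactly what the paper does in Section~5.2.

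For $\Sigma\Aut(F_n)$ you and the paper diverge. You propose a direct ``decorated free group'' groupoid whose objects carry an unordered set of conjugacy classes of rank-one free factors; the paper instead realises $\Sigma\Aut(F_n)$ inside its $3$-manifold framework, taking $M=D^3$ and $N=S^1\times D^2$ in the category $\U\M_3^+$ (see Remark~\ref{symaut}), so that the relevant complex $S_n(A,X)$ becomes the disc/arc complex $Y^A$ of \cite{HatWah10}, whose $(\tfrac{n-3}{2})$-connectivity is proved there. The advantage of the paper's route is that both ingredients---the identification $\pi_0\Dif(D^3\natural(S^1\times D^2)^{\natural n}\ \textrm{rel}\ D)\cong\Sigma\Aut(F_n)$ and the connectivity of $Y^A$---are already established in \cite{HatWah10}, so nothing new needs proving. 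Your route is conceptually cleaner and stays inside group theory, but you would still owe two verifications you have flagged only in passing: that automorphisms of your decorated object really are $\Sigma\Aut(F_n)$ (this is the Fouxe-Rabinovitch/McCullough--Miller description and is not hard, but must be said), and---more substantially---that your $S_n(A,X)$ is isomorphic to, or at least as connected as, a complex appearing in \cite{HatWah10}. The connectivity results there are stated for the $3$-manifold complexes, so you would in effect be redoing the paper's identification from the other side. Neither route involves new geometric input; the paper's choice simply minimises the bookkeeping by quoting \cite{HatWah10} verbatim.
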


Note that $\Aut(F_n)'$ identifies with the index 2 subgroup $S\Aut(F_n)$ of $\Aut(F_n)$ of those automorphisms which induce an isomorphism of determinant one on $H_1(F_n;\bZ)$. The category $U(\sqcup_n\Aut(F_n))$ is equivalent to the category of free groups and split injective homomorphisms (with respect to free product), that is pairs consisting of an injective homomorphism and a choice of splitting as free factor for its image. 
The first part of the theorem was conjectured by the first author for $\Aut(F_n)$ in \cite{Ran10}. Previously known results were:  Hatcher--Vogtmann \cite{HatVog-cerf} ($\Aut(F_n)$ with constant coefficients) and Hatcher--Wahl \cite{HatWah10}  ($\Si\Aut(F_n)$ with constant coefficients). A generalisation of the above theorem to automorphism groups of certain free products is given in Theorem~\ref{freetwist}. A further generalisation to certain families of subgroups as described in \cite[Cor.~1.3]{HatWah10} can likewise be carried out.

\addtocontents{toc}{\SkipTocEntry}
\subsection*{General linear and unitary groups} 

For a ring $R$ satisfying one of Bass' stable range conditions, we obtain homological stability for the groups $GL_n(R)$, with both twisted and abelian coefficients. However, the stability range obtained is not as good as that of van der Kallen \cite{vdK80}, as there are some tricks particular to general linear groups which do not fit in to our general framework (though we shall explain them in Section \ref{GLnsec}). On the other hand, we do not require coefficient systems to be split, so our stability theorem with twisted coefficients may be applied more broadly than \cite{vdK80}. 

If $R$ is a ring with anti-involution $\overline{\phantom{a}}$, $\epsilon \in R$ is central satisfying $\epsilon \overline{\epsilon}=1$, and $\Lambda$ is a form parameter in the sense of Bak \cite{Bak}, then there are defined (hyperbolic) unitary groups $U^\epsilon_n(R,\Lambda)$. This general construction recovers many interesting groups: for trivial involution (with $R$ commutative), $U^1_n(R, 0)$ is the orthogonal group $O_{n,n}(R)$ and $U^{-1}_n(R,R)$ is the symplectic group $Sp_{2n}(R)$; for non-trivial involution and $\epsilon=-1$, $U_n^{-1}(R,\Lambda)$ is the classical unitary group for $\Lambda$ the fixed points of the involution. But more elaborate constructions are possible: for example, $U^{-1}_n(\bZ, 2\bZ)$ is the subgroup of $Sp_{2n}(\bZ)$ of those matrices fixing an even theta-characteristic.

Mirzaii and van der Kallen \cite{MvdK} have defined a notion of unitary stable rank for a tuple $(R, \epsilon, \Lambda)$, which we shorten to $usr(R)$. In Section \ref{sec:Unitary} we shall show that for $n \geq usr(R)+1$ the commutator subgroup $U^\epsilon_n(R,\Lambda)'$ agrees with the elementary subgroup $EU^\epsilon_n(R,\Lambda)$. 

\begin{Th}\label{thm:H}
Let $F : U(\sqcup_n U^\epsilon_n(R,\Lambda)) \to \Z\operatorname{-Mod}$ be a coefficient system of degree $r$ at $N$. Then for all $n>N$, $$H_i(U^\epsilon_n(R,\Lambda);F_n)\lra H_i(U^\epsilon_{n+1}(R,\Lambda);F_{n+1})$$
is an epimorphism for $i\le \frac{n-usr(R)-1}{2}-r$ and an isomorphism for $i\le \frac{n-usr(R)-3}{2}-r$.

If $F$ is a coefficient system as above then for all $n > 2N$ the map
$$H_i(EU^\epsilon_n(R,\Lambda);F_n)\lra H_i(EU^\epsilon_{n+1}(R,\Lambda);F_{n+1})$$
is an epimorphism for $i\le \frac{n-usr(R)-2}{3}-r$ and an isomorphism for $i\le \frac{n-usr(R)-5}{3}-r$.
\end{Th}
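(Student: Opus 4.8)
The plan is to obtain Theorem~\ref{thm:H} as an instance of the general machinery: realise the hyperbolic unitary groups as the automorphism groups $\Aut(A\op X^{\op n})$ in a pre-braided homogeneous category, verify the connectivity hypothesis, and then quote Theorem~\ref{main} (in its sharpened twisted-coefficient form, Theorem~\ref{twistrange}) together with Corollary~\ref{cor:B}. Concretely, let $\G=\sqcup_n U^\epsilon_n(R,\Lambda)$ be the groupoid whose objects are the hyperbolic $(R,\epsilon,\Lambda)$-quadratic modules $H(R)^{\op n}$ and whose morphisms are isometries, with monoidal structure the orthogonal direct sum (on objects this is the block sum $U^\epsilon_n\x U^\epsilon_m\to U^\epsilon_{n+m}$); this is symmetric monoidal, hence braided. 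The isomorphism classes of objects form the free monoid on the class of $H(R)$, so cancellation holds --- and in any case only the local cancellation property of Definition~\ref{LCDef} is needed --- while injectivity of $\Aut(A)\to\Aut(A\op B)$ is immediate for these matrix groups. Thus Theorem~\ref{universal} applies and produces a homogeneous category $U(\G)$, which is pre-braided because $\G$ is symmetric monoidal. Taking $A=0$ (or, for the version with an arbitrary hyperbolic starting module, any fixed object) and $X=H(R)$ gives $G_n=\Aut(A\op X^{\op n})=U^\epsilon_n(R,\Lambda)$ and $F_n=F(A\op X^{\op n})$ as in the statement.

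The substantive step is connectivity. Unwinding Definition~\ref{simpdef}, a $p$-simplex of $W_n(A,X)_\bullet$ amounts to an isometric embedding of $H(R)^{\op(p+1)}$ as an orthogonal summand of $H(R)^{\op n}$, i.e.\ an ordered hyperbolic $(p+1)$-frame; the associated simplicial complex $S_n(A,X)$ is the unitary analogue of van der Kallen's complex of unimodular sequences~\cite{vdK80}, namely the complex of hyperbolic unimodular sequences in $H(R)^{\op n}$ studied by Mirzaii and van der Kallen~\cite{MvdK}. Their connectivity theorem shows this complex is $\big(\tfrac{n-usr(R)-3}{2}\big)$-connected (the exact constant to be pinned down by matching the two conventions), and by the comparison between $S_n(A,X)$ and $W_n(A,X)_\bullet$ established in the body of the paper --- whose hypotheses one checks for $U(\G)$ --- the semi-simplicial sets $W_n(A,X)_\bullet$ are at least $\big(\tfrac{n-usr(R)-3}{2}\big)$-connected. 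Equivalently, the hypothesis of Theorem~\ref{main} holds with $k=2$ after replacing $n$ by $n'=n-usr(R)-1$.

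With this in place, the first half of Theorem~\ref{thm:H} is the output of Theorem~\ref{twistrange} for $k=2$ with the shifted connectivity bound: the ranges $i\le\tfrac{n'}{2}-r$ and $i\le\tfrac{n'}{2}-r-1$ unwind to the stated epimorphism and isomorphism ranges for $n>N$. For the second half, the identification $U^\epsilon_n(R,\Lambda)'=EU^\epsilon_n(R,\Lambda)$ for $n\ge usr(R)+1$ is a standard fact about elementary unitary groups over rings of finite unitary stable rank (see Bak~\cite{Bak} and the references in~\cite{MvdK}), independent of the stability machinery. Since an $\big(\tfrac{m-2}{2}\big)$-connected complex is a fortiori $\big(\tfrac{m-2}{3}\big)$-connected in the relevant range of $m$, Corollary~\ref{cor:B} applies with $k=3$ and the $usr(R)$-shifted connectivity, and the arithmetic $i\le\tfrac{n'+2}{3}-r-1$, $i\le\tfrac{n'+2}{3}-r-2$ yields the claimed ranges $i\le\tfrac{n-usr(R)-2}{3}-r$ and $i\le\tfrac{n-usr(R)-5}{3}-r$ for $n>2N$.

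The main obstacle is modest: essentially everything above is bookkeeping once the general theorems are granted, and the single genuinely non-formal input --- the high connectivity of the hyperbolic-unimodular-sequence complex --- is already available in~\cite{MvdK}. The places needing care are (a) checking that the complex $S_n(A,X)$ coming from Definition~\ref{simpdef} agrees, up to the constants governing connectivity, with the Mirzaii--van der Kallen complex, and that the $S_n\leftrightarrow W_n$ comparison hypotheses hold for $U(\G)$; (b) tracking the $usr(R)$-dependent shift through the stability range so that the constants $-1,-3,-2,-5$ emerge correctly; and (c) arranging that only the local cancellation axiom of Definition~\ref{LCDef} is used, so as not to rely on exact Witt cancellation for hyperbolic forms over a general form ring.
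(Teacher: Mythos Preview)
Your strategy matches the paper's: realise $U_n^\epsilon(R,\Lambda)$ as automorphism groups in a pre-braided homogeneous category, import connectivity from Mirzaii--van der Kallen, and apply Theorem~\ref{twistrange} and Corollary~\ref{cor:B}. Two technical points need adjustment.

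First, the connectivity step is misrouted. Mirzaii--van der Kallen's object $\mathcal{HU}(H^{\op n})$ is a \emph{poset}, and it coincides with the poset of simplices of the semi-simplicial set $W_n(A,H)_\bullet$, not with the simplicial complex $S_n(A,H)$. The paper's Lemma~\ref{connH} therefore deduces the connectivity of $|W_n|$ directly from that of $|\mathcal{HU}|$ via \cite[Theorem~7.4]{MvdK}, bypassing $S_n$ and Theorem~\ref{caseA} entirely. Your proposed route through $S_n$ would need an independent connectivity bound for the \emph{unordered} hyperbolic-frame complex, which \cite{MvdK} does not supply; go directly to $W_n$ instead.

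Second, the identification $U_n^\epsilon(R,\Lambda)' = EU_n^\epsilon(R,\Lambda)$ for $n \ge usr(R)+1$ is not in \cite{Bak} or \cite{MvdK}; the paper combines the unitary Whitehead lemma \cite[II Theorem~5.2]{BassUnitary} with a result of Sinchuk \cite{Sinchuk} on stabilisation of $U_n^\epsilon/EU_n^\epsilon$.

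A minor but harmless difference in setup: you work with the free groupoid $\G=\sqcup_n U_n^\epsilon(R,\Lambda)$, where cancellation is tautological, while the paper works inside the larger groupoid $f(R,\epsilon,\Lambda)\operatorname{-Quad}$ of all finitely-generated quadratic modules and establishes only local cancellation at $(H^{\op usr(R)+1},H)$ via a Witt-type argument. Both are valid; yours is slightly more economical for this particular statement, while the paper's allows a more general starting object $A$.
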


Previously known results were: Vogtmann \cite{Vog79} ($O_{n,n}$ for fields except $\bF_2$), Betley \cite{Betley} ($O_{n,n}$ for semi-local rings), Charney \cite{Cha87} ($Sp_{2n}$ and $O_{n,n}$ for Dedekind domains) and Mirzaii--van der Kallen \cite{MvdK} (for constant coefficients). In particular, the results for twisted coefficients and for  $EU^\epsilon_{n}(R,\Lambda)$ seem to be new; the case of $EU^\epsilon_{n}(R,\Lambda)$ and constant coefficients implies that the unstable unitary $K$-groups $\pi_i(BU_n^\epsilon(R,\Lambda)^+)$ stabilise.

\addtocontents{toc}{\SkipTocEntry}
\subsection*{Non-orientable mapping class groups}

Let $F_{n,b}$ denote a non-orientable surface of genus $n \geq 0$ with $b \geq 1$ boundary components, and write $\pi_0\Dif(F_{n,b}\, \textrm{rel}\ \del_0 F_{n,b})$ for the group of isotopy classes of diffeomorphisms of $F_{n,b}$ which fix a boundary component $\del_0 F_{n,b}$ of $F_{n,b}$, the mapping class group of $(F_{n,b}, \partial_0 F_{n,b})$. 
For $n \geq 7$ in the case $b=1$, the commutator subgroup $\pi_0\Dif(F_{n,1}\, \textrm{rel}\ \del_0 F_{n,1})'$ coincides with the subgroup $\mathcal{T}_{n,1}$ generated by Dehn twists \cite{Stu09}. 

\begin{Th}\label{thm:I}
Let $F:U(\sqcup_{n,b} \pi_0\Dif(F_{n,b}\, \textrm{rel}\ \del_0 F_{n,b}))\to \Z\operatorname{-Mod}$ be a coefficient system of degree $r$ at $N$. Then the map 
$$H_i(\pi_0\Dif(F_{n,b}\, \textrm{rel}\ \del_0 F_{n,b});F(F_{n,b}))\rar  H_i(\pi_0\Dif(F_{n+1,b}\, \textrm{rel}\ \del_0 F_{n,b});F(F_{n+1,b}))$$
is an epimorphism for $i\le \frac{n-3}{3}-r$ and an isomorphism for $i\le \frac{n-6}{3}-r$.

If $F$ is a coefficient system as above then for $n \geq 7$ the map 
$$H_i( \mathcal{T}_{n,b};F(F_{n,1}))\rar  H_i(\mathcal{T}_{n+1,1};F(F_{n+1,1}))$$
is an epimorphism for $i\le \frac{n-3}{3}-r$ and an isomorphism for $i\le \frac{n-6}{3}-r$.
\end{Th}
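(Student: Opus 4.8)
The plan is to exhibit these families as automorphism groups in a pre-braided homogeneous category and then to apply Theorem~\ref{main} and Corollary~\ref{cor:B}; the only step that is not purely formal is a connectivity estimate for an arc complex on non-orientable surfaces, which is already available in the literature.

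First I would set up the category. Let $\G$ be the groupoid whose objects are the surfaces $F_{n,b}$ (for $n\ge 0$, $b\ge 1$), each equipped with a chosen parametrised interval in $\del_0 F_{n,b}$, and whose morphisms are isotopy classes of diffeomorphisms fixing a neighbourhood of that interval; the monoidal structure $\op$ is boundary connected sum performed along these intervals, so that $F_{n,b}\op F_{m,c}\cong F_{n+m,\,b+c-1}$, with unit the disk $F_{0,1}$. Take $X=F_{1,1}$ (the M\"obius band) and $A=F_{0,b}$, so that $A\op X^{\op n}\cong F_{n,b}$ and $\Aut(A\op X^{\op n})=\pi_0\Dif(F_{n,b}\,\textrm{rel}\ \del_0 F_{n,b})$. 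To apply Theorem~\ref{universal} I must check that $\G$ is braided (the braiding being the half-twist interchanging two boundary-connect-summands; this is the non-orientable analogue of the structure on bounded surfaces used in \cite{GalRW14,HatWah10}), that it satisfies the (local) cancellation axiom of Definition~\ref{LCDef}, and that $\Aut(A)\inc\Aut(A\op B)$ is injective. The last point is the standard fact that the inclusion of an incompressible subsurface is injective on mapping class groups (Paris--Rolfsen), exactly as verified in the orientable case in \cite{HatWah10}. For cancellation it is essential that $\G$ contains no orientable surface of positive genus --- otherwise Dyck's relation $T^2\#\RP\cong\RP\#\RP\#\RP$ would obstruct it --- whereas as it stands the monoid of isomorphism classes is $\{(n,b):n\ge 0,\ b\ge 1\}$ with $(n,b)+(m,c)=(n+m,\,b+c-1)$, which is manifestly cancellative. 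Theorem~\ref{universal} then produces the pre-braided homogeneous category $U\G$, which is precisely the category $U(\sqcup_{n,b}\pi_0\Dif(F_{n,b}\,\textrm{rel}\ \del_0))$ of the statement.

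The next --- and main --- step is connectivity: I need the semi-simplicial sets $W_n(A,X)_\bullet$ to be roughly $\tfrac{n}{3}$-connected. Following the general pattern of the paper I would introduce the simplicial complex $S_n(A,X)$ whose simplices are suitable systems of disjoint one-sided curves in $F_{n,b}$ tethered to $\del_0$ --- equivalently, disjoint embedded M\"obius bands --- each of which reduces the genus by one when cut out, verify the hypotheses under which $|W_n(A,X)_\bullet|$ and $|S_n(A,X)|$ are equally highly connected, and then quote the connectivity of this complex. That connectivity is the geometric heart of homological stability for non-orientable mapping class groups; it is due to Wahl (see also \cite{HatWah10}), and it grows linearly in $n$ with slope $\tfrac{1}{3}$, up to an additive constant. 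I expect this to be the genuine obstacle: the categorical set-up is routine, but the connectivity statement is a real piece of surface topology, and some care is needed to match the precise complex required here --- the correct basing/tethering, a surface with $b$ boundary components, and the small starting genus of $A$ --- with the form in which such estimates have been proved. The additive constant can be chosen independently of $b$, which is why the final ranges do not depend on $b$.

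Finally, feeding this connectivity bound into Theorem~\ref{main} with $k=3$ --- more precisely into its sharper technical form (Theorem~\ref{stabthm}), so as to pin down the additive constants in the degrees --- yields the first assertion. For the second, I would invoke the theorem of Korkmaz and Stukow that for $n\ge 7$ the twist subgroup $\cT_{n,b}$ coincides with the commutator subgroup $\pi_0\Dif(F_{n,b}\,\textrm{rel}\ \del_0)'$; since the connectivity estimate supplies $k\ge 3$, Corollary~\ref{cor:B} applies with $F$ our coefficient system and, after this identification, gives stability for $H_i(\cT_{n,b};F(F_{n,b}))$ in the stated range --- valid once $n\ge 7$ and, as in Corollary~\ref{cor:B}, $n>2N$. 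In both parts the additive shifts in the stated degrees are inherited directly from the additive constant in the connectivity bound for $S_n(A,X)$.
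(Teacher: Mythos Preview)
Your proposal is correct and follows essentially the same route as the paper: work in the homogeneous category built from the braided groupoid of non-orientable (and genus~0) decorated surfaces, take $X$ the M\"obius band and $A=F_{0,b}$, identify $S_n(A,X)$ with a complex of one-sided arcs, invoke its $\tfrac{1}{3}$-slope connectivity, and feed this into Theorems~\ref{stabthm}/\ref{twistrange} for the first part and into the abelian-coefficient machinery (equivalently Corollary~\ref{cor:B}) together with Korkmaz's identification of $\cT_{n,b}$ with the commutator subgroup for the second.

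A few small points where the paper's execution differs from what you wrote, worth noting when you fill in details:
\begin{itemize}
\item For injectivity of $\Aut(A)\to\Aut(A\natural B)$ the paper uses the fibration $\Dif(S_1\natural S_2\ \textrm{rel}\ S_2\cup\del_0)\to\Dif(S_1\natural S_2\ \textrm{rel}\ \del_0)\to\Emb(\ldots)$ together with Gramain's contractibility of components of the embedding space, rather than Paris--Rolfsen; either works.
\item The precise connectivity input is $(\tfrac{n-5}{3})$-connectedness of $S_n(A,X)$, quoted from \cite[Thm~A.2]{RW09} (an improvement of \cite{Wah08}); the complex is an arc complex (one-sided arcs with both endpoints on $\del_0$), not a tethered-curve complex as you describe, though the two models are equivalent for this purpose. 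The shift from $\tfrac{n-2}{3}$ to $\tfrac{n-5}{3}$ is absorbed by replacing $(A,X)$ with $(A\natural X^{\natural 3},X)$ before applying the stability theorems, which produces the additive constants $-3$ and $-6$ you see in the statement.
\item You correctly flag the passage from $S_n$ to $W_n$ as needing care; since $\U\M_2^-$ is only pre-braided (not symmetric), Proposition~\ref{prop:SymMonBuilding} does not literally apply, and the paper is somewhat implicit here. In practice one checks condition~(A) directly, using that any permutation of the embedded M\"obius bands can be realised by an element of the braid-type subgroup of $\Aut(X^{\natural p+1})$.
\end{itemize}
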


For constant coefficients, homological stability for the groups $\pi_0\Dif(F_{n,b}\, \textrm{rel}\ \del_0 F_{n,b})$ was proved by the second author in \cite{Wah08} and with an improved range by the first author in \cite{RW09}, but for twisted coefficients or for the subgroups $\mathcal{T}_{n,b}$ it is new.

\addtocontents{toc}{\SkipTocEntry}
\subsection*{Mapping class groups of  3-manifolds} 

Let $M$ and $N$ be compact, connected, oriented 3-manifolds with boundary, and let $D$ and $D'$ be chosen discs in boundary components $\del_0M$ and $\del_0N$ of $M$ and $N$. Denote by $M\natural N$ the {\em boundary connected sum} of $M$ and $N$, i.e.~the manifold obtained from $M$ and $N$ by identifying $D$ with $D'$. 
(Note that if $N=\overline N\minus D^3$ for some 3-manifold $\overline N$ and $\del_0N=\del D^3$, then $M\natural N$ is the usual connected sum of $M$ and $\overline N$.) 
Write $\pi_0\Dif(M \natural N^{\natural n}\ \textrm{rel}\ D)$ for the mapping class group of $M\natural N^{\natural n}$, the group of components of the diffeomorphisms of $M\natural N^{\natural n}$ which restrict to the identity on $D$. 

\begin{Th}\label{thm:J}
Let $(M,D)$ and $(N,D')$ be compact, connected, oriented 3-manifolds equipped with discs in their boundaries as above. If $\del_0N\not \cong S^2$, assume that $M$ is irreducible. Let 
$$G_n = \pi_0\Dif(M \natural N^{\natural n} \ \textrm{rel}\ \partial_0 M)$$
and $G_n'$ be its commutator subgroup. Let $\M_3^+$ denote the groupoid of pairs $(M,D)$ and isotopy classes of diffeomorphisms fixed on the discs. 
Let $F:U(\M_3^+)\to \Z\operatorname{-Mod}$ be a coefficient system of degree $r$ at $N$. Then for all $n>N$, $H_i(G_n;F_n)\to H_i(G_{n+1};F_{n+1})$
is an epimorphism for $i \leq \tfrac{n-1}{2}-r$ and an isomorphism for $i \leq \tfrac{n-3}{2}-r$, and for all $n>2N$, 
 $H_i(G'_n;F_n)\to H_i(G'_{n+1};F_{n+1})$
an epimorphism for $i \leq \tfrac{n-2}{3}-r$ and an isomorphism for $i \leq \tfrac{n-5}{3}-r$. 
\end{Th}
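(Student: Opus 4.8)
The plan is to exhibit the groups $G_n$ as automorphism groups in a braided monoidal groupoid and then apply Theorem~\ref{universal}, Theorem~\ref{main} and Corollary~\ref{cor:B}. Equip the groupoid $\M_3^+$ of the statement with the monoidal structure given by boundary connected sum $\natural$ along the distinguished boundary discs, the unit being $D^3$ together with a chosen disc in its boundary sphere. This monoidal structure is braided, the braiding --- as for the classical surface braid groups --- being realised by dragging two boundary-connect-summands around one another inside the boundary surface near their gluing discs. Taking $A=(M,D)$ as the starting object and $X=(N,D')$ as the stabilising object, boundary connected sum identifies $A\op X^{\op n}$ with $M\natural N^{\natural n}$ (with its disc), so that $G_n=\Aut_{\M_3^+}(A\op X^{\op n})$ and, for a coefficient system $F\colon U(\M_3^+)\to\Z\operatorname{-Mod}$ of degree $r$ at $N$, $F_n=F(A\op X^{\op n})$.

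The next step is to verify the two hypotheses required for Theorem~\ref{universal}, in the local form of Definition~\ref{LCDef}. Injectivity of $\Aut(C)\to\Aut(C\natural C')$ holds because a mapping class of $M\natural N^{\natural n}$ supported away from the last summand is determined by its restriction to the corresponding submanifold. Cancellation for the monoid of diffeomorphism classes under $\natural$ is governed by the prime decomposition theorem for compact oriented $3$-manifolds --- existence (Kneser) and, crucially, uniqueness via orientability (Milnor); the hypothesis that $M$ be irreducible when $\del_0 N\not\cong S^2$ is precisely what keeps this argument valid in the $\del$-reducible case. Theorem~\ref{universal} then delivers the pre-braided homogeneous category $U(\M_3^+)$ together with the semi-simplicial sets $W_n(A,X)_\bullet$ of Definition~\ref{simpdef} and their $G_n$-actions.

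It remains to establish the connectivity hypothesis, and this is the heart of the matter. From the description of $U(\M_3^+)$, a $p$-simplex of $W_n(A,X)_\bullet$ amounts to a collection of $p+1$ disjoint embedded copies of $N$ inside $M\natural N^{\natural n}$, each split off as a boundary-connect-summand --- separated off by an embedded $2$-sphere when $\del_0 N\cong S^2$ (so that $M\natural N^{\natural n}=M\#\overline N^{\#n}$), and by a properly embedded boundary disc otherwise. Up to the passage between $W_n$ and the associated simplicial complexes $S_n(A,X)$, these are the complexes studied by Hatcher and Wahl, whose connectivity theorem shows that $W_n(A,X)_\bullet$ is (roughly) $\tfrac{n-3}{2}$-connected under the stated hypotheses --- the irreducibility of $M$ when $\del_0 N\not\cong S^2$ being needed there to carry out the innermost-disc and innermost-sphere surgery arguments. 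Inserting this connectivity of slope $\tfrac12$ into the refined forms of Theorem~\ref{main} (Theorems~\ref{stabthm} and~\ref{abstabthm}) with $k=2$, respectively $k=3$, and into Corollary~\ref{cor:B}, yields the stated ranges for $H_i(G_n;F_n)$ and for $H_i(G_n';F_n)$; here Corollary~\ref{cor:B} is applied directly to the $G_n'$, with no identification of the commutator subgroup needed for the argument.

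I expect the connectivity step to be the main obstacle: one must match the abstractly defined $W_n(A,X)_\bullet$ with the concrete systems of spheres and discs in the $3$-manifold literature, confirm that the connectivity bound available there lies in the range the stability machine requires (keeping track of the precise constants behind the $\tfrac12$ and $\tfrac13$ shifts in the stated slopes), and check that ``$M$ irreducible when $\del_0 N\not\cong S^2$'' is exactly the condition under which both this connectivity and the local cancellation for $\M_3^+$ hold. By comparison, the braiding, the injectivity and cancellation checks, and the final bookkeeping of stability ranges should be routine.
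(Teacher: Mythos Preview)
Your proposal is correct and follows essentially the same route as the paper: set up $\M_3^+$ as a (in fact symmetric, not merely braided) monoidal groupoid under $\natural$, verify the hypotheses of Theorem~\ref{universal}, identify $S_n(A,X)$ with the Hatcher--Wahl complexes $X^{FA}$ and $Y^A$ whose $(\tfrac{n-3}{2})$-connectivity gives LH3 at the shifted pair $(A\natural X,X)$ with slope $k=2$ (hence also $k=3$), and then invoke Theorems~\ref{twistrange} and~\ref{abstabthm} and Corollary~\ref{cor:B}. One small correction: for local cancellation when $\del_0 N\not\cong S^2$ the paper does not use prime decomposition but rather the existence and uniqueness of compression bodies in irreducible 3-manifolds (Bonahon), which is why irreducibility of $M$ enters there; prime decomposition handles only the $\del_0 N\cong S^2$ case.
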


For $G_n$ with constant coefficients, this was first proved in \cite{HatWah10}. An example of a group $G_n$ in the theorem is for $M=D^3$ and $N=S^1\x D^2$, where $G_n$ is the handlebody mapping class group of a surface of genus $n$.

\addtocontents{toc}{\SkipTocEntry}
\subsection*{Outline}

The paper is organised as follows. Section~\ref{homcatsec} introduces (locally) homogeneous categories and pre-braided categories, and gives some of their basic properties; it also explains how to obtain a homogeneous category from a braided monoidal groupoid, by a construction of Quillen (see Theorem~\ref{universal}). In Section~\ref{simpsec}, we define semi-simplicial sets $W_n(A,X)_\bullet$ associated to a pair of objects in a monoidal category having an initial unit. We then relate these to certain simplicial complexes $S_n(A,X)$, which are easier to manipulate: these are not necessary for the proofs of our main theorems, but are useful in establishing examples. In Section~\ref{cstsec}, we prove a preliminary homological stability theorem for families of automorphism groups in a homogeneous category with constant coefficients (Theorem~\ref{stabthm}) and with abelian coefficients (Theorem~\ref{abstabthm}). These are not necessary for the proof of our general homological stability theorem, but the argument we give yields a slightly better stability range. In Section \ref{sec:twist} we first introduce (internalised) coefficient systems and their basic properties, and then formulate and prove our general homological stability theorem for families of automorphism groups in a homogeneous category, which implies Theorem \ref{main} and Corollary \ref{cor:B}. Finally, Section~\ref{examples} gives examples of homogeneous categories and resulting stability theorems, and in particular establishes Theorems~\ref{thm:D}--\ref{thm:J}.

\addtocontents{toc}{\SkipTocEntry}
\subsection*{Acknowledgements}
NW: This paper was inspired by a workshop in Copenhagen on homological stability in August 2013, and I benefited from conversations with many people at this occasion, as well as subsequently. I would in particular like to thank Aur\'elien Djament and Christine Vespa for numerous helpful discussions on polynomial functors,  Julie Bergner and Emily Riehl for categorical help,  and Giovanni Gandini, Manuel Krannich, Martin Palmer, Peter Patzt, Christian Schlichtkrull and Arthur Souli\'e for helpful comments, suggestions and corrections.  Much of this paper was written during research stays at MSRI, MPI and the HIM, and I would like to thank these institutes for providing such wonderful working conditions. I was supported by the Danish National Sciences Research Council (DNSRC) and the European Research Council (ERC), as well as by the Danish National Research Foundation through the Centre for Symmetry and Deformation (DNRF92). 

ORW: This paper is a revision of one originally written by NW, in which many of the main ideas already appeared. NW suggested that I write an appendix on abelian coefficient systems, but it fit so well with her framework that we were led to push it as far as possible: this revision is the result. I am grateful to her for suggesting that we pursue this project together. I am also grateful to Nina Friedrich for her useful comments on a draft of this paper.

\tableofcontents

\section{Homogeneous and locally homogeneous categories}\label{homcatsec}

In this section we define a categorical framework based on one used by Djament--Vespa in the context of functor homology, and presented by Christine Vespa in her course in Copenhagen \cite{DjaVes10,Ves13}. We introduce the notions of (locally) homogeneous categories, and of pre-braided monoidal categories. The main result of the section, Theorem~\ref{universal}, shows that Quillen's bracket construction associates a pre-braided monoidal homogeneous category to any braided monoidal groupoid satisfying some mild conditions. 

\bigskip

Let $(\C,\oplus,0)$ be a (strict) monoidal category in which the unit 0 is initial. For a morphism $g : X \to Y$ in $\C$ we will write
$$\Fix(g):=\{f\in \Aut(Y)\ |\ f\circ g=g\}.$$
In particular, for every pair of objects $A$ and $B$ in such a category we have a preferred morphism 
$$\iota_A\op B:B= 0\op B \lra A\op B$$
where $\iota_A:0\to A$ denotes the unique morphism in $\C$ from the initial object 0 to $A$, and we have hence defined $\Fix(\iota_A\op B)$.% define 

\begin{rem}
Here, and in the rest of the paper, we adopt the convention of identifying objects of $\mathcal{C}$ with their identity morphisms.
\end{rem}

\begin{Def}\label{lochomdef}
A monoidal category $(\C,\op,0)$ is {\em locally homogeneous} at a pair of objects $(A,X)$ if 0 is initial in $\C$ and if it satisfies the following two axioms: 
\begin{itemize}
\item[{\bf LH1}] For all $0 \leq p < n$, $\Hom(X^{\op p+1},A \op X^{\op n})$ is a transitive $\Aut(A \op X^{\op n})$-set under postcomposition. 
\item[{\bf LH2}] For all $0 \leq p < n$, the map $\Aut(A \op X^{\op n-p-1})\to \Aut(A \op X^{\op n})$ taking $f$ to $f\op X^{\op p+1}$ is injective with image $\Fix(\iota_{A \op X^{\op n-p-1}} \op X^{\op p+1})$.
\end{itemize}
\end{Def}

In many cases---but not all---these axioms will hold for all objects $A$ and $X$ of $\mathcal{C}$ because the category satisfies the following stronger, global, version of LH1 and LH2.

\begin{Def}\label{homdef}
A monoidal category $(\C,\op,0)$ is {\em homogeneous} if 0 is initial in $\C$ and if it satisfies the following two axioms for all objects $A, B \in \C$: 
\begin{itemize}
\item[{\bf H1}] $\Hom(A,B)$ is a transitive $\Aut(B)$-set under postcomposition. 
\item[{\bf H2}] The map $\Aut(A)\to \Aut(A\oplus B)$ taking $f$ to $f\op B$ is injective with image $\Fix(\iota_A \op B)$.
\end{itemize}
\end{Def}
These axioms are a generalisation of a simplification of the set-up of \cite[Sec.~1.2]{DjaVes10}, and can be found in that form in \cite[Sec.~3]{Ves13} and essentially in \cite[Sec.~2.4]{Dja12}. Djament--Vespa in these papers work only with symmetric monoidal categories, a condition we shall weaken to a notion of {\em pre-braiding} (see Definition~\ref{prebraidDef} below).

\medskip

The most basic example of a (non-trivial) homogeneous category is the category $FI$ of finite sets and injections, much studied by Church, Farb, Ellenberg, and others (see \cite{CEF12} for an introduction). The monoidal structure on $FI$ is the disjoint union of sets, with the empty set as unit. The automorphism group of a set is the group of permutations of its elements. It is easy to see that $\Aut(B)$ acts transitively on $\Hom(A,B)=\operatorname{Inj}(A,B)$. Likewise,
$\Aut(A)\to \Aut(A\sqcup B)$ is injective with image the permutations that fix the canonical injection $B\inc A\sqcup B$. 

A more ``typical'' example of a homogeneous category is obtained by fixing a field $k$ and considering the category $\mathcal{V}_k$ whose objects are finite dimensional $k$-vector spaces, and whose morphisms 
from $V$ to $W$ consist of a pair $(f,H)$ where $f:V\to W$ is an injective homomorphism and $H\le W$ is a subspace such that $W=H\op f(V)$. The monoidal structure is direct sum and the unit is the 0 vector space. The automorphism group of $W$ is its associated general linear group $GL(W)$. The fact that $GL(W)$ acts transitively on $\Hom(V,W)$ uses a cancellation property, namely the fact that complements of isomorphic subspaces of $W$ are isomorphic. We have that $GL(V)\to GL(V\op W)$ is injective with image $\Fix(\iota_V\op W)$ because the morphism $\iota_V\op W=(i_W,V):W\to V\op W$ is the pair of the canonical inclusion $i_W$ of $W$ in $V\op W$ with the canonical copy of $V$ 
as chosen complement, so that an automorphism of $V\op W$ fixing this map needs to fix $V$ setwise and $W$ pointwise, and thus can only come from an automorphism of $V$. 

The {\em complemented categories with generator}   of Putman--Sam in \cite{PutSam14} are particular examples of homogeneous categories (see Lemmas 3.1 and 3.2 in that paper). Homogeneous categories are also closely related to the categories satisfying {\em transitivity}, which is our H1 above, and {\em bijectivity} of Gan and Li \cite{GanLi14}. 

As we will see, there are many examples of homogeneous categories. We give below a way to construct a homogeneous category from certain braided monoidal groupoids, the above two examples being constructible this way by starting from the groupoid which is the union of the symmetric groups (see Section~\ref{setex}), and the groupoid which is the union of the general linear groups $GL(V)$ respectively (see Section~\ref{GLnsec}). 

\begin{rem}
For a homogeneous category $(\C,\op ,0)$,  conditions H1 and H2 together imply that $$\Hom(B,A\op B)\cong \Aut(A\op B)/\Aut(A)$$
with the subgroup $\Aut(A)\inc \Aut(A\op B)$ acting by precomposition.  
In particular, given that the unit 0 is initial and hence has no non-trivial automorphisms, we must have 
$$\operatorname{End}(A):=\Hom(A,A)=\Aut(A)$$ for each object $A$, i.e.~endomorphisms are isomorphisms. So homogeneous categories are in particular
EI-categories, as studied in \cite{Luc89}. 
This implies in particular that 
$$\Hom(A,B)\neq\emp \ \ \textrm{and} \ \ \Hom(B,A)\neq \emp \ \ \ \ \Longrightarrow \ \ \ A\cong B$$ 
so that isomorphism classes of objects naturally form a poset with $[A]\le [B]$ if and only if $\Hom(A,B)\neq \emp$. 
\end{rem}

\bigskip

Let $(\mathcal{C}, \op, 0)$ be a monoidal category which is locally homogeneous at $(A,X)$, and consider the groups 
 $$G_n:= \Aut(A\op X^{\oplus n}).$$ 
There is a canonical map $\Si^X:G_n\to G_{n+1}$ induced by writing $A\op X^{\oplus n+1}=(A\op X^{\oplus n})\oplus X$ and taking $f\in G_n$ to $f\oplus X\in G_{n+1}$. By LH2 the map $\Si^X$ is injective, so we obtain a sequence of groups and preferred maps between them 
$$G_1\inc G_2\inc \cdots \inc G_n\inc \cdots.$$ 
The category $\mathcal{C}$ however encodes more information about how these groups are related.

We shall study homological stability for sequences of groups arising as above. When we study stability with twisted coefficients, we shall consider a certain ``lower stabilisation map" $\Si_X : G_n \to G_{n+1}$ in addition to the
``upper stabilisation map" $\Si^X:G_n\to G_{n+1}$ described above (see
Section~\ref{sec:LowerSusp}). In the case $A=0$ this is simply given by $X \oplus -$, but for $A \neq 0$ a more complicated formula is needed, which makes use of the following definition.

\begin{Def}\label{prebraidDef}
Let $(\C,\op,0)$ be a monoidal category with 0 initial. 
We say that $\C$ is  {\em pre-braided} if its underlying groupoid is braided and for each pair of objects $A$ and $B$ in $\C$, the groupoid braiding $b_{A,B}:A\op B\to B\op A$ satisfies 
$$b_{A,B}\circ (A\op \iota_B)=\iota_B\op A:A\lra B\op A.$$
\end{Def}

Note that braided monoidal categories are examples of pre-braided categories. All the examples of categories $\C$ in which we shall study homological stability will be pre-braided, and in fact will be built from an underlying braided monoidal groupoid. Note however that the category $\C$ itself need {\em not} in general be a braided monoidal category (see Remark~\ref{prebraidrem} for an example).

In Section~\ref{sec:twist}, we will use that the following symmetric version of H2 also holds in a pre-braided homogeneous category.

\begin{prop}\label{H2sym}
Let $\C$ be a pre-braided homogeneous category. Then 
the map 
$$\Aut(A\op C)\lra \Aut(A\oplus B\op C)$$
taking $f$ to $c_{A\op b^{-1}_{B,C}}(f\op B)$ has image $\Fix(\iota_A\op B\op \iota_C)$. 
In particular, the map 
$$\Aut(C)\lra \Aut(B\oplus C)$$
taking $f$ to $B\op f$ has image $\Fix(B\op \iota_C)$. 
\end{prop}

\begin{proof} The second statement follows from the first and the braided monoidal axioms of the underlying braided groupoid, so we are left to prove the first statement. By axiom H2 the map $f \mapsto f \op B : \Aut(A\op C) \to \Aut(A\op C \op B)$ is injective with image $\Fix(\iota_{A \op C} \op B)$, so it remains to show that $c_{A\op b^{-1}_{B,C}} : \Aut(A\op C \op B) \overset{\sim}\to \Aut(A\op B \op C)$ takes $\Fix(\iota_{A \op C} \op B)$ isomorphically to $\Fix(\iota_A\op B\op \iota_C)$. For $f \in \Fix(\iota_{A \op C} \op B)$ we have
\begin{align*}
(A\op b_{B,C}^{-1})\circ f&\circ (A\op b_{B,C})\circ (\iota_A\op B\op \iota_C)\\
&= (A\op b_{B,C}^{-1})\circ f\circ (\iota_A\op \iota_C\op B)\\
& =  (A\op b_{B,C}^{-1})\circ  (\iota_A\op \iota_C\op B)\\
& =  (\iota_A\op B\op \iota_C),
\end{align*}
where we used the axioms of  the pre-braided axiom for the first equality, the definition of $\Fix(\iota_A\op B\op \iota_C)$ for the second, and finally the pre-braided axiom again for the last equality, so $c_{A\op b^{-1}_{B,C}}(f) \in \Fix(\iota_A\op B\op \iota_C)$. Similarly, for $g \in \Fix(\iota_A\op B\op \iota_C)$ the same reasoning gives
\begin{align*}
(A\op b_{B,C})\circ g&\circ (A\op b_{B,C}^{-1})\circ (\iota_A\op \iota_C\op B)\\
&= (A\op b_{B,C}^{-1})\circ g\circ (\iota_A\op B\op \iota_C)\\
& =  (A\op b_{B,C}^{-1})\circ  (\iota_A\op B\op \iota_C)\\
& =  (\iota_A\op \iota_C\op B).\qedhere
\end{align*}
\end{proof}

\subsection{(Locally) homogeneous categories from groupoids}

Let $(\G,\op,0)$ be a monoidal groupoid. We recall from \cite[p.~219]{Gra76} Quillen's construction of a category $\langle\G,\G\rangle$, denoted here $U\G$ for brevity. (Quillen considers the more general case of a monoidal category $S$ acting on a category $X$: here we take $S=\G=X$.)  The category $U\G$ has the same objects as $\G$, and a morphism in $U\G$ from $A$ to $B$ is an equivalence class of pairs $(X,f)$ where  $X$ is an
object of $\G$ and $f:X\op A\to B$ is a morphism in $\G$, and where $(X,f)\sim (X',f')$ if there exists an isomorphism $g:X\to X'$ in $\G$ making
the diagram 
$$\xymatrix{X\op A \ar[d]_{g\op A}\ar[r]^f & B\\
X'\op A\ar[ur]_{f'} &
}$$
commute. We write $[X,f]$ for such an equivalence class. If $[X,f]:A\to B$ and $[Y,g]:B\to C$ in $U\G$, their composition is defined as 
$[Y,g]\circ [X,f]=[Y\op X,g\circ (Y\op f)]$. 

An alternative description of the morphisms in this category, used in \cite[Sec.~3]{DjaVes13}, is $$\Hom_{U\G}(A,B)=\colim_{\G}\Hom_{\G}(- \, \op A,B).$$
From this description in particular, one can see that there is a well-defined map from the set of morphisms in $U\G$ to the set of isomorphism classes of objects in $\G$, which associates to a morphism $[X,f]$ its {\em complement} $[X]$.

When applying Quillen's construction, we will be interested in the relationship between the automorphism groups in $\G$ we start with, and those in $U\G$. 
We have a functor 
$$I:\G\lra \mathrm{Iso}(U\G)$$ 
taking an isomorphism $f:A\to B$ to the pair $[0,f]$. 
The following result gives a condition on $\G$ under which the functor $I$ is faithful (resp.\ full). 

\begin{prop}\label{underlying}
Let $(\G,\op,0)$ be a monoidal groupoid and  $U\G :=\lgl \G,\G\rgl$. Then the following holds: 
\begin{enumerate}[(i)]
\item If $\Aut_{\G}(0)=\{id\}$, the functor $I:\G\to \mathrm{Iso}(U\G)$ is faithful;
\item If $\G$ has no {\em zero divisors}, i.e.\ if $U\op V\cong 0$ in $\G$ implies $U,V\cong 0$, then  the functor $I:\G\to \mathrm{Iso}(U\G)$ is full. 
\end{enumerate}
In particular, $\G$ is the underlying groupoid of $U\G$ when both conditions hold. 
\end{prop}

\begin{proof} 
Suppose first that  $I(f):=[0,f]=[0,g]=:I(g)$ in $\Hom_{U\G}(A,B)$. This means that there is an isomorphism $\phi:0\to 0$ in $\G$ such that $g=f\circ (\phi\op A)$. If $\Aut_\G(0)=\{id\}$, we must have that $\phi=id$ and hence that $f=g$, which proves (i). 

For (ii), suppose that  $[X,f] \in \Hom_{U\G}(A,B)$ is an isomorphism.  Then $[X,f]$ has an inverse $[Y,g] \in \Hom_{U\G}(B,A)$. This means that
$[Y,g]\circ [X,f]=[Y\op X,g\circ (Y\op f)]=[0,id]$. So in particular we must have $ Y \op X\cong 0$ in $\G$. If $\G$ has no zero divisors, this implies that  $X\cong 0\cong Y$ in $\G$. Choosing an isomorphism $\phi:0\to X$, we get $[X,f]=[0,f\circ (\phi \op A)]$, which shows that $[X,f]$ is in the image of $\G$ in $U\G$, proving (ii). 
\end{proof}

As already noted in \cite[Ex.~5.11]{SchSag12}, Quillen's construction applied to the groupoid of finitely generated free modules over a ring $R$ yields the category with objects the finitely generated free $R$-modules and morphisms the ``free split injections", that is split injections $f : M \to N$ equipped with a choice of free submodule $F \leq N$ such that $N = \mathrm{Im}(f) \oplus F$. This is an example of a homogeneous category, already mentioned after Definition~\ref{homdef}  in the case where $R$ is a field. We will show in the present section that this construction often yields homogeneous categories. This will be done by analysing how mild assumptions on the groupoid $\G$ yield the properties (L)H1, (L)H2 and the pre-braidedness. We start with the latter. 

\begin{prop}\label{braidandsym} Let $(\G,\op,0)$ be a monoidal groupoid, and $U\G :=\lgl \G,\G\rgl$. Then: 
\begin{enumerate}[(i)]
\item  $0$ is initial in $U\G$;
\item  if $\mathcal{G}$ is braided monoidal  with no zero divisors then $U\G$ is a pre-braided monoidal category;
\item  if $\mathcal{G}$ is symmetric monoidal then $U\G$ is a symmetric monoidal category. 
\end{enumerate}
Moreover, in the latter two cases, the monoidal structure of $U\G$ is such that the map $\G\to U\G$ taking an isomorphism $f$ to $[0,f]$ is monoidal.
\end{prop}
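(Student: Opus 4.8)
The plan is to prove the four claims of Proposition~\ref{braidandsym} more or less in the order stated, since each one feeds into the next.

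\medskip

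\textbf{(i)} To see that $0$ is initial in $U\G$, I compute $\Hom_{U\G}(0,B)$ for an arbitrary object $B$. Using the description $\Hom_{U\G}(0,B)=\colim_{\G}\Hom_{\G}(-\op 0, B) = \colim_{\G}\Hom_{\G}(-,B)$, and noting that $\G$ is a groupoid, every object of $\G$ mapping nontrivially to $B$ is isomorphic to $B$; the colimit over the (connected) component of $B$ in $\G$ of $\Hom_\G(-,B)$ is then a single point. Concretely: any pair $(X,f)$ with $f:X\op 0 = X\to B$ an isomorphism in $\G$ is equivalent to $(B,\mathrm{id}_B)$ via $g=f$, so $\Hom_{U\G}(0,B)$ is a singleton. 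Hence $0$ is initial.

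\medskip

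\textbf{(ii) and (iii)} For the monoidal structure: I would first define $\op$ on $U\G$ by declaring it to agree with $\op$ on objects, and on morphisms to send $[X,f]:A\to B$ and $[Y,g]:C\to D$ to $[X\op Y,\, h]:A\op C\to B\op D$, where $h$ is the composite $(X\op Y)\op(A\op C)\to (X\op A)\op(Y\op C)\xrightarrow{f\op g} B\op D$, the first arrow built from the associativity and braiding constraints of $\G$ (this is where braidedness of $\G$ enters even to \emph{define} $\op$ on $U\G$ — one needs to move $Y$ past $A$). I must check this is well-defined on equivalence classes (straightforward: an isomorphism $X\to X'$ and $Y\to Y'$ induce $X\op Y\to X'\op Y'$, and the relevant square commutes by naturality of the coherence isomorphisms), functorial, and that the coherence isomorphisms of $\G$ (associator, unitors, and — in the symmetric case — the symmetry) descend to $U\G$ via the functor $\G\to U\G$, $f\mapsto[0,f]$. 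Coherence then follows from coherence in $\G$, because the relevant diagrams in $U\G$ are images under the monoidal functor $\G\to U\G$ of the corresponding diagrams in $\G$. This simultaneously proves the last sentence of the proposition (that $\G\to U\G$ is monoidal, and in the symmetric case symmetric monoidal — so part (iii) is essentially done once the monoidal structure is in place). For part (ii), it remains to check the pre-braiding axiom of Definition~\ref{prebraidDef}: the underlying groupoid of $U\G$ is $\G$ with the braiding $b_{A,B}$ carried over (i.e.\ $[0,b_{A,B}]$), and I must verify $b_{A,B}\circ(A\op\iota_B)=\iota_B\op A$ as morphisms $A\to B\op A$ in $U\G$. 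Since $0$ is initial, both sides are the unique morphism $A\to B\op A$ with complement $[B]$ — wait, I should instead verify it directly: $A\op\iota_B = [0, A\op\iota_B^\G]$ precomposed appropriately, and under the description of morphisms this reduces to a commuting triangle in $\G$ which holds on the nose or by a one-line coherence argument. Actually the cleanest route: $\iota_B\op A:A\to B\op A$ is, by definition of $\iota$, the image $[0,\text{coherence iso}]$, and $b_{A,B}\circ(A\op\iota_B)$ has the same source, target, and complement $[B]$, so if I can show $\Hom_{U\G}(A,B\op A)$ contains at most one morphism with complement $[B]$ coming from these canonical constructions — but that is not automatic, so I will instead just unwind both sides to explicit pairs $(B, -)$ in $\G$ and check they are equivalent, which is a direct diagram chase using only the coherence axioms of the braided groupoid $\G$.

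\medskip

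\textbf{Main obstacle.} I expect the genuinely fiddly part to be the bookkeeping in part (ii)/(iii): defining $\op$ on morphisms of $U\G$ requires inserting a braiding isomorphism $Y\op A\xrightarrow{\sim} A\op Y$ inside the composite, and then verifying well-definedness, functoriality, and all the coherence pentagons/hexagons descend correctly. None of this is deep, but it is exactly the kind of coherence verification that is easy to get subtly wrong, and the pre-braiding identity has to be checked against the \emph{specific} choice of coherence isomorphisms baked into $U\G$. Everything else — part (i), and the final monoidality statement — is short. I would organize the write-up so that the monoidal structure is pinned down once and for all, the functor $\G\to U\G$ is shown monoidal, and then (i), the pre-braiding axiom, and the symmetric case each become a one- or two-line consequence.
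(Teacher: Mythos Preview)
Your proposal is correct and follows essentially the same approach as the paper: define the monoidal product on $U\G$ by inserting a braiding isomorphism to move $Y$ past $A$, check well-definedness and coherence by reducing to $\G$, and verify the pre-braid identity by unwinding both sides to explicit representatives $(B,-)$ and comparing. One small caution: the underlying groupoid of $U\G$ need not literally equal $\G$ without the extra hypotheses of Proposition~\ref{underlying}, so phrase the pre-braiding check in terms of the braiding $[0,b_{A,B}]$ on the underlying groupoid of $U\G$ rather than asserting that groupoid \emph{is} $\G$.
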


Note that in the case when $\G$ is symmetric monoidal the construction can be repeated on $U\G^{op}$ to create a symmetric monoidal category in which $0$ is now a null object. This is what the construction is used for in \cite[Sec.~3]{DjaVes13}. 

\begin{proof}
We first check that the unit 0 is initial in $U\G$. Indeed, if $[X,f]$ and $[Y,g]$ are two elements of $\Hom_{U\G}(0,A)$, then $g^{-1}\circ f :X\to Y$ is an isomorphism exhibiting that the two morphisms actually represent the same element of $\Hom_{U\G}(0,A)$. 
Note now that there always exists a morphism from 0 to $A$ in $U\G$, namely the morphism $\iota_A=[A,id_A]$. This proves (i). 

\medskip

Assuming now that $\G$ is braided monoidal, we define the monoidal structure on $U\G$ as follows: given $[X,f]\in \Hom(A,B)$ and $[Y,g]\in\Hom(C,D)$, we let 
$$[X,f]\op [Y,g]:= [X\op Y,(f\op g)\circ (X\op b_{A,Y}^{-1}\op C)] \in \Hom(A\op C,B\op D).$$
This is compatible with the monoidal structure of $\G$ because of the compatibility between the braiding and the unit in a braided monoidal category. The associativity of this monoidal product follows from that of the monoidal product in $\G$ and the braid relations. 

The functor $I : \G \to Iso(U\G)$ is a bijection on objects and, given that $\G$ has no zero divisors, Proposition~\ref{underlying} shows that it is also full. The braided monoidal structure on $\G$ therefore induces one on $Iso(U\G)$. The fact that $U\G$ is pre-braided follows from the computation
\begin{align*}
b_{A,B}\circ (A\op \iota_B)&= [0,b_{A,B}]\circ \big([0,id_A]\op [B,id_B]\big)\\
& = [0,b_{A,B}]\circ [B,id_{A\op B}\circ b_{A,B}^{-1}] = [B,b_{A,B}\circ b_{A,B}^{-1}]=[B,id_{A\op B}]\\
\iota_B\op A& = [B,id_B]\op [0,id_A]=[B,id_{A\op B}\circ b_{B,0}^{-1}]=[B,id_{A\op B}]
\end{align*} 
which proves (ii). 

\medskip

To prove (iii), we need to check that for any  $[X,f]\in \Hom_{U\G}(A,B)$ and  $[Y,g]\in \Hom_{U\G}(C,D)$, we have 
$$([Y,g]\op [X,f])\circ [0,b_{A,C}]=[0,b_{B,D}]\circ ([X,f]\op [Y,g]).$$
Explicitly, the left-hand side is 
$$[Y\op X,(g\op f)\circ (Y\op b_{C,X}^{-1}\op A)\circ (Y\op X\op b_{A,C})]$$
and the right-hand side is 
$$[X\op Y,b_{B,D}\circ (f\op g)\circ (X\op b_{A,Y}^{-1}\op C)].$$
Now $b_{X,Y}:X\op Y\to Y\op X$ defines an isomorphism between the complements of these two morphisms. The fact that they represent the same morphism corresponds to the commutativity of the following diagram: 
$$\xymatrix{X\op Y\op A\op C \ar[rr]^-{b_{X,Y}\op A\op C}\ar[d]_{X\op b^{-1}_{A,Y}\op C} && Y\op X\op A\op C \ar[d]^{Y\op ((b_{C,X}^{-1}\op A)\circ (X\op b_{A,C}))} \\
X\op A\op Y\op C \ar[d]_{f\op g}\ar[rr]^{b_{X\op A,Y\op C}} && Y\op C\op X\op A \ar[d]^{g\op f}\\
B\op D \ar[rr]^-{b_{B,D}}  && D\op B.
}$$
The bottom square commutes because $b$ is a braiding in $\G$. One then checks that the top square commutes under the additional assumption that $b$ is a symmetry, i.e. that $b_{A,B}^{-1}=b_{B,A}$. (This last commutation fails if $b$ is only a braiding.) 
\end{proof}

Property (L)H1 on $U\G$ will require a cancellation property on $\G$.

\begin{Def}\label{LCDef}
For a pair of objects $(A,X)$ in a monoidal groupoid $(\G,\op,0)$ we say that $\G$ satisfies \emph{local cancellation at $(A,X)$} if it satisfies
\begin{itemize}
\item[{\bf LC}] For all $0 \leq p < n$, if $Y \in \G$ is such that $Y \op X^{\op p+1} \cong A \op X^{\op n}$ then $Y \cong A \op X^{\op n-p-1}$.
\end{itemize}
We say that $\G$ satisfies \emph{cancellation} if it satisfies
\begin{itemize}
\item[{\bf C}] For all objects $A,B,C \in \G$, if $A\op C\cong B\op C$ then $A\cong B$.
\end{itemize}
\end{Def}
The axiom LC implies in particular that morphisms $X^{\op p+1} \to A \op X^{\op n}$ in $\lgl \G,\G\rgl$ have isomorphic complements, and this is one of the main ways in which we shall use it.

\medskip

Our main result in this section is the following: 

\begin{thm}\label{universal}\mbox{}  Let $(\G,\op,0)$ be a braided monoidal groupoid with no zero divisors, and  $U\G :=\lgl \G,\G\rgl$ its associated pre-braided category. 
\begin{enumerate}[(a)]
\item $U\G$ satisfies LH1 at $(A,X)$ if and only if $\G$ satisfies   LC at $(A,X)$.
\item If the map $\Aut_{\G}(A \op X^{\op n-p-1})\to \Aut_{\G}(A \op X^{\op n})$ taking $f$ to $f\op X^{\op p+1}$ is injective  for all $0 \leq p < n$, then  $U\G$ satisfies LH2 at $(A,X)$.
\end{enumerate}
In particular, if (a) and (b) are both satisfied, then $U\G$ is locally homogeneous at $(A,X)$.
\begin{enumerate}[(a)]
\setcounter{enumi}{2}
\item $U\G$ satisfies H1 if and only if $\G$ satisfies  C.
\item If the map $\Aut_{\G}(A)\to\Aut_{\G}(A\op B)$ taking $f$ to $f\oplus B$ is injective for every $A,B$ in $\G$, then $U\G$ satisfies H2.
\end{enumerate}
In particular, if (c) and (d) are both satisfied, then $U\G$ is homogeneous.
\end{thm}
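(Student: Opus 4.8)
The plan is to prove the four statements (a)--(d) in turn, working directly with the explicit description of morphisms in $U\G = \lgl\G,\G\rgl$ from Quillen's bracket construction, together with the computation (already made in the proof of Proposition~\ref{braidandsym}) that the canonical morphism $\iota_A : 0 \to A$ is represented by $[A, \mathrm{id}_A]$ and that $\iota_A \op B$ is represented by $[A, \mathrm{id}_{A\op B}]$. Throughout I will use that a morphism $[Y,g] : C \to B$ in $U\G$ has a well-defined complement $[Y] \in \pi_0\G$, given by the description $\Hom_{\lgl\G,\G\rgl}(C,B) = \colim_{\G}\Hom_\G(-\op C, B)$.

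For (a): first I would unwind what LH1 asks. A morphism $X^{\op p+1} \to A\op X^{\op n}$ in $U\G$ is represented by a pair $(Y, g)$ with $g : Y\op X^{\op p+1} \arsim A\op X^{\op n}$ an isomorphism in $\G$; composing with an automorphism $\phi \in \Aut(A\op X^{\op n})$ just post-composes $g$. So the $\Aut(A\op X^{\op n})$-orbits on $\Hom_{U\G}(X^{\op p+1}, A\op X^{\op n})$ are indexed exactly by the possible complements $[Y]$, i.e.\ by the isomorphism classes of objects $Y$ with $Y\op X^{\op p+1}\cong A\op X^{\op n}$ — and transitivity of this action is equivalent to there being only one such class, which is precisely axiom LC at $(A,X)$ (the class $[A\op X^{\op n-p-1}]$ always occurs, via $(A\op X^{\op n-p-1}, \mathrm{id})$). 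This gives the "if and only if" directly. Statement (c) is the global analogue and follows by the same argument with the roles of $X^{\op p+1}, A\op X^{\op n}$ replaced by arbitrary $A, B$: the $\Aut(B)$-orbits on $\Hom_{U\G}(A,B)$ are the isomorphism classes of complements, and H1 holds iff this set is a singleton whenever it is nonempty, which is exactly cancellation axiom C (again the class $[C]$ such that $C\op A\cong B$, if one exists, is unique under C).

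For (b) and (d): here I would identify the image of $\Aut_\G(A\op X^{\op n-p-1}) \to \Aut_{U\G}(A\op X^{\op n})$, $f\mapsto [0, f\op X^{\op p+1}]$, with $\Fix(X^{\op p+1}, A\op X^{\op n})$. One inclusion is a short direct computation: $[0, f\op X^{\op p+1}]$ post-composed with $\iota = [X^{\op p+1}, \mathrm{id}]$ sends it to $[X^{\op p+1}, (f\op X^{\op p+1})]$, and using that $f$ is the identity on the $X^{\op p+1}$-factor one checks this equals $[X^{\op p+1}, \mathrm{id}] = \iota$, so the image lies in $\Fix$. For the reverse inclusion, take $\phi \in \Aut_{U\G}(A\op X^{\op n})$ with $\phi\circ\iota = \iota$; since $\phi$ is an automorphism of $U\G$ it is represented by $[0, \psi]$ for a unique $\psi \in \Aut_\G(A\op X^{\op n})$ (automorphisms in $U\G$ have trivial complement by axiom LC, which is the point of the remark following Definition~\ref{LCDef} — and here I should note that (b) as stated does not assume LC; I would either invoke that the hypothesis of local homogeneity is what we ultimately want, or observe that an automorphism $[Y,g]$ forces $Y\op(\text{anything})\cong(\text{anything})$ hence $Y\cong 0$ by a minimality/poset argument, or simply restrict attention to the situation where (a) holds as the theorem's "in particular" clause does). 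Then $\phi\circ\iota = \iota$ translates into $\psi\circ(\iota_A\op X^{\op p+1}) = \iota_A\op X^{\op p+1}$ in $\G$, i.e.\ $\psi$ fixes the canonical inclusion of the $X^{\op p+1}$-factor; a short diagram argument then shows $\psi$ must be of the form $f\op X^{\op p+1}$, and injectivity of $\Si^X$ is exactly the hypothesis of (b). Statement (d) is identical with $X^{\op p+1}$ replaced by $B$.

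The main obstacle is the reverse inclusion in (b)/(d) — showing that an automorphism of $A\op B$ fixing the canonical map $B\to A\op B$ actually comes from an automorphism of $A$. In the concrete examples (vector spaces, finite sets) this is transparent because "fixing the inclusion of $B$ with its chosen complement $A$" visibly pins down both factors; the work here is to run this purely formally inside $\G$ from the bracket-construction data, i.e.\ to show that $\psi\circ(\iota_A\op B) = \iota_A\op B$ forces $\psi = f\op B$. I expect this to require the observation that in $\G$ (a groupoid) the morphism $\iota_A\op B$, although just "the inclusion $B\to A\op B$" after passing to $U\G$, is literally an isomorphism $B\op(\text{nothing})$... — more precisely one has to use that $f\mapsto f\op B$ is injective (the hypothesis) together with the fact that $A\op B$ "decomposes" compatibly, which is where some care with the strict monoidal structure and the definition of $\op$ on morphisms of $U\G$ is needed. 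Everything else is bookkeeping with Quillen's construction.
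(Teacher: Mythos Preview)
Your treatment of (a) and (c) is correct and essentially identical to the paper's: the $\Aut(B)$-orbits on $\Hom_{U\G}(A,B)$ are indexed by isomorphism classes of complements, so transitivity is exactly cancellation.

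For (b) and (d) there is a genuine gap, and you have already put your finger on it: you assume that every automorphism of $A\op X^{\op n}$ in $U\G$ is of the form $[0,\psi]$ for some $\psi\in\Aut_\G(A\op X^{\op n})$. This is \emph{not} part of the hypotheses of (b) or (d), and it does not follow from LC either---it is equivalent to the ``no zero divisors'' condition of Proposition~\ref{underlying}. Your suggested workarounds (a minimality/poset argument, or restricting to the case where (a) holds) do not supply this: an isomorphism $Y\op A\cong A$ in $\G$ with $Y\not\cong 0$ is not ruled out by anything you have assumed.

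The paper's proof avoids this entirely by never reducing to $\Aut_\G$. It takes a general $[Y,\phi]\in\Aut_{U\G}(U\op V)$ lying in $\Fix(V,U\op V)$ and computes directly that the condition $[Y,\phi]\circ[U,\mathrm{id}_{U\op V}]=[U,\mathrm{id}_{U\op V}]$ unwinds to $[Y\op U,\phi]=[U,\mathrm{id}_{U\op V}]$ as morphisms $V\to U\op V$. The equivalence relation in the bracket construction then produces an isomorphism $\psi:Y\op U\to U$ in $\G$ with $\phi=\psi\op V$, so that $[Y,\phi]$ is the image of $[Y,\psi]\in\Aut_{U\G}(U)$ under $-\op V$. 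The point is that $[Y,\psi]$ is an automorphism in $U\G$, not in $\G$, and $Y$ need not be trivial. The injectivity hypothesis on $\Aut_\G$ is used only for the injectivity half of LH2, where the argument does force the complement to be $0$.
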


\begin{rem}
If cancellation does not hold  in a (small) monoidal groupoid $\G$, it can be forced by replacing $\G$ with the groupoid $\tilde \G$ whose set of objects is the free monoid on the objects of $\G$, and with automorphisms of these objects as only morphisms---that is we free up the monoidal structure and forget in $\tilde \G$ that certain sums of objects were isomorphic in $\G$. This construction may seem rather unnatural, but it can be relevant in our context. Indeed, it does not change the automorphism groups of objects, which are the groups we are interested in, and only affects which morphisms we want to allow between different objects. This method is applied in \cite{SzyWah} to prove homological stability for the Higman--Thompson groups. 
\end{rem}

\begin{proof}
We first prove (a) and (c). 
Suppose that $\G$ satisfies LC at $(A,X)$ and let $[U,f],[V,g]\in\Hom_{U\G}(X^{\op p+1},A \op X^{\op n})$. Then
$$f: U \op X^{\op p+1} \lra A \op X^{\op n} \text{ and } g: V \op X^{\op p+1} \lra A \op X^{\op n}$$
are isomorphisms, so by two applications of LC we have that $U$ and $V$ are isomorphic: choose a $\phi : U \overset{\sim}\to V$. Then $[V, g] = [U, g \circ (\phi \op X^{\op p+1})]$, but this clearly differs from $[U, f]$ by postcomposition by $[0, f \circ (g \circ (\phi \op X^{\op p+1}))^{-1}]$, which proves LH1.

Likewise, if $\G$ satisfies C and $[U,f],[V,g]\in\Hom_{U\G}(A,B)$ are two morphisms, we have $U\op A\cong B\cong V\op A$ giving $U\cong V$ and the same proof shows that $U\G$ satisfies H1. 

\medskip

Conversely, assume that $U\G$ satisfies LH1 and  that we have an isomorphism $\phi:Y\op X^{\op p+1}\sta{\cong}\rar A\op X^{\op n}$. We can consider $[Y,\phi]$ as an element of $\Hom(X^{\op p+1}, A\op X^{\op n})$. Now $[A\op X^{\op n-p-1},id]$ is also an element in that set. By LH1, there exists an automorphism $[U,\psi]$ of $A\op X^{\op n}$ such that $[U,\psi] \circ [Y,\phi]=[A\op X^{\op n-p-1},id]$. 
Given that $\G$ has no zero divisors, $[U,\psi]=[0,\psi']$ for some $\psi'\in \Aut_\G(A\op X^{\op n})$. It follows that $[Y,\psi'\circ \phi]=[A\op X^{\op n-p-1},id]$. 
But this can only happen in $U\G$ if $Y\cong A\op X^{\op n-p-1}$, proving LC, and finishing the proof of (a). 

Again, if we now assume that $U\G$ satisfies H1, and $Y\op A\cong B$ is an isomorphism, the same proof, using now H1 on $\Hom(A,B)$, shows that $\G$ satisfies C, which finishes the proof of (c). 

\medskip

For (b), we must show that the map
$$-\op X^{\op p+1}:\Aut_{U\G}(A\op X^{\op n-p-1})\lra \Aut_{U\G}(A\op X^{\op n})$$
is injective and identify its image. To check injectivity, suppose that $[V,f]\in\Aut_{U\G}(A\op X^{\op n-p-1})$ is such that $[V,f]\op X^{\op p+1}$ is the identity in $\Aut_{U\G}(A\op X^{\op n})$. This means that there exists an isomorphism $\phi:V\to 0$ in $\G$  such that
$$f\op X^{\op p+1}=\phi\op A\op X^{\op n} \in \Hom_\G(V\op A\op X^{\op n},A\op X^{\op n}).$$
Using the assumption, we have that $f=\phi\op A\op X^{\op n-p-1}$. But this means that $[V,f]$ is the identity in $\Aut_{U\G}(A\op X^{\op n-p-1})$, as required. We are left to show that the image of the map  $-\op X^{\op p+1}$ is $\Fix(\iota_{A \op X^{\op n-p-1}} \op X^{\op p+1})$. Letting $U = A \op X^{\op n-p-1}$ and $V=X^{\op p+1}$, we compute (using the no zero divisor assumption for simplicity)
\begin{align*}
\Fix(\iota_U \op V)&=\{[0,\phi]\in\Aut_{U\G}(U \op V)\ |\ [0,\phi]\circ (\iota_U\op V)=\iota_U\op V\}\\
& = \{[0,\phi]\in\Aut_{U\G}(U \op V)\ |\ [U,\phi]=[U,id_{U\op V}]\}.
\end{align*}
The last equality is equivalent to saying that $\Fix(\iota_U \op V)$ consists of the morphisms $[0,\phi]$ such that there is an isomorphism $\psi:U\to U$ in $\G$ satisfying that $\phi=\psi\op V$, which is exactly saying that $[0,\phi]$ is the image of 
$[0,\psi]\in \Aut_{U\G}(U)$.

For (d), the proof is identical. 
\end{proof}

There are many examples of braided monoidal groupoids satisfying the hypotheses of Theorem \ref{universal}. In Section~\ref{examples}, we will study the homogeneous categories associated to braided monoidal groupoids defined from symmetric groups, braid groups, automorphism groups of free groups, general linear groups, and mapping class groups. 

\subsection{A graphical calculus for pre-braided categories}\label{prebraidcal}

Let $\beta := \sqcup_{n \geq 0} \beta_n$ be the braided monoidal groupoid with objects the natural numbers, and with the automorphisms of $n$ given by the braid group on $n$ strands. By Proposition \ref{braidandsym}, $\U\beta$ is a pre-braided monoidal category. Just as $\beta$ is the free braided monoidal category on one object, one has that $\U\beta$ is the free pre-braided monoidal category on one object. (See \cite[Prop.~2.2]{JoyStr93} for the braided case.) We give here a graphical calculus that may be useful to the reader in following the subsequent proofs, though it is not mathematically necessary. We give an indication of why it holds. 

It follows from our construction of $\U\beta$ that $\mathrm{Hom}_{\U\beta}(n, m+n) = \beta_{m+n}/\beta_m$, where $\beta_m \subset \beta_{m+n}$ is considered as the subgroup of braids on the first $m$ strands and the quotient is by pre-composition. This set can alternatively be described as follows: morphisms from $n$ to $m+n$ are given by diagrams of braids with $m+n$ strands, $m$ of which have ``free'' ends, which are allowed to pass under, but not over, any other strand (Fig.~\ref{prebfig}). 
\begin{figure}[h]
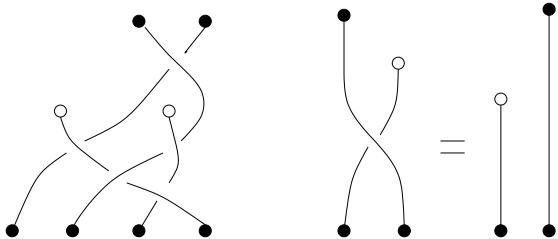

\begin{lpic}{prebraid.2(0.5,0.5)}
\end{lpic}
\caption{Morphism from 2 to 4 and the pre-braid relation in $\U\beta$.}\label{prebfig}
\end{figure}
Indeed, given a coset $\beta_m x \in \beta_{m+n}/\beta_m$, we can represent it by the braid $x$ on $m+n$ strands, the first $m$ of which start at a ``free'' end point. As free ends are allowed to pass under strands, we see that the diagram so obtained is independent of the coset representative $x$. Conversely, given a braid diagram with free ends, we can pass all the free ends leftwards under all the other strands, obtaining an element of $\beta_{m+n}$.  This is well-defined up to precomposition with elements of the subgroup $\beta_m$.

\section{The space of destabilisations associated to a pair $(A,X)$}\label{simpsec}

In this section we shall associate a sequence of semi-simplicial sets to any pair of objects in a monoidal category. We will use these to define an additional {\em connectivity} property for pairs of objects in a monoidal category, which will play a central role in our homological stability theorems.

\begin{Def}\label{simpdef} Let $(\C,\op,0)$ be a monoidal category with 0 initial and $(A,X)$ a pair of objects in $\C$. 
Define $W_n(A,X)_\bullet$ to be the semi-simplicial set with set of $p$-simplices 
$$W_n(A,X)_p :=\ \Hom_\C(X^{\op p+1},A\op X^{\op n})$$ and with face map 
$$d_i\colon \Hom_\C(X^{\op p+1},A\op X^{\op n})\rar \Hom_\C(X^{\op p},A\op X^{\op n})$$ defined by precomposing with ${X^{\op i}}\op \iota_X\op {X^{\op p-i}}$.  

Postcomposition in $\C$ defines a simplicial action of the group $\Aut(A\op X^{\op n})$ on  $W_n(A,X)_\bullet$.
\end{Def}

\medskip

Recall that a non-empty space $Y$ is called $m$-connected if $\pi_i(Y)=0$ for all $i\le m$ and all basepoints; we adopt the convention that $m$ may be a rational number, for which this definition also makes sense. We say that a non-empty space is at least $(-1)$-connected, and that the empty space is $(-2)$-connected. For a pair of objects $(A,X)$ as above, we make one final connectivity axiom, which depends on a parameter $k \in \bN$. 

\begin{Def} Let $(\C,\op,0)$ be a monoidal category and $(A,X)$ a pair of objects in $\C$. We say that $\C$ satisfies {\em $LH3$ at $(A,X)$ with slope $k$} if 
\begin{itemize}
\item[{\bf LH3}] For all $n \geq 1$, $\vert W_n(A,X)_\bullet \vert$ is $(\tfrac{n-2}{k})$-connected.
\end{itemize}
\end{Def}
The following lemma shows that when $\C=U\G$, LH3 essentially implies LH1, which we saw in Theorem~\ref{universal} is equivalent to local cancellation.  

\begin{lem}
Let $(\G,\op,0)$ be a braided monoidal groupoid, and suppose that $U\G$ satisfies LH3 at $(A,X)$ with some slope $k \geq 1$. Then $U\G$ satisfies LH1 at $(A\op X,X)$.
\end{lem}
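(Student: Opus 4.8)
The plan is to unwind what LH3 gives us at $(A,X)$ and show it forces local cancellation at the shifted pair $(A\op X,X)$, then invoke Theorem~\ref{universal}(a). Concretely, I would first recall that by Theorem~\ref{universal}(a), $U\G$ satisfies LH1 at $(A\op X, X)$ if and only if $\G$ satisfies local cancellation LC at $(A\op X, X)$, i.e.\ for all $0\le p<n$, if $Y\op X^{\op p+1}\cong (A\op X)\op X^{\op n} = A\op X^{\op n+1}$ in $\G$, then $Y\cong (A\op X)\op X^{\op n-p-1} = A\op X^{\op n-p}$. So the goal reduces to proving this cancellation statement using only that $|W_m(A,X)_\bullet|$ is $(\tfrac{m-2}{k})$-connected for all $m\ge 1$.

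The key observation is that connectivity of $|W_m(A,X)_\bullet|$ for $k\ge 1$ guarantees in particular that $|W_m(A,X)_\bullet|$ is $(-1)$-connected, i.e.\ \emph{non-empty}, as soon as $m\ge 1$; indeed $\tfrac{m-2}{k}\ge -1$ precisely when $m\ge 2-k$, which holds for all $m\ge 1$ since $k\ge 1$. Non-emptiness of $W_m(A,X)_0 = \Hom_\C(X, A\op X^{\op m})$ is automatic (it contains $\iota_A\op X^{\op m-1}$ suitably interpreted), so I actually want one more simplicial dimension: for $m\ge 2$ we get $W_m(A,X)_\bullet$ is $0$-connected, hence in particular $W_m(A,X)_1=\Hom_\C(X^{\op 2}, A\op X^{\op m})\ne\emptyset$, and more generally for $m\ge p+1$ we get $(p-1)$-connectivity, hence $W_m(A,X)_p = \Hom_\C(X^{\op p+1}, A\op X^{\op m})\ne\emptyset$ whenever $\tfrac{m-2}{k}\ge p-1$. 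The cleanest route, though, is: given an isomorphism $\phi: Y\op X^{\op p+1}\cong A\op X^{\op n+1}$ in $\G$, it yields a morphism $[Y,\phi]\in \Hom_{U\G}(X^{\op p+1}, A\op X^{\op n+1}) = W_{n+1}(A,X)_p$. I would then compare $[Y,\phi]$ against the ``standard'' $p$-simplex $[A\op X^{\op n-p}, \mathrm{id}]\in W_{n+1}(A,X)_p$, whose complement is $A\op X^{\op n-p}$. If I can move one to the other by the $G_{n+1}$-action — which is exactly what LH1 at $(A,X)$ would give — then their complements agree and $Y\cong A\op X^{\op n-p}$, which is LC at $(A\op X,X)$.

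So the real content is: \textbf{LH3 at $(A,X)$ implies LH1 at $(A,X)$}, from which LH1 at $(A\op X,X)$ is then essentially immediate (LH1 at $(A,X)$ for indices $1\le p<n$ covers LH1 at $(A\op X,X)$ for indices $0\le p<n$, since $\Hom(X^{\op p+1}, (A\op X)\op X^{\op n}) = \Hom(X^{\op p+1}, A\op X^{\op n+1})$ with $p+1\le n+1$). To prove LH3 $\Rightarrow$ LH1 at $(A,X)$: I want to show $G_n = \Aut(A\op X^{\op n})$ acts transitively on $W_n(A,X)_p = \Hom(X^{\op p+1}, A\op X^{\op n})$ for $0\le p<n$. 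Transitivity on vertices ($p=0$): $W_n(A,X)_\bullet$ being $(\tfrac{n-2}{k})$-connected with $n\ge 2$ gives at least $0$-connectivity, so $|W_n(A,X)_\bullet|$ is path-connected; since the $G_n$-action is simplicial and transitivity on $\pi_0$ of a connected simplicial set plus an edge-path argument lets us hop between any two vertices along edges, but to upgrade path-connectivity to \emph{$G_n$-transitivity} on vertices I need edges of $W_n$ to have controlled endpoints — and this is where I'd use the explicit description of faces: a $1$-simplex $f:X^{\op 2}\to A\op X^{\op n}$ has $d_0 f, d_1 f$ being $f$ precomposed with $\iota_X\op X$ and $X\op\iota_X$. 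Here is the subtlety, and it is the main obstacle: path-connectivity alone does \emph{not} obviously give $G_n$-transitivity; one genuinely needs to know that any vertex is connected to the standard vertex $\iota_A\op X^{\op n-1}$ \emph{by an edge}, or to run an induction on $p$ simultaneously. The standard trick (going back to Charney/van der Kallen and used in the relative-version arguments) is: the link of the standard $p$-simplex inside $W_n(A,X)_\bullet$ is itself $W_{n-p-1}$-like, and highly connected by LH3 at lower $n$; so by a downward induction on the codimension one shows any simplex can be slid onto the standard one using automorphisms. I would therefore structure the proof as an induction using LH3 at all levels $\le n$: assume $G_m$ acts transitively on $W_m(A,X)_q$ for all $m<n$ and all $q$, and all $m=n$, $q<p$; then given two $p$-simplices, use $0$-connectivity of $W_n$ (from LH3, valid since $n\ge p+1\ge 2$... wait, need $n\ge 2$, fine) together with the inductive transitivity on their faces to connect them by a $G_n$-orbit path, then conclude. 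The honest statement for the write-up is shorter: since I may cite Theorem~\ref{universal}, it suffices to produce the LC conclusion directly, and I expect the cleanest proof to bypass full LH1 and instead argue: given $\phi:Y\op X^{\op p+1}\cong A\op X^{\op n+1}$, the simplex $[Y,\phi]$ lies in the $(\ge 0)$-connected complex $W_{n+1}(A,X)_\bullet$, and by the standard fact that in such a Quillen-bracket semi-simplicial set every simplex is $G$-equivalent to the standard one (which I will extract from connectivity exactly as in the proof of Theorem~\ref{universal}(a)'s converse direction, now only needing transitivity which connectivity supplies), we get $Y\cong A\op X^{\op n-p}$, i.e.\ LC at $(A\op X,X)$, hence LH1 at $(A\op X,X)$ by Theorem~\ref{universal}(a). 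The main obstacle, to reiterate, is carefully upgrading ``$|W_n|$ is $(\ge 0)$-connected'' to ``$G_n$ acts transitively on $p$-simplices'' — this requires the link/induction argument rather than being formal, though it is standard in this circle of ideas.
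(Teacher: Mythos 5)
Your first step — reducing via Theorem~\ref{universal}(a) to showing that $\G$ satisfies LC at $(A\op X,X)$ — is exactly what the paper does. But from there the two routes diverge, and yours has a genuine gap precisely at the spot you flag as "the main obstacle."

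The gap: you propose to deduce \emph{transitivity} of the $G_{n+1}$-action on $p$-simplices (i.e.\ LH1, or most of it, at $(A,X)$) from the connectivity hypothesis LH3, so that you can move $[Y,\phi]$ onto the standard simplex and read off the complement. The link/induction argument you sketch for this does not close. To identify $\mathrm{Link}(\sigma_p)$ inside $W_n(A,X)_\bullet$ with a smaller $W_{n-p-1}(\cdot,\cdot)_\bullet$ (which is what your downward induction would need) you already require LH1 \emph{and} LH2 at $(A,X)$ — this is exactly how the paper proves the link identification in Section~2 — and the lemma does not assume either. Worse, your boldface intermediate claim "LH3 at $(A,X)$ implies LH1 at $(A,X)$" is actually \emph{false} in general: LH3 gives no control over the connectivity of $W_1(A,X)_\bullet$ (indeed $\tfrac{1-2}{k}<0$), so there is no obstruction to having $Y\op X\cong A\op X$ with $Y\not\cong A$. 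This is precisely why the lemma is stated at the shifted pair $(A\op X,X)$ and not at $(A,X)$.

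What the paper does instead, and what you should aim for: transitivity is never needed. The paper reduces LC at $(A\op X,X)$ to the $p=0$ case by an induction on $p$ (cancel one copy of $X$ at a time), and for $p=0$ it makes the elementary observation — a direct unwinding of the definition of the face maps in $U\G$ — that if two vertices $[U,f]$ and $[V,g]$ of $W_{n+1}(A,X)_\bullet$ span a $1$-simplex $[W,h]$, then both faces of $h:W\op X^{\op 2}\to A\op X^{\op n+1}$ have complement $W\op X$, so $U\cong W\op X\cong V$. Thus \emph{adjacent} vertices have isomorphic complements, with no group action involved. Then $0$-connectedness of $W_{n+1}(A,X)_\bullet$ (which does hold for $n\ge 1$ under LH3) gives a path of $1$-simplices from $[Y\op X^{\op p},\phi]$ to the standard vertex $[A\op X^{\op n},\mathrm{id}]$, and one simply propagates the isomorphism of complements along the path. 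This is strictly weaker than transitivity and is all that is needed. Your proof should be rewritten around this local "adjacent vertices have isomorphic complements" observation rather than attempting to establish LH1.
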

\begin{proof} 
By Theorem~\ref{universal} (a), it is enough to check that $\G$ satisfies LC at $(A\op X,X)$ (noting that the no zero divisors assumption is not used for that direction). So  
let $Y \in \G$ and suppose that $\phi : Y \op X^{\op p+1} \to (A \op X) \op X^{\op n}$ is an isomorphism in $\G$, with $0 \leq p < n$, where we may assume $n\ge 1$ (otherwise there is nothing to prove). If we can show that $Y \op X^{\op p} \cong (A \op X) \op X^{\op n-1}$ then, by induction on $p$, we are done.

If $[U, f]$ and $[V, g]$ are 0-simplices in $W_{n+1}(A,X)_\bullet$ which span a 1-simplex $[W, h]$, then
$$h : W \op X^{\op 2} \lra A \op X^{\op n+1}$$
is an isomorphism in $\G$, and it follows from the definition of precomposition with the morphisms $\iota_X \op X$ and $X \op \iota_X$ in $U\G$ that $U \cong W \op X \cong V$. Hence adjacent vertices in $W_{n+1}(A,X)_\bullet$ represent morphisms having isomorphic complements.

Under our assumption, the space $\vert W_{n+1}(A,X)_\bullet \vert$ is $0$-connected as $n+1\ge 2$. To $\phi$ we can associate the morphism $[Y \op X^{\op p}, \phi]$ in $U\G$, which represents a 0-simplex of $W_{n+1}(A,X)_\bullet$, as does $[(A \op X) \op X^{\op n-1}, id]$. As the space is connected there must be a sequence of 1-simplices interpolating between these two vertices, and so the complements they define, $Y \op X^{\op p}$ and $(A \op X) \op X^{\op n-1}$, must be isomorphic.
\end{proof}

Given a homogeneous category $\C$ and a pair of objects $(A,X)$ in $\C$, 
we shall obtain a homological stability theorem for the groups $G_n := \Aut(A \op X^{\op n})$ whenever we are able to show that $\C$ satisfies LH3 at $(A,X)$ for some slope $k \geq 2$. In proving stability theorems, it is usually in establishing LH3 that the difficulty arises, and to provide examples we must be able to prove that the $W_n(A,X)_\bullet$ are highly-connected. The remainder of this section develops tools to reduce this problem to proving high-connectivity of closely related simplicial complexes, which experience shows are often easier to work with. 

The reader may wish to ignore the rest of this section on a first reading: its results are not necessary to develop the general theory, and will only be used in Section \ref{examples} when we come to examples.

\subsection{Simplicial complexes versus semi-simplicial sets}\label{sec:SxCxSSets}

A simplicial complex is defined from a set of vertices by giving a collection of subsets of the vertices closed under taking subsets and containing all singletons. A $p$-simplex is then a subset of cardinality $(p+1)$ in the collection. So in a simplicial complex, $p$-simplices have an unordered set of $(p+1)$ distinct vertices which determine the simplex. 
In contrast, in a semi-simplicial set, $p$-simplices have an {\em ordered list} of $(p+1)$ {\em not necessarily distinct} vertices (obtained from applying the boundary maps repeatedly) and two distinct simplices may have the same set of vertices, i.e.~the vertices do not necessarily determine the simplex. 
In all examples we will work with in the present paper, simplices of $W_n(A,X)_\bullet$ will be determined by their ordered list of vertices, which will be distinct. This is, however, not an automatic feature of these simplicial sets, as shown by the following pathological example (pointed out to us by Peter Patzt): 

\begin{ex}
Let $\e=\coprod_{n\ge 0}\{e\}$ be the groupoid consisting of a copy of the trivial group for each natural number. This groupoid has a monoidal structure defined by addition on the objects, and the identity map defines a symmetry for $\e$. The category $U\e$ has objects the natural numbers with a unique morphism from $m$ to $n$ whenever $m\le n$: in other words, it is the poset $(\N, \leq)$. Take $X=1$ and $A=0$. Then the semi-simplicial set $W_n(A,X)_\bullet$ has a single $p$-simplex for every $p\le n-1$, and $\vert W_n(A,X)_\bullet\vert$ is an $(n-1)$-dimensional ``Dunce hat'' (which is nonetheless $(n-2)$-connected).
\end{ex}

To avoid such pathological examples, we make the following {\em standardness} assumption throughout this section: 

\begin{Def}
Let $(\C,\op,0)$ be a homogeneous category and $(A,X)$ a pair of objects in $\C$. We say that $\C$ is {\em locally standard at $(A,X)$} if 
\begin{itemize} 
\item[{\bf LS1}] The morphisms $\iota_A\op X\op \iota_X$ and $\iota_{A\op X}\op X$ are distinct in $\Hom(X,A\op X^{\op 2})$;
\item[{\bf LS2}] For all $n\ge 1$, the map $\Hom(X,A\op X^{\op n-1})\to \Hom(X,A\op X^{\op n})$ taking $f$ to $f\op \iota_X$ is injective. 
\end{itemize}
\end{Def}

\begin{prop}\label{standardprop}
Let $(\C,\op,0)$ be locally homogeneous at $(A,X)$. Then $\C$ is locally standard at $(A,X)$ if and only if all simplices of $W_n(A,X)_\bullet$ for all $n$ are determined by their vertices and their vertices are all distinct. 
\end{prop}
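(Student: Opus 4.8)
The plan is to prove the two directions of the equivalence separately, unwinding the definitions of the face maps in $W_n(A,X)_\bullet$ and the meaning of a simplex being ``determined by its vertices.'' First I would set up notation: a $p$-simplex of $W_n(A,X)_\bullet$ is a morphism $f\colon X^{\op p+1}\to A\op X^{\op n}$, and its $i$-th vertex is the $0$-simplex $d_0\cdots \widehat{d_i}\cdots d_p(f)$, i.e.\ the composite $f\circ(\iota_{X^{\op i}}\op X\op \iota_{X^{\op p-i}})\colon X\to A\op X^{\op n}$ (using that $0$ is initial so that $\iota_{X^{\op i}}$ makes sense, and that the $d_j$'s are precomposition with $X^{\op j}\op\iota_X\op X^{\op p-j-1}$, which compose to the stated inclusion of $X$ as the $i$-th summand). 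So ``$f$ is determined by its vertices'' means $f$ is determined by the tuple $(f\circ\sigma_0,\dots,f\circ\sigma_p)$ where $\sigma_i\colon X\to X^{\op p+1}$ is the $i$-th canonical inclusion, and ``the vertices are distinct'' means $f\circ\sigma_i\neq f\circ\sigma_j$ for $i\neq j$.

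For the direction ``locally standard $\Rightarrow$ simplices determined by distinct vertices'': the distinctness of vertices should follow by reducing to LS1. Given $i<j$, postcomposition by an automorphism and the transitivity axiom LH1 let me assume $f$ is a standard inclusion, or more directly I can factor $\sigma_i$ and $\sigma_j$ through a copy of $X^{\op 2}\hookrightarrow X^{\op p+1}$ using the two coordinates $i,j$; then $f\circ\sigma_i=f\circ\sigma_j$ would, after using LH1/LH2 to normalize $f$, contradict LS1 (possibly after applying LS2 repeatedly to ``thin out'' the ambient $X^{\op n}$ down to $X^{\op 2}$). For ``determined by'': I would argue by downward induction or by a direct normalization — transitivity (LH1) says $\Aut(A\op X^{\op n})$ acts transitively on $p$-simplices, and LH2 identifies the stabilizer of the standard $p$-simplex $\iota_A\op X^{\op p+1}\op\iota_{X^{\op n-p-1}}$; one checks that this stabilizer also fixes no other $p$-simplex with the same vertex tuple, which is where LS1 and LS2 get used to prevent two different simplices (differing by an element outside the stabilizer) from sharing a vertex list.

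For the converse ``simplices determined by distinct vertices $\Rightarrow$ locally standard'': this is the easier direction. LS1 is immediate: $\iota_A\op X\op\iota_X$ and $\iota_{A\op X}\op X$ are the two vertices of the standard $1$-simplex $\iota_A\op X^{\op 2}\colon X^{\op 2}\to A\op X^{\op 2}$ in $W_2(A,X)_\bullet$, so if all vertices of all simplices are distinct, these two morphisms in $\Hom(X,A\op X^{\op 2})$ must differ. For LS2: if $f\op\iota_X=g\op\iota_X$ as morphisms $X\to A\op X^{\op n}$ with $f,g\colon X\to A\op X^{\op n-1}$, I want to show $f=g$. Here I would build a $1$-simplex in $W_n(A,X)_\bullet$ whose two vertices ``encode'' $f\op\iota_X$ and something standard, and whose being determined by its vertices forces $f=g$; concretely, the point is that $f\op\iota_X$ is the $0$-th vertex of the $1$-simplex $(f\op X)\circ(\text{suitable morphism})$, and distinctness plus ``determined by vertices'' run backwards to recover $f$ from $f\op\iota_X$.

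The main obstacle I expect is the forward direction, specifically the bookkeeping in ``simplices are determined by their vertices'': one has to carefully use LH1 to move an arbitrary $p$-simplex to a standard one, track what the vertex tuple becomes, and then use LH2 together with LS1/LS2 to see that the stabilizer of the standard simplex acts freely enough on vertex tuples — i.e.\ that no nontrivial coset of the stabilizer fixes the vertex list. Getting the indexing of the face maps and the canonical inclusions exactly right (and correctly invoking pre-braidedness or just the initial-unit structure for the $\iota$'s) is the fiddly part; the conceptual content is small but the combinatorics of which $\iota_{X^{\op i}}\op X\op\iota_{X^{\op j}}$ appears where must be handled with care.
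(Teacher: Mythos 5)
Your high-level plan --- splitting into the two directions, extracting LS1 from the standard edge $\iota_A\op X^{\op 2}$, reducing distinctness to LS1 via LH1 and LS2, and proving "determined by vertices" by normalizing to a standard simplex and analyzing the stabilizing automorphism --- matches the skeleton of the paper's proof. The paper in fact factors the hard content into a separate lemma (Lemma~\ref{LH4equiv}) establishing a three-way equivalence between LS2, a stabilizer condition on edges, and "simplices determined by ordered vertices"; Proposition~\ref{standardprop} then follows quickly.

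However, your sketch leaves genuine gaps exactly where the real work sits. In the converse direction, deducing LS2 from "simplices determined by distinct vertices" is not a matter of "building a $1$-simplex whose two vertices encode $f\op\iota_X$ and something standard and running backwards." The actual mechanism: normalize $f=\iota_{A\op X^{\op n-2}}\op X$ by LH1 and write $g=\phi\circ f$; observe that $\phi\op X$ stabilizes both $f\op\iota_X$ and $\iota_{A\op X^{\op n-1}}\op X$, which are the two vertices of the edge $\iota_{A\op X^{\op n-2}}\op X^{\op 2}$; conclude from "determined by vertices" that $\phi\op X$ stabilizes the edge itself; then apply LH2 \emph{twice} to factor $\phi\op X=\phi'\op X^{\op 2}$ and then $\phi=\phi'\op X$, whence $\phi$ fixes $f$, so $g=f$. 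Without that specific edge and the double application of LH2, the argument does not close.

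In the forward direction, your phrase that "the stabilizer also fixes no other $p$-simplex with the same vertex tuple" misstates the shape of the argument: one is not characterizing the stabilizer, one shows that any $\phi$ with $\phi\circ g=f$ and $f,g$ sharing ordered vertices must lie in $\Fix(X^{\op p+1}, A\op X^{\op n})$, hence $g=f$. This is done by an induction peeling off one $X$ at a time: equality of the last vertex puts $\phi\in\Fix(X,\cdot)$, so $\phi=\phi'\op X$ by LH2; equality of the next vertex combined with \emph{LS2} then puts $\phi'\in\Fix(X,\cdot)$, so $\phi'=\phi''\op X$; and so on until $\phi=\psi\op X^{\op p+1}$. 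Your proposal correctly flags that LS1 and LS2 "get used", but the iteration --- in particular the role of LS2 at each inductive step, rather than only to reduce the ambient power of $X$ --- is the crux and must be made explicit for the proof to stand.
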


Recall that the group $G_n=\Aut(A\op X^{\op n})$ acts on $W_n(A,X)_\bullet$ by post-composition. For a simplex $f\in W_n(A,X)_p$,  we denote by $\St(f) \le G_n$ its stabiliser. Towards the proof of the above proposition, we give different equivalent characterisations of LS2, one of them in terms of this action: 

\begin{lem}\label{LH4equiv}
Let $(\C,\op,0)$ be locally homogeneous at $(A,X)$. Then the following are equivalent: 
\begin{enumerate}[(i)]
\item $\C$ satisfies LS2 at $(A,X)$;
\item For every $n$ and every edge $f\in W_n(A,X)_1$, $\St(f)=\St(d_0f)\cap \St(d_1f)$;
\item For every $n$ and $p$, if $f,f'\in W_n(A,X)_p$ have the same ordered set of vertices, i.e.~if $d_0^id_{i+1}d_{i+2}\dots d_pf=d_0^id_{i+1}d_{i+2}\dots d_pf'$ for all $0\le i\le p$, then $f=f'$.  
\end{enumerate}
\end{lem}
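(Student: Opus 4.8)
The plan is to prove the chain of equivalences (i)$\Rightarrow$(iii)$\Rightarrow$(ii)$\Rightarrow$(i), exploiting that all three statements are really statements about when face maps, hence the lists of vertices, separate simplices.

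First I would prove (i)$\Rightarrow$(iii) by induction on $p$. The case $p=0$ is vacuous, and the case $p=1$ is a reformulation of LS2 itself: an edge $f \in W_n(A,X)_1 = \Hom(X^{\op 2}, A\op X^{\op n})$ has vertices $d_0f = f\circ(\iota_X\op X)$ and $d_1 f = f\circ(X\op\iota_X)$, and I will need to argue using LH1 and LH2 that knowing these two morphisms, together with the compatibility forced by the simplicial identities, pins down $f$; the injectivity in LS2 is exactly what removes the last ambiguity (a priori $f$ and $f'$ with the same vertices differ by an automorphism fixing the relevant subobject, and LS2 forces that automorphism to act trivially). For the inductive step, given $f, f' \in W_n(A,X)_p$ with the same ordered vertex list, all their codimension-one faces $d_i f, d_i f'$ have the same vertex lists, so by induction $d_i f = d_i f'$ for all $i$; then a short argument (again using transitivity LH1 to write $f' = \phi\circ f$ for some $\phi \in \St(d_i f)$ for each $i$, and LH2 to identify these stabilisers with automorphism groups of complements) forces $\phi = \mathrm{id}$. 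This is the step I expect to be the main obstacle: carefully bookkeeping which automorphism subgroup each $d_i f$-stabiliser is, and checking that their intersection over all $i$ is trivial when the vertices already agree, is where the real content lies.

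Next, (iii)$\Rightarrow$(ii) is essentially formal. The inclusion $\St(f) \subseteq \St(d_0 f)\cap\St(d_1 f)$ is automatic since the face maps are $G_n$-equivariant. Conversely, if $\phi \in \St(d_0f)\cap\St(d_1 f)$, then $\phi\circ f$ and $f$ are both edges with the same (ordered) pair of vertices $d_0 f, d_1 f$, so by (iii) they coincide, i.e.\ $\phi \in \St(f)$.

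Finally, for (ii)$\Rightarrow$(i), I would show the contrapositive, or argue directly: take $g, g' \in \Hom(X, A\op X^{\op n-1})$ with $g\op\iota_X = g'\op\iota_X$ in $\Hom(X, A\op X^{\op n})$, and produce an edge whose stabiliser is strictly larger than the intersection of its vertex-stabilisers unless $g = g'$. Concretely, using LH1 there is an automorphism $\phi$ of $A\op X^{\op n}$ carrying $g\op X$ to $g'\op X$ while fixing the last $X$ summand, hence fixing both vertices of a suitable standard edge $f$ (the edge $\iota_{A\op X^{\op n-2}}\op X^{\op 2}$, say, precomposed appropriately); then $\phi \in \St(d_0 f)\cap\St(d_1 f)$, so by (ii) $\phi \in \St(f)$, and unwinding this via LH2 gives $g = g'$. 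I would also double-check the base object count ($n\ge 1$) and that the standard edge used genuinely has the two distinct vertices guaranteed by LS1, so the argument is not vacuous. Throughout, the only inputs are LH1, LH2 and the explicit description of the face maps in Definition~\ref{simpdef}; no connectivity or pre-braiding is needed.
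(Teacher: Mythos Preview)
Your proposal is correct and follows the same cycle of implications as the paper's proof; the arguments for (iii)$\Rightarrow$(ii) and (ii)$\Rightarrow$(i) are essentially identical to the paper's.  For (i)$\Rightarrow$(iii) the paper does not induct on $p$ but argues directly: writing $f' = \phi^{-1}\circ f$ with $f$ standard, one uses the vertex $i_p$ and LH2 to get $\phi = \phi'\op X$, then the vertex $i_{p-1}$ together with \emph{LS2} (to cancel the trailing $\iota_X$) and LH2 to get $\phi' = \phi''\op X$, and so on down to $\phi = \psi\op X^{\op p+1}$.

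One small correction to your sketch of the inductive step: LH1 and LH2 alone will not let you ``identify these stabilisers'', because for the standard $p$-simplex only the face $d_0 f$ is again standard; the other faces $d_i f$ are not of the form $\iota\op X^{\op p}$, so LH2 does not directly compute $\St(d_i f)$.  What makes the step go through is exactly LS2 again (applied at the vertex level to cancel a trailing $\iota_X$), combined with the inductive hypothesis---and once you write that out, your induction unrolls to precisely the paper's direct peeling argument.  Also, LS1 plays no role in this lemma: the equivalence of (i), (ii), (iii) holds regardless of whether the two vertices of the standard edge are distinct, so you need not invoke it.
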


\begin{proof}
We first show that (iii) $\Rightarrow$ (ii). Let  $f\in W_n(A,X)_1$. We always have that $\St(f)\subset \St(d_0f)\cap \St(d_1f)$. Suppose $\phi\in \St(d_0f)\cap \St(d_1f)$. Then $\phi\circ f$ is a new 1-simplex in $W_n$ with the same vertices as $f$. By (iii), this means that $\phi\circ f=f$, i.e.~that $\phi\in \St(f)$.  This proves (ii). 

\medskip

We now show that (ii) $\Rightarrow$ (i). Suppose we have $f, g : X \to A \op X^{\op n-1}$ such that $f \op \iota_X = g \op \iota_X$. By LH1,  we may assume without loss of generality that $f = \iota_{A \op X^{\op n-2}} \op X$ and that $g = \phi \circ f$ for some $\phi \in \Aut(A \op X^{\op n-1})$.

Thus $f \op \iota_X = (\phi \circ f) \op \iota_X = (\phi \op X) \circ (f \op \iota_X)$ and so $\phi \op X$ stabilises $f \op \iota_X = \iota_{A \op X^{\op n-2}} \op X\op \iota_X$. Now $\phi \op X$ also stabilises $\iota_{A \op X^{\op n-1}} \op X$, so by (ii) it stabilises the edge $\iota_{A \op X^{\op n-2}} \op X^{\op 2}$ in $W_n(A,X)_\bullet$. By LH2, it follows that $\phi \op X = \phi' \op X^{\op 2}$. By a further application of LH2, it follows that $\phi = \phi' \op X$. But then $\phi$ fixes $f$, so $g=f$.

\medskip

We are left to show that  (i) $\Rightarrow$ (iii).
Let $f, g : X^{\op p+1} \to A \op X^{\op n}$ be two $p$-simplices having the same sets of ordered vertices: that is, they agree when precomposed with any
$$i_j=\iota_{X^{\op j}}\op X\op \iota_{X^{\op p-j}}\colon X \lra X^{\op p+1}.$$
By LH1, we may suppose that $f = \iota_{A \op X^{\op n-p-1}} \op X^{\op p+1}$, and we may also find a $\phi \in \Aut(A \op X^{\op n})$ such that $\phi \circ g = f$. Now $f$ and $g$ precomposed with $i_p$ both give $\iota_{A \op X^{\op n-1}} \op X$, and so $\phi \in \Fix(\iota_{A \op X^{\op n-1}} \op X)$ and hence by LH2 we have $\phi = \phi' \op X$. Similarly, $f$ and $g$ precomposed with $i_{p-1}$ both give $\iota_{A \op X^{\op n-2}} \op X \op \iota_X$. Hence $(\phi'\op X)\circ (\iota_{A \op X^{\op n-2}} \op X \op \iota_X ) = (\phi'\circ (\iota_{A \op X^{\op n-2}} \op X)) \op \iota_X=\iota_{A \op X^{\op n-2}} \op X \op \iota_X$. Now by (i), this means that $\phi' \in \Fix(\iota_{A \op X^{\op n-2}} \op X)$ and hence $\phi' = \phi'' \op X$. Continuing in this way, we find that $\phi = \psi \op X^{\op p+1}$, but then $g = (\psi^{-1} \op X^{\op p+1}) \circ (\iota_{A \op X^{\op n-p-1}} \op X^{\op p+1}) = \iota_{A \op X^{\op n-p-1}} \op X^{\op p+1}$ as required.
\end{proof}

\begin{proof}[Proof of Proposition~\ref{standardprop}]
Suppose first that simplices of $W_n(A,X)_\bullet$ for all $n$ are determined by their vertices which are distinct. In particular, this holds for the 1-simplex $\iota_A\op X^{\op 2}$ whose vertices are $\iota_A\op X\op \iota_X$ and $\iota_{A\op X}\op X$ which immediately gives LS1. Moreover, by Lemma \ref{LH4equiv}, we have that $\C$ also satisfies LS2. 

Conversely, if $\C$ satisfies LS1 and LS2 then by Lemma \ref{LH4equiv} we know that simplices of $W_n(A,X)_\bullet$ are determined by their vertices. Now every $p$-simplex of $W_n(A,X)_\bullet$ has distinct vertices if and only if every edge of $W_n(A,X)_\bullet$ has distinct vertices. By LH1, this holds if and only if the edge $\iota_A\op X^{\op 2}\op \iota_{X^{\op n-2}}$ has distinct vertices, which by LS2 holds if and only if LS1 holds. 
\end{proof}

We associate a simplicial complex to the pair $(A,X)$, which is closely related to the semi-simplicial set $W_n(A,X)_\bullet$.

\begin{Def} Let $(\C,\op,0)$ be a monoidal category with 0 initial and $(A,X)$ a pair of objects in $\C$. 
Define $S_n(A,X)$ to be the simplicial complex with vertices $W_n(A,X)_0$, and where a set of vertices spans a simplex if and only if there is a simplex of $W_n(A,X)_\bullet$ having them as vertices.
\end{Def}

When $\C$ is locally standard at $(A,X)$, there is a surjective map on the set of $p$-simplices
$$\pi_p:W_n(A,X)_p\surj S_n(A,X)_p$$
for each $p$. The maps $\pi_p$ together define a map on geometric realisation 
 $\pi: \vert W_n(A,X)_\bullet \vert \to \vert S_n(A,X) \vert.$
The goal of this section is to describe two situations in which the high connectivity of the spaces $\vert S_n(A,X)\vert$ can be leveraged to deduce the high-connectivity of the spaces $\vert W_n(A,X)_\bullet\vert$. 

There are two situations which arise in examples, namely the two extreme ones:

\begin{enumerate}[(A)]
\item For any $p$-simplex $\{ v_0, v_1,\ldots, v_p\}$ of $ S_n(A,X)$ and any permutation $\la \in \Sigma_{p+1}$, there exists a simplex of $W_n(A,X)_p$ whose vertices are $(v_{\la(0)}, v_{\la(1)}, \ldots, v_{\la(p)})$. In particular, there are exactly $(p+1)!$ elements in $\pi_p^{-1}(\s)$ for any $p$-simplex $\s\in S_n(A,X)_p$. 

\item There is a single element in $\pi_p^{-1}(\s)$ for any $p$-simplex $\s\in S_n(A,X)_p$. 
\end{enumerate}

In case (B), under the locally standard assumption, we have that $\vert W_n(A,X)_\bullet \vert$ and $\vert S_n(A,X) \vert$ are homeomorphic, so it is equivalent to work with the simplicial complex $S_n(A,X)$. However, case (A) is the most common situation; it occurs whenever the category is symmetric monoidal: 

\begin{prop}\label{prop:SymMonBuilding}
Suppose $(\C, \op, 0)$ is a symmetric monoidal category, locally homogeneous and locally standard at $(A,X)$. Then the semi-simplicial set $W_n(A,X)_\bullet$ satisfies condition {\rm (A)}. 
\end{prop}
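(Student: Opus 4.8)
The plan is to unwind the definitions and exhibit, for each $p$-simplex of $S_n(A,X)$ and each permutation of its vertices, an explicit $p$-simplex of $W_n(A,X)_\bullet$ realising that ordering; the symmetry isomorphisms of $(\C,\op,0)$ provide exactly the permutations needed. First I would recall that, by local standardness (Proposition~\ref{standardprop}), simplices of $W_n(A,X)_\bullet$ are determined by their ordered tuples of vertices, and these vertices are distinct; so the map $\pi_p : W_n(A,X)_p \to S_n(A,X)_p$ has the property that two preimages of the same $\s$ differ precisely by reordering their vertex tuples, and condition (A) amounts to saying all $(p+1)!$ reorderings of a given vertex set of $S_n(A,X)$ are realised, hence $|\pi_p^{-1}(\s)| = (p+1)!$.

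Next I would use LH1: given a $p$-simplex $\lgl v_0,\dots,v_p\rgl$ of $S_n(A,X)$, there is some $f \in W_n(A,X)_p$ with these as its (unordered) vertices, and after acting by $\Aut(A\op X^{\op n})$ — which acts transitively on $p$-simplices of $W_n(A,X)_\bullet$ up to reordering, by LH1 applied at level $p$ — we may normalise and assume $f = \iota_{A\op X^{\op n-p-1}} \op X^{\op p+1}$, so its $j$-th vertex ($0 \le j \le p$) is $d_p\cdots \widehat{d_j}\cdots d_0 f = \iota_{A\op X^{\op n-1}}\op$ (the $j$-th copy of $X$), obtained by precomposing with $i_j = \iota_{X^{\op j}}\op X\op \iota_{X^{\op p-j}}$. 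Given $\la \in \Si_{p+1}$, I would then produce a permutation isomorphism $P_\la : X^{\op p+1} \to X^{\op p+1}$ built from the symmetry braidings $b$ of $\C$ that realises the permutation $\la$ of the $(p+1)$ tensor factors, and set $f_\la := f \circ P_\la \in W_n(A,X)_p$. The key point, to be checked, is that $f_\la$ precomposed with $i_j$ equals $f$ precomposed with $i_{\la(j)}$, i.e.\ $P_\la \circ i_j = i_{\la(j)}$ in $\Hom(X, X^{\op p+1})$ — this follows from the defining property $b_{A,B}\circ(A\op \iota_B) = \iota_B\op A$ of a pre-braided (a fortiori symmetric monoidal) category, applied inductively to decompose $P_\la$ and $i_j$ into elementary transposition braidings and unit morphisms. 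Hence $f_\la$ has vertex tuple $(v_{\la(0)},\dots,v_{\la(p)})$, as required, and since the vertices of $f$ are distinct (standardness) the $f_\la$ are distinct for distinct $\la$, giving exactly $(p+1)!$ preimages.

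The main obstacle I anticipate is the bookkeeping in the last step: verifying $P_\la\circ i_j = i_{\la(j)}$ cleanly. The subtlety is that a permutation of tensor factors is not canonically a single morphism — one must write $\la$ as a word in adjacent transpositions, assemble $P_\la$ from the corresponding $b_{X,X}$'s tensored with identities, and then chase the unit-compatibility of the braiding through the composite; the symmetry condition $b_{X,X}^{-1} = b_{X,X}$ is what guarantees the result is independent of the chosen word, and the pre-braided axiom is what lets the inserted $\iota_X$'s "slide past" the braidings. Once this compatibility is in hand, the statement is immediate. I would organise the argument so that this combinatorial lemma about $P_\la$ and the $i_j$ is isolated and proved first, after which the proposition is a two-line deduction combining it with LH1 and Proposition~\ref{standardprop}.
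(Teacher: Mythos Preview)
Your proposal is correct and follows essentially the same route as the paper: both arguments take a lift $f\in W_n(A,X)_p$ of the given simplex, use the symmetric monoidal structure to produce a permutation automorphism $P_\la:X^{\op p+1}\to X^{\op p+1}$ with $P_\la\circ i_j=i_{\la(j)}$, and conclude that $f\circ P_\la$ has the permuted vertex tuple, invoking local standardness at the end.

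Two minor remarks. First, your normalisation step via LH1 (replacing $f$ by the standard simplex) is unnecessary: the argument works for any $f$ projecting to $\sigma$, since precomposition by $P_\la$ permutes the vertices of \emph{any} $f$. Second, you anticipate difficulty in checking $P_\la\circ i_j=i_{\la(j)}$ and plan to use the pre-braided axiom; in a genuinely symmetric monoidal category this identity is immediate from naturality of the symmetry applied to the morphism $\iota_X\op\cdots\op X\op\cdots\op\iota_X$ together with the unit coherence $b_{0,Y}=\mathrm{id}$, so no induction on transpositions is needed. The paper accordingly asserts it in one line.
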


\begin{proof}
If $\s$ is a $p$-simplex of $S_n(A,X)$, there must exist a $p$--simplex $f$ of $W_n(A,X)_\bullet$ such that $\pi(f)=\s$. We have  $f: X^{\op p+1}\to A\op X^{\op n}$. Now let $\la\in \Si_{p+1}$ be a permutation of the set $\{0,\dots,p\}$. Then the symmetry of $\C$ defines a morphism $\la:X^{\op p+1}\to X^{\op p+1}$ such that $\la\circ i_j=i_{\la(j)}$ for $i_j= \iota_{X^{\op j}}\op X\op \iota_{X^{\op p-j}}:X\to X^{\op p+1}$. Hence the simplex $f\circ \la$ in $W_n(A,X)_\bullet$  has vertices $f\circ\la\circ i_j$, the vertices of $f$ permuted by $\la$, showing that any permutation of the vertices of $\s$ can be obtained. 
The result follows by local standardness and Proposition~\ref{standardprop}. 
\end{proof}

In the remainder of this section we shall prove the following result. 

\begin{thm}\label{caseA}
Let $(\C, \op, 0)$ be locally homogeneous and locally standard at $(A,X)$. Suppose that $W_n(A,X)_\bullet$ satisfies condition {\rm (A)} for all $n \geq 0$. Let $a,k\ge 1$. Then the simplicial complex $S_n(A,X)$ is $(\frac{n-a}{k})$-connected for all $n\ge 0$, if and only if the semi-simplicial set $W_n(A,X)_\bullet$ is $(\frac{n-a}{k})$-connected  for all $n\ge 0$.
\end{thm}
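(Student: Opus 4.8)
The plan is to compare the semi-simplicial set $W_n := W_n(A,X)_\bullet$ with the simplicial complex $S_n := S_n(A,X)$ via the surjection $\pi: \vert W_n \vert \to \vert S_n \vert$, and to analyze the fibers of $\pi$ over each open simplex. Under condition (A), the preimage $\pi_p^{-1}(\sigma)$ of a $p$-simplex of $S_n$ consists of exactly $(p+1)!$ simplices of $W_n$, indexed by the orderings of the vertex set of $\sigma$; that is, $\pi$ is (combinatorially) the "symmetric realization" map. The key local observation is that over a single $p$-simplex $\sigma$ of $S_n$, the subcomplex $\pi^{-1}(\overline\sigma) \subset \vert W_n\vert$ is the quotient of $(p+1)!$ copies of the standard $p$-simplex glued along faces according to orderings — equivalently, it is the geometric realization of the semi-simplicial set whose $q$-simplices are injections $\{0,\dots,q\}\hookrightarrow \{0,\dots,p\}$ together with the induced face structure. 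This space is well-known (it is the "complex of injective words" on $p+1$ letters, or can be recognized by an explicit deformation retraction / shelling argument) to be homotopy equivalent to a wedge of $p$-spheres, and in particular $(p-1)$-connected. More precisely it maps to $\overline\sigma \simeq *$ with the homotopy fiber over the barycenter being this wedge of $(p-1)$-connected pieces.

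With this local input, the global comparison follows by a skeleton-by-skeleton / Mayer–Vietoris-type induction, or most cleanly by the "microfibration / fiberwise connectivity" technique used in the cited works of van der Kallen and Galatius–Randal-Williams. First I would observe that since $\pi$ restricts to a homeomorphism on the $0$-skeleta (both are $W_n(A,X)_0$) and, by (A), the connectivity of a point-inverse $\pi^{-1}(\sigma)$ grows with $\dim\sigma$ — being $(\dim\sigma - 1)$-connected hence contractible is too much to hope, but $(\dim\sigma-1)$-connected is exactly what is needed — one can feed this into a standard lemma: if $f: Y \to Z$ is a map of CW (or simplicial) complexes such that the preimage of the open star (or the cone on the link) of each $p$-cell is $(c-1)$-connected whenever the $p$-cell is attached at "height" making it contribute in degrees $\le c$, then $f$ induces isomorphisms on $\pi_i$ in the relevant range. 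Concretely: pulling back the skeletal filtration of $\vert S_n\vert$ gives a filtration of $\vert W_n\vert$ whose successive quotients are wedges of (reduced) suspensions of the wedges-of-spheres fibers over the $p$-simplices; the extra connectivity $(p-1)$ of the fiber over a $p$-simplex exactly compensates for the $p$ dimensions being added, so the comparison map is as highly connected as one wants, and in fact an isomorphism on homotopy in all degrees below the dimension. Since $(n-a)/k$-connectivity is a statement purely in low degrees, the two connectivity statements are equivalent.

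In more detail, the cleanest route is probably: (1) record the combinatorial description of $\pi^{-1}(\overline\sigma)$ as $\vert \mathrm{Inj}_\bullet(\sigma)\vert$ and cite/prove that this is $(\dim\sigma - 1)$-connected (this is Lemma of "complex of injective words" type — a short argument via the standard free $\Sigma_{p+1}$-action or a discrete Morse function, or one can phrase it via the observation that $\vert W_{p+1}(A,X)_\bullet\vert$ for the free example collapses this way; but here it is purely combinatorial so no appeal to examples is needed); (2) prove a general lemma: given a surjective simplicial map $\pi : \vert W \vert \to \vert S\vert$ of finite-dimensional complexes such that for every $p$-simplex $\sigma$ of $S$ the space $\pi^{-1}(\mathrm{star}^\circ(\sigma))$ is $(c-p-1+\ell)$-connected where we want conclusions in degree $\le \ell$-ish — state it so that it yields: $\pi$ is $N$-connected provided each $\pi^{-1}(\overline\sigma)$ is $(\dim\sigma-1)$-connected and $N$ is below the relevant dimensions — then apply with $W = W_n$, $S = S_n$; (3) conclude by noting $\pi : \vert W_n\vert \to \vert S_n\vert$ being $\infty$-connected in the appropriate sense (equivalently, a weak equivalence, since in fact the fibers are as connected as their dimension allows at every stage) forces $\vert W_n\vert$ and $\vert S_n\vert$ to have the same connectivity, so $(\frac{n-a}{k})$-connectivity transfers in both directions.

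The main obstacle I anticipate is not the homotopy-colimit bookkeeping (which is standard) but rather setting up the local model $\pi^{-1}(\overline\sigma)$ precisely and proving its $(\dim\sigma-1)$-connectivity cleanly within the paper's conventions — one must be careful that condition (A) gives exactly the $(p+1)!$ orderings with the expected face-compatibility (so that the preimage really is the realization of the injective-words semi-simplicial set and not something with fewer identifications), and that local standardness (via Proposition~\ref{standardprop}) is what guarantees simplices are determined by their ordered vertices so that this identification is correct. A secondary subtlety: one wants the comparison not just for a fixed $n$ but "for all $n$" simultaneously, because the inductive argument for connectivity of $W_n$ may want to use connectivity of $S_m$ for $m < n$ and vice versa — but since the statement is an "if and only if, for all $n$", this is handled by simply running the fiberwise argument for each $n$ independently, as the fibers over simplices of $S_n$ only involve the combinatorics of that single complex.
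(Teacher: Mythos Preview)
Your approach has a genuine gap: the ``general lemma'' you propose in step (2) is false. Take $S$ to be a single $1$-simplex, so $|S|$ is contractible. Under condition (A), $|W| = |S^{ord}|$ consists of two $1$-simplices sharing their endpoints, i.e.\ $|W| \cong S^1$. Your hypothesis is satisfied --- $\pi^{-1}(\overline v)$ is a point for each vertex $v$, and $\pi^{-1}(\overline\sigma) \cong S^1$ is $0$-connected for the edge $\sigma$ --- yet $\pi\colon S^1 \to [0,1]$ is not $1$-connected. More generally $|W_n|$ and $|S_n|$ are \emph{not} weakly equivalent, and the projection $\pi$ is typically only $0$-connected (its honest point-inverses over interior points of a $p$-simplex are $(p+1)!$ discrete points). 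Your identification of $\pi^{-1}(\overline\sigma)$ with the complex of injective words and its $(\dim\sigma-1)$-connectivity are correct, but neither a microfibration argument nor a skeletal-filtration comparison extracts a high connectivity of $\pi$ from this.

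What is missing is precisely the weakly Cohen--Macaulay structure, and this is where the ``for all $n$'' hypothesis is genuinely used --- contrary to your last paragraph, one cannot run the argument for each $n$ independently. The paper first shows that the link of a $p$-simplex in $S_n(A,X)$ is isomorphic to $S_{n-p-1}(A,X)$; the assumed connectivity of $S_m$ for \emph{all} $m$ then makes each $S_n$ weakly Cohen--Macaulay (Corollary~\ref{CMcor}). The hard direction is then the general Proposition~\ref{RWprop}: given a map $f\colon S^k \to |Y^{ord}|$ with $Y$ wCM of dimension $n$ and $k\le n-1$, project to $|Y|$, extend to a simplicial null-homotopy $g\colon D^{k+1} \to |Y|$ which is \emph{simplexwise injective on interior simplices} (this is \cite[Thm~2.4]{GalRW14}, and it is here that the wCM link condition is used), and then lift $g$ back to $|Y^{ord}|$ by imposing any total order on the new interior vertices. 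The easy direction is immediate: a total order on the vertices of $S_n$ splits $\pi$, so $|S_n|$ is a retract of $|W_n|$.
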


Note that in the theorem, we need to know about the connectivity of the spaces $S_n$ (resp.~$W_n$) for all $n\ge 0$ with a slope $k\ge 1$, whereas axiom LH3 only requires connectivity for all $n\ge 1$ with a slope $k\ge 2$. This means that if each $S_n(A,X)$ is $(n-2)$--connected for all $n\ge 0$, we can conclude that so is each $W_n(A,X)$, and hence that $W_n(A,X)$ is $(\frac{n-2}{2})$--connected for all $n\ge 1$ as $\lfloor\frac{n-2}{2}\rfloor\le n-2$ when $n\ge 1$. This situation occurs in several examples, where $S_0(A,X)$ and $W_0(A,X)$ are empty, in which case it is true that they are $(0-2)$-connected, though not $(\frac{0-2}{2})$-connected. 

We start by showing that, when condition (A) is satisfied, the simplicial complexes $S_n(A,X)$ satisfy the following niceness condition.

\begin{Def}\label{CMdef}
Following the terminology of \cite{GalRW14,HatWah10}, we call a simplicial complex $X$ 
{\em weakly Cohen--Macaulay of dimension $n$} if 
\begin{enumerate}[(i)]
\item $X$ is $(n-1)$-connected, and
\item for every $p\ge 0$, the link of each $p$-simplex in $X$ is $(n-p-2)$-connected. 
\end{enumerate}
The complex $X$ is called  {\em locally weakly Cohen--Macaulay of dimension $n$} if it just satisfies property (ii). 

As per our convention on connectivity, we allow $n$ to be a rational number. 
Note that a weakly Cohen--Macaulay complex of dimension $n$ is necessarily of actual dimension at least $\lfloor n\rfloor$, as property (ii) requires that the link of an $\lfloor n-1\rfloor$-simplex be $(-1)$-connected, i.e.\ non-empty, which inductively shows that there must exist simplices of dimension $\lfloor n\rfloor$. 
\end{Def}

\begin{prop}
Suppose $(\C, \op, 0)$ is locally homogeneous and standard at $(A,X)$, 
and that each $W_n(A,X)_\bullet$ satisfies condition {\rm (A)}. If $\sigma$ is a $p$-simplex of $S_n(A,X)$ with $p \leq n-1$ then $\link(\sigma) \cong S_{n-p-1}(A,X)$. 
\end{prop}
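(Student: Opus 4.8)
The plan is to exhibit an explicit simplicial isomorphism between $\link(\sigma)$ in $S_n(A,X)$ and the complex $S_{n-p-1}(A,X)$. First I would pick the standard $p$-simplex. By condition {\rm (A)} together with local standardness (Proposition~\ref{standardprop}), the $p$-simplex $\sigma$ has an underlying unordered set of $p+1$ distinct vertices, each of which is a morphism $X \to A\op X^{\op n}$; applying LH1 I may post-compose by an automorphism of $A\op X^{\op n}$ to assume that $\sigma$ is the image under $\pi_p$ of the standard $p$-simplex $\iota_{A\op X^{\op n-p-1}}\op X^{\op p+1}\colon X^{\op p+1}\to A\op X^{\op n}$, since an isomorphism of the ambient complex carries $\link(\sigma)$ isomorphically onto the link of the image. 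So it suffices to identify $\link(\sigma)$ for this particular $\sigma$.

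\textbf{Construction of the comparison map.} Write $A' := A\op X^{\op n-p-1}$, so that $A\op X^{\op n} = A' \op X^{\op p+1}$ and $\sigma$ is represented by $\iota_{A'}\op X^{\op p+1}$. I would define a map on vertices
$$\Phi\colon S_{n-p-1}(A,X)_0 = \Hom_\C(X, A') \lra \Hom_\C(X, A'\op X^{\op p+1}) = S_n(A,X)_0$$
by sending a morphism $g\colon X \to A'$ to $g \op \iota_{X^{\op p+1}}$ (equivalently, post-composing $g$ with the canonical inclusion $A' \to A'\op X^{\op p+1}$). The first task is to check that $\Phi$ lands in $\link(\sigma)$: I must show that for a vertex $v$ in the image of $\Phi$, the set $\{v\}\cup\{\text{vertices of }\sigma\}$ spans a simplex of $S_n(A,X)$, i.e.~that there is a simplex of $W_n(A,X)_\bullet$ with those vertices. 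This follows by taking the $(p+1)$-simplex $g\op X^{\op p+1}\colon X^{\op p+2}\to A'\op X^{\op p+1}$ (for $g\colon X \to A'$ a representing morphism, using LH1 to reduce to a standard representative of the original vertex in $S_{n-p-1}$) and observing its faces: one face is the standard $p$-simplex $\iota_{A'}\op X^{\op p+1}$ representing $\sigma$, and the opposite vertex is $g\op\iota_{X^{\op p+1}}$. More generally any simplex $\tau$ of $\link(\sigma)$ is carried back: if $\tau$ together with $\sigma$ spans a simplex $\rho$ of $S_n(A,X)$, lift $\rho$ to $W_n(A,X)_\bullet$; using condition {\rm (A)} I may reorder the vertices so that those of $\sigma$ come first, i.e.~the lift has the form $h\colon X^{\op q+1}\op X^{\op p+1}\to A'\op X^{\op p+1}$ with the last $p+1$ vertices giving $\sigma$; then LH2 forces $h = h'\op X^{\op p+1}$ for a unique $h'\colon X^{\op q+1}\to A'$, and $h'$ represents the preimage simplex in $S_{n-p-1}(A,X)$.

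\textbf{Bijectivity and simpliciality.} Injectivity of $\Phi$ on vertices is exactly the content of LS2 (iterated $p+1$ times, or one reduction step using Lemma~\ref{LH4equiv}). Surjectivity onto the vertices of $\link(\sigma)$ is the $q=0$ case of the lifting argument just described: a vertex $w$ adjacent to all vertices of $\sigma$ spans a simplex with $\sigma$, whose lift (after reordering by {\rm (A)}) has the form $h'\op X^{\op p+1}$, so $w = h'\op\iota_{X^{\op p+1}} = \Phi(h')$. That $\Phi$ is simplicial in both directions---that a set $\{g_0,\dots,g_q\}$ spans a simplex of $S_{n-p-1}(A,X)$ iff $\{\Phi(g_0),\dots,\Phi(g_q)\}$ spans a simplex of $\link(\sigma)$---follows from the same correspondence: a lift of the first to $W_{n-p-1}(A,X)_\bullet$, namely some $h'\colon X^{\op q+1}\to A'$, produces the lift $h'\op X^{\op p+1}$ of the second to $W_n(A,X)_\bullet$, and conversely every lift of the second splits off $X^{\op p+1}$ by LH2. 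Uniqueness of complements (which LH2/LH1 provide) guarantees this correspondence is well-defined on the level of simplicial complexes. I expect the main obstacle to be the bookkeeping in the reordering step: condition {\rm (A)} is exactly what lets me assume the vertices of $\sigma$ sit in the last $p+1$ slots of a lifted simplex, and one must be slightly careful that after this reordering the remaining vertices really do assemble, via LH2 and local standardness, into a genuine simplex of the smaller complex rather than merely a list of compatible vertices---but this is precisely guaranteed by Lemma~\ref{LH4equiv}(iii).
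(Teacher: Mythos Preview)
Your proposal is correct and follows essentially the same approach as the paper: reduce $\sigma$ to the standard $p$-simplex via LH1, define the comparison map by post-composing with $A'\op\iota_{X^{\op p+1}}$, use LS2 for injectivity, and for surjectivity lift a simplex of $\link(\sigma)$ to $W_n(A,X)_\bullet$, reorder via condition~(A) so that the vertices of $\sigma$ occupy the last $p+1$ slots, and then split off $X^{\op p+1}$.

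Two small points worth tightening. First, there is an inconsistency in your phrasing: you write ``reorder the vertices so that those of $\sigma$ come first'' but then immediately describe the lift with the vertices of $\sigma$ in the \emph{last} $p+1$ positions; the latter is what you want (and what the paper does), so that LH2 applies to the last $X^{\op p+1}$ factor. Second, your assertion ``LH2 forces $h=h'\op X^{\op p+1}$'' is slightly compressed: LH2 is a statement about automorphisms, not arbitrary morphisms $h$. The paper makes this explicit by first choosing, via LH1, an automorphism $\phi$ with $\phi\circ h$ equal to the standard simplex; since both $h$ and the standard simplex restrict to $\iota_{A'}\op X^{\op p+1}$ on the last factor, $\phi$ fixes that standard map and hence by LH2 has the form $\phi'\op X^{\op p+1}$, from which $h=(\phi')^{-1}\circ(\iota\op X^{\op q+1})\op X^{\op p+1}$ follows. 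You clearly have this in mind (you cite LH1/LH2 together at the end), but the step deserves to be unpacked.
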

\begin{proof}
Let $\sigma = \{ v_0, v_1, \ldots, v_p \}$ be a $p$-simplex of $S_n(A,X)$. Using LH1, we may assume that $v_i=\iota_{A\op X^{n-p-1+i}}\op X\op \iota_{X^{\op p-i}}:X\to A\op X^{\op n}$ is the inclusion of the $(n-p+i)$th $X$-summand. We have an inclusion $$\al\colon S_{n-p-1}(A,X)\inc \link(\sigma)$$ 
of simplicial complexes defined by taking $\{ g_0,\dots,g_k\}$  to $\{ \widehat g_0,\dots,\widehat g_k\}$, where 
$$\widehat g_i:X\ \sta{g_i}\rar\  A\op X^{\op n-p-1}\ \sta{A\op X^{\op n-p-1} \op
  \iota_{X^{\op p+1}}}\rar\  A\op X^{\op n-p-1}\op X^{\op p+1}.$$
Indeed,  $n-p-1\ge 0$ as we have assumed that $p\le n-1$. Also, $\{ \widehat g_0,\dots,\widehat g_k,v_0,  \ldots, v_p \}$ is a $(p+k+1)$-simplex of $S_n(A,X)$ as, by LS2, the $\widehat g_i$'s are distinct and by LS1 combined with LH1, they are distinct from the vertices of $\s$.  Finally, if $\{ g_0,\dots,g_k\}$ is represented by $g:X^{\op k+1}\to A\op X^{\op n-p-1}$ in $W_{n-p-1}(A,X)_k$, then this larger simplex is represented by $g\op X^{\op p+1}$ in $W_n(A,X)_{p+k+1}$. So $\al$ is well-defined. It is clearly simplicial and it is injective by LS2. 

We are left to show the map $\al$ is an isomorphism, so let $\{ \widehat g_0,\dots,\widehat g_k\}$ be a $k$-simplex of the link of $\sigma$. As $\{ \widehat g_0,\dots,\widehat g_k,v_{0},\dots,v_p\}$ is a simplex of $S_n(A,X)$ there exists a $(p+k+1)$-simplex 
$$\widehat g\colon X^{\op k+1} \op X^{\op p+1}  \rar  A\op X^{\op n}$$
of $W_n(A,X)_\bullet$ such that $\widehat g\circ i_j= \widehat g_{j}$ for $j=0,\dots,k$ and $\widehat g\circ i_j=v_{j-k-1}$ for $j=k+1,\dots,k+p+1$.   (Note that we may choose this particular ordering because $W_n(A,X)_\bullet$ is assumed to satisfy condition (A).) Now let $\phi\in\Aut(A\op X^{\op n})$ be such that $\phi\circ \widehat g=\iota\op {X^{\op k+p+2}}$. (Such an automorphism exists by LH1.) As $\widehat{g}$ restricts to the inclusion of the last $X^{\op p+1}$, it follows that $\phi \in \Fix(\iota_{A \op X^{\op n-p-1}}\op X^{\op p+1})$ and thus by LH2 that $\phi=\phi'\op {X^{\op p+1}}$ for some $\phi'\in\Aut(A\op X^{\op n-p-1})$. Now let $$g=(\phi')^{-1}\circ (\iota_{A\op X^{\op n-p-k-2}}\op {X^{\op k+1}})\colon X^{\op k+1}\ \rar \ A\op X^{\op n-p-1}.$$
Then $g$ is a lift in $W_{n-p-1}(A,X)_\bullet$ of a simplex $\{ g_0,\dots,g_k\}$ of $S_{n-p-1}(A,X)$ which is mapped to $\{ \widehat g_0,\dots,\widehat g_k\}$ by $\al$.
\end{proof}

\begin{cor}\label{CMcor}
Under the same conditions, suppose there are numbers  $a,k\ge 1$ such that for all $n\ge 0$,  the complex $S_n(A,X)$ is $(\frac{n-a}{k})$-connected, then each $S_n(A,X)$ is weakly Cohen--Macaulay of dimension $\frac{n-a+k}{k}$.
\end{cor}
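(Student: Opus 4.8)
The plan is to deduce Corollary~\ref{CMcor} directly from the preceding proposition, which identifies $\link(\sigma) \cong S_{n-p-1}(A,X)$ for any $p$-simplex $\sigma$ of $S_n(A,X)$ with $p \le n-1$. Since we already assume each $S_n(A,X)$ is $(\tfrac{n-a}{k})$-connected for all $n \ge 0$, both clauses of the definition of weakly Cohen--Macaulay of dimension $\tfrac{n-a+k}{k}$ will follow by bookkeeping with the connectivity bounds.

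First I would check clause (i): $S_n(A,X)$ must be $\left(\tfrac{n-a+k}{k} - 1\right)$-connected. But $\tfrac{n-a+k}{k} - 1 = \tfrac{n-a}{k}$, which is exactly the hypothesised connectivity, so clause (i) is immediate. Next, for clause (ii), I would take a $p$-simplex $\sigma$ with $p \ge 0$; if $p > n-1$ there are no such simplices (or the link is empty and there is nothing of positive dimension to connect — one should note the degenerate cases, as the paper does after Definition~\ref{CMdef}, where $(-1)$-connected just means non-empty), so assume $p \le n-1$. By the proposition, $\link(\sigma) \cong S_{n-p-1}(A,X)$, and by hypothesis this is $\left(\tfrac{(n-p-1)-a}{k}\right)$-connected. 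We need it to be $\left(\tfrac{n-a+k}{k} - p - 2\right)$-connected, so I would verify the inequality $\tfrac{n-a+k}{k} - p - 2 \le \tfrac{n-p-1-a}{k}$. Multiplying through by $k$ this reads $n - a + k - kp - 2k \le n - p - 1 - a$, i.e.\ $-k - kp \le -p - 1$, i.e.\ $p + 1 \le k + kp = k(p+1)$, which holds since $k \ge 1$. Hence the required connectivity bound for the link is implied, establishing clause (ii).

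There is no real obstacle here: the content is entirely in the proposition giving the link identification, which we are permitted to assume, and the rest is the arithmetic comparison of the two affine-in-$n$ connectivity functions together with the observation that the ``dimension'' parameter $\tfrac{n-a+k}{k}$ is arranged precisely so that the link recursion $n \mapsto n-p-1$ is absorbed (this is why the statement uses $\tfrac{n-a+k}{k}$ rather than, say, $\tfrac{n-a}{k}$). The only mild care needed is to handle non-integer values of the connectivity parameters using the usual floor conventions, and to treat the edge cases where a link is empty; I would dispatch these with a one-line remark referencing the note after Definition~\ref{CMdef}.
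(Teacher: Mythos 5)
Your argument is correct and follows the paper's proof essentially verbatim: clause (i) is immediate from the hypothesis, and clause (ii) for $p \le n-1$ is the link identification plus the arithmetic inequality $\tfrac{n-a+k}{k} - p - 2 \le \tfrac{n-p-1-a}{k}$, which both you and the paper reduce to $p+1 \le k(p+1)$. One small imprecision: for $p \ge n$ the right justification (as the paper gives it) is that the required connectivity bound $\tfrac{n-a+k}{k} - p - 2 < -1$, so clause (ii) is vacuously satisfied regardless of whether $p$-simplices exist or what their links look like; your parenthetical about links being empty gestures at this but isn't quite the reason, and as stated it would actually conflict with the requirement if the bound were $-1$ rather than strictly less.
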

\begin{proof}
By the assumption and the previous proposition, the link of a $p$-simplex of
$S_n(A,X)$ is $(\frac{n-p-1-a}{k})$-connected for any $p\le n-1$. 
As $\frac{n-p-1-a}{k}\ge \frac{n-a}{k}-p-1$ when $k\ge 1$, we have that property (ii) of Definition~\ref{CMdef} is satisfied for any $p\le n-1$. On the other hand, property (ii) for $p\ge n$ is void as $\frac{n-a}{k}-p-1\le n-\frac{a}{k}-n-1< -1$ when $a>0$ and  $k\ge 1$, while property (i) is satisfied by assumption. 
\end{proof}

Given a simplicial complex $Y$, one can form an associated semi-simplicial set $Y^{ord}_\bullet$ which has a $p$-simplex for every $p$-simplex of $Y$ and every choice of ordering of its vertices. Under condition (A) and local standardness, the semi-simplicial set $W_n(A,X)_\bullet$ will be isomorphic to $S_n(A,X)^{ord}_\bullet$. 
Although for a simplicial complex $Y$ the geometric realisation of $Y$ and $Y^{ord}_\bullet$ are in general very different, the following shows that for weakly Cohen--Macaulay complexes, the connectivity property of $Y$ survives in $Y^{ord}_\bullet$. 

\begin{prop}\label{RWprop}
If $Y$ is a simplicial complex which is weakly Cohen--Macaulay of dimension $n$, then the associated semi-simplicial set $Y^{ord}_\bullet$ is $(n-1)$-connected. 
\end{prop}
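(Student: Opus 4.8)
The plan is to show that every map $f\colon S^{k}\to |Y^{ord}_\bullet|$ with $k\le n-1$ is null-homotopic, the case $k=-1$ being just the non-emptiness of $Y^{ord}_0=Y_0$, which holds because $Y$ is $(-1)$-connected. First I would pass to a combinatorial model. By simplicial approximation, $f$ is homotopic to the realization of a semi-simplicial map $L_\bullet\to Y^{ord}_\bullet$, where $L$ is a PL triangulation of $S^{k}$ with linearly ordered vertex set; unwinding the definition of $Y^{ord}_\bullet$, such a map is exactly a simplicial map $g\colon L\to Y$ which is \emph{simplexwise injective}, i.e.\ injective on the vertex set of each simplex of $L$. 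Composing with the forgetful map $|Y^{ord}_\bullet|\to|Y|$ and using that $|Y|$ is $(n-1)$-connected with $k\le n-1$, the map $|g|$ extends to a continuous map $D^{k+1}\to|Y|$, which relative simplicial approximation makes into a simplicial map $G\colon K\to Y$ for some PL triangulation $K$ of $D^{k+1}$ restricting to $L$ on the boundary (leaving $L$ undisturbed); extend the vertex ordering of $L$ arbitrarily to $K$. If $G$ is simplexwise injective, then, together with this ordering, it defines a semi-simplicial map $K_\bullet\to Y^{ord}_\bullet$ extending $L_\bullet\to Y^{ord}_\bullet$, whose realization is a null-homotopy of $f$ through $|K_\bullet|\cong D^{k+1}$. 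So the problem reduces to homotoping $G$, rel $\partial D^{k+1}=L$, to a simplexwise injective simplicial map, after possibly subdividing the interior of $K$.

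The heart of the matter is this last reduction, which I would carry out by the ``bad simplex'' surgery of Galatius and the first author (going back to Hatcher--Wahl), and it is here that the weakly Cohen--Macaulay hypothesis enters. Call a simplex $\sigma$ of $K$ not contained in $L$ \emph{bad} if $G|_\sigma$ is not injective, and choose a bad simplex $\sigma$ of maximal dimension $j$. Then $G$ is simplexwise injective on $\partial\sigma$ and on $\link_K(\sigma)$, the latter being a PL sphere or ball of dimension $k-j$, and, since $\st_Y(\bar\sigma)=\bar\sigma*\link_Y(\bar\sigma)$ for the image simplex $\bar\sigma:=G(\sigma)$ (of some dimension $q$ with $0\le q\le j-1$), the map $G$ carries $\link_K(\sigma)$ into $\bar\sigma*\link_Y(\bar\sigma)$. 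By hypothesis $\link_Y(\bar\sigma)$ is $(n-q-2)$-connected, and from $q\le j-1$ and $k\le n-1$ one has $k-j\le n-q-2$; this is exactly enough connectivity to re-triangulate $\st_K(\sigma)=\sigma*\link_K(\sigma)$ and redefine $G$ on it, rel $\link_K(\sigma)$, so as to remove the badness of $\sigma$ without introducing any bad simplex of dimension $\ge j$. Iterating this surgery, with the bad simplices ordered by decreasing dimension and then by count, terminates at a simplexwise injective simplicial map, which completes the reduction and hence the proof.

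The main obstacle is precisely this surgery together with its connectivity bookkeeping: one must re-triangulate the star of a bad simplex and redefine the map there so that a suitable measure of badness strictly decreases, and this is what forces the stated connectivity bounds on the links and the hypothesis $k\le n-1$; I would import these details from \cite{GalRW14}. The remaining ingredients---semi-simplicial approximation, the identification of semi-simplicial maps $L_\bullet\to Y^{ord}_\bullet$ with simplexwise injective simplicial maps $L\to Y$, and the fact that the closed star of a simplex is the join of that simplex with its link---are routine.
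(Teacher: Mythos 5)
Your approach is essentially the paper's: extend the projection $\bar f = q\circ f \colon S^k \to |Y|$ over $D^{k+1}$ using the $(n-1)$-connectivity of $|Y|$, arrange the extension to be simplexwise injective using the link-connectivity furnished by the weakly Cohen--Macaulay hypothesis, and then lift back to $|Y^{ord}_\bullet|$ by choosing orderings. The paper invokes \cite[Thm 2.4]{GalRW14} as a black box for the middle step, and its only genuine content is the lift, which it organises by writing each simplex as $\sigma_\partial * \sigma_{int}$ and ordering $g(\sigma_\partial)$ via $f$ and $g(\sigma_{int})$ via a fixed enumeration of interior vertices; your device of assuming $L$ carries a compatible total order which you extend to $K$ comes to the same thing.

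The one place your sketch goes wrong is in the surgery you offer in place of the citation. With ``bad'' meaning ``$G|_\sigma$ not injective'', badness is inherited by every simplex containing $\sigma$, so a ``bad simplex of maximal dimension'' is just a top-dimensional simplex of $K$, and the claims that $G$ is then simplexwise injective on $\partial\sigma$ and on $\link_K(\sigma)$ both fail. The notion that actually works in \cite{GalRW14} (following Hatcher--Wahl) is that $\sigma$ is bad when \emph{every} vertex of $\sigma$ has the same $G$-image as some other vertex of $\sigma$; this is not upward closed, and for a bad $\sigma$ of maximal dimension one shows that $G(\link_K(\sigma))$ is disjoint from $G(\sigma)$, hence lands in $\link_Y(G(\sigma))$, at which point your dimension count $k-j \le n-q-2$ (using $q\le j-1$ and $k\le n-1$) is exactly what permits the re-triangulation. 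Since you explicitly defer these details to \cite{GalRW14}, the overall argument is sound, but the definition and the associated claims should be corrected to match what that reference actually does.
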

\begin{proof}
Let $f:S^k\to |Y^{ord}_\bullet|$ be a map, and consider $\bar f:=q\circ f:S^k\to |Y|$ the composition of $f$ with the projection $q:|Y^{ord}_\bullet|\to |Y|$ which forgets the ordering. 
By \cite[Thm.~5.1]{RouSan71}, we can assume that $f$ is simplicial with respect to some PL triangulation of $S^k$.  If $k\le n-1$, there is a map $g:D^{k+1}\to |Y|$ such that $g$ restricts to $\bar f$ on $S^k$. Again, after changing $g$ by a homotopy we can assume that it is simplicial with respect to a PL triangulation of $D^{k+1}$ extending the triangulation of $S^k$, and in fact, by \cite[Thm.~2.4]{GalRW14}, the map $g$ can be chosen such that for each vertex $v$ in $D^{k+1}$ which does not lie in $S^{k}$
$$\st(v)=(\st(v)\cap S^k)*(\st(v)\cap \operatorname{int}(D^{k+1}))  \ \ \textrm{and}\ \  g(\link(v)) \subset \link(g(v)).$$     
The first property is equivalent to saying that every simplex $\sigma$ of $D^k$ decomposes as $\sigma_\partial * \sigma_{int}$, a boundary simplex joined with an interior simplex. 
The second property says that, 
 if $v$ does not lie in the boundary, and $(v,w)$ is a 1-simplex, then $(g(v), g(w))$ is also a 1-simplex, i.e.~$g(v) \neq g(w)$, which in particular implies that for any simplex $\s$ of $D^{k+1}$, $$g(\sigma) = g(\sigma_\partial) * g(\sigma_{int}).$$

We wish to find a lift $G$ of $g$ to $|Y^{ord}_\bullet|$ which extends $f$. To do so, choose a total order $v_1, \ldots, v_r$ of the internal vertices of $D^{k+1}$. 
Given a simplex $\s$ of $D^{k+1}$, its image $g(\sigma) = g(\sigma_\partial) * g(\sigma_{int})$
has a preferred lift, given by the ordering $f(\sigma_\partial)$ of $g(\sigma_\partial)$, the ordering of $g(\sigma_{int})$ coming from the total
order of the interior vertices of $D^{k+1}$ (which is well-defined because $g$ is simplexwise injective on interior simplices), and the standard order coming from the join, putting $g(\sigma_\partial)$ first, then $g(\sigma_{int})$. This defines a lift $G(\sigma)$ of $g(\s)$.  As the choices are compatible with passing to faces, we actually get a simplicial map $G:D^{k+1}\to |Y^{ord}_\bullet|$ extending $f$, as required. 
\end{proof}

Finally, we are ready to prove the main result of the section: 

\begin{proof}[Proof of Theorem~\ref{caseA}]  By Proposition~\ref{standardprop} and the condition (A) assumption, 
we have that  $W_n(A,X)_\bullet$ is isomorphic to $S_n^{ord}(A,X)_\bullet$. 
Choosing a total order of the vertices of $S_n(A,X)$ gives a section $\vert S_n(A,X) \vert \to \vert S_n^{ord}(A,X)_\bullet \vert=\vert W_n(A,X)_\bullet \vert $ of the projection map, so that
$$\vert W_n(A,X)_\bullet \vert \text{ is $l$-connected } \Longrightarrow \vert S_n(A,X) \vert \text{ is $l$-connected}$$
which proves one implication. 

Conversely, as the $W_n(A,X)_\bullet$ satisfy condition (A), by
Corollary~\ref{CMcor}, $S_n(A,X)$ is weakly Cohen--Macaulay of dimension $\frac{n-a+k}{k}$. Hence by Proposition~\ref{RWprop}, 
$S_n(A,X)^{ord}_\bullet=W_n(A,X)_\bullet$ is $(\frac{n-a}{k})$-connected, which proves the other implication.
\end{proof}

\section{Homological stability with constant and abelian coefficients}\label{cstsec}

In this section, we use Quillen's classical argument \cite{QuiNotes} in the case of general linear groups to show that, in a (locally) homogeneous category $(\C,\op,0)$ in which the semi-simplicial sets $W_n(A,X)_\bullet$ are highly connected, the groups $\Aut(A\op X^{\op n})$ satisfy homological stability with constant coefficients $\Z$. The precise statement of the theorem is as follows:

\begin{thm}\label{stabthm}
Let $(\C,\op,0)$ be a pre-braided category which is locally homogeneous at a pair of objects $(A,X)$. Suppose that $\C$ satisfies LH3 at $(A,X)$ with slope $k \geq 2$. 
Then the map $$H_i(\Aut(A\op X^{\op n});\bZ)\ \rar \  H_i(\Aut(A\op X^{\op n+1});\bZ)$$ is an epimorphism if $i\le \frac{n}{k}$ and an isomorphism if $i\le\frac{n-1}{k}$.
\end{thm}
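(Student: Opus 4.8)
The strategy is Quillen's classical spectral sequence argument, adapted to the setting of a locally homogeneous category. Fix $n$, write $G_m = \Aut(A\op X^{\op m})$, and consider the augmented semi-simplicial set $W_n(A,X)_\bullet$. Since it is $(\tfrac{n-2}{k})$-connected by LH3, the associated chain complex of $\bZ[G_n]$-modules
$$\cdots \rar \bZ[W_n(A,X)_1] \rar \bZ[W_n(A,X)_0] \rar \bZ \rar 0$$
is exact in a range of degrees growing linearly in $n$. The key point is to identify the $G_n$-action on the set of $p$-simplices: by LH1, $W_n(A,X)_p = \Hom_\C(X^{\op p+1}, A\op X^{\op n})$ is a transitive $G_n$-set, and by LH2 the stabiliser of the standard $p$-simplex $\iota_{A\op X^{\op n-p-1}}\op X^{\op p+1}$ is the image of $G_{n-p-1}$ under $-\op X^{\op p+1}$. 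Hence $\bZ[W_n(A,X)_p] \cong \bZ[G_n]\ot_{\bZ[G_{n-p-1}]}\bZ$ as $\bZ[G_n]$-modules, and Shapiro's lemma will give $H_*(G_n;\bZ[W_n(A,X)_p]) \cong H_*(G_{n-p-1};\bZ)$.

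\textbf{Main steps.} First I would set up the first-quadrant homology spectral sequence of the filtered complex obtained by splicing a projective resolution of $\bZ$ over $\bZ[G_n]$ against the augmented chain complex of $W_n(A,X)_\bullet$; by high connectivity its abutment vanishes in a range. The $E^1$-page has $E^1_{p,q} = H_q(G_{n-p-1};\bZ)$ for $p\ge 0$ (with the $p=-1$ column $H_q(G_n;\bZ)$), and the $d^1$-differentials are alternating sums of the maps induced by the face maps $d_i$. The crucial computation is that, up to conjugation by elements of $G_n$, each face map $d_i\colon W_n(A,X)_p \to W_n(A,X)_{p-1}$ induces on homology either the stabilisation map $\Si^X\colon H_q(G_{n-p-1}) \to H_q(G_{n-p})$ or a map conjugate to it; this is where pre-braidedness enters, since one needs the braiding to move the $i$-th summand past the others while keeping track of the effect on automorphism groups, and to see that the resulting self-map of $G_{n-p}$ is an inner automorphism (hence trivial on homology). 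Consequently the $d^1$-differential $E^1_{p,q}\to E^1_{p-1,q}$ is $\Si^X$ composed with a factor of $0$ or $1$ depending on the parity of $p$, i.e.\ it alternates between $0$ and $\Si^X\colon H_q(G_{n-p-1})\to H_q(G_{n-p})$. Then I would run the standard double induction on $n$ and the homological degree $i$: assuming inductively that $\Si^X$ is an iso/epi in the stated range for all smaller $n$, one reads off from the vanishing of the $E^\infty$-page near the bottom-left corner that the map $E^1_{0,i}\to E^1_{-1,i}$, namely $H_i(G_{n-1};\bZ)\to H_i(G_n;\bZ)$, is surjective for $i\le n/k$, and that $H_i(G_{n-1})\to H_i(G_n)$ is injective for $i \le (n-1)/k$ using one extra row of the spectral sequence.

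\textbf{The main obstacle.} The technical heart is the identification of the $d^1$-differentials, i.e.\ verifying that the face maps induce (conjugates of) the stabilisation map $\Si^X$ and that the ``wrong'' faces contribute inner automorphisms. This requires a careful manipulation of the pre-braiding $b_{-,-}$ to commute an inserted copy of $X$ past a block $X^{\op j}$, and checking via the pre-braid relation $b_{A,B}\circ(A\op\iota_B)=\iota_B\op A$ that the resulting identification of stabiliser subgroups is compatible with $-\op X^{\op p}$ on the nose up to an inner automorphism of $G_{n-p}$. The bookkeeping of which summand each face map deletes, and tracking the induced conjugation, is the part that genuinely uses the hypothesis $k\ge 2$ (one needs enough room in the connectivity range for the induction to close) and is the step I expect to be most delicate; everything else — Shapiro's lemma, the spectral sequence comparison, and the final numerical induction — is routine once the $E^1$-page is correctly described.
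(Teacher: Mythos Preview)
Your proposal is correct and follows the same Quillen-style spectral sequence argument as the paper: the same double complex, the same identification of $E^1_{p,q}\cong H_q(G_{n-p};\bZ)$ via LH1, LH2 and Shapiro's lemma, and the same double induction closing under the hypothesis $k\ge 2$.

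There is one organizational difference worth flagging. You propose to compute the $d^1$-differentials \emph{exactly} (alternating between $0$ and $\Sigma^X_*$) by showing that each face map, after conjugation by a suitable $h_i$, equals the standard inclusion on the nose. This works, but it requires knowing that for \emph{every} $p$ and $i$ the element $h_i\in G_{n+1}$ can be chosen to commute with $\St(\sigma_p)$; the paper establishes this only for $p=1$ (its observation~(d)) and instead handles general $p$ by comparing the $q$th row of the $E^1$-page to the augmented chain complex of a point with constant coefficients $H_q(G_{n+1};\bZ)$, via the maps induced by inclusion of stabilisers into $G_{n+1}$. Both arguments are standard variants; yours needs the mild extra observation that $h_i=A\op X^{\op n-p}\op\psi_i$ with $\psi_i$ a product of braidings $b_{X,X}$ does the job for all $i$, which follows by iterating the pre-braid relation exactly as you suggest. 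One small indexing point: to conclude about $H_i(G_n)\to H_i(G_{n+1})$ you should run the spectral sequence on $W_{n+1}$, not $W_n$, so that the connectivity input is $\lfloor\tfrac{n-1}{k}\rfloor$ and the edge differential is $H_i(G_n)\to H_i(G_{n+1})$.
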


\begin{rem} The hypothesis of pre-braidedness in this theorem can be replaced by the assumption that $\C$ also satisfies LH1 at $(0,X)$.
If $\C$ satisfies LH3 at $(A,X)$ with slope $k=1$, then the isomorphism range can be improved to be equal to the epimorphism range $i\le \frac{n}{2}$ by a variation of the argument below 
(see \cite{R-WConf, Gan16} for the case of the symmetric groups). 
\end{rem}

We shall also prove homological stability for these groups with respect to certain non-trivial coefficient modules, quite different from the ``homological stability with twisted coefficients" often considered (which we will treat in Section \ref{sec:twist}). 
To define these coefficient systems, we first form the stable group $\Aut(A \oplus X^{\oplus \infty})$ as the colimit of
$$\cdots \overset{-\oplus X}\lra \Aut(A \oplus X^{\oplus n}) \overset{-\oplus X}\lra \Aut(A \oplus X^{\oplus n+1}) \overset{-\oplus X}\lra \Aut(A \oplus X^{\oplus n+2}) \overset{-\oplus X}\lra \cdots.$$
Then any $\Aut(A \oplus X^{\oplus \infty})$-module $M$ may be considered as an $\Aut(A \oplus X^{\oplus n})$-module for any $n$, by restriction, which we continue to call $M$. We shall say that the module $M$ is \emph{abelian} if the action of $\Aut(A \oplus X^{\oplus \infty})$ on $M$ factors through the abelianisation of $\Aut(A \oplus X^{\oplus \infty})$, or in other words if the derived subgroup $\Aut(A \oplus X^{\oplus \infty})'$ acts trivially on $M$.

\begin{ex}[The sign representation]\label{signrepex}
Let $\Si=\coprod_{n\ge 0}\Si_n$ be the groupoid with objects the natural numbers and automorphism groups the symmetric groups. This is a symmetric
monoidal groupoid whose associated category $U\Si$ is homogeneous by Theorem~\ref{universal}. (The category $U\Si$ is a skeleton for 
 the category $FI$ of finite sets and injections introduced in Section~\ref{homcatsec}, see also Section~\ref{setex}.)
Taking $A=0$ and $X=1$, we have $\Aut(A\op X^{\op n})=\Si_n$ and  $\Aut(A\op X^{\op \infty})=\Si_\infty$, the group of permutations of the natural numbers with finite support. We have that $\Si_\infty^{ab}=\Z/2$ and the map $\Si_n\to \Si_{\infty}^{ab}$ is given by the sign of the permutation. Hence the sign representation of the symmetric groups, namely the sequence of $\Si_n$-modules $\Z$ with $\s\in\Si_n$ acting via its sign, is an example of an abelian coefficient system for $\C=U\Si$. 
\end{ex}

\begin{thm}\label{abstabthm}  
Let $(\C,\op,0)$ be a pre-braided category which is locally homogeneous at a pair of objects $(A,X)$. Suppose that $\C$ satisfies LH3 at $(A,X)$ with slope $k \geq 3$ (!). Then for any abelian $\Aut(A \oplus X^{\oplus \infty})$-module $M$ the map
\begin{equation*}
H_i(\Aut(A \oplus X^{\oplus n});M) \lra H_i(\Aut(A \oplus X^{\oplus n+1});M)
\end{equation*}
is an epimorphism for $i \leq \tfrac{n-k+2}{k}$ and an isomorphism for $i \leq \tfrac{n-k}{k}$.
\end{thm}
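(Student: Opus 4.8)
The plan is to run Quillen's standard spectral-sequence argument, as already set up for constant coefficients in Theorem~\ref{stabthm}, but carefully keeping track of the coefficient module $M$ along the way. Write $G_n := \Aut(A \oplus X^{\oplus n})$, and let $|W_n| := |W_n(A,X)_\bullet|$. The action of $G_n$ on $W_n(A,X)_\bullet$ is transitive on $p$-simplices (by LH1), with the stabiliser of the standard $p$-simplex being $G_{n-p-1}$ (by LH2). The augmented semi-simplicial chain complex of $W_n$, tensored with $M$ and then taken $G_n$-equivariantly, gives a spectral sequence
\begin{equation*}
E^1_{p,q} = H_q(G_{n-p-1}; M_p) \Longrightarrow H_{p+q}\big(G_n, |W_n|; M\big),
\end{equation*}
where $M_p$ denotes $M$ restricted along $G_{n-p-1} \hookrightarrow G_n$, but twisted by the orientation character of the simplex — and here is the crucial point where $M$ being \emph{abelian} enters. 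Since the $G_n$-action on the set of $p$-simplices is via permuting summands, and $M$ factors through $G_\infty^{ab}$, the stabiliser $G_{n-p-1}$ acts on $M_p$ exactly the way it acts on $M$ by restriction (the orientation/permutation twist lands in the derived subgroup $G_\infty'$, which acts trivially on $M$), so $E^1_{p,q} = H_q(G_{n-p-1}; M)$ with the \emph{untwisted} restricted action. Because $|W_n|$ is $\tfrac{n-2}{k}$-connected by LH3, the abutment $H_{p+q}(G_n, |W_n|; M)$ vanishes in total degree $\leq \tfrac{n-2}{k}$, i.e.\ the relative term contributes nothing and one is comparing $H_*(G_n;M)$ with the $E^1$-page built from smaller groups.

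**Next I would** set up the induction. Let $P(n)$ be the statement that $H_i(G_m;M) \to H_i(G_{m+1};M)$ is an epimorphism for $i \leq \tfrac{m-k+2}{k}$ and an isomorphism for $i \leq \tfrac{m-k}{k}$, for all $m \leq n$; the base cases are trivial since the relevant ranges are negative for small $m$. Assuming $P(n-1)$, one examines the spectral sequence for $G_n$: the differentials entering and leaving $E^1_{0,q} = H_q(G_{n-1};M)$ and $E^1_{-1,q} = H_q(G_n;M)$ (the augmentation term). The $d^1$ differential $E^1_{0,q} \to E^1_{-1,q}$ is, up to sign, the difference of the two face maps, one of which is the stabilisation map $H_q(G_{n-1};M) \to H_q(G_n;M)$ and the other of which — and this is the second place pre-braidedness is used — is a second, "lower" stabilisation map, which by a now-standard argument (as in Theorem~\ref{stabthm}, using the pre-braiding to commute $X$ past $A\oplus X^{\oplus n-1}$) induces the \emph{same} map on homology with these abelian coefficients, since the two ways of adding an $X$ differ by an element whose class in $G_\infty^{ab}$ is trivial. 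Hence on the $E^2$-page the relevant entries compare to the cokernel/kernel of the stabilisation map, and surjectivity onto $H_i(G_n;M)$ in the claimed range follows from the vanishing of the abutment, while injectivity follows from identifying the $E^2$ differential on the next column with the map whose kernel we want to control — this is where the loss from $k\geq 2$ to $k\geq 3$ comes from, since controlling the kernel requires the $E^1_{1,q}$ term to also stabilise, pushing the connectivity requirement up by one and forcing the stricter slope, exactly as flagged by the "(!)" in the statement.

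**The main obstacle** I anticipate is not the homological bookkeeping — which is essentially Quillen's argument verbatim — but rather verifying carefully that the permutation/orientation twist on the coefficients really is killed by the abelian hypothesis, and more delicately, that the \emph{two} stabilisation maps (the "upper" $\Sigma^X$ and the "lower" $\Sigma_X$ described in the excerpt before Definition~\ref{prebraidDef}) agree on $H_*(-;M)$. For constant coefficients this is automatic; here one must show that the conjugation relating the two embeddings $G_{n-1} \hookrightarrow G_n$ is realised by an element of $G_n$ whose image in $G_\infty^{ab}$ is trivial — equivalently, that the "swap" braiding element $b_{X^{\oplus n-1}, X}$ (or the relevant composite with $A$) lies in the commutator subgroup $G_\infty'$. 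This should follow because braidings are built from adjacent transpositions/elementary braids which become trivial (or 2-torsion, hence trivial modulo the derived subgroup by a separate small check, or simply already in the image of a commutator) in the abelianisation — but the argument must be made uniformly, without reference to the specific groupoid, purely from the pre-braided structure and the definition of "abelian". Once that compatibility is in hand, everything else is the routine spectral-sequence induction, and I would then note that the improved ranges for split or constant coefficients claimed after the statement come from the sharper vanishing available in those cases, deferred to the more technical theorems.
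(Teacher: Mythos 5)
Your overall framework is correct—Quillen's spectral sequence associated to the $G_n$-action on $W_n(A,X)_\bullet$, with the abelian hypothesis on $M$ and pre-braidedness being the two extra ingredients—and you correctly locate the delicate step: showing that a certain $d^1$-differential vanishes. But the fix you propose for that step does not work, and the gap is essential.

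You want to show that the two face maps $d_0, d_1 : E^1_{1,i} \to E^1_{0,i}$ (not $E^1_{0,q}\to E^1_{-1,q}$, which is a single face map, but this is a minor slip) induce the same map on homology, and you suggest accomplishing this by showing that the element $h_1 \in G_{n+1}$ conjugating $d_1\sigma_1$ to $\sigma_0$—the obvious choice being $A \oplus X^{\oplus n-1} \oplus b_{X,X}$—lies in the commutator subgroup, so that it acts trivially on $M$. This is \emph{false} in general: for $G_n = \Sigma_n$ the transposition $b_{X,X}$ maps to the nontrivial element of $\Sigma_\infty^{ab} \cong \bZ/2$, and for $G_n = \beta_n$ the half-twist $b_{X,X}$ generates $\beta_\infty^{ab} \cong \bZ$. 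So the naive conjugating element genuinely acts nontrivially on $M$, and the two bar-complex formulas $[g_1|\cdots|g_q|m]$ and $[g_1|\cdots|g_q|h_1\cdot m]$ really do differ. Your hedge (``or simply already in the image of a commutator... a separate small check'') conceals the fact that the approach fails in every motivating example.

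The paper's actual argument makes two further moves that you are missing. First, it replaces the naive $h_1$ by the ``double-swap'' $h_1 := A \oplus X^{\oplus n-2} \oplus \bigl((b_{X,X}^{-1} \oplus X) \circ (X \oplus b_{X,X})\bigr)$, which still takes $d_1\sigma_1$ to $\sigma_0$, but whose image in $G_\infty^{ab}$ \emph{is} trivial: the braid relation $(b_{X,X}\oplus X)(X\oplus b_{X,X})(b_{X,X}\oplus X) = (X\oplus b_{X,X})(b_{X,X}\oplus X)(X\oplus b_{X,X})$ abelianises to $[b_{X,X}\oplus X]=[X\oplus b_{X,X}]$, forcing $[h_1]=0$. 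Second—and this is the step where $k\geq 3$ actually enters, not where you claim it does—this new $h_1$ centralises only the smaller stabiliser $\St(\sigma_2)$, not $\St(\sigma_1)$, so one cannot conclude that the two face maps agree on all of $H_i(\St(\sigma_1);M)$; one only gets agreement on the image of $(d_0)_*: H_i(\St(\sigma_2);M)\to H_i(\St(\sigma_1);M)$. The argument then invokes the \emph{surjectivity} clause of the inductive hypothesis for the stabilisation $H_i(G_{n-2};M)\to H_i(G_{n-1};M)$, which holds precisely when $n-2\geq k-2+ki$, i.e.\ $n\geq k+ki$. This retreat to level 2 and the ensuing appeal to inductive surjectivity is what forces $k\geq 3$, and it is absent from your sketch. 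Without these two moves, the proof does not close.
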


\begin{rem}
In these theorems we restrict to linear stability slopes with $k\ge 2$ (or $k \geq 3$ for Theorem \ref{abstabthm}) as the proof given here does not give a better range for lower $k$. 

Note that if each $W_n(A,X)_\bullet$ is $(\frac{n-a}{k})$-connected for some $a>2$, then
$W_n(A\op X^{\op a-2},X)_\bullet$ is $(\frac{n-2}{k})$-connected for each $n$, so that the theorem can instead be applied to the pair $(A\op X^{\op a-2},X)$, yielding a shifted stability range. 
\end{rem}

The proofs of these two theorems are largely the same, and we will give them simultaneously. The proof of Theorem \ref{stabthm} is very close to that of Theorem 5.1 in \cite{HatWah10}, which essentially yields the case $k=2$. The modification of this argument necessary to prove Theorem \ref{abstabthm} is based on one appearing in the first author's work with Galatius \cite[\S 8]{GalRW14}, which we believe is the first time homological stability with abelian coefficient systems has been explicitly considered. 

\begin{proof}[Proof of Theorem~\ref{stabthm} and Theorem~\ref{abstabthm}]
Let us write $G_n:=\Aut(A\op X^{\op n})$ and $W_n=W_n(A,X)_\bullet$, and fix an abelian $\Aut(A \oplus X^{\oplus \infty})$-module $M$ (which may also be $\mathbb{Z}$ with trivial action). Consider the spectral sequences associated to the
double complex 
\begin{equation}\label{eq:DoubleCx}
E_\bullet G_{n+1}\otimes_{G_{n+1}}(\widetilde{C}_*(W_{n+1}) \otimes_{\bZ} M),
\end{equation}
where 
$E_\bullet G_{n+1}$ is a free resolution of $\Z$ over $\Z G_{n+1}$, $\widetilde{C}_*(W_{n+1})$ is the augmented cellular chain complex of $W_{n+1}$, and $\widetilde{C}_*(W_{n+1}) \otimes_{\bZ} M$ has the diagonal $G_{n+1}$-action. 

We now make a number of observations about this set-up.

\medskip

\noindent (a) {\em $W_{n+1}$ is $(\frac{n-1}{k})$-connected.} (This holds by LH3.) 

\medskip

\noindent (b) {\em The action of $G_{n+1}$ on $W_{n+1}$ is transitive on $p$-simplices for each $p \leq n$ and the stabiliser of each simplex fixes the simplex pointwise.} The set of $p$-simplices of $W_{n+1}$ is $\Hom(X^{\op p+1},A\op X^{\op n+1})$, and by axiom LH1 the group $G_{n+1}$ acts transitively on this set for $0 \leq p \leq n$. The stabiliser of a $p$-simplex $f\in\Hom(X^{\op p+1},A\op X^{\op n+1})$ fixes its vertices $f\circ i_j$ and hence fixes the
simplex pointwise. 

\medskip

Let us fix for each $p$ the \emph{standard $p$-simplex} $\sigma_p$ of $W_{n+1}$ to be the map
$$\iota_{A \op X^{\op n-p}}\op {X^{\op p+1}}:X^{\op p+1}\cong 0\op X^{\op p+1}\lra A\op X^{\op n-p}\op X^{\op p+1}.$$

\medskip

\noindent (c) {\em The map $- \op X^{\op p+1} : G_{n-p} \to G_{n+1}$ is an isomorphism onto the stabiliser of $\sigma_p$.} By axiom LH2, this map is injective with image $\Fix(\iota_{A \op X^{\op n-p}} \op X^{\op p+1}) \leq G_{n+1}$. But this subgroup is precisely the stabiliser of the $p$-simplex $\sigma_p$ of $W_{n+1}$.

Note that (b) and (c) together show that the set of $p$-simplices of $W_{n+1}$ is isomorphic as a $G_{n+1}$-set to $G_{n+1}/G_{n-p}$, so that 
$$\widetilde C_p(W_{n+1})\cong \Z [G_{n+1}/G_{n-p}]$$ as a $\Z G_{n+1}$-module. Our choice of the standard $p$-simplex $\sigma_p$ determines such an isomorphism, but we emphasise that this is a choice: 
in fact, distinct such isomorphisms play a role in the argument, via distinct subgroups of $G_{n+1}$ isomorphic to $G_{n-p}$, and it is important to keep track of exactly which subgroup one is talking about. 

\medskip

The two standard filtrations of the double complex \eqref{eq:DoubleCx} give two spectral sequences. By observation (a) above, the chain complex $\widetilde{C}_*(W_{n+1})$ has trivial homology in degrees $* \leq \tfrac{n-1}{k}$, and so $\widetilde{C}_*(W_{n+1}) \otimes_\bZ M$ does too by the Universal Coefficient Theorem. The spectral sequence obtained by taking the $*$-homology first then shows that the totalisation of \eqref{eq:DoubleCx} has trivial homology in the same range of degrees. The spectral sequence obtained by taking the $\bullet$-homology first has, by observations (b) and (c) above and Shapiro's lemma (see \cite[VII.7]{B} for more details),
$$E^1_{p,q} = H_q(\St(\sigma_p) ; M) \cong H_q(G_{n-p};M) \ \text{ for } -1 \leq p \leq n,$$ 
where $M$ is considered as a $\St(\sigma_p)$-module via the inclusions $\St(\sigma_p) \leq G_{n+1}$ and $G_{n+1} \leq G_\infty$. We shall only be interested in degrees $p+q \leq \tfrac{n-1}{k}$, so shall ignore the fact that the above only holds for $p \leq n$. 
This spectral sequence must also converge to 0 for $p+q \leq \tfrac{n-1}{k}$. The $d^1$ differential is a map $d^1:E_{p,q}\to E_{p-1,q}$ induced by the differential in $\widetilde{C}_*(W_{n+1})$. It is given by the alternating sum of the maps
$$H_q(\St(\s_p);M)\stackrel{(inc, id_M)_*}{\rar}H_q(\St(d_i\s_p);M)\stackrel{c_{h_i}}{\rar}H_q(\St(\s_{p-1});M)$$ 
where $c_{h_i}$ denotes the map on homology induced by the map of pairs
\begin{equation*}
(g, m) \mapsto (h_i g h_i^{-1}, h_i \cdot m) : (\St(d_i\s_p),M) \lra (\St(\s_{p-1}),M)
\end{equation*}
for any element $h_i\in G_{n+1}$ which takes $d_i\s_p$ to the standard $(p-1)$-simplex $\s_{p-1}$. In particular, for $p=0$, the map
$$d^1\colon H_i(\St(\s_0);M)\rar H_i(\St(\s_{-1});M)=H_i(G_{n+1};M)$$ 
identifies with the map in the statement of the theorem by (c). In order to better understand the $d^1$ differential we make the following additional observation.

\begin{comment}

For $p=0$, the map
$$d^1\colon H_i(\St(\s_0);M)\rar H_i(\St(\s_{-1});M)=H_i(G_{n+1};M)$$ 
identifies with the map in the statement of the theorem by (c). 
For $p\ge 1$, the map $d^1$ is an alternating sum of the maps
induced by the faces maps $d_i$ of $W_{n+1}$. 
The term $d_0$ identifies, under our prefered identifications of $\St(\s_p)\cong G_{n-p}$ and $\St(\s_{p-1})\cong G_{n-p+1}$, with the stabilization map. 
To identify the maps induced by $d_i$ for $1\le i\le p$, we make the following additional observation.

\end{comment}

\medskip

\noindent (d) {\em For every $p\ge 1$ and $1\le i\le p$, there exists an element $h_{i}\in G_{n+1}$ such that  $h_{i}\circ d_i\sigma_p = \sigma_{p-1}$ with the property that $h_{i}$ centralises  $\St(\sigma_p)$.} Indeed, we have 
$$d_i\sigma_p=\sigma_p\circ (X^{\op i}\op \iota_X\op X^{\op p-i})=\iota_{A\op X^{\op n-p}}\op {X^{\op i}}\op \iota_X\op X^{\op p-i}$$ while 
$$\sigma_{p-1}=\iota_{A\op X^{\op n-p+1}}\op  {X^{\op p}}= \iota_{A\op X^{\op n-p}}\op \iota_X\op  {X^{\op i}}\op X^{\op p-i}.$$ As $\C$ is pre-braided, we have that $b_{X^{\op i},X}\circ  (X^{\op i}\op \iota_X)= \iota_X\op X^{\op i}$. Hence if we choose $h_{i}:={A\op X^{n-p}}\op b_{X^{\op i},X}\op X^{\op p-i}\in G_{n+1}$ then $h_{i}\circ d_i\sigma_p=\sigma_{p-1}$. Moreover, the map $- \op X^{\op p+1} : \Aut(A\op X^{\op n-p}) \to \St(\sigma_p)$ is an isomorphism by observation (c) above, and this subgroup of $G_{n+1}$ commutes with $h_{i}$ as $h_{i}$ acts as the identity on $A\op X^{\op n-p}$, proving the claim. 

\medskip

To identify the $i$th term $(-1)^i d_i$ in the differential $d^1$, we choose $h_i$ as in (d), whereupon $c_{h_i} \circ (inc, id_M)$ is
$$(g, m) \mapsto (h_i g h_i^{-1}, h_i \cdot m) = (g, h_i \cdot m) : (\St(\s_p),M) \lra (\St(\s_{p-1}),M)$$
because $h_i$ centralises $\St(\s_p)$. It follows that
$$d^1 = \sum_{i=0}^p (-1)^i (inc, h_i \cdot -)_* : H_*(\St(\sigma_p);M) \lra H_*(\St(\sigma_{p-1});M).$$

In particular, whenever all the $h_i$ act trivially on $M$---for example when $M$ is a trivial module---then the differential $d^1 : E^1_{p,q} \to E^1_{p-1,q}$ is zero if $p$ is odd and can be identified with the stabilisation map $H_q(G_{n-p};M) \to H_q(G_{n-p+1};M)$ if $p$ is even.

\medskip

\noindent\textbf{Constant coefficients case.} We want to show that the differential 
$$d^1\colon E^1_{0,i}=H_i(G_{n};\bZ)\rar E^1_{-1,i}=H_i(G_{n+1};\bZ)$$ 
is surjective when $n\ge ki$ and injective when $n\ge ki+1$. 
We prove this by induction on $i$, so consider the statements

\vspace{1ex}

\noindent \Ei{I} This map is epi for all $i \leq I$ and all $n$ such that $n \geq ki$.

\vspace{1ex}

\noindent \Ii{I} This map is iso for all $i \leq I$ and all $n$ such that $n \geq ki+1$.

\vspace{1ex}

\noindent The statements \Ei{0} and \Ii{0} hold trivially. 

\medskip

We start by showing the implication $({\bf E}_{I-1}) + ({\bf I}_{I-1}) \Rightarrow ({\bf E}_I)$, so let $i \leq I$ and $n$ satisfy $n \geq ki$, or equivalently $i \leq \tfrac{n}{k}$. Surjectivity of the map $d^1$ above follows from: 
\begin{itemize}
\item[\EI{1}] $E^\infty_{-1,i}=0$; 
\item[\EI{2}] $E^2_{p,q}=0$ for $p+q=i$ with $q<i$. 
\end{itemize}
Indeed, condition \EI{1} says that $E^1_{-1,i}$ has to be killed before $E^\infty$, and condition \EI{2} says that the sources of all the possible
differentials to that term after $d^1$ are 0, and therefore $d^1$ is the only differential that can kill it, so it must be surjective. 

Condition \EI{1} holds because $E^\infty_{p,q}=0$ when $p+q\le \frac{n-1}{k}$, and $i-1\le \frac{n-k}{k} \le \frac{n-1}{k}$ when $n\ge ki$ and $k\ge 1$.

\begin{comment}

For condition \EI{2}, let $p+q=i$ with $q<i$ and consider the portion 
$$E^1_{p-1,q} \overset{d^1}\longleftarrow E^1_{p,q} \overset{d^1}\longleftarrow E^1_{p+1, q}$$
of the $E^1$-page of the spectral sequence. If $p$ is even then $d^1: E^1_{p+1, q} \to E^1_{p, q}$ is zero and $d^1: E^1_{p, q} \to E^1_{p-1, q}$ may be identified with the stabilisation map $H_q(G_{n-p};\bZ) \to H_q(G_{n-p+1};\bZ)$. As $q < i$, by $({\bf I}_{I-1})$ this map is injective as long as $n-p \geq kq+1$. But $n \geq ki$ so $n \geq kp+kq \geq kq+p+1$ as $k \geq 2$ and $p>0$, and so this holds and it follows that $E^2_{p, q}=0$. If $p$ is odd then $d^1: E^1_{p, q} \to E^1_{p-1, q}$ is zero and $d^1: E^1_{p+1, q} \to E^1_{p, q}$ may be identified with the stabilisation map $H_q(G_{n-p-1};\bZ) \to H_q(G_{n-p};\bZ)$. As $q < i$, by $({\bf E}_{I-1})$ this map is surjective as long as $n-p-1 \geq kq$, but this holds as above and it follows that $E^2_{p, q}=0$.

\end{comment}

For condition \EI{2}, we first show that for $p+q=i$ with $q<i$ the map induced by inclusion of stabilisers 
$$E^1_{p',q}=H_q(\St(\s_{p'});\Z)\stackrel{\cong}{\rar} H_q(G_{n+1};\Z)$$ 
is an isomorphism if $p'\le p$, and an epimorphism if $p'=p+1$. By observation (c) above, this map may be written as a composition
$$H_q(G_{n-p'};\Z) \lra H_q(G_{n-p'+1};\Z) \lra H_q(G_{n-p'+2};\Z) \lra \cdots \lra H_q(G_{n+1};\Z)$$
of stabilisation maps. Each of these maps is an epimorphism if $n-p' \geq kq$ and an isomorphism if $n-p' \geq kq+1$, by \Ei{I-1} and \Ii{I-1} respectively. If $p'+q\le p+q \leq i \leq \tfrac{n}{k}$ then $n \geq kp' + kq \geq p' + kq + 1$ for $k \geq 2$ and $p'\geq 1$, so each of these maps is an isomorphism, as claimed. When $p'=0$, we use $q<i\le \frac{n}{k}$ which gives $n\ge kq+k\ge kq+1$. 
Finally if $p'+q = i+1 \leq \tfrac{n+k}{k}$ then $n+k \geq kp'+kq$ and so $n+k(1-p') \geq kq$. But we have $k \geq 2$ and $p' \geq 2$ in this case, so $-p' \geq k(1-p')$ and it follows that $n-p' \geq kq$, so each of these maps is an epimorphism, as claimed.

Now consider the diagram 
\begin{equation}\label{eq:AcyclicComparison}
\begin{gathered}
\xymatrix{
H_q(\St(\sigma_{p-1});\bZ) \ar[d] & H_q(\St(\sigma_p);\bZ) \ar[l] \ar[d]& H_q(\St(\sigma_{p+1});\bZ) \ar[l] \ar[d]\\
H_{q}(G_{n+1};\bZ) & H_{q}(G_{n+1};\bZ) \ar[l] & H_{q}(G_{n+1};\bZ). \ar[l]
}
\end{gathered}
\end{equation}
By observation (d), the top horizontal maps are alternatingly the 0-map and the stabilization map, which on the bottom line correspond to the 0-map and the identity map. 
The bottom sequence is exact in the middle. By the previous paragraph, for $p+q \leq i$ the two leftmost vertical maps are isomorphisms, and the rightmost vertical map is an epimorphism; hence the top row is also exact in the middle, which gives \EI{2}.

\medskip

We will now show the implication $({\bf E}_{I-1}) + ({\bf I}_{I-1}) \Rightarrow ({\bf I}_I)$, so let $i \leq I$ and $n$ satisfy $n \geq ki+1$, so $i \leq \tfrac{n-1}{k}$. We will show that 
\begin{itemize}
\item[\II{1}] $E^\infty_{0,i}=0$; 
\item[\II{2}] $E^2_{p,q}=0$ for $p+q=i+1$ with $q<i$; 
\item[\II{3}] $d^1\colon E^1_{1,i}\to E^1_{0,i}$ is the $0$-map. 
\end{itemize}
This will imply $({\bf I}_I)$, as condition \II{1} says that $E^1_{0,i}$ lies in the vanishing range of the spectral sequence, 
condition \II{2} says that potential sources of differentials $d^p$ to $E^1_{0,i}$ with $p\ge 2$, i.e. terms $E^2_{p,q}$ with $p+q=i+1$ and $q< i$, vanish,  
and condition \II{3} says that the differential $d^1:E^1_{1,i}\to E^1_{0,i}$ is the zero map. 
The leaves the injectivity  of $d^1:E^1_{0,i}\to E^1_{-1,i}$ as the only way to kill $E^1_{0,i}$ by the $E^\infty$-term. 

Conditions \II{1} and \II{2} follow from the same argument as above. For \II{1}, we need $i\le \frac{n-1}{k}$, which we have assumed. 
For \II{2}, the leftmost two vertical maps in \eqref{eq:AcyclicComparison} are isomorphisms if $n-p \geq kq+1$, which holds when $p+q=i+1$ with $q<i$ and $n \geq ki+1$ as long as $k\geq 2$. The rightmost vertical map in \eqref{eq:AcyclicComparison} is an epimorphism if $n-p-1 \geq kq$, which holds under the same conditions.

For condition \II{3}, the differential is $d^1$ is the difference of two maps which by the explanation after observation (d) above are equal: thus this differential vanishes.

\medskip

\noindent\textbf{Abelian coefficients case.} The strategy in this case is identical, and all steps of the proof go through without change except \II{3}. However, as the ranges involved are different we repeat the major points of the argument. We want to show that the differential 
\begin{equation}\label{eq:AbCoeffDiff}
d^1\colon E^1_{0,i}=H_i(G_{n};M)\rar E^1_{-1,i}=H_i(G_{n+1};M)
\end{equation}
is surjective when $n\ge k-2+ki$ and injective when $n\ge k+ki$, so consider

\vspace{1ex}

\noindent \Ei{I} The map \eqref{eq:AbCoeffDiff} is epi for all $i \leq I$ and all $n$ such that $n \geq k-2+ki$.

\vspace{1ex}

\noindent \Ii{I} The map \eqref{eq:AbCoeffDiff} is iso for all $i \leq I$ and all $n$ such that $n \geq k+ki$.

\vspace{1ex}

By stability with trivial coefficients, which we have already proved, the stabilisation map $H_1(G_n;\bZ) \to H_1(G_\infty;\bZ)$ is surjective as long as $n \geq k$. As $M$ is an abelian $G_\infty$-module, so a $H_1(G_\infty;\bZ)$-module, the map on coinvariants
$$H_0(G_n;M) = M_{G_n} = M_{H_1(G_n;\bZ)} \lra M_{H_1(G_\infty;\bZ)} = M_{G_\infty} = H_0(G_\infty;M)$$
is always an epimorphism, and by the above is an isomorphism as long as $n \geq k$. In particular it is an epimorphism as long as $n \geq k-2$ and an isomorphism as long as $n \geq k$. Thus the statements \Ei{0} and \Ii{0} hold. 

\medskip

To show the implication $({\bf E}_{I-1}) + ({\bf I}_{I-1}) \Rightarrow ({\bf E}_I)$ let $i \leq I$ and $n$ satisfy $n \geq k -2 + ki$, or alternatively $i \leq \tfrac{n-k+2}{k}$. Surjectivity of the map \eqref{eq:AbCoeffDiff} follows from the statements \EI{1} and \EI{2} just as before. Condition \EI{1} holds as $i-1 \leq \tfrac{n-2k+2}{k} \leq \tfrac{n-1}{k}$ for $k \geq 2$. We prove condition \EI{2} as before: 
We need to show that if $p+q=i$ with $q < i$ then $E^2_{p,q}=0$. 
Using that conjugation by $h_i$ is the identity on $H_i(G_i;M)$, the analogue of the diagram \eqref{eq:AcyclicComparison} with coefficient modules commutes,  where we take the differential $d^1$ on the top line and alternatingly the identity and the 0--map on the bottom line. 
By  \Ii{I-1} and \Ei{I-1}, in this diagram the two leftmost vertical maps are isomorphisms as long as $n-p \geq k+kq$, and the rightmost vertical map is surjective as long as $n-p-1 \geq k-2+kq$. Note that the first condition implies the second. 
Our assumptions show that $n \geq k-2+kp+kq$ and so $n-p \geq k-2+(k-1)p+kq$, but as $p\geq 1$ and $k \geq 3$ we have $-2+(k-1)p \geq 0$ and so the required condition holds. It follows that $E^2_{p,q}=0$, as required. 

\medskip

To show the implication $({\bf E}_{I}) + ({\bf I}_{I-1}) \Rightarrow ({\bf I}_I)$ let $i \leq I$ and $n$ satisfy $n \geq k+ki$, or alternatively $i \leq \tfrac{n-k}{k}$. Condition \II{1} is verified as $i \leq \tfrac{n-k}{k} \leq \tfrac{n-1}{k}$ for $k \geq 1$. For Condition \II{2} we proceed as for \EI{2}. Supposing that $p+q = i+1$ with $q < i$ (so $p\geq 2$), we wish to show that $E^2_{p,q}=0$. We have $k(p+q-1) = ki \leq n-k$, so in particular $p + kq  \leq n-k$ as $p\geq 2$ and $k \geq 2$. 
Firstly this shows that $n-p \geq k+kq$ and so by $({\bf I}_{I-1})$ the leftmost two vertical maps in \eqref{eq:AcyclicComparison} are isomorphisms; secondly it shows that $n-p-1 \geq k-1+kq \geq k-2+kq$ so by \Ei{I-1} the rightmost vertical map in \eqref{eq:AcyclicComparison} is surjective: thus the top sequence is exact in the middle.

Condition \II{3} states that the differential
\begin{equation}\label{eq:diff}
d^1 : E^1_{1,i} = H_i(\St(\sigma_1);M) \lra E^1_{0,i} = H_i(\St(\sigma_0);M)
\end{equation}
is zero. This differential is the difference of two maps: $d^1=d_0-d_1$, which differ by multiplication by the element $h_1$ from observation (d) which centralises $\St(\s_1)$ and takes $d_1\s_1$ to $\s_0$. 
Consider the element $g\in G_{n+1}$ defined by 
$$g := {A \oplus X^{\oplus n-2}} \oplus b_{X,X}^{-1} \oplus X\colon  A \oplus X^{\oplus n+1} \lra A \oplus X^{\oplus n+1}.$$
We have that $g\in \Fix(\iota_{A\op X^{\op n}}\op X)=\Fix(\s_0)$, hence we also have $g\circ h_1\circ d_1\s_1=\s_0$. The element $g$  does not centralise $\St(\s_1)$, but it does centralise $\St(\s_2)$, as does $g\circ h_1$. The element $g\circ h_1$ has the additional property that it becomes trivial in the abelianisation $H_1(G_{n+1};\bZ)$, so acts trivially on $M$. 
Indeed,  the braid relation
\begin{equation}\label{braidrelation}
(b_{X,X} \oplus X) \circ (X \oplus b_{X,X}) \circ (b_{X,X} \oplus X) = (X \oplus b_{X,X}) \circ (b_{X,X} \oplus X) \circ (X \oplus b_{X,X})
\end{equation}
abelianises to show that
$$[b_{X,X} \oplus X] = [X \oplus b_{X,X}] \in H_1(\Aut(X^{\oplus 3});\bZ),$$
from which it follows that $[g \circ h_1] = 0 \in H_1(\Aut(A \oplus X^{\oplus n+1});\bZ)=H_1(G_{n+1};\bZ)$.
Hence the two compositions
\begin{equation*}
\xymatrix{
H_i(\St(\sigma_2);M) \ar[rr]^-{(inc, id_M)_*} & & H_i(\St(\sigma_1);M) \ar@/^/[rr]^-{(inc, id_M)_*} \ar@/_/[rr]_-{c_{(g\circ h_1)} \circ (inc, id_M)_*} & & H_i(\St(\sigma_0);M)
}
\end{equation*}
are equal, by the same argument as for $h_1$ in the case of trivial coefficients, using now that $g\circ h_1$ acts trivially on $M$. By $({\bf E}_{I})$ the left-hand map is surjective as long as $n-2 \geq k-2+ki$, which holds as we have assumed that $n \geq k+ki$, so it follows that the two right-hand maps are equal, and hence that their difference is zero. This finishes the proof of the theorem. 
\end{proof}

\begin{rem}\label{deg0rem}
The proof of homological stability given above for trivial coefficients $M=\Z$ actually also applies to coefficients given by a fixed abelian group $M$ equipped with compatible actions of the groups $G_n$ such that $\Aut(X\op X)\inc G_n=\Aut(A\op X^{\op n-2}\op X\op X)$ acts trivially (and hence the element $h_1$ used in the proof acts trivially). The action of $G_n$ on $M$ need not be trivial if $A\neq 0$. Examples of such coefficient systems are {\em degree 0} coefficient systems as defined in the next section (see Definition~\ref{fdegcoef}). 
\end{rem}

The case $A=0$ of Theorem \ref{abstabthm} enjoys an additional feature, and to explain it, we introduce a construction. If $G$ is a group and $M$ a $G$-module given by a homomorphism $\phi : G \to \mathrm{Aut}_{\bZ\operatorname{-Mod}}(M)$, then there is an action of the subgroup 
$$Z(G,M) := \phi^{-1}(Z(\phi(G))) \triangleleft G$$
on $H_*(G;M)$ given in terms of the standard bar complex by
\begin{align*}
Z(G,M) \times C_p(G;M) & \lra C_p(G;M)\\
(z, [g_1 \vert \cdots \vert g_p \vert m]) & \longmapsto [g_1 \vert \cdots \vert g_p \vert \phi(z)(m)].
\end{align*}
This defines a chain map precisely because $\phi(z)$ centralises $\phi(G)$. For each $z \in Z(G,M)$ write $\tau_z : H_*(G;M) \to H_*(G;M)$ for the induced isomorphism. Note that a $G$-module is abelian precisely when $Z(G,M)=G$, in which case $\tau$ defines an action of the entire group $G$ on $H_*(G;M)$, and on $H_*(K;M)$ for any $K \leq G$.

\begin{cor}\label{cor:TrivAct} 
In the set-up of Theorem \ref{abstabthm} with $A=X^{\op a}$, the action of $\Aut(X^{\oplus \infty})$ on $H_i(\Aut(X^{\oplus a+n});M)$ via $\tau$ is trivial in degrees $i \leq \tfrac{n-k}{k}$.
\end{cor}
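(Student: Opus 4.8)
The plan is to reduce the statement to the stable group and then run an Eilenberg swindle. Write $G_n:=\Aut(X^{\oplus n})$ and $G_\infty:=\Aut(X^{\oplus\infty})$. On the standard bar complexes the operator $\tau_z$ modifies only the coefficient slot while a stabilisation map modifies only the group slots, so the stabilisation maps $H_i(G_n;M)\to H_i(G_{n+1};M)$ are equivariant for the $\tau$-action of $G_\infty$; by Theorem~\ref{abstabthm} they are isomorphisms for $i\le\tfrac{n-k}{k}$. As group homology commutes with the filtered colimit $G_\infty=\colim_n G_n$, the map $H_i(G_n;M)\to H_i(G_\infty;M)$ is therefore a $\tau$-equivariant isomorphism in that range, so it suffices to prove that $\tau_z$ acts as the identity on $H_*(G_\infty;M)$ for every $z\in G_\infty$.

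I would then use the classical fact (e.g.\ \cite[III.8]{B}) that inner automorphisms act trivially on group homology with coefficients: if $z$ lies in a group $H$ and $N$ is an $H$-module on which $\phi(z)$ centralises $\phi(H)$, then the coefficient-twist $\tau_z$ is the inverse of the conjugation map $\gamma_z:=(c_z)_*$ on $H_*(H;N)$ — this is a self-map since $\phi(zgz^{-1})=\phi(g)$ forces $c_z^*N=N$. Because $M$ is an abelian $G_\infty$-module, $\phi$ factors through $G_\infty^{ab}$, and the fact applies with $H=G_\infty$, $N=M$ and arbitrary $z$; so it is enough to show $\gamma_z=\mathrm{id}$ on $H_*(G_\infty;M)$.

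Here the hypothesis $A=0$ becomes essential. Fix $z\in G_\infty$ and choose $n_0$ with $z\in G_{n_0}$. Since $A=0$ there is a well-defined self-map $F\colon G_\infty\to G_\infty$, $g\mapsto X^{\oplus n_0}\oplus g$. As $z$ is supported on the first $n_0$ summands while every element of the image of $F$ is the identity there, $z$ commutes with the image of $F$, so $c_z\circ F=F$ as homomorphisms; consequently $\gamma_z\circ F_*=(c_z\circ F)_*=F_*$ on $H_*(G_\infty;M)$. If $F_*$ is an isomorphism we may cancel it to conclude $\gamma_z=\mathrm{id}$, completing the argument.

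The step I expect to be the main obstacle is checking that $F_*$ is an isomorphism. First, $F^*M=M$ as $G_\infty$-modules, because $X^{\oplus n_0}\oplus g$ is conjugate to $g$ in $G_\infty$ (through a groupoid braiding) and $M$ is abelian, so $F_*$ really is an endomorphism of $H_*(G_\infty;M)$. Second, the braiding-naturality identity $b_{X,X^{\oplus n}}\circ(X\oplus g)=(g\oplus X)\circ b_{X,X^{\oplus n}}$ exhibits the lower-stabilisation map $H_*(G_n;M)\to H_*(G_{n+1};M)$ as the upper-stabilisation map followed by the conjugation isomorphism $\gamma_{b_{X,X^{\oplus n}}^{-1}}$; by Theorem~\ref{abstabthm} it is thus an isomorphism for $*\le\tfrac{n-k}{k}$. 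Since it commutes with the upper-stabilisation maps defining the colimit and is an isomorphism in each fixed degree once $n$ is large, it induces an isomorphism on $H_*(G_\infty;M)$, and iterating $n_0$ times gives that $F_*$ is an isomorphism. The delicate point is precisely this: that lower stabilisation, which Theorem~\ref{abstabthm} does not cover directly, is nevertheless a homology isomorphism in a range, and stays one with the twisted coefficients $M$.
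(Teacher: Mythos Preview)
Your proof is correct, but it follows a genuinely different path from the paper's. The paper stays at finite level throughout: given $g\in G_\infty$ it uses surjectivity of $H_1(G_k;\bZ)\to H_1(G_\infty;\bZ)$ (from Theorem~\ref{stabthm}) to choose $a\in G_k$ with the same class in $G_\infty^{ab}$, so that $\tau_g=\tau_{\mu(e,a)}$ on $H_i(G_{n+k};M)$. Then, because $\mu(e,a)$ centralises $\mu(G_n\times\{e\})$, a direct bar-complex computation shows that $\tau_{\mu(e,a)}\circ\mu(-,e)_*$ coincides with $\mu(-,e)_*$ followed by the inner action of $G_{n+k}$, hence with $\mu(-,e)_*$ itself. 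Since $\mu(-,e)_*$ is the $k$-fold \emph{upper} stabilisation, Theorem~\ref{abstabthm} makes it an isomorphism for $i\le\tfrac{n-k}{k}$, and cancelling it gives $\tau_g=\mathrm{id}$ directly.

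So both arguments hinge on the same idea---find a commuting element that acts the same way on $M$ and use inner-automorphism triviality---but the paper shortcuts your Eilenberg swindle by replacing $z$ with an element of $G_k$ that lies \emph{above} rather than below, so that only upper stabilisation (which Theorem~\ref{abstabthm} handles immediately) is needed. Your route costs you the passage to $G_\infty$ and the verification that lower stabilisation is a homology isomorphism via the braiding; the paper's route costs the appeal to $H_1$-stability to find $a$. Both are short; the paper's is a bit more economical because it avoids analysing $F_*$ altogether.
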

\begin{proof}
Let $g \in \Aut(X^{\oplus \infty})=G_\infty$. By Theorem 3.1, the map $H_1(G_k;\bZ) \to H_1(G_\infty;\bZ)$ is surjective, so we may choose an $h \in G_k:=\Aut(X^{\op a+k})$ such that after stabilising $[h]=[g] \in H_1(G_\infty;\bZ)$. Consider the sum map
$$\op: G_n \times G_k \lra G_{n+a+k}.$$
We have that $X^{\op a+n} \op G_k$ centralises $G_n \op X^{\op a+k}$. There is a commutative diagram
\begin{equation*}
\xymatrix{
H_i(G_n;M) \ar[r]^-{\tau_g} \ar[d]^-{- \op X^{\op a+k}}& H_i(G_n;M) \ar[d]^-{- \op X^{\op a+k}}\\
H_i(G_{n+a+k};M) \ar[r]^-{\tau_{g}} & H_i(G_{n+a+k};M).
}
\end{equation*}
Now the map $\tau_{g} : H_i(G_{n+a+k};M) \to H_i(G_{n+a+k};M)$ is equal to $\tau_{X^{\op a+n}\op h}$, as $g$ and $h$ induce the same automorphism of $M$ so $g$ and $X^{\op a+n}\op h=b_{X^{\op a+ k},X^{\op a+n}}\circ (h\op X^{\op a+n})\circ b_{X^{\op a+k},X^{\op a+n}}^{-1}$ do too. But on the bar complex we have
\begin{align*}
\tau_{X^{\op a+n}\op h} \circ (-\op X^{a+k}) ([g_1 \vert \cdots \vert g_i \vert m]) &= \tau_{X^{\op a+n}\op h}([g_1 \op X^{a+k}  \vert \cdots \vert g_i \op X^{a+k} \vert m])\\
&= [g_1 \op X^{a+k} \vert \cdots \vert g_i \op X^{a+k} \vert (X^{\op a+n}\op h) \cdot m]
\end{align*}
which we may write as $[(X^{\op a+n}\op h) (g_1\op X^{a+k}) (X^{\op a+n}\op h)^{-1} \vert \cdots \vert (X^{\op a+n}\op h) (g_i \op X^{a+k}) (X^{\op a+n}\op h)^{-1} \vert (X^{\op a+n}\op h) \cdot m]$ because $X^{\op a+n}\op G_k$ centralises $G_n \op X^{\op a+k}$. But this simply agrees with $-\op X^{\op a+k}$ followed by the inner action of $G_{n+a+k}$ on $H_*(G_{n+a+k};M)$, which is the identity. Thus the square above still commutes when the lower map is replaced by the identity, but the vertical maps are isomorphisms as long as $i \leq \tfrac{n-k}{k}$ and so in this range of degrees the top map is also the identity.
\end{proof}

\begin{rem}
One can give a different proof of Theorem \ref{abstabthm} in the case $A=0$, in which Corollary \ref{cor:TrivAct} is proved in parallel. The point is that the $d^1$-differentials in the spectral sequence used in the proof of Theorems \ref{stabthm} and \ref{abstabthm} are an alternating sum of the contributions of the various face maps, and these can be shown to differ from each other by postcomposition with certain $\tau_g$. Thus in the range where all the $\tau_g$ act as the identity the $d^1$-differentials out of odd columns become zero, and those out of even columns may be identified with stabilisation maps.
\end{rem}

\subsection{Subgroups containing the commutator subgroup}\label{sec:Commutator}

An extreme example of an abelian $G$-module is the group ring of the abelianisation of $G$, $\bZ[H_1(G;\bZ)]$. By Shapiro's lemma we have an isomorphism
$$H_*(G;\bZ[H_1(G;\bZ)]) \cong H_*(G';\bZ),$$
identifying the homology of $G$ with coefficients in $\bZ[H_1(G;\bZ)]$ with the homology of the commutator subgroup $G' \leq G$. Thus when the hypotheses of Theorem \ref{abstabthm} apply we can deduce that the commutator subgroups $\Aut(A \oplus X^{\oplus n})'$ exhibit homological stability with trivial coefficients.

\begin{cor}\label{cor:CommutatorSubGp}
With the hypotheses of Theorem \ref{abstabthm}, the map
\begin{equation*}
H_i(\Aut(A \oplus X^{\oplus n})';\bZ) \lra H_i(\Aut(A \oplus X^{\oplus n+1})';\bZ)
\end{equation*}
is an epimorphism for $i \leq \tfrac{n-k+2}{k}$ and an isomorphism for $i \leq \tfrac{n-k}{k}$.
\end{cor}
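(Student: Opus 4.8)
The plan is to deduce this from Theorem~\ref{abstabthm} applied to a single universal abelian coefficient module, and then to reinterpret the conclusion via Shapiro's lemma, exactly as indicated in the discussion preceding the statement. Write $G_n=\Aut(A\op X^{\op n})$ and $G_\infty=\Aut(A\op X^{\op\infty})$, and set $M:=\bZ[H_1(G_\infty;\bZ)]=\bZ[G_\infty^{ab}]$, viewed as a $G_\infty$-module by left translation along the abelianisation map $G_\infty\to G_\infty^{ab}$. This action factors through $G_\infty^{ab}$ by construction, so $M$ is an abelian $G_\infty$-module in the sense of Section~\ref{cstsec}; hence Theorem~\ref{abstabthm} applies and shows that
$$H_i(G_n;M)\lra H_i(G_{n+1};M)$$
is an epimorphism for $i\le\tfrac{n-k+2}{k}$ and an isomorphism for $i\le\tfrac{n-k}{k}$.

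It remains to identify $H_i(G_n;M)$ with $H_i(G_n';\bZ)$ in the range where the corollary has content, compatibly with the stabilisation maps. When $i=0$ both sides are $\bZ$ and the map is the identity, so assume $i\ge 1$; then the epimorphism (resp.\ isomorphism) range forces $n\ge k(i+1)-2\ge 2k-2\ge k+1$ (resp.\ $n\ge k(i+1)\ge 2k\ge k+1$), using $k\ge 3$. For $n\ge k+1$, constant-coefficient stability (Theorem~\ref{stabthm}) gives that $H_1(G_n;\bZ)\to H_1(G_\infty;\bZ)$ is an isomorphism, so the abelianisation map restricts to an isomorphism $G_n^{ab}\arsim G_\infty^{ab}$; in particular the kernel of $G_n\to G_\infty^{ab}$ is precisely $G_n'$, and the restriction of $M$ to $G_n$ is isomorphic to the permutation module $\bZ[G_n/G_n']$. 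Shapiro's lemma then provides a natural isomorphism $H_i(G_n;M)\cong H_i(G_n';\bZ)$, and since the stabilisation $G_n\inc G_{n+1}$ carries $G_n'$ into $G_{n+1}'$ and commutes with the maps to $G_\infty^{ab}$, these identifications intertwine the stabilisation maps on $H_i(-;M)$ with those on $H_i((-)';\bZ)$. Combining this with the ranges of the previous paragraph yields the corollary.

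I do not expect a genuine obstacle here; the work is bookkeeping. The one point requiring care is that Theorem~\ref{abstabthm} is stated for modules over the \emph{stable} group, so one cannot directly feed the finite-stage group rings $\bZ[G_n^{ab}]$ into Shapiro's lemma at each stage. The observation that $G_n^{ab}\cong G_\infty^{ab}$ throughout the range in which the conclusion is non-trivial — which is itself a consequence of the already-established constant-coefficient stability — is exactly what bridges this gap.
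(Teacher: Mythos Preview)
Your proof is correct and follows essentially the same approach as the paper: apply Theorem~\ref{abstabthm} with the abelian module $M=\bZ[G_\infty^{ab}]$, use Theorem~\ref{stabthm} to see that $G_n^{ab}\cong G_\infty^{ab}$ in the relevant range (so that $M\vert_{G_n}\cong\bZ[G_n/G_n']$), and then invoke Shapiro's lemma. Your bookkeeping for the range $n\ge k+1$ is slightly more explicit than the paper's, but the argument is the same.
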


\begin{rem}
Suppose that a sequence of groups $V_0 \to V_1 \to V_2 \to \cdots$ is such that the groups $H_1(V_n;\bZ)$ stabilise, and the commutator subgroups $V'_n$ exhibit homological stability with trivial coefficients. Then the $V_n$ exhibit homological stability with all abelian coefficients.

To see this, let $M$ be an abelian $V_\infty$-module and suppose that $n$ is large enough for $H_1(V_n;\bZ) \to H_1(V_\infty;\bZ)$ to be an isomorphism. We have a group extension
$$1 \lra V'_n \lra V_n \lra H_1(V_\infty;\bZ) \lra 1,$$
and similarly for $V_{n+1}$, with a map between them, and thus we obtain a relative Lyndon--Hochschild--Serre spectral sequence
$$H_p(H_1(V_\infty;\bZ) ; H_q(V'_{n+1}, V'_n ; M)) \Longrightarrow H_{p+q}(V_{n+1}, V_n;M).$$
But $M$ is a trivial $V'_\infty$-module, and $V_n'$ has stability with trivial coefficients, so the groups $H_q(V'_{n+1}, V'_n ; M)$ vanish in a range. Thus $H_{p+q}(V_{n+1}, V_n;M)$ vanishes in the same range.
\end{rem}

More generally, let ${U} \leq H_1(\Aut(A \oplus X^{\oplus \infty});\bZ)$ be a subgroup. Then we may define groups $\Aut^U(A \oplus X^{\oplus n})$ by the pullback
\begin{equation*}
\xymatrix{
\Aut^U(A \oplus X^{\oplus n}) \ar@{^(->}[r] \ar[d]& \Aut(A \oplus X^{\oplus n}) \ar[d]\\
U \ar@{^(->}[r]& H_1(\Aut(A \oplus X^{\oplus \infty});\bZ),
}
\end{equation*}
and $-\oplus X$ defines stabilisation maps $\Aut^U(A \oplus X^{\oplus n}) \to \Aut^U(A \oplus X^{\oplus n+1})$. The group ring
$$M_U := \bZ[H_1(\Aut(A \oplus X^{\oplus \infty});\bZ)/U]$$
is an abelian $\Aut(A \oplus X^{\oplus \infty})$-module, and as an $\Aut(A \oplus X^{\oplus n})$-module it can be identified with $\bZ[\Aut(A \oplus X^{\oplus n})/\Aut^U(A \oplus X^{\oplus n})]$ by the above pullback square. Thus Shapiro's lemma shows that $H_*(\Aut(A \oplus X^{\oplus n});M_U) \cong H_*(\Aut^U(A \oplus X^{\oplus n});\bZ)$.

\begin{cor}\label{cor:ParameterSubGp}
With the hypotheses of Theorem \ref{abstabthm}, and a fixed subgroup ${U} \leq H_1(\Aut(A \oplus X^{\oplus \infty});\bZ)$, the map
\begin{equation*}
H_i(\Aut^U(A \oplus X^{\oplus n});\bZ) \lra H_i(\Aut^U(A \oplus X^{\oplus n+1});\bZ)
\end{equation*}
is an epimorphism for $i \leq \tfrac{n-k+2}{k}$ and an isomorphism for $i \leq \tfrac{n-k}{k}$.
\end{cor}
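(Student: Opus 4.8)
The plan is to obtain this as a direct consequence of Theorem~\ref{abstabthm} applied to the coefficient module $M_U = \bZ[H_1(\Aut(A\op X^{\op \infty});\bZ)/U]$, following the same argument used to prove Corollary~\ref{cor:CommutatorSubGp} (which is the case $U = 0$). The bridge is Shapiro's lemma: once $M_U$, regarded as an $\Aut(A\op X^{\op n})$-module by restriction, is identified with the permutation module $\bZ[\Aut(A\op X^{\op n})/\Aut^U(A\op X^{\op n})]$, one obtains a natural isomorphism $H_*(\Aut^U(A\op X^{\op n});\bZ) \cong H_*(\Aut(A\op X^{\op n});M_U)$; and $M_U$ is by construction an abelian $\Aut(A\op X^{\op \infty})$-module, so Theorem~\ref{abstabthm} gives precisely the asserted epimorphism range $i \le \tfrac{n-k+2}{k}$ and isomorphism range $i \le \tfrac{n-k}{k}$.

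First I would observe that the case $i = 0$ is trivial ($H_0(-;\bZ) = \bZ$, with stabilisation the identity), so one may assume $i \ge 1$. Then if $i$ lies in the epimorphism range one has $n \ge ki + k - 2 \ge 2k-2 \ge k$ (using $k \ge 2$), and a fortiori $n+1 \ge k$; so by Theorem~\ref{stabthm} the stabilisation-induced maps $H_1(\Aut(A\op X^{\op n});\bZ) \to H_1(\Aut(A\op X^{\op \infty});\bZ)$ and $H_1(\Aut(A\op X^{\op n+1});\bZ) \to H_1(\Aut(A\op X^{\op \infty});\bZ)$ are surjective. Hence the composites $\Aut(A\op X^{\op n}) \to H_1(\Aut(A\op X^{\op \infty});\bZ)/U$ are surjective, so by the defining pullback square $\Aut(A\op X^{\op n})$ acts transitively on the set underlying $M_U$ with point-stabiliser exactly $\Aut^U(A\op X^{\op n})$; this supplies the permutation-module identification at both levels $n$ and $n+1$. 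One then feeds $M_U$ into Theorem~\ref{abstabthm} and transports the resulting ranges across the Shapiro isomorphisms.

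The only step requiring care --- and the one I expect to be the main (if mild) obstacle --- is checking that these Shapiro isomorphisms are natural for the stabilisation maps: that the square with vertical maps the Shapiro isomorphisms at levels $n$ and $n+1$, top map the $-\op X$-induced map $H_i(\Aut(A\op X^{\op n});M_U) \to H_i(\Aut(A\op X^{\op n+1});M_U)$, and bottom map $H_i(\Aut^U(A\op X^{\op n});\bZ) \to H_i(\Aut^U(A\op X^{\op n+1});\bZ)$, commutes. This reduces to the observation that $f \mapsto f \op X$ sends $\Aut^U(A\op X^{\op n})$ into $\Aut^U(A\op X^{\op n+1})$ and is compatible with the two surjections onto $H_1(\Aut(A\op X^{\op \infty});\bZ)/U$, which is immediate because the canonical maps $\Aut(A\op X^{\op n}) \to \Aut(A\op X^{\op \infty})$ form a compatible system under $-\op X$ by definition of the colimit. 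Everything else is formal.
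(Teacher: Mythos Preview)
Your proposal is correct and follows the same route as the paper: the paragraph preceding Corollary~\ref{cor:ParameterSubGp} already records that $M_U$ is an abelian $\Aut(A\op X^{\op\infty})$-module which, restricted to $\Aut(A\op X^{\op n})$, is $\bZ[\Aut(A\op X^{\op n})/\Aut^U(A\op X^{\op n})]$, so that Shapiro's lemma and Theorem~\ref{abstabthm} give the result. Your write-up is in fact slightly more careful than the paper's sketch, in that you make explicit why surjectivity of $G_n\to H_1(G_\infty;\bZ)/U$ holds in the relevant range (via Theorem~\ref{stabthm}) and why the Shapiro isomorphisms are compatible with stabilisation; note also that the hypotheses of Theorem~\ref{abstabthm} give $k\ge 3$, so your inequality $2k-2\ge k$ is amply satisfied.
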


\subsection{Stable homology}\label{sec:StabHomology}

Theorems \ref{stabthm} and \ref{abstabthm} interact very nicely with the technique of ``group-completion". More precisely, for a pre-braided homogeneous category $(\mathcal{C}, \oplus, 0)$ and an object $X$ of $\mathcal{C}$, we may consider the braided monoidal groupoid $\mathcal{X} \subset \mathcal{C}$ having objects $\{X^{\oplus n} \, \vert \, n \in \bN\}$ and all isomorphisms between them. 

Supposing that no two powers of $X$ are isomorphic (which is the case for example if $X \neq 0$ and $\mathcal{C}$ satisfies the cancellation property), the geometric realisation of the nerve of the category $\mathcal{X}$ is
$$\vert \mathcal{X} \vert = \coprod_{n \geq 0} B\Aut(X^{\oplus n}).$$
The monoidal structure on $\mathcal{X}$ gives $\vert \mathcal{X} \vert$ the structure of a topological monoid, and the braided structure makes this topological monoid be homotopy commutative. (In fact, the braided structure makes $\vert \mathcal{X} \vert$ into an $E_2$-space, but we shall not use this here.)

The topological monoid $\vert \mathcal{X} \vert$ is thus suitable input for the group-completion theorem. In the form due to McDuff--Segal \cite{McDuff-Segal}, this identifies the homology of the mapping telescope
$$\mathrm{hocolim}(\vert \mathcal{X} \vert \overset{-\oplus X}\to \vert \mathcal{X} \vert \overset{-\oplus X}\to \vert \mathcal{X} \vert \overset{-\oplus X}\to \cdots)$$
with that of the space $\Omega B_\oplus \vert \mathcal{X} \vert$, the homotopical group-completion of the monoid $\vert \mathcal{X} \vert$. Restricting to individual path components, it thus identifies the homology of $B\Aut(X^{\oplus \infty})$ with that of the basepoint component $\Omega_0 B_\oplus \vert \mathcal{X} \vert$.

Recent refinements of the technique of McDuff--Segal, due to Miller--Palmer \cite{MillerPalmer} and the first author \cite{RWGC}, show that the comparison map
$$\mathrm{hocolim}(\vert \mathcal{X} \vert \overset{-\oplus X}\to \vert \mathcal{X} \vert \overset{-\oplus X}\to \vert \mathcal{X} \vert \overset{-\oplus X}\to \cdots) \lra \Omega B_\oplus \vert \mathcal{X} \vert$$
is more than just a homology equivalence: it is in fact an acyclic map. Restricted to basepoint components it follows that the comparison map
$$B\Aut(X^{\oplus \infty}) \lra \Omega_0 B_\oplus \vert \mathcal{X} \vert$$
is an acyclic map. Any abelian $\Aut(X^{\oplus \infty})$-module $M$ gives a local coefficient system on $B\Aut(X^{\oplus \infty})$ which is pulled back from one on $\Omega_0 B_\oplus \vert \mathcal{X} \vert$, which we give the same name, and it follows from acyclicity that the induced map
$$H_*(B\Aut(X^{\oplus \infty});M) \lra H_*(\Omega_0 B_\oplus \vert \mathcal{X} \vert;M)$$
is an isomorphism.

In particular, for a fixed subgroup
$${U} \leq H_1(\Aut(A \oplus X^{\oplus \infty});\bZ) \cong H_1(\Omega_0B_\oplus \vert \mathcal{X} \vert;\bZ) \cong \pi_1(\Omega_0B_\oplus \vert \mathcal{X} \vert)$$
this discussion identifies the homology of the subgroup $\Aut^U(X^{\oplus \infty})$ defined in the last section with that of the covering space of $\Omega_0 B_\oplus \vert \mathcal{X} \vert$ corresponding to $U$. Hence the homology of the commutator subgroup $\Aut(X^{\oplus \infty})'$ is that of the universal cover $\widetilde{\Omega_0 B_\oplus \vert \mathcal{X} \vert}$, which implies that $\Aut(X^{\oplus \infty})'$ is perfect (cf.\ \cite[\S 3]{RWGC} for an elementary proof that such groups are perfect).

\begin{rem}
A similar discussion can be made to treat the groups $\Aut(A \oplus X^{\oplus n})$. In this case the right action of $\vert \mathcal{X} \vert$ on $\vert \mathcal{A} \vert := \coprod_{n \geq 0} B\Aut(A \oplus X^{\oplus n})$ should be used, and the stable homology of $\Aut(A \oplus X^{\oplus n})$ will be computed in terms of the homotopy fibre $\mathrm{hofib}(B(\vert \mathcal{A} \vert; \vert \mathcal{X}\vert; *) \to B(*; \vert \mathcal{X}\vert; *))$ of the induced map on two-sided bar constructions. We leave the details to the interested reader.
\end{rem}

\section{Stability with twisted coefficients}\label{sec:twist}

Let  $(\C,\op,0)$ be a monoidal category. The main result of this section is a stability theorem for homology of the automorphism groups of objects of $\C$ with twisted coefficients. Before stating the theorem, we will need to introduce the upper and lower suspension maps, the coefficient systems we work with, and the crucial notion of kernel and
cokernel of a coefficient system (see Definition~\ref{kercoker}). This section is a generalisation of the work of Dwyer \cite{Dwy80} and van der Kallen \cite{vdK80}, who worked with general linear groups, and of Ivanov \cite{Iva93}, who worked with mapping class groups. 

We will assume in this section that $\C$ is {\em pre-braided} in the sense of Definition~\ref{prebraidDef}. 

\subsection{Coefficient systems}\label{coefsec}

Let $\C_{A,X}$ denote the full subcategory of $\C$ whose objects are the objects $A\op X^{\op n}$ for all $n\ge 0$. We are interested in the stabilisation map $\Aut(A\op X^{\op n})\to \Aut(A\op X^{\op n+1})$, which is associated to the {\em upper suspension functor} 
$$\Si^X:=- \oplus X\colon \C_{A,X}\lra \C_{A,X},$$
the functor taking any object $B$ to $B\op X$ and any morphism $f:A\to B$ to $f\op X:A\op X\to B\op X$. 
There is an associated {\em upper suspension map}   
$$\s^X:=A\op X^{\op n}\op \iota_X: A\op X^{\op n}\rar A\op X^{\op n}\op X =A\op X^{\op n+1},$$
which defines a natural transformation 
 $\s^X:Id \Rightarrow \Si^X$. 

\begin{Def}\label{def:CoeffSys}
A {\em coefficient system for $\C$ at $(A,X)$} is a functor 
$$F\colon \C_{A,X} \lra  \A$$ 
from $\C_{A,X}$ to an abelian category $\A$. 
\end{Def}
We will be mostly interested in the case where $\A$ is the category of $\Z$-modules, or more generally the category of $R$-modules for some ring $R$.

A coefficient system $F$ associates to the sequence of groups $G_n=\Aut(A\op X^{\op n})$  a sequence of left $G_n$-modules $F_n=F(A\op X^{\op n})$, together with $G_n$-equivariant maps $F_n\to F_{n+1}$ induced by the upper suspension. Sequences of $G_n$-modules obtained this way have the property that the subgroup
$$\Aut(X^{\op m}) \inc \Aut(A \oplus X^{\op n} \oplus X^{\op m})$$
acts  trivially on the image of $F_n$ in $F_{n+m}$. In particular, in the case where $G_n=GL_n(\Z)$ with $\C$  the category with objects the finitely generated free abelian groups and morphisms the injective homomorphisms equipped with a choice of splitting (cf.\ Section~\ref{GLnsec}), it is {\em central} in the sense of \cite[Sec.~2]{Dwy80}. 
Conversely, we have the following: 

\begin{prop}\label{repfunct} 
Let $(\C, \op, 0)$ be a pre-braided homogeneous category and let $(A,X)$ be a pair of objects in $\C$. Suppose that $A\op X^{\op n}\not \cong A\op X^{\op m}$ in $\C$ when $n\neq m$. Let $G_n=\Aut(A\op X^{\op n})$. 
Suppose $M_n$, for $n\ge 0$, is a sequence of $G_n$-modules, together with $G_n$-equivariant maps $\s_n:M_n\to M_{n+1}$ satisfying that $\Aut(X^{\op
  m})$ acts trivially on the image of $M_n$ in $M_{n+m}$. Then there exists a functor 
$$F:\C_{A,X}\lra \Z\operatorname{-Mod}$$ 
satisfying that $F(A\op X^{\op n})=M_n$ as a $G_n$-module, and that $\s_n=F({A\op X^{\op n}}\op \iota_{X})$.  
\end{prop}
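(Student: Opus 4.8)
The plan is to define $F$ on objects by $F(A\op X^{\op n}):=M_n$, and to extend it to morphisms by first putting every morphism of $\C_{A,X}$ into a normal form provided by the homogeneity axioms.

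First I would record the shape of $\C_{A,X}$. For $m\le n$ let $\iota_{m,n}:=(A\op X^{\op m})\op\iota_{X^{\op n-m}}\colon A\op X^{\op m}\to A\op X^{\op n}$ be the evident canonical morphism; since $0$ is initial and $\op$ is functorial one has $\iota_{n,n}=\mathrm{id}$, $\iota_{p,n}\circ\iota_{m,p}=\iota_{m,n}$ for $m\le p\le n$, and the naturality relation $\iota_{m,n}\circ\psi=(\psi\op X^{\op n-m})\circ\iota_{m,n}$ for $\psi\in G_m$ (iterated naturality of the upper suspension $\s^X$), while $\iota_{n,n+1}=A\op X^{\op n}\op\iota_X$ is exactly the morphism appearing in the statement. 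Because a homogeneous category is an EI-category in which $\Hom(B,C)$ and $\Hom(C,B)$ can be simultaneously nonempty only if $B\cong C$ (remark following Definition~\ref{homdef}), the hypothesis $A\op X^{\op m}\not\cong A\op X^{\op n}$ for $m\ne n$, together with the existence of $\iota_{n,m}$ when $n\le m$, forces $\Hom_{\C_{A,X}}(A\op X^{\op m},A\op X^{\op n})=\emp$ when $m>n$; and for $m\le n$ axiom H1 says $G_n$ acts transitively on $\Hom(A\op X^{\op m},A\op X^{\op n})$, so every such morphism can be written, non-uniquely, as $\phi\circ\iota_{m,n}$ with $\phi\in G_n$.

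The step that requires an idea, and which I expect to be the main obstacle, is the identification of the stabiliser: I claim that $\St_{G_n}(\iota_{m,n})$ equals the image of the injection $(A\op X^{\op m})\op(-)\colon\Aut(X^{\op n-m})\to G_n=\Aut(A\op X^{\op m}\op X^{\op n-m})$. The inclusion $\supseteq$ is immediate because $0$ is initial. For $\subseteq$ I would use pre-braidedness: by Definition~\ref{prebraidDef} the groupoid braiding $b:=b_{A\op X^{\op m},\,X^{\op n-m}}\colon A\op X^{\op n}\arsim X^{\op n-m}\op A\op X^{\op m}$ satisfies $b\circ\iota_{m,n}=\iota_{X^{\op n-m}}\op(A\op X^{\op m})$, so if $\phi\in G_n$ fixes $\iota_{m,n}$ then $b\phi b^{-1}$ fixes $\iota_{X^{\op n-m}}\op(A\op X^{\op m})$; by axiom H2 this forces $b\phi b^{-1}=g\op(A\op X^{\op m})$ for a unique $g\in\Aut(X^{\op n-m})$, and naturality of the groupoid braiding then gives $\phi=(A\op X^{\op m})\op g$. (Only the inclusion $\subseteq$ is strictly needed below.) Everything else is formal.

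Granting the stabiliser claim, set $\s_{m,n}:=\s_{n-1}\circ\cdots\circ\s_m\colon M_m\to M_n$ (with $\s_{n,n}=\mathrm{id}$) and, for a morphism $f=\phi\circ\iota_{m,n}$, define $F(f):=\phi_*\circ\s_{m,n}\colon M_m\to M_n$, where $\phi_*$ denotes the given action of $\phi\in G_n$ on $M_n$. Well-definedness is precisely the remaining hypothesis: if $\phi\circ\iota_{m,n}=\phi'\circ\iota_{m,n}$ then $(\phi')^{-1}\phi\in\St_{G_n}(\iota_{m,n})$, hence by the claim lies in the image of $\Aut(X^{\op n-m})$ in $G_n$, which by assumption acts trivially on $\Img(M_m\to M_n)=\Img(\s_{m,n})$, so $\phi_*\circ\s_{m,n}=\phi'_*\circ\s_{m,n}$. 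Functoriality then follows from $\iota_{p,n}\circ\iota_{m,p}=\iota_{m,n}$, from the naturality relation $\iota_{m,n}\circ\psi=(\psi\op X^{\op n-m})\circ\iota_{m,n}$ recorded above, and from the assumed $G_n$-equivariance of the maps $\s_n$, which iterates to the statement that $\s_{m,n}$ intertwines the $G_m$-action on $M_m$ with the $G_n$-action on $M_n$ along $(-)\op X^{\op n-m}\colon G_m\to G_n$. Finally, taking $m=n$ gives $F(\phi)=\phi_*$, so $F(A\op X^{\op n})=M_n$ as a $G_n$-module, and $F(A\op X^{\op n}\op\iota_X)=F(\iota_{n,n+1})=\s_{n,n+1}=\s_n$, as required.
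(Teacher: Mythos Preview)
Your proof is correct and follows the same route as the paper's: put every morphism into the normal form $\phi\circ\iota_{m,n}$ via H1, check well-definedness by identifying the stabiliser of $\iota_{m,n}$ with the image of $\Aut(X^{\op n-m})$, and verify functoriality from the naturality relation and equivariance of the $\sigma_n$. The one place where you go further than the paper is in actually proving the stabiliser claim: the paper simply asserts that ``$\phi$ is only determined up to an automorphism of $X^{\op n-m}$'', whereas you correctly observe that H2 as stated only controls $\Fix(B,A\op B)$ (the stabiliser of $\iota_A\op B$, not of $A\op\iota_B$) and use the pre-braiding to conjugate the problem into a form where H2 applies --- this is the honest justification of that step, and is exactly why pre-braidedness is in the hypotheses.
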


\begin{proof}
We define $F$ on objects by setting $F(A\op X^{\op n}):=M_{n}$. 
On morphisms, we set the action of $\Aut(A\op X^{\op n})=G_n$ on $F(A\op X^{\op n})=M_n$ to be the $G_n$-module structure of $M_n$, and we set 
$$F({A\op X^{\op n}}\op \iota_{X^{\op m}})=\s_{n+m-1}\circ\dots\circ \s_n\colon F(A\op X^{\op n})\rar F(A\op X^{\op n+m}).$$
Let $f:A\op X^{\op m}\to A\op X^{\op n}$ denote a general morphism of $\C_{A,X}$. We must have $m\le n$ for $f$ to exist, as the existence of such an $f$ with $m>n$ would imply, by the fact that endomorphisms are isomorphisms in $\C$, that $A\op X^{\op n}$ is isomorphic to $A\op X^{\op m}$, contradicting our hypothesis. We can always write such a morphism as a composition 
$$f= \phi \circ ({A\op X^{\op m}}\op \iota_{X^{\op n-m}})$$ 
for some  $\phi\in \Aut(A\op X^{\op n})=G_{n}$, by axiom {H1}. We propose to define $F(f)$ using such a decomposition of $f$; we must check is that this is consistent.

First note that by Proposition~\ref{H2sym} the automorphism $\phi$ is only determined up to an automorphism of $X^{\op n-m}$. As we have assumed that the automorphisms of $X^{\op n-m}$ act trivially on the image of $M_{m}$ inside $M_{n}$, we see that $F(f)$ is in fact independent of that choice. 

It remains to show that this proposed definition respects composition. This follows from the commutativity of the following diagram, 
\begin{equation*}
\xymatrix{
A\op X^{\op m} \ar[rr]^-{A\op X^{\op m}\op \iota_{X^{\op k}}}\ar[drr]_-{A\op X^{\op m} \op \iota_{X^{\op k+l}}} && A\op X^{\op m+k} \ar[r]^-\phi\ar[d]^-{A\op X^{\op m+k}\op \iota_{X^{\op l}}} & A\op X^{\op m+k} \ar[d]^-{A\op X^{\op m+k}\op \iota_{X^{\op l}}} \\
&& A\op X^{\op m+k+l} \ar[dr]_-{\psi\circ \Si^l\phi}\ar[r]^-{\Si^l \phi}  & A\op X^{\op m+k+l} \ar[d]^{\psi}\\
&&& A\op X^{\op m+k+l},
}
\end{equation*}
and the $G_n$-equivariance of $\s_n$.
\end{proof}

Many sequences of $G_n$-modules will come directly as functors from the category $\C_{A,X}$, but some sequences of modules do not obviously come from a functor. In this case the above proposition gives an easy way to check whether or not they do. The following example---which is one such less obvious example---was suggested to us by Christine Vespa. 

\begin{ex}[The Burau representation]\label{Bureauex} 
Let $\beta=\coprod_{n\ge 0} \beta_n$ be the groupoid with objects the natural numbers and automorphism groups the braid groups. This is a braided
monoidal groupoid satisfying the hypothesis of Theorem~\ref{universal}. Let $U\beta$ denote the associated homogeneous category. The category $U\beta$
is described in detail in Section~\ref{braidex}, but we will not need that description for our current purpose. 

We take here $X=1$ and $A=0$, so $\C_{A,X}=U\beta$.

The (unreduced) Burau representation is a sequence of representations for the braid groups: for each $n$, let $M_n=(\Z[t,t^{-1}])^n$ denote the rank $n$ free
$\Z[t,t^{-1}]$-module. Then the standard $i$th Artin generator $\s_i$ of $\beta_n$ acts on $M_n$ via the following matrix: 
$$\left(
\begin{array}{c|cc|c}
I_{i-1}& &&  \\
\hline
& 1-t & t& \\
& 1& 0& \\
\hline
&&& I_{n-i-1}
\end{array}
\right)$$
where $I_j$ denotes the $(j\x j)$-identity matrix. 
(See e.g. \cite[Sec.~1.3]{Tur02}.)

The natural map $(\Z[t,t^{-1}])^n\to (\Z[t,t^{-1}])^{n+1}$ defines a $\beta_n$-equivariant module map $M_n\to M_{n+1}$, and it follows from the above
description of the action that $\beta_m$ acts trivially on the image  $(\Z[t,t^{-1}])^n$ inside $(\Z[t,t^{-1}])^{n+m}$. Hence by Proposition~\ref{repfunct}, the Burau representation is part of a functor
 $$F:U\beta \ \rar \ \Z[t,t^{-1}]\operatorname{-Mod},$$
that is, it defines a coefficient system in our set-up. 
\end{ex}

\subsection{Lower suspension, and suspension of coefficient systems}\label{sec:LowerSusp}

We will also use in this section the stabilisation map associated with the {\em lower suspension by $X$},
$$\s_X:= (b_{X,A} \op X^{\op n}) \circ (\iota_X \op A  \op X^{ \op n}): A\op X^{\op n}\rar A\op X\op X^{\op n}.$$
As $\C$ is assumed to be pre-braided, these two suspension maps are related  via 
\begin{equation}\label{lowups}
\s_X=(b_{X,A} \op X^{\op n}) \circ (b_{A \op X^{\op n},X})\circ \s^X.
\end{equation}

Just as for the upper suspension, the lower suspension defines a natural transformation from the identity to an endofunctor of $\C$. Let 
$$\Si_X \colon \C_{A,X} \lra \C_{A,X}$$
be the functor taking $A \oplus X^{\op n}$ to $A \oplus X^{\op n+1}$, and taking a morphism $f : A \oplus X^{\op n} \to A \oplus X^{\op k}$ to
$$\Si_X(f) : A \oplus X^{\op n+1} \overset{b_{X,A}^{-1} \op X^{\op n}}\rar X \op A \oplus X^{\op n} \overset{X \op f}\lra X \op A \oplus X^{\op k} \overset{b_{X,A} \oplus X^{\op k}}\rar A \oplus X^{\op k+1}.$$
This defines a functor, and $\s_X$ defines a natural transformation $\s_X : Id\Rightarrow \Si_X$. Note that the functors $\Sigma^X$ and $\Sigma_X$ commute. Using the axioms of a pre-braided monoidal category one can moreover show that,
for any $f\in \Aut(A\op X^{\op n})$,  the upper and lower suspension satisfy 
\begin{equation}\label{eq:UpperLowerConj2}
\Sigma_X(f) = (b_{X,A} \op X^{\op n}) \circ (b_{A\op X^n, X}) \circ \Sigma^X(f) \circ (b_{A\op X^n, X}^{-1}) \circ (b_{X,A}^{-1} \op X^{\op n}).
\end{equation}
One can also check that for any morphism $f:A\op X^{\op n}\to A\op X^{\op k}$ in $\C$, we also have that $\Sigma_X(f) = (A\op b_{X,X^{\op k}}^{-1}) \circ \Sigma^X(f) \circ (A\op b_{X,X^{\op n}})$, though we will not use this. 

\medskip

Given a coefficient system $F:\C_{A,X}\to \A$, we can get a new one by precomposing with an endofunctor of $\C_{A,X}$. We define the {\em suspension} of $F$ with respect to $X$ as the composite functor $$\Si F:=F\circ \Si_X.$$ 
Given $F$ and its suspension $\Si F$, we can define two additional coefficient systems.

\begin{Def}\label{kercoker}
For a coefficient system $F:\C_{A,X}\to \A$, we define {\em $\ker F$} and {\em $\coker F$} to be the kernel and cokernel of the natural transformation
$F(\s_X):F\to \Si F=F\circ \Si_X$. 
\end{Def}
The kernel and cokernel of a functor again define coefficient systems.

\begin{lem}\label{suspker} 
Let $F:\C_{A,X}\to \A$ be a coefficient system. Then the  kernel and cokernel of $\Si^jF$ are isomorphic to the $j$-fold suspension of the kernel and cokernel of $F$.  
\end{lem}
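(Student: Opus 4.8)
The plan is to reduce the statement to a formal property of precomposition together with one braiding identity. Since $\A$ is abelian, the category of coefficient systems $\C_{A,X}\to\A$ is abelian with kernels and cokernels computed objectwise, so $\ker F$ is the functor $A\op X^{\op n}\mapsto\ker\!\bigl(F(\s_{X,A\op X^{\op n}})\bigr)$ and likewise for $\coker F$. Moreover $\Si^j(-)=(-)\circ\Si_X^{\,j}$ is precomposition with the endofunctor $\Si_X^{\,j}$ of $\C_{A,X}$, and precomposition with any functor is exact (it preserves objectwise kernels and cokernels); hence $\Si^j$ commutes with forming the kernel and cokernel of a natural transformation. Therefore it suffices to compare the natural transformation $\s_X\colon\Si^jF\Rightarrow\Si(\Si^jF)=\Si^{j+1}F$ that, by Definition~\ref{kercoker}, defines $\ker(\Si^jF)$ and $\coker(\Si^jF)$, with the natural transformation $\Si^j(\s_X)\colon\Si^jF\Rightarrow\Si^{j+1}F$ obtained by applying $\Si^j$ to $\s_X\colon F\Rightarrow\Si F$, whose kernel and cokernel are $\Si^j(\ker F)$ and $\Si^j(\coker F)$. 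Unwinding the whiskerings, the first has value $F\!\bigl(\Si_X^{\,j}(\s_{X,c})\bigr)$ at an object $c$, and the second has value $F\!\bigl(\s_{X,\Si_X^{\,j}c}\bigr)$.

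Thus everything comes down to comparing the two morphisms $\Si_X^{\,j}(\s_{X,c})$ and $\s_{X,\Si_X^{\,j}c}$ of $\C$, both of which go from $\Si_X^{\,j}(c)$ to $\Si_X^{\,j+1}(c)$. I would first treat $j=1$: using the definition of $\Si_X$ on morphisms, the formula \eqref{lowups} for $\s_X$, and the pre-braided relation $b_{P,Q}\circ(P\op\iota_Q)=\iota_Q\op P$ of Definition~\ref{prebraidDef}, a direct computation for $c=A\op X^{\op n}$ (which simplifies after stripping off a common right factor $\op X^{\op n}$ and a common braiding) shows that $\Si_X(\s_{X,c})$ and $\s_{X,\Si_X c}$ differ exactly by postcomposition with the automorphism $\tau_c=A\op b_{X,X}^{\pm1}\op X^{\op n}$ of $A\op X^{\op n+2}$ which braids past one another the two new copies of $X$ that each morphism adjoins just after $A$; and $\tau$ is a natural automorphism of the endofunctor $\Si_X^{\,2}$, since for any morphism $g$ the twofold suspension $\Si_X^{\,2}(g)$ carries those two copies of $X$ along in their slots. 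The general $j$ then follows by induction, whiskering this identity with powers of $\Si_X$ and assembling the resulting natural automorphisms into a single natural automorphism $\sigma$ of $\Si_X^{\,j+1}$ with $\s_{X,\Si_X^{\,j}c}=\sigma_c\circ\Si_X^{\,j}(\s_{X,c})$ for all $c$.

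Granting this, whiskering by $F$ shows that the natural transformation defining $\ker(\Si^jF)$ and $\coker(\Si^jF)$ equals $\theta$ vertically composed with $\Si^j(\s_X)$, where $\theta:=(F\ast\sigma)^{-1}$ is a natural automorphism of $\Si^{j+1}F$. Postcomposing a morphism with an isomorphism leaves its kernel unchanged and carries its cokernel to an isomorphic one, and the same holds objectwise, hence for natural transformations; therefore $\ker(\Si^jF)=\ker\bigl(\Si^j(\s_X)\bigr)=\Si^j(\ker F)$ and $\coker(\Si^jF)\cong\coker\bigl(\Si^j(\s_X)\bigr)=\Si^j(\coker F)$, the cokernel isomorphism being induced by $\theta$. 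This is exactly the claim of the lemma.

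The one genuinely non-formal step, and where I expect to spend the most care, is the $j=1$ braiding identity. In a merely pre-braided category the braiding is natural only along isomorphisms, so the sole permissible bridge between $b$ and the structure maps $\iota_{(-)}$ is the axiom of Definition~\ref{prebraidDef}; the computation of $\Si_X(\s_{X,c})$ and $\s_{X,\Si_X c}$ must be set up so that this is the only non-bifunctorial input, and the relation between the upper and lower suspensions recorded just after \eqref{eq:UpperLowerConj2} is a convenient device for doing so, as it transports the question to the upper-suspension version where the analogous automorphism is simply $A\op X^{\op n}\op b_{X,X}^{\pm1}$ and naturality is immediate from bifunctoriality of $\op$.
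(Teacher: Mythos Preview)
Your proposal is correct and follows essentially the same route as the paper: both reduce to showing that the two natural transformations $\Sigma_X^{\,j}\ast\sigma_X$ and $\sigma_X\ast\Sigma_X^{\,j}$ from $\Sigma_X^{\,j}$ to $\Sigma_X^{\,j+1}$ differ by postcomposition with a natural automorphism of $\Sigma_X^{\,j+1}$, so that the two resulting maps out of $\Sigma^j F$ have equal kernels and naturally isomorphic cokernels. The only difference is one of execution: the paper carries out the braiding computation once for arbitrary $j$, obtaining directly that the comparison automorphism is $A\op b_{X^{\op j},X}\op X^{\op n}$ (without induction and without invoking the alternate formula for $\Sigma_X$ recorded after \eqref{eq:UpperLowerConj2}, which the paper explicitly says it will not use), whereas you propose to establish the $j=1$ case and then induct.
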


\begin{proof} We want to compare $\ker(\Si^jF)=\ker(F\circ \Si_X^j)$ with $\Si^j\ker F=(\ker F) \circ \Si_X^j$ and likewise for the cokernels. Consider the diagram of functors
$$\xymatrix{0\to (\ker F)\circ \Si_X^j \ar[r] & F \circ \Si_X^j \ar[rr]^-{F(\s_X)\circ \Si_X^j} \ar@{=}[d] && (F\circ \Si_X) \circ \Si_X^j \ar[d]^\eta\ar[r] & (\coker F)\circ \Si_X^j \ar[r] & 0\\
0\to \ker (F\circ \Si_X^j) \ar[r] & F \circ \Si_X^j \ar[rr]^-{F\circ \Si_X^j(\s_X)} &&  (F\circ \Si^j_X) \circ \Si_X \ar[r] & \coker (F\circ \Si_X^j) \ar[r] & 0}$$ 
where the rows are exact and where $\eta$ is the natural isomorphism defined by 
$$\eta_{A\op X^{\op n}}=F(A\op b_{X^{\op j},X}^{-1}\op X^{\op n}): F(A\op X\op X^{\op j}\op X^{\op n}) \rar F(A\op X^{\op j}\op X\op X^{\op n}).$$
The result follows from the commutativity of the middle square, which we check now on the object $A\op X^{\op n}$: By definition, $ F\circ \Si_X^j(\s_X)$ is $F$ applied to the composition
\begin{align*}
& (b_{X^{\op j}, A} \op X^{\op n+1}) \circ (X^{\op j} \op b_{X,A} \op X^{\op n}) \circ (X^{\op j} \op\iota_X \op A  \op X^{ \op n}) \circ (b_{X^{\op j}, A}^{-1} \op X^{\op n})\\
= &  \big((b_{X^{\op j}, A} \op X) \circ (X^{\op j} \op b_{X,A}) \circ (X^{\op j} \op\iota_X \op A) \circ (b_{X^{\op j}, A}^{-1})\big) \op X^{\op n}.
\end{align*}
Replacing $ (X^{\op j} \op\iota_X \op A)$ by $ (X^{\op j} \op b_{A,X}) \circ (X^{\op j} \op A  \op\iota_X)$ using the pre-braiding relation and commuting the right-most two maps, we can rewrite this composition as
\begin{align*}
\big((b_{X^{\op j}, A} \op X) \circ (X^{\op j} \op b_{X,A}) \circ  (X^{\op j} \op b_{A,X}) \circ (b_{X^{\op j}, A}^{-1} \op X) \circ (A\op X^{\op j} \op\iota_X)\big)\op X^{ \op n}.
\end{align*}
Using the braid relations, one checks that this is equal to 
\begin{align*}
\big((A\op b_{X^{\op j},X}^{-1}) \circ  ( b_{A,X}\op X^{\op j}) \circ (b_{A\op X^{\op j},X}) \circ (A\op X^{\op j} \op\iota_X)\big)\op X^{ \op n},
\end{align*}
which reduces to 
\begin{align*}
\big((A\op b_{X^{\op j},X}^{-1}) \circ  ( b_{A,X}\op X^{\op j}) \circ (\iota_X\op A\op X^{\op j})\big)\op X^{ \op n}.
\end{align*}
using the pre-braiding relation, which identifies, after applying $F$, with $\eta\circ F(\s_X)\circ \Si_X^j$. 
\end{proof}

\subsection{Coefficient systems of $G_\infty^{ab}$-modules}

In the set-up of the previous sections, we have a group $G_\infty$ obtained as the colimit of the groups $G_n$ along the upper suspension homomorphisms $\Si^X\colon G_n \to G_{n+1}$; we write $\Si^X_\infty : G_n \to G_\infty$ for the canonical homomorphism, and $G_\infty^{ab} = H_1(G_\infty;\bZ)$ for the abelianisation. Let $\mathbb{Z}[G_\infty^{ab}]\operatorname{-Mod}$ denote the (abelian) category of left $G_\infty^{ab}$-modules. If
$$F : \C_{A,X} \lra \mathbb{Z}[G_\infty^{ab}]\operatorname{-Mod}$$
is a coefficient system, we obtain for each $n$ a left $G_\infty^{ab}$-module $F_n = F(A \oplus X^{\op n})$, with a left action of $G_n$ via $G_\infty^{ab}$-module maps. Thus it has commuting left $G_n$ and $G_\infty^{ab}$ actions, which for now we write as $\cdot$ and $\ast$. Via the canonical map $G_n \overset{\Si^X_\infty}\to G_\infty \to G_\infty^{ab}$ these can be combined to give a modified $G_n$-module structure, via
\begin{align*}
G_n \times F_n & \lra F_n\\
(g, x) & \mapsto g \cdot (\Si^X_\infty (g) \ast x).
\end{align*}
This defines a new left $G_n$-module structure on $F_n$, and we write $F_n^\circ$ for this $G_n$-module. An alternative point of view, occasionally convenient, is to take
$$F_n^\circ = \mathbb{Z}[G_\infty^{ab}] \otimes_{\mathbb{Z}[G_\infty^{ab}]} F_n$$
with the diagonal $G_n$-action. We shall call $F_n^\circ$ the module obtained by \emph{internalising} $F_n$. These will almost never come from a coefficient system.

Finally, as $\mathbb{Z}[G_\infty^{ab}]$ is a commutative ring the left $G_n$-action on $F_n^\circ$ commutes with the left $\mathbb{Z}[G_\infty^{ab}]$-module structure. Hence $H_i(G_n ; F_n^\circ)$ is a left $\mathbb{Z}[G_\infty^{ab}]$-module for each $i$.

\begin{ex}[Constant coefficient systems] 
Let $M$ be a $G_\infty^{ab}$-module. There is an associated functor $F_M:\C_{A,X}\to \mathbb{Z}[G_\infty^{ab}]\operatorname{-Mod}$ taking the value $M$ on all objects and the identity on $M$ on all morphisms. The corresponding internalised coefficient system $F^\circ_M$ is just the abelian coefficient system $M$ as considered in Section~\ref{cstsec}. 
\end{ex}

\begin{ex}[Induced $G_\infty^{ab}$-modules]
If $F : \C_{A,X} \to \mathbb{Z}\operatorname{-Mod}$ is a coefficient system of abelian groups, and $U \leq G_\infty^{ab}$ is a subgroup, then $F(-) \otimes_{\mathbb{Z}} \mathbb{Z}[G_\infty^{ab}/U]$ is a coefficient system of $G_\infty^{ab}$-modules.
\end{ex}

\begin{ex}[Twisting by determinants]
Let $R$ be a ring and $\G = fR\operatorname{-Mod}$ be the groupoid of finitely generated $R$-modules. This is a symmetric monoidal groupoid under direct sum, and $U\G$  is pre-braided  monoidal (in fact symmetric monoidal) by Proposition~\ref{braidandsym}. The category $U\mathcal{G}$ is isomorphic to the category with objects the finitely generated $R$-modules and morphisms the injective $R$-module homomorphisms equipped with a choice of splitting (cf.\ Section~\ref{GLnsec}). 

If $R$ is commutative, then there are determinant maps $\det : GL_n(R) \to R^\times$, which assemble to give a homomorphism $\det: GL_\infty(R)^{ab} \to R^\times$. This makes any $R$-module into a $\Z[GL_\infty(R)^{ab}]$-module in a canonical way. Hence if $F : U\mathcal{G}_{A,X} \to R\operatorname{-Mod}$ is a coefficient system, 
we get an associated coefficient system of $GL_\infty(R)^{ab}$-modules $\bar F : U\G_{A,X} \lra \mathbb{Z}[G_\infty^{ab}]\operatorname{-Mod}$, which, internalised, give the sequence of modules  
$\bar F_n^\circ = F_n^{(1)} := R \otimes_R F_n$ where $GL_n(R)$ acts via the determinant on the first factor and via the functoriality of $F$ on the second. More generally, using the $k$th power of the determinant, $k \in \Z$, we obtain $GL_n(R)$-modules $F_n^{(k)}$.
\end{ex}

\begin{lem}\label{lem:TwistedModule}
The maps 
$$F(\sigma^X) : F_n \lra F_{n+1} \quad\textrm{and}\quad F(\sigma_X) : F_n \lra F_{n+1}$$ 
define left $\mathbb{Z}[G_\infty^{ab}]$-module maps $F_n^\circ \to F_{n+1}^\circ$ which are equivariant with respect to $\Sigma^X : G_n \to G_{n+1}$ and $\Sigma_X : G_n \to G_{n+1}$ respectively.
\end{lem}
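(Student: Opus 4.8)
The plan is to treat the two claims separately---that these maps are $\mathbb{Z}[G_\infty^{ab}]$-linear, and that they intertwine the relevant stabilisation homomorphisms---and to reduce each to a formal property: for the first, that $F$ takes values in $\mathbb{Z}[G_\infty^{ab}]\operatorname{-Mod}$; for the second, naturality of the suspension transformations together with the way $G_\infty$ is constructed.

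Throughout I would write $\cdot$ for the $G_n$-action on $F_n$ coming from functoriality of $F$, and $\ast$ for the ambient $G_\infty^{ab}$-action, so that the diagonal action of $g\in G_n$ on $F_n^\circ$ sends $x$ to $g\cdot(s_n(g)\ast x)$, and the left $\mathbb{Z}[G_\infty^{ab}]$-module structure on $F_n^\circ$ is just $\ast$ (internalising does not change $\ast$, as noted before the lemma). Since $F$ lands in $\mathbb{Z}[G_\infty^{ab}]\operatorname{-Mod}$, every morphism in its image, in particular $F(\sigma^X(A\op X^{\op n}))$ and $F(\sigma_X(A\op X^{\op n}))$, is $\ast$-linear; this is precisely the assertion that these are $\mathbb{Z}[G_\infty^{ab}]$-module maps $F_n^\circ\to F_{n+1}^\circ$.

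For equivariance, I would use that $\sigma^X:Id\Rightarrow\Sigma^X$ and $\sigma_X:Id\Rightarrow\Sigma_X$ are natural transformations of endofunctors of $\C_{A,X}$. Applying $F$ to the naturality square at a morphism $g\in G_n=\Aut(A\op X^{\op n})$ gives $F(\sigma^X(A\op X^{\op n}))\circ(g\cdot-)=(\Sigma^X(g)\cdot-)\circ F(\sigma^X(A\op X^{\op n}))$, and likewise with $\sigma_X,\Sigma_X$ in place of $\sigma^X,\Sigma^X$. Combining this with $\ast$-linearity, for $x\in F_n$ one computes
$$F(\sigma^X(A\op X^{\op n}))\big(g\cdot(s_n(g)\ast x)\big)=\Sigma^X(g)\cdot\big(s_n(g)\ast F(\sigma^X(A\op X^{\op n}))(x)\big),$$
while the diagonal action of $\Sigma^X(g)$ on the image is $\Sigma^X(g)\cdot\big(s_{n+1}(\Sigma^X(g))\ast F(\sigma^X(A\op X^{\op n}))(x)\big)$. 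These agree because $G_\infty$ is the colimit along the upper suspension maps, so $s_{n+1}\circ\Sigma^X=s_n$; this proves $\Sigma^X$-equivariance of the upper stabilisation of $F$.

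For the lower stabilisation the same computation applies verbatim and leaves one needing $s_{n+1}(\Sigma_X(g))=s_n(g)$ in $G_\infty^{ab}$. Here the pre-braided structure enters: by \eqref{eq:UpperLowerConj2} the automorphisms $\Sigma_X(g)$ and $\Sigma^X(g)$ are conjugate in $G_{n+1}$, hence become equal in $G_\infty^{ab}$, so $s_{n+1}(\Sigma_X(g))=s_{n+1}(\Sigma^X(g))=s_n(g)$, as required. The only point needing genuine care---the mild ``obstacle''---is exactly this reconciliation of the two $G_\infty^{ab}$-actions, i.e.\ keeping track of which homomorphism $G_m\to G_\infty^{ab}$ is being applied on source and target, and invoking \eqref{eq:UpperLowerConj2} in the lower case.
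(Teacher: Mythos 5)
Your proof is correct and takes essentially the same approach as the paper's: both cases reduce to the identity $s_{n+1}\circ\Sigma^X=s_n$ and, for the lower suspension, to the conjugacy \eqref{eq:UpperLowerConj2} implying equality after abelianisation. You merely spell out the naturality computation that the paper leaves implicit in the phrase "immediate from the formula defining the $G_n$-action on $F_n^\circ$."
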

\begin{proof}
The first case is easy: the map $\Sigma^X : G_n \to G_{n+1}$ commutes with the canonical map from each of these groups to $G_\infty$, in other words, $\Si^X_\infty \circ \Sigma^X = \Si^X_\infty: G_n\to G_{n+1}\to G_\infty$. The claim is then immediate from the formula defining the $G_n$-action on $F_n^\circ$.

In the second case, the map $\Sigma_X : G_n \to G_{n+1}$ does \emph{not} necessarily commute with the canonical map to $G_\infty$. However, by \eqref{eq:UpperLowerConj2} the homomorphisms $\Sigma^X, \Sigma_X : G_n \to G_{n+1}$ are conjugate, so in particular they become equal under $G_{n+1} \overset{\Si^X_\infty}\to G_\infty \to G_\infty^{ab}$. Now the formula defining the $G_n$-action on $F_n^\circ$ shows that the claim holds.
\end{proof}

\subsection{Finite degree coefficient systems}

Fix objects $A$ and $X$ of $\C$ and  recall from Section~\ref{coefsec} that $\C_{A,X}$ denotes the full subcategory of $\C$ generated by the objects of the form
$A\op X^{\op n}$.  
Recall also that for a coefficient system $F:\C_{A,X}\to \A$, the associated suspended functor $\Si F:=F\circ \Si_X:\C_{A,X} \to \A$ takes the value $F(A \op X^{\op n+1})$ at
$A \op X^{\op n}$, and that $\ker F$ and $\coker F$ denote the kernel and cokernel of the suspension map $F\to \Si F$.  We will restrict attention in this section to coefficient systems which satisfy a certain {\em polynomiality} condition. 
The following definition is a generalisation of van der Kallen's definition of degree $k$ functors in the case of general linear groups \cite[5.5]{vdK80}. 
(See  Remark~\ref{polrem} below for other, different though closely related, uses of polynomiality terminology in the literature.)

\begin{Def}\label{fdegcoef}
A coefficient system $F:\C_{A,X}\to \A$ has {\em degree $r<0$ at $N \in \bZ$ with respect to $X$} if $F(A\op X^{\op n})=0$ for all $n\ge N$. 

For $r\ge 0$, we define inductively that $F$ {\em has degree $r$ at $N \in \bZ$} if 
\begin{enumerate}[(i)]
\item the kernel of the suspension map $F\to \Si F$ has degree $-1$ at $N$, 

\item the cokernel is a coefficient system of degree $(r-1)$ at $N-1$.  
\end{enumerate}

Furthermore, we say that $F$ is a {\em split} coefficient system of degree $r$ at $N$ if 
\begin{enumerate}[(i)]
\item the suspension map $F \to \Sigma F$ is split injective in the category of coefficient systems,

\item the cokernel is a split coefficient system of degree $(r-1)$ at $N-1$. 
\end{enumerate}
where any coefficient system of degree $r<0$ at $N$ is split. 
\end{Def}

\begin{rem}
The suspension map $(\ker F) \to (\Sigma \ker F)$ for the functor $\ker
F$ is the zero map, as the diagram 
$$\xymatrix{
\ker F\ar[r]\ar@{^(->}[d]\ar[dr]^0 & (\Sigma \ker F) \ar@{^(->}[d]\\
F \ar[r] & (\Sigma F)}$$
commutes.  It follows that $ \ker (\ker F)\cong \ker F$. Hence the condition that $\ker F$ have degree $-1$ is equivalent to asking that $\ker F$ have strictly smaller degree than $F$.
\end{rem}

\begin{rem}
In \cite{vdK80}, van der Kallen assumes that $F \to \Sigma F$ is split injective and $\coker F$ is of degree $(r-1)$, where $F$ has degree $r<0$ if it is the trivial functor: his definition corresponds to the case $N=0$ and $F$ is split in our definition. 

On the other hand, if $F:\C_{A,X}\to \A$ is of degree $r$ at $N$, then the restriction of $F$ to $\C_{A\op X^{\op N},X}$ is of degree $r$ at $0$, so by replacing if necessary the pair $(A,X)$ by $(A\op X^{\op N},X)$, one could just consider finite degree coefficient systems at 0  as in \cite{vdK80}. 

However, the point of view taken here will give better stability range already in the case of constant coefficients. Indeed, if $F=M$ is a constant coefficient system at $N$, our proof will in effect be as if we considered the actual constant coefficient system and only talked about the groups $H_i(\Aut(A\op X^{\op n});M)$ for $n\ge N$. The stability range will thus depend on how large $n$ is. If we on the other hand replaced $A$ by $A\op X^{\op N}$, we would be considering the same groups, namely  $H_i(\Aut((A\op X^{\op N})\op X^{\op n-N});M)$ with $n-N\ge 0$, but now the stability range would depend on how large $n-N$ is. 
\end{rem}

\begin{lem}\label{suspdeg} 
Let $F:\C_{A,X}\to \A$ be a coefficient system. If $F$ is of degree $r$ at $N$, then for any $j\ge 1$, $\Si^jF$ is of degree $r$ at $(N-j)$. 
\end{lem}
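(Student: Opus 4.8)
The plan is a straightforward induction on the degree $r$, with the compatibility of suspension with kernels and cokernels supplied by Lemma~\ref{suspker}. The only thing to keep track of is the index shift, which is transparent because $\Si_X^j$ acts on objects by $A\op X^{\op n}\mapsto A\op X^{\op n+j}$, so that $(\Si^jF)(A\op X^{\op n})=F(A\op X^{\op n+j})$. For the base case $r<0$: by definition $F$ having degree $r$ at $N$ means $F(A\op X^{\op n})=0$ for all $n\ge N$, and then $(\Si^jF)(A\op X^{\op n})=F(A\op X^{\op n+j})=0$ whenever $n+j\ge N$, i.e.\ for all $n\ge N-j$, so $\Si^jF$ has degree $r<0$ at $N-j$ (and if $N-j\le 0$ this merely says $\Si^jF$ vanishes identically, which is stronger still; since $n$ ranges over non-negative integers the vanishing condition ``for all $n\ge N'$'' makes sense for any integer $N'$ and is monotone in $N'$).

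For the inductive step I would assume the lemma for all coefficient systems of degree $<r$ (and all $N$), and take $F$ of degree $r\ge 0$ at $N$; thus by Definition~\ref{fdegcoef} the system $\ker F$ has degree $-1$ at $N$ and $\coker F$ has degree $r-1$ at $N$. By Lemma~\ref{suspker} there are natural isomorphisms $\ker(\Si^jF)\cong\Si^j(\ker F)$ and $\coker(\Si^jF)\cong\Si^j(\coker F)$. The base case applied to $\ker F$ then gives that $\ker(\Si^jF)$ has degree $-1$ at $N-j$, and the inductive hypothesis applied to $\coker F$ gives that $\coker(\Si^jF)$ has degree $r-1$ at $N-j$. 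By Definition~\ref{fdegcoef} these two statements together say exactly that $\Si^jF$ has degree $r$ at $N-j$, which closes the induction. The same argument proves the analogous statement for split coefficient systems, using in addition that precomposition with $\Si_X^j$ carries split injections to split injections.

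I do not expect a genuine obstacle: the one potentially delicate ingredient — that forming kernels and cokernels commutes with suspension, up to natural isomorphism — is already isolated as Lemma~\ref{suspker}, so what remains is bookkeeping through the two clauses of the definition of finite degree. The only points deserving a sentence of care are that the induction on $r$ must be run uniformly in $N$ (and $j$), and the boundary behaviour when $N-j\le 0$.
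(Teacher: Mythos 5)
Your proof is correct and follows essentially the same route as the paper's: induction on $r$, with the base case $r<0$ handled directly and the inductive step reduced to Lemma~\ref{suspker}. The paper's own proof states the cokernel conclusion as ``degree $(r-1)$ at $(N-1-j)$'' where your ``at $(N-j)$'' is the precise consequence of the inductive hypothesis (the two are compatible since a smaller base point gives a stronger condition), and your remark about the boundary case $N-j\le 0$ is a reasonable clarification the paper leaves implicit.
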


\begin{proof}
We prove the lemma by induction on $r$. 
This can be checked immediately for $r<0$ as the suspension of the trivial coefficient system is of the same form, and if $F$ is trivial on $A\op X^{\op n}$ with $n\ge N$, then $\Si^jF$ is trivial on $A\op X^{\op n}$ with $n\ge N-j$. 
So assume $r\ge 0$. By Lemma~\ref{suspker}, we have $\ker (\Si^j F)\cong \Si^j(\ker F)$ and  $\coker (\Si^j F)\cong \Si^j(\coker F)$. 
Hence the kernel  of $\Si^j F$ is of degree $-1$ at $(N-j)$ 
and its cokernel is of degree $(r-1)$ at $(N-1-j)$ by induction, that is $\Si^j F$ is of degree $r$ at $(N-j)$. 
\end{proof}

There are many examples of finite degree coefficient systems. We give a few below. 

\begin{ex}
Any constant coefficient system  $F:\C_{A,X}\to \Z\operatorname{-Mod}$ has degree 0 at $0$. More generally, unravelling Definition \ref{fdegcoef} shows that a coefficient system $M$ having degree 0 at 0 is one which has $\sigma_X : M_n \to M_{n+1} = (\Sigma M)_n$ an isomorphism for all $n$. This identifies all the $M_n$, but if $A \neq 0$ then $M$ need \emph{not} be a constant coefficient system: all that is implied is that the subgroup $\Aut(X^{\op n}) \leq \Aut(A\op X^{\op n}) = G_n$ acts trivially on $M_n$. Thus for example $\Aut(A)$ may act non-trivially on $M_0$.
\end{ex}

\begin{ex}\label{Bureau2}
Recall from Example~\ref{Bureauex} the Burau representation of the braid groups. Here the homogeneous category is $U\beta$, associated to the braid
groups, and $F:U\beta\to \Z\operatorname{-Mod}$ is a coefficient system taking the value $\Z[t,t^{-1}]^n$ at the object $n$. We take $X=1$ and $A=0$. The map 
$F(n)\to \Si F(n)=F(n+1)$ is the inclusion of $\Z[t,t^{-1}]^n$ into $\Z[t,t^{-1}]^{n+1}$ as the rightmost $n$ factors. It has trivial kernel and constant cokernel
$\Z[t,t^{-1}]$. Hence the Burau representation defines a degree 1 coefficient system at $0$. 
\end{ex}

\begin{ex}
Let $(f\G,\x,e)$ denote the symmetric monoidal groupoid of finitely generated groups and their automorphisms, with monoidal structure
induced by direct product. The associated category $Uf\G$ is equivalent to the category with objects the finitely generated groups and morphisms from $G$ to $H$ is a pair $(f,K)$ for $f:G\inc H$ an injective homomorphism and $K\le G$ a subgroup satisfying that $H=K\x f(G)$. See Section~\ref{directprodsec} for further discussion of this example.

Now let $F:Uf\G\to \Z\operatorname{-Mod}$ be the functor taking $G$ to $H_k(G;\bZ)$. Consider $A=\{e\}$ the trivial group and $X=\Z$. One can check, using the description $H_k(\Z^m;\bZ)\cong \wedge^k \bZ^m$, that the restriction of $F$ to $Uf\G_{(\Z,\{e\})}$ is a degree $k$ coefficient system at $0$. 
\end{ex}

\begin{ex}
Let $(FI,\sqcup,\emp)$ be the category of finite sets and injections. ($FI=U\Sigma$ for $\Sigma$ the groupoid of finite sets and isomorphisms, with monoidal product induced by disjoint union---see Section~\ref{setex}.) Let  $F:FI\to \Z\operatorname{-Mod}$ be defined on objects by 
$$F(T)=\left\{\begin{array}{ll}\Z & |T|=m\\
0 & |T|\neq m,\end{array}\right.$$
send all isomorphisms to the identity map, and all non-isomorphisms to the zero map. Take $X=\{*\}$ and $A=\emp$. Then $F$ is a degree $-1$ functor with respect to $X$ at $m+1$. 
\end{ex}

Functors $F : FI \to R\operatorname{-Mod}$ are also known as {\em $FI$-modules}, and under this name have a well-developed theory. In the following example we freely use this theory: see e.g.\ \cite{ChurchEllenberg} for the necessary background.

\begin{comment}

Recall from e.g.~\cite[Sec.~2.1]{ChurchEllenberg} that an $FI$-module is {\em generated in degree $\leq r$} if 
for any $FI$-module $F'$ such that there is a natural transformation $F'\to F$ which is pointwise injective and such that $F'(T)=F(T)$ for all $T$ with $|T|\le r$, we have that $F'=F$. 
This can be reformulated by saying that there is a pointwise surjective natural transformation $M\surj F$ with $M(T)=\oplus_i\Hom_{FI}(S_i,T)$ a {\em free} functor, where $|S_i|\le r$ for each $i$. 
And recall from \cite[Def.~4.1]{ChurchEllenberg} that an $FI$-module $F$  is {\em generated in degree $\leq r$ and related in degree $\leq d$}  if 
there is a short exact sequence of functors
$$0\rar V\rar M\rar F\rar 0$$
where $M$ as above and $V$ generated in degrees $\le d$. We say that such an $FI$-module is {\em presented in finite degree}. 

\end{comment}

\begin{exprop}\label{FIex}
A coefficient system $F : FI \to R\operatorname{-Mod}$ is of finite degree if it is presented in finite degree. 
More precisely, if $F$ is generated in degree $\leq k$ and related in degree $\leq d$, then it is of degree $k$ at $d+\min(k,d)$.
\end{exprop}
\begin{proof} 
Djament has shown \cite[Proposition 4.4]{Dja13} (see also \cite[Prop.~4.6]{ChurchEllenberg}) that an $FI$-module is generated in degree $\leq k$ if and only if it is {\em strong polynomial} of degree $\leq k$, that is, its $k$th iterated cokernel is trivial. It remains to show that $\ker(F)$ is bounded (that is, vanishes in large enough degree) if $F$ is presented in finite degree, and that the same holds for the kernel of its iterated cokernels.
This follows from Theorem 4.8 in \cite{ChurchEllenberg} and its proof: Equation (18) in the proof holds for $p=1$ and $a=j+1\ge 1$, and by the theorem, the middle term vanishes for $n > d+\min(k,d) -1-(j+1) + 1=d+\min(k,d)-j-1$, which in our terms means is of degree $-1$ at $d+\min(k,d)-j$.  Now equation (18) implies that the same holds for the right most term, which, in our notation and for $W=F$, is $\ker(\coker^j(F))$ (as $\ker F$ and $\coker F$ are denoted $H_1^D(F)$ and $D(F)=H_0^{D}(F)$ in that paper).  
\end{proof}

\begin{rem}[Relationship to polynomiality for functors]\label{polrem}
Finite degree coefficient systems are closely related to the notion of strong polynomial functors introduced by Djament--Vespa in \cite[Def.~1.5]{DjaVes13} and mentioned in the above example. The difference between the two notions is that there are no
requirements on the kernel of the map $F\to \Si F$ for strong polynomial functors, but on the other hand strong polynomial functors are required to have a trivial iterated cokernel with respect to stabilisation with respect to {\em all} objects $X$ in the category. Polynomial functors were originally defined using cross effects for functors with domain category a monoidal category with the unit 0 a null object (see \cite{EilMac54}). Proposition 2.3 in \cite{DjaVes13} shows that strong polynomiality can also be defined in terms of cross effects, and hence that it is a direct generalisation of the classical notion of polynomiality. Note that if 0 is a null object in $\C$, functors $\C\to \A$ are automatically split, as in that case we have a canonical factorisation $id : A\to X\op A\to A$, natural in $A$. The unit 0 will never be null in the categories we consider, but certain functors may factor through a larger category in which 0 is a null object. 
\end{rem}

\subsection{The stability theorem}

We will phrase our main stability theorem in terms of the vanishing of relative homology groups. We first recall the definition and basic properties of these groups and refer to \cite[3.8-12]{vdK80} for more details. 

\medskip

Following \cite[3.9]{vdK80} we let $\mathcal{R}ep$ be the category with objects pairs $(G, M)$ of a group $G$ and a left $G$-module $M$, in which the morphisms $(\phi, f) : (G, M) \to (G',M')$ consist of a group homomorphism $\phi: G \to G'$ and a $\phi$-linear map $M \to M'$. We then let $\mathcal{R}elRep$ be the arrow category of $\mathcal{R}ep$, with objects the morphisms of $\mathcal{R}ep$ and morphisms the commutative squares in $\mathcal{R}ep$.

Let $(\phi, f) : (G, M) \to (G', M')$ be an object in $\mathcal{R}elRep$, or what is the same thing a morphism in $\mathcal{R}ep$. For any projective resolutions $P_\bullet$ and $P'_\bullet$ of $M$ and $M'$ as $G$- and $G'$-modules, there is a ($\phi$-linear) map of resolutions $P_\bullet\to P'_\bullet$ covering $f$. The mapping cone of the chain map 
$$\Z\ot_{\Z G} P_\bullet\rar \Z\ot_{\Z G'}P'_\bullet$$  
computes by definition the relative homology group $H_*(G',G;M',M)$. These relative homology groups fit into a long exact sequence 
$$\cdots \rar H_*(G,M)\rar H_*(G',M')\rar H_*(G',G;M',M) \rar H_{*-1}(G,M)\rar\cdots$$
and they define functors $H_i(-) : \mathcal{R}elRep \to \Z\operatorname{-Mod}$. More precisely, 
given a morphism
\begin{equation*}
\xymatrix{
(G_0, M_0) \ar[r] \ar[d]^-{(\phi_0, f_0)}& (G_1, M_1) \ar[d]^-{(\phi_1, f_1)}\\
(G'_0, M'_0) \ar[r] & (G'_1, M'_1)
}
\end{equation*}
in $\mathcal{R}elRep$ we get an induced map 
$$H_*(G_0',G_0;M_0',M_0)\rar H_*(G_1',G_1;M_1',M_1).$$

\medskip

Given a pre-braided monoidal category $\C$, a pair of objects $(A,X)$ in $\C$ and a coefficient system $F:\C_{A,X} \to \Z\operatorname{-Mod}$, we are interested in the groups 
$H_*(G_n;F_n)$ for  $G_n:=\Aut(A\op X^{\op n})$ and $F_n:=F(A\op X^{\op n})$. 
We denote by
$$Rel^F_*(A,n)=H_*(G_{n+1},G_n;F_{n+1},F_n)$$ 
the relative groups associated to the upper suspension maps $\Si^X:G_n\to G_{n+1}$ and $\s^X:F_n\to F_{n+1}$. By Lemma \ref{lem:TwistedModule}, if $F:\C_{A,X}\to \mathbb{Z}[G_\infty^{ab}]\operatorname{-Mod}$ then we can also form
$$Rel^{F^\circ}_*(A,n)=H_*(G_{n+1},G_n;F_{n+1}^\circ,F_n^\circ).$$

Our main result is the following.

\begin{thm}\label{twistrange}
Let $\C$ be a category and  $(A,X)$ a pair of objects in $\C$. Suppose that $\C$ is locally homogeneous at $(A,X)$ and satisfies  LH3 at $(A,X)$ with slope $k \geq 2$.

If $F:\C_{A,X}\to \Z\operatorname{-Mod}$ is a coefficient system of degree $r$ at $N$, then

\begin{enumerate}[(i)]
\item $Rel_i^F(A,n)$ vanishes for $n\ge \max(N+1,k(i+r))$, and

\item if $F$ is split then  $Rel_i^F(A,n)$ vanishes for $n\ge \max(N+1,ki + r)$. 
\end{enumerate}

If $F:\C_{A,X}\to \mathbb{Z}[G_\infty^{ab}]\operatorname{-Mod}$ is a coefficient system of degree $r$ at $N$ and $k \geq 3$, then

\begin{enumerate}[(i)]
\setcounter{enumi}{2}
\item $Rel_i^{F^\circ}(A,n)$ vanishes for $n\ge \max(2N+1,k(i+r)+k-2)$, and

\item if $F$ is split then $Rel_i^{F^\circ}(A,n)$ vanishes for $n\ge \max(2N+1,ki+2r+k-2)$.
\end{enumerate}
\end{thm}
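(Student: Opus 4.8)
\textbf{Proof strategy for Theorem~\ref{twistrange}.}

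The plan is to prove all four statements simultaneously by a double induction, on the homological degree $i$ and, within fixed $i$, on the degree $r$ of the coefficient system, following the scheme of van der Kallen \cite{vdK80} and Ivanov \cite{Iva93}, adapted to the present abstract setting. The base cases are handled by the preliminary constant/abelian coefficients theorems: a coefficient system of degree $r<0$ at $N$ vanishes above $N$ so the relative groups vanish trivially, and a coefficient system of degree $0$ (in the $\mathbb{Z}$-Mod case) or the internalised version of a constant abelian system of degree $0$ (in the $G_\infty^{ab}$-Mod case) is covered by Theorem~\ref{stabthm} and Theorem~\ref{abstabthm} respectively, via Remark~\ref{deg0rem} --- this is where the hypothesis $k\ge 2$ (resp.\ $k\ge 3$) and the extra $k-2$ shift in (iii),(iv) enters, and where the doubling $N \rightsquigarrow 2N$ comes from (one needs $n \geq 2N$ roughly because the abelian-coefficients argument must invoke stability with constant coefficients in a range shifted by $N$).

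The inductive step is the heart of the argument. Fix $(A,X)$ and a coefficient system $F$ of degree $r$ at $N$. One forms the augmented semi-simplicial set $W_{n}(A,X)_\bullet$ and builds a double complex as in the proof of Theorem~\ref{stabthm}, but now with coefficients in $F_{n}$ (resp.\ $F_n^\circ$), more precisely comparing the $E_\bullet G_{n}$-resolution against $\widetilde{C}_*(W_{n}) \otimes F_n$; the LH3 hypothesis gives that the associated spectral sequence converges to zero in a range of total degree $\approx \frac{n-2}{k}$. The key new input over the constant case is that the stabiliser of a $p$-simplex $\sigma_p$ is $G_{n-p-1}$ (by LH1 and LH2), but the restriction of $F_n$ to that stabiliser is \emph{not} $F_{n-p-1}$: it is $F$ applied to $A \oplus X^{\oplus n}$ viewed as $(A\oplus X^{\oplus n-p-1})\oplus X^{\oplus p+1}$, i.e.\ a $(p+1)$-fold suspension $(\Sigma^X)^{p+1}F$ evaluated at $A\oplus X^{\oplus n-p-1}$. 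One analyses the $E^1$-page, whose $(p,q)$-entry is $H_q(G_{n-p-1}; ((\Sigma^X)^{p+1}F)_{n-p-1})$ or the corresponding relative group, and identifies the $d^1$-differential. Using the short exact sequences $0 \to \ker F \to F \to \Sigma F / \coker$... more precisely the defining exact sequences $0\to \ker F\to F\to \Sigma F\to \coker F\to 0$ together with Lemma~\ref{suspker} and Lemma~\ref{suspdeg} (which control how degree and the base-point shift $N$ behave under suspension), one feeds in the inductive hypothesis for $\coker F$ (degree $r-1$) and $\ker F$ (degree $-1$, hence vanishing) to kill the off-diagonal $E^1$ and $E^2$ terms, leaving the relative group $Rel_i^F(A,n)$ as the only potentially-nonvanishing contribution --- forcing it to vanish in the claimed range. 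The split case is handled in parallel, the splitting ensuring that the relevant long exact sequences break into short ones so that no degree is lost, which accounts for the better bound $ki+r$ versus $k(i+r)$; and for the $G_\infty^{ab}$-coefficients cases one uses Lemma~\ref{lem:TwistedModule} to guarantee the relative groups are defined and $\mathbb{Z}[G_\infty^{ab}]$-linear, and one needs $k\ge 3$ for exactly the same reason as in Theorem~\ref{abstabthm}: the lower-suspension vs.\ upper-suspension discrepancy is only controlled after abelianising, via \eqref{eq:UpperLowerConj2}.

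The main obstacle I expect is the careful bookkeeping of the two stabilisation maps. Because $F$ is a functor on $\C_{A,X}$ it is naturally compatible with the \emph{upper} suspension $\sigma^X$, but the degree filtration (via $\ker$ and $\coker$) is defined using the \emph{lower} suspension $\sigma_X$; reconciling these requires the pre-braided axiom and the conjugacy relation \eqref{eq:UpperLowerConj2}, and one must check that the element $h_1$ (or its analogue) used to trivialise the relevant $d^1$-differential both conjugates correctly and acts trivially on the coefficient module after internalising --- this is precisely observation (d) and the braid-relation computation from the proof of Theorem~\ref{abstabthm}, and it is the delicate point that makes $k=3$ rather than $k=2$ necessary in (iii) and (iv). A secondary technical nuisance is keeping track of the exact numerical shifts: the $+N$ versus $+2N$ thresholds, and verifying at each use of the inductive hypothesis that the shifted coefficient system $(\Sigma^X)^{p+1}F$ really does have the degree and base-point claimed by Lemmas~\ref{suspker}, \ref{suspdeg}, so that the induction closes with the stated ranges.
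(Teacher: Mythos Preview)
Your outline has the right ingredients in the background (double induction, the $\ker F/\coker F$ short exact sequences, Lemmas~\ref{suspker} and \ref{suspdeg}, the pre-braided/$h_1$ discussion), but the architecture you propose --- a single absolute spectral sequence as in the proof of Theorems~\ref{stabthm}/\ref{abstabthm}, with $F_n$ plugged in as coefficients --- will not close. The obstruction is precisely at the analogue of step \II{3}: for a general coefficient system the element $h_1$ does \emph{not} act trivially on $F_{n+1}$ (it only acts trivially on the image of $F_{n-1}$ under $\sigma_X^2$), so the differential $d^1\colon E^1_{1,i}\to E^1_{0,i}$ is genuinely nonzero and you cannot deduce injectivity of $d^1\colon E^1_{0,i}\to E^1_{-1,i}$ from the absolute spectral sequence alone. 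Your remark that one ``must check that $h_1$ \ldots\ acts trivially on the coefficient module'' is exactly the point that fails; the trick from Theorem~\ref{abstabthm} (choosing $h_1$ trivial in $G_\infty^{ab}$) helps for abelian modules but not for arbitrary $F$.

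The paper's route is structurally different and uses three pieces you do not have. First, Proposition~\ref{prop:TwoCompIsZero} (Dwyer's qualitative argument): the lower-suspension map $\sigma_n\colon Rel_i^F(A,n)\to Rel_i^F(A,n+1)$ satisfies $\sigma_{n+1}\circ\sigma_n=0$, proved by a direct diagram chase comparing upper and lower suspension via the conjugacy~\eqref{eq:UpperLowerConj2}. Second, $\sigma_n$ is factored (Proposition~\ref{lsusp}) as $Rel_i^F(A,n)\to Rel_i^{\Sigma F}(A,n)\to Rel_i^F(A,n+1)$; the first factor is controlled by the $\ker/\coker$ sequences and induction on $r$ (Proposition~\ref{prop:FirstMapRange}), and the second by a \emph{relative} spectral sequence built from the cone of $\widetilde C_*(W_{n+1})\otimes_{G_{n+1}}P_*\to \widetilde C_*(W_{n+2})\otimes_{G_{n+2}}Q_*$, whose $E^1$-terms are already relative groups $Rel_q^{\Sigma^{p+1}F}(A,n-p)$ (Proposition~\ref{prop:TwistedSSArgument}). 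In that relative setting the $d^1$-analysis only needs $h_1$ to act trivially on the image of a deeper $Rel_i^{F}(A,n-1)$ (or $Rel_i^{F^\circ}(A,n-2)$ in the internalised case), which it does by construction of $\sigma_X$; surjectivity of that deeper map is supplied by Proposition~\ref{prop:FirstMapRange}. Vanishing then follows because both factors of $\sigma_n$ (and of $\sigma_{n+1}$) are injective in the claimed range while their composite is zero. Also note: the paper's base case is simply $r=-1$ or $i=-1$, not degree~$0$ via Theorems~\ref{stabthm}/\ref{abstabthm}; the latter is mentioned as an alternative but is not what the argument uses.
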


Theorem \ref{main} then follows from the long exact sequence
\begin{align*}
\cdots\rar H_k(G_n;F_n)\rar H_k(G_{n+1};F_{n+1})\rar Rel^F_k(A,n) \rar  H_{k-1}(G_n;F_n)\rar \cdots,
\end{align*}
as the vanishing of the relative homology groups in a range of degrees gives a range in which the stabilisation maps
$$H_k(G_n;F_n)\rar H_k(G_{n+1};F_{n+1})$$
are epimorphisms and isomorphisms. Similarly for $Rel^{F^\circ}_k(A,n)$. Corollary \ref{cor:B} follows by applying Theorem \ref{main} to the induced coefficient system
$$\overline{F}(-) := F(-) \otimes_{\bZ} \bZ[G_\infty^{ab}] : \mathcal{C} \lra \mathbb{Z}[G_\infty^{ab}]\operatorname{-Mod}$$
and noting that $H_*(G_n ; \overline{F}_n^\circ) \cong H_*(G_n';F_n)$ by Shapiro's lemma, as long as $G_n^{ab} \to G_\infty^{ab}$ is an isomorphism, which happens for $n \geq k+1$ by Theorem \ref{stabthm} applied to first homology (and Corollary \ref{cor:B} makes no claim for $n \leq k$).

\subsection{General properties of the relative homology groups}

Before proving Theorem \ref{twistrange}, we shall establish some basic properties of the relative homology groups $Rel^F_*(A,n)$, and their analogues $Rel^{F^\circ}_*(A,n)$. 

The endofunctors $\Sigma^X$ and $\Sigma_X$ commute, and the diagram 
$$\xymatrix{
id \ar[rrr]^{\s_X}\ar[d]_{\s^X} &&& \Sigma_X \ar[d]^{\s^X(\Si_X)}\\
\Sigma^X \ar[rrr]^-{\s_X(\Si^X)} &&& \Sigma_X\Sigma^X = \Sigma^X\Sigma_X}$$
is a commutative diagram of natural transformations. Hence for any coefficient system $F$ and any $n$ we get a pair of compatible commuting squares 
\begin{equation}\label{LUsquares}
\begin{gathered}
\xymatrix{G_n \ar[r]^{\Si_X} \ar[d]_{\Si^X} & G_{n+1}\ar[d]^{\Si^X} & & F_n\ar[r]^{F(\s_X)} \ar[d]_{F(\s^X)} & F_{n+1} \ar[d]^{F(\s^X)}\\
G_{n+1} \ar[r]^{\Si_X} & G_{n+2} & & F_{n+1}\ar[r]^{F(\s_X)} & F_{n+2} }
\end{gathered}
\end{equation}
i.e.~a morphism in  $\mathcal{R}elRep$, thus inducing for each $i$ a map 
\begin{equation*} 
s_n = (\Sigma_X, F(\sigma_X)):Rel^F_i(A,n)\rar  Rel_i^F(A,n+1).
\end{equation*}
By Lemma \ref{lem:TwistedModule} the same formula defines a map on the relative homology groups $Rel^{F^\circ}_*(A,n)$.

\begin{prop}\label{lsusp}
The map $s_n$ factors as a composition
\begin{equation}\label{factorization}
Rel^F_i(A,n)\sta{(id,F(\s_X))}{\rar} Rel_i^{\Si F}(A,n)\sta{(\Si_X,id)}{\rar} Rel_i^F(A,n+1),
\end{equation}
and the same holds for the relative homology groups $Rel^{F^\circ}_*(A,n)$.
\end{prop}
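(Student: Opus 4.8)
The plan is to exhibit $\s_n$ as the image under the functor $H_i(-)\colon \mathcal{R}elRep\to\Z\operatorname{-Mod}$ of a composite of two morphisms in $\mathcal{R}elRep$. Recall that $Rel^F_i(A,n)=H_i$ of the object of $\mathcal{R}elRep$ given by the morphism $(\Si^X,F(\s^X))\colon (G_n,F_n)\to(G_{n+1},F_{n+1})$ of $\mathcal{R}ep$, and that $\s_n$ is $H_i$ applied to the morphism of $\mathcal{R}elRep$ consisting of the pair of commuting squares~\eqref{LUsquares}, whose two ``vertical'' legs are $(\Si_X,F(\s_X))$ on the source pair $(G_n,F_n)$ and on the target pair $(G_{n+1},F_{n+1})$. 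Since $\Si F=F\circ\Si_X$, the object of $\mathcal{R}elRep$ computing $Rel^{\Si F}_i(A,n)$ is the morphism $(\Si^X,(\Si F)(\s^X))\colon(G_n,F_{n+1})\to(G_{n+1},F_{n+2})$, where now $G_n$ acts on $F_{n+1}$ through $\Si_X\colon G_n\to G_{n+1}$. A preliminary observation, checked directly from the braid relations and the pre-braided axiom, is that $\Si_X$ carries the upper-suspension morphism $\s^X(A\op X^{\op n})$ to $\s^X(A\op X^{\op n+1})$ (the copy of $X$ created by $\s^X$ is disjoint from the one $\Si_X$ conjugates past $A$, so the two braidings cancel), so that $(\Si F)(\s^X(A\op X^{\op n}))=F(\s^X(A\op X^{\op n+1}))$.

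First I would build the morphism labelled $(id,F(\s_X))$ in~\eqref{factorization}: it is the identity on the group data, and on the module data it is the square with horizontal maps $F(\s^X)\colon F_n\to F_{n+1}$ and $(\Si F)(\s^X)\colon F_{n+1}\to F_{n+2}$ and both vertical maps $F(\s_X)$. That $F(\s_X)$ is equivariant for the pertinent group actions is the naturality of $\s_X\colon id\Rightarrow\Si_X$, and that the square commutes is the image under $F$ of the identity $\s_X(\Si^X)\circ\s^X=\s^X(\Si_X)\circ\s_X$ of natural transformations recorded just before~\eqref{LUsquares}. Next I would build the morphism labelled $(\Si_X,id)$, from the $Rel^{\Si F}_i(A,n)$-object to the $Rel^F_i(A,n+1)$-object: here the module legs are the identity maps of $F_{n+1}$ and $F_{n+2}$ --- legitimate because the two ambient module structures coincide after transport along $\Si_X$, and because the two horizontal maps agree by the preliminary observation --- while the group square has horizontal maps $\Si^X$ and vertical maps $\Si_X$ and commutes because the endofunctors $\Si^X$ and $\Si_X$ commute. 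Composing these two morphisms of $\mathcal{R}elRep$ recovers exactly the morphism defining $\s_n$ (the group legs compose to $\Si_X$, the module legs to $F(\s_X)$), so applying $H_i(-)$ gives~\eqref{factorization}.

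For $Rel^{F^\circ}_*(A,n)$ the same argument applies verbatim: $\Si F=F\circ\Si_X$ still takes values in $\mathbb{Z}[G_\infty^{ab}]\operatorname{-Mod}$, so $(\Si F)^\circ$ and the relative groups $Rel^{(\Si F)^\circ}_i(A,n)$ make sense, and every equivariance used above --- now for the internalised modules $F_\bullet^\circ$ and the maps $F(\s^X),F(\s_X)$ between them --- is supplied by Lemma~\ref{lem:TwistedModule}. The step I expect to require the most care is purely bookkeeping: keeping track of which group ($\Si^X$- versus $\Si_X$-suspended, and in the internalised case with or without the $G_\infty^{ab}$-twist) acts on each copy of $F_{n+1}$ and $F_{n+2}$ appearing, so that all three squares really are squares in $\mathcal{R}ep$; granted the identity $\Si_X\circ\s^X=\s^X\circ\Si_X$, the rest is an application of functoriality of $F$ (or of Lemma~\ref{lem:TwistedModule}) and of $H_i(-)$.
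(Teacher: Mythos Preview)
Your proposal is correct and follows exactly the same approach as the paper: factor the morphism of $\mathcal{R}elRep$ defining $\sigma_n$ into a module-changing step $(id,F(\sigma_X))$ followed by a group-changing step $(\Sigma_X,id)$, and then apply the functor $H_i(-)$. The paper's proof is quite terse (three sentences), whereas you have made explicit the verification that $\Sigma_X(\sigma^X(A\oplus X^{\oplus n}))=\sigma^X(A\oplus X^{\oplus n+1})$, which is needed for the second morphism to be well defined and which the paper leaves implicit; your bookkeeping is accurate.
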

\begin{proof}
Considering the bottom copies of $F_{n+1}=(\Si F)_n$ and $F_{n+2}=(\Si F)_{n+1}$ in the right square in (\ref{LUsquares}) as $G_n$- and $G_{n+1}$-modules under the lower suspension shows that the pair $(id,F(\s_X))$ defines a map $Rel_i^{F}(A,n)\to Rel_i^{\Si F}(A,n)$. The further composition with $(\Si_X,id)$ then reconsiders $F_{n+1}$ and $F_{n+2}$ as $G_{n+1}$- and $G_{n+2}$-modules.  Likewise replacing $F$ with $F^{\circ}$. 
\end{proof}

The following proposition implies that $\underset{n \to \infty}\colim \, Rel^F_i(A,n) = 0$, and is heavily influenced by Dwyer's ``qualitative stability theorem" \cite{Dwy80}.

\begin{prop}\label{prop:TwoCompIsZero}
For any coefficient system $F$ the composition
$$Rel^F_i(A,n) \overset{s_n}\rar Rel_i^F(A,n+1) \overset{s_{n+1}}\rar Rel_i^F(A,n+2)$$
is zero. Similarly if $F : \C_{A,X} \to \mathbb{Z}[G_\infty^{ab}]\operatorname{-Mod}$, with internalised modules $F_n^\circ$ having relative homology groups $Rel^{F^\circ}_i(A,n)$, then the composition
$$Rel^{F^\circ}_i(A,n) \overset{s_n}\rar Rel_i^{F^\circ}(A,n+1) \overset{s_{n+1}}\rar Rel_i^{F^\circ}(A,n+2)$$
is zero.
\end{prop}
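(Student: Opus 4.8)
The plan is to realise the composite $\sigma_{n+1}\circ\sigma_n$ as the map induced on relative homology by a single morphism in $\mathcal{R}elRep$, and then to show that this morphism induces the zero map by comparing it, via the pre-braided structure, with one that manifestly factors through a relative pair with vanishing relative homology. Since $H_*(-)\colon\mathcal{R}elRep\to\bZ\operatorname{-Mod}$ is a functor and $\sigma_n$ is, by definition, the map induced by the morphism of $\mathcal{R}elRep$-objects whose vertical components are the lower suspension $(\Sigma_X,F(\sigma_X))$ (cf.\ \eqref{LUsquares}), the composite $\sigma_{n+1}\circ\sigma_n\colon Rel^F_i(A,n)\to Rel^F_i(A,n+2)$ is the map induced by the morphism $\Phi$ in $\mathcal{R}elRep$ from $[(G_n,F_n)\overset{(\Sigma^X,F(\sigma^X))}{\longrightarrow}(G_{n+1},F_{n+1})]$ to $[(G_{n+2},F_{n+2})\overset{(\Sigma^X,F(\sigma^X))}{\longrightarrow}(G_{n+3},F_{n+3})]$ both of whose vertical components equal the iterated lower suspension $(\Sigma_X\circ\Sigma_X,\,F(\sigma_X)\circ F(\sigma_X))$. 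The same holds verbatim for $F^\circ$, using Lemma~\ref{lem:TwistedModule} to see that the iterated lower suspension defines such a morphism for the internalised modules.

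Next I would introduce the auxiliary morphism $\psi$ in $\mathcal{R}elRep$ with the same source and target as $\Phi$ but with vertical components $(\Sigma^X\circ\Sigma_X,\,F(\sigma^X)\circ F(\sigma_X))$, i.e.\ obtained from $\Phi$ by replacing the \emph{outer} lower suspension with an upper one. Because $\Sigma^X$ and $\Sigma_X$ commute and the right-hand square of \eqref{LUsquares} commutes, $\psi$ factors in $\mathcal{R}elRep$ as
$$[(G_n,F_n)\to(G_{n+1},F_{n+1})]\ \longrightarrow\ [(G_{n+2},F_{n+2})\overset{\mathrm{id}}{\longrightarrow}(G_{n+2},F_{n+2})]\ \longrightarrow\ [(G_{n+2},F_{n+2})\to(G_{n+3},F_{n+3})],$$
where the first morphism has vertical components $(\Sigma_X\circ\Sigma^X,\,F(\sigma_X\circ\sigma^X))$ on $(G_n,F_n)$ and $(\Sigma_X,F(\sigma_X))$ on $(G_{n+1},F_{n+1})$, and the second is the structure map $(\Sigma^X,F(\sigma^X))$ on top and the identity on the bottom. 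Since $H_i$ of the middle object is $H_i(G_{n+2},G_{n+2};F_{n+2},F_{n+2})=0$, we conclude $\psi_*=0$, and likewise $\psi_*=0$ on $Rel^{F^\circ}_i$.

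It then remains to see that $\Phi$ and $\psi$ induce the same map on relative homology. By \eqref{eq:UpperLowerConj2} the homomorphisms $\Sigma_X$ and $\Sigma^X$ out of any $G_m$ are conjugate by a braiding element of $G_{m+1}$; applying this to the outer of the two composed lower suspensions, $\Phi$ is obtained from $\psi$ by post-composing its two vertical components with conjugation by braiding elements of $G_{n+2}$ and $G_{n+3}$ (and, on modules, with the action of those elements, which is compatible by \eqref{eq:UpperLowerConj2}). Since conjugating a group by an element together with letting that element act on the coefficients induces the identity on group homology, this post-composition is the identity on the relative homology groups, so $\Phi_*=\psi_*=0$. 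For $F^\circ_n$ the same argument works once one knows in addition that these braiding elements act trivially through $G^{ab}_\infty$, which holds because the relevant products of braidings become trivial in $H_1(\Aut(X^{\oplus k});\bZ)$ by the braid relation \eqref{braidrelation}.

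The main obstacle is making this last comparison rigorous: one must choose the braiding elements supplied by the two applications of \eqref{eq:UpperLowerConj2} so that they assemble into an honest automorphism of the target relative pair — equivalently, so that the element conjugating the large object restricts to the one conjugating the small object — which requires bookkeeping with braid relations in the spirit of the computation with the element $h_1$ in the proof of Theorem~\ref{abstabthm}. An alternative, avoiding this point, is to write down directly an explicit contracting homotopy of the map of mapping cones computing $\Phi$, realising $\Phi$ as chain-homotopic through the pair to a map that is null because it lands in the image of the inclusion $G_{n+2}\hookrightarrow G_{n+3}$.
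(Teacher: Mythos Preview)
Your strategy is sound in spirit but founders precisely at the point you flag as ``the main obstacle,'' and that obstacle is not a matter of bookkeeping. You want to pass from $\Phi$ (vertical components $(\Sigma_X)^2$) to $\psi$ (vertical components $\Sigma^X\Sigma_X$) by postcomposing with the pair of automorphisms $(c_{\phi_{n+1}},F(\phi_{n+1}))$ of $(G_{n+2},F_{n+2})$ and $(c_{\phi_{n+2}},F(\phi_{n+2}))$ of $(G_{n+3},F_{n+3})$, where $\phi_m=(b_{X,A}\oplus X^{\oplus m})\circ b_{A\oplus X^{\oplus m},X}$. But one checks directly that $\Sigma^X(\phi_{n+1})\neq\phi_{n+2}$ (the braidings involve different objects), so these automorphisms do \emph{not} assemble to an endomorphism of the target object in $\mathcal{R}elRep$. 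There is no choice of conjugating elements that repairs this: any $g\in G_{n+2}$ with $c_g\circ\Sigma^X\Sigma_X=(\Sigma_X)^2$ on $G_n$ is determined up to the centraliser of $\Sigma^X\Sigma_X(G_n)$, and the required compatibility with $\Sigma^X$ fails for all of them. The appeal to the braid relation \eqref{braidrelation} in the $F^\circ$ case is also incorrect: \eqref{braidrelation} only kills products like $(b_{X,X}^{-1}\oplus X)(X\oplus b_{X,X})$ in the abelianisation, whereas the $\phi_m$ are of an entirely different shape and generally nonzero in $G_\infty^{ab}$.

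The paper avoids this by never comparing the two morphisms on \emph{relative} homology at all. Instead it maps the three relative long exact sequences (for $n$, $n+1$, $n+2$) to one another by lower suspension and uses two separate facts on \emph{absolute} homology. First, from $l_2=(c_{\phi},F(\phi))\circ l'_2$ and exactness ($l'_2\circ f_2=0$) one gets $g_2\circ\sigma_n=l_2\circ f_2=0$, so $\Im(\sigma_n)\subset\Im(g_1)$. Second, from $l_1=(c_{\phi'},F(\phi'))\circ l'_1$ and the fact that \emph{inner} action on $H_*(G_{n+3};F_{n+3})$ is the identity, one gets $l_1=l'_1$ on homology and hence $\sigma_{n+1}\circ g_1=h_1\circ l_1=h_1\circ l'_1=0$. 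The point is that each conjugation argument lives on a single absolute group, where no compatibility between two conjugating elements is needed. For $F^\circ$ the second step is subtler: the paper shows $l_1=l'_1\circ([\phi']^{-1}\cdot-)$ as $\bZ[G_\infty^{ab}]$-module maps, so they differ by \emph{pre}composition with an isomorphism, and $h_1\circ l'_1=0$ still yields $h_1\circ l_1=0$ without needing $[\phi']=0$.
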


\begin{proof}
Consider the diagram
$${
\xymatrix{\cdots\ar[r]& H_m(G_{n+1};F_{n+1})\ar[r]^-{f_1}\ar[d]& Rel^F_m(A,n)\ar[r]^-{f_2}\ar[d]^{s_n}& H_{m-1}(G_n;F_n)\ar[r]^-{l_2'}\ar[d]^{l_2} &\cdots\\
\cdots\ar[r]& H_m(G_{n+2};F_{n+2})\ar[r]^-{g_1}\ar[d]^{l_1}& Rel^F_m(A,n+1)\ar[r]^-{g_2}\ar[d]^{s_{n+1}}& H_{m-1}(G_{n+1};F_{n+1})\ar[r]\ar[d] &\cdots\\
\cdots\ar[r]^-{l_1'}& H_m(G_{n+3};F_{n+3})\ar[r]^-{h_1}& Rel^F_m(A,n+2)\ar[r]^-{h_2}& H_{m-1}(G_{n+2};F_{n+2})\ar[r] &\cdots
}}$$
where the horizontal lines are part of the long exact sequences associated to the upper suspension, and the vertical maps are the maps induced by the
lower suspension.

In the top right of the diagram, we have that the maps $l_2$ and $l_2'$ are induced by lower and upper suspension respectively. 
From (\ref{lowups}) and (\ref{eq:UpperLowerConj2}), we have a commutative diagram 
$$\xymatrix{A\op X^{\op n} \ar[drr]_{\s_X}\ar[rr]^{\s^X} && A\op X^{\op n+1}
      \ar[d]^{\phi}\ar[r]^{\Si^X(g)} & A\op X^{\op n+1}\ar[d]^\phi\\
&& A \op X\op X^{\op n}\ar[r]_{\Si_X(g)} & A\op X^{\op n+1}
}$$
where $\phi = (b_{X,A}\op X^{\op n})\circ(b_{A \op X^{\op n}, X}) \in G_{n+1}$ and any $g\in G_n$. As $l_2'$ is induced by the pair $(\Si^X,F(\s^X))$, and $l_2$ by the pair $(\Si_X,F(\s_X))$, the above diagram induces the commutative diagram
$$\xymatrix{ Rel^F_m(A,n)\ar[r]^{f_2} & H_{m-1}(G_n;F_n) \ar[r]^-{l_2'}\ar[d]_{l_2} & H_{m-1}(G_{n+1};F_{n+1}) \ar[dl]^{\ \ \ \ (c_{\phi},F(\phi))}_\cong \\
&H_{m-1}(G_{n+1};F_{n+1}) & 
}$$
As $l_2'\circ f_2=0$, it follows that we also have  
$g_2\circ s_n=l_2\circ f_2=0$. Thus every homology class in the image of $s_n$ lifts along $g_1$, and so
$$\mathrm{Im}(s_{n+1} \circ s_n) \subset \mathrm{Im}(s_{n+1} \circ g_1).$$

In the bottom left corner of the diagram, by the same argument we have maps 
$$\xymatrix{H_m(G_{n+2};F_{n+2})\ar[d]_{l_1} \ar[r]^{l'_1}& H_m(G_{n+3},F_{n+3}) \ar[dl]^{(c_{\phi'};F(\phi'))}_\cong\\
 H_m(G_{n+3};F_{n+3})& }$$
where, just as for $l_2$ and $l_2'$, the maps $l_1$ and $l_1'$ are induced by lower and upper suspension respectively, and conjugating by  
$\phi'=(b_{X,A}\op X^{\op n+2})\circ(b_{A \op X^{\op n+2}, X}) \in G_{n+3}$ induces an isomorphism making the above diagram commute. 

\emph{As conjugation induces the identity in homology}, $l_1$ and $l_1'$ induce the same map on homology. This implies that $s_{n+1}\circ g_1=h_1\circ l_1=h_1\circ l_1'=0$, as required. 

The first part still goes through for the coefficient modules $F_n^\circ$. Writing $F_{n+1}^\circ = \mathbb{Z}[G_\infty^{ab}] \otimes_{\mathbb{Z}[G_\infty^{ab}]} F_{n+1}$ with the diagonal action, the map $(c_{\phi}, \mathrm{Id}_{\mathbb{Z}[G_\infty^{ab}]} \otimes F(\phi))$ intertwines $l_2$ and $l'_2$, and this map is invertible so induces an isomorphism on homology, so $l_2 \circ f_2=0$. 

However in the second part the maps $l_1$ and $l_1'$ are intertwined by $(c_{\phi'}, \mathrm{Id}_{\mathbb{Z}[G_\infty^{ab}]} \otimes F(\phi'))$, which is \emph{not} left multiplication by $\phi'$ on $F_{n+3}^\circ$ (it is left multiplication by $\phi'$ on $F_{n+3}$, but this is a different module structure). Thus there is no reason why $(c_{\phi'}, \mathrm{Id}_{\mathbb{Z}[G_\infty^{ab}]} \otimes F(\phi'))$ should induce the identity map on homology. Instead, using the fact that $(c_{\phi'}, [\Si^X_\infty(\phi')] \otimes F(\phi'))$ induces the identity map on homology, and that $l'_1$ is a left $\mathbb{Z}[G_\infty^{ab}]$-module map, write
\begin{align*}
l_1(-) &= (c_{\phi'}, \mathrm{Id}_{\mathbb{Z}[G_\infty^{ab}]} \otimes F(\phi'))(l'_1(-))\\
&= [\phi']^{-1} \cdot [\phi'] \cdot ((c_{\phi'}, \mathrm{Id}_{\mathbb{Z}[G_\infty^{ab}]} \otimes F(\phi'))(l'_1(-)))\\
&= [\phi']^{-1} \cdot ((c_{\phi'}, [\Si^X_\infty(\phi')] \otimes F(\phi'))(l'_1(-)))\\
&= [\phi']^{-1} \cdot l'_1(-)\\
&= l'_1([\phi']^{-1} \cdot -)
\end{align*}
where ``$\cdot$'' denotes the $\mathbb{Z}[G_\infty^{ab}]$-module structure of the homology groups. 
Thus $l_1$ and $l'_1$ differ by \emph{precomposition} by an isomorphism, so as $h_1 \circ l'_1=0$ it follows that $s_{n+1} \circ g_1 = h_1 \circ l_1 = 0$ too, as required.
\end{proof}

\subsection{Proof of the stability theorem (Theorem \ref{twistrange})}

We will prove Theorem \ref{twistrange} by double induction on the degree $r$ of the coefficient system and the homological degree $i$. The case $r=-1$ and arbitrary $i$ is trivial, as is the case $i=-1$ and arbitrary $r$. These form the start of the induction. 

\begin{rem}
The case $r=0=N$ may be deduced from a slight generalisation of Theorems~\ref{stabthm} and \ref{abstabthm}, the case of stability with constant and abelian coefficients, using Remark~\ref{deg0rem}, which explains how to adapt the proof to degree 0 coefficient systems which are not necessarily constant functors. The case $r=0$ and arbitrary $N$ can also be obtained in this way by replacing a coefficient system $F$ which has degree 0 at $N$ by the coefficient system $\widetilde{F}$ defined by 
 $$\widetilde F(X^{\op n})=\left\{\begin{array}{ll}F(X^{\op N})& n<N\\
F(X^{\op n})& n\ge N\end{array}\right.$$
with the action of $G_n$ on $F(X^{\op N})$ induced by the upper suspension $G_n\to G_N$. 

However we shall not use this, and the argument that follows in particular gives another proof of the results of Section \ref{cstsec}, albeit with a slightly worse stability range for injectivity. 
\end{rem}

We fix $r \geq 0$ and $i \geq 0$ and make the following 

\begin{ind}\label{hyp2}
Each of the four statements of Theorem~\ref{twistrange} holds for all coefficient systems of degree $<r$ at any $N\ge 0$ in all homological degrees, and for all coefficient systems of degree $r$ at any $N\ge 0$ in homological degrees $< i$. 
\end{ind}

By Proposition \ref{prop:TwoCompIsZero} the composition $s_{n+1} \circ s_n$ is zero. Our strategy will be to show that each $s_n$ is injective in a range, from which it will follow that $Rel_i^F(A,n)=0$ in this range.  To do this, we factorise $s_n$ as in equation \eqref{factorization} of Proposition \ref{lsusp}, and study the two resulting maps separately.

We start by considering the first map in the composition \eqref{factorization}.

\begin{prop}\label{prop:FirstMapRange}
Let $F:\C_{A,X}\to \Z\operatorname{-Mod}$ be a coefficient system of degree $r$ at $N$.  Suppose that Inductive Hypothesis~\ref{hyp2} is satisfied. Then the map
$$Rel_i^F(A,n) \lra Rel^{\Si F}_i(A,n)$$
is
\begin{tabular}[t]{rl}
 { (i)}& surjective if $n \geq \max(N,k(i+r-1))$,\\
 { (ii)}&injective if $n \geq \max(N,k(i+r))$,\\
 { (iii)}&surjective if $n \geq \max(N, ki + (r-1))$ and $F$ is split,\\
 { (iv)}& split injective if the coefficient system is split.
\end{tabular}\\
For an internalised coefficient system of $G_\infty^{ab}$-modules the analogous map is \\ 
\begin{tabular}[t]{lrl}
& { (i$\,^\prime$)}& surjective if $n \geq \max(2N-1, k(i+r)-2)$,\\
& { (ii$\,^\prime$)}& injective if $n \geq \max(2N-1, k(i+r)+k-2)$,\\
& { (iii$\,^\prime$)}&surjective if $n \geq \max(2N-1, ki + 2r+k-4)$ and $F$ is split,\\ 
& { (iv$\,^\prime$)}& split injective if the coefficient system is split.
\end{tabular}
\end{prop}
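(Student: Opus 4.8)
The map in question is $(\mathrm{id}, F(\sigma_X)) : Rel_i^F(A,n) \to Rel_i^{\Sigma F}(A,n)$, which on the level of the defining commutative squares in $\mathcal{R}elRep$ is induced by the vertical maps $F(\sigma_X) : F_m \to F_{m+1} = (\Sigma F)_m$ for $m = n, n+1$, with group homomorphisms the identities $G_n \to G_n$ and $G_{n+1} \to G_{n+1}$. The idea is to splice the long exact sequences coming from the short exact sequence of coefficient systems
$$0 \longrightarrow \ker F \longrightarrow F \overset{\sigma_X}\longrightarrow \Sigma F \longrightarrow \coker F \longrightarrow 0,$$
restricted to the objects $A \oplus X^{\oplus n}$ and $A \oplus X^{\oplus n+1}$. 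Breaking this into two short exact sequences $0 \to \ker F \to F \to \mathrm{Im} \to 0$ and $0 \to \mathrm{Im} \to \Sigma F \to \coker F \to 0$, each gives a long exact sequence of relative homology groups $Rel_*$ of the pair $(G_{n+1}, G_n)$; the connecting maps assemble a long exact sequence comparing $Rel_i^F$, $Rel_i^{\Sigma F}$, and the relative homology groups of $\ker F$ and $\coker F$. One then reads off that the comparison map $Rel_i^F(A,n) \to Rel_i^{\Sigma F}(A,n)$ is surjective once $Rel_i^{\ker F}(A,n) = 0 = Rel_{i-1}^{\coker F}(A,n)$ (roughly), and injective once in addition $Rel_{i+1}^{\coker F}(A,n) = 0$ and $Rel_{i-1}^{\ker F}(A,n)=0$. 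The precise bookkeeping of which terms one needs to vanish is the only subtlety, and it is where the exact numerical ranges in (i)--(iv) come from.

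\textbf{Applying the inductive hypothesis.} Now $\ker F$ has degree $-1$ at $N$, so $Rel_*^{\ker F}(A,n)$ vanishes for $n \geq N$ by the trivial base case $r=-1$; and $\coker F$ has degree $r-1$ at $N$, so by Inductive Hypothesis~\ref{hyp2} its relative homology groups $Rel_j^{\coker F}(A,n)$ vanish for $n \geq \max(N+1, k(j+r-1))$. Plugging $j = i-1$ gives the threshold $\max(N+1, k(i+r-2))$ for surjectivity from the $\coker$ side, while $j = i+1$ gives $\max(N+1, k(i+r))$ for the injectivity direction; intersecting with the $\ker$-requirement $n \geq N$ and simplifying yields the stated bounds $\max(N, k(i+r-1))$ for (i) and $\max(N, k(i+r))$ for (ii). For the split case (iii)--(iv), one uses that the short exact sequence $0 \to F \to \Sigma F \to \coker F \to 0$ is split as a sequence of coefficient systems (with $\coker F$ split of degree $r-1$), so $\Sigma F \cong F \oplus \coker F$ compatibly with the stabilisation maps; this immediately gives that $Rel_i^F(A,n) \to Rel_i^{\Sigma F}(A,n)$ is split injective (proving (iv) with no range restriction), and surjectivity (iii) holds as soon as $Rel_i^{\coker F}(A,n) = 0$, i.e. $n \geq \max(N+1, k(i+r-1))$; the slightly better bound $\max(N, ki + r - 1)$ claimed in (iii) comes from using the \emph{split} case of the inductive hypothesis for $\coker F$, which has the improved range $\max(N+1, ki + (r-1))$.

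\textbf{The internalised case.} For a coefficient system $F : \mathcal{C}_{A,X} \to \mathbb{Z}[G_\infty^{ab}]\operatorname{-Mod}$ the argument is formally identical: the short exact sequences of coefficient systems remain short exact after internalising, since $- \otimes_{\mathbb{Z}[G_\infty^{ab}]} \mathbb{Z}[G_\infty^{ab}]$ with the diagonal action is exact (it is just a relabelling of the module structure), and the relative homology long exact sequences go through because the maps in $\mathcal{R}elRep$ induced by the upper suspension are still morphisms by Lemma~\ref{lem:TwistedModule}. One then invokes the internalised parts (iii) and (iv) of Inductive Hypothesis~\ref{hyp2} applied to $\coker F$ (of degree $r-1$), whose vanishing ranges are $\max(2N+1, k(j+r)+k-2)$ and (in the split case) $\max(2N+1, kj + 2r + k - 4)$; substituting $j = i-1$ and $j = i+1$ respectively, intersecting with the $\ker F$ vanishing range $n \geq 2N$, and simplifying the resulting maxima gives exactly the four bounds $\max(2N-1, k(i+r)-2)$, $\max(2N-1, k(i+r)+k-2)$, $\max(2N-1, ki + 2r + k - 4)$, and the unconditional split injectivity.

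\textbf{Main obstacle.} The conceptual content is routine once the right exact sequences are in place; the genuine work is purely numerical --- verifying that the maxima obtained by feeding $j = i \pm 1$ and degree $r-1$ into the inductive hypothesis collapse to the clean thresholds stated, and in particular tracking the $-1$ versus $+1$ shifts in the $N$-part (e.g.\ why $\max(N+1, \ldots)$ for $\coker F$ combines with $n \geq N$ for $\ker F$ to give $\max(N, \ldots)$, and why the internalised case produces $2N-1$ rather than $2N$ or $2N+1$). I expect the bulk of the write-up to be this careful comparison of numerical ranges, done case by case for (i)--(iv) in both the ordinary and internalised settings.
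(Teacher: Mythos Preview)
Your approach is essentially the same as the paper's: break the map $\sigma_X : F \to \Sigma F$ into the two short exact sequences $0 \to \ker F \to F \to \sigma_X(F) \to 0$ and $0 \to \sigma_X(F) \to \Sigma F \to \coker F \to 0$, use that $(\ker F)_n = 0$ for $n \geq N$ so the first gives an isomorphism on $Rel_i$, then read off surjectivity/injectivity from the long exact sequence of the second. The split case is handled exactly as you say, via the retraction $\Sigma F \to F$.

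There is one genuine slip in your bookkeeping. From the long exact sequence
\[
\cdots \lra Rel_{i+1}^{\coker F}(A,n) \lra Rel_i^{\sigma_X(F)}(A,n) \lra Rel_i^{\Sigma F}(A,n) \lra Rel_i^{\coker F}(A,n) \lra \cdots
\]
surjectivity requires $Rel_i^{\coker F}(A,n)=0$ (index $j=i$, not $j=i-1$), and injectivity requires $Rel_{i+1}^{\coker F}(A,n)=0$ (index $j=i+1$, which you have). You have swapped the roles of $\ker$ and $\coker$ in your surjectivity condition, and consequently plugged $j=i-1$ into the inductive hypothesis for $\coker F$, which gives $k(i+r-2)$ rather than the $k(i+r-1)$ in the statement. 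You use the correct index $j=i$ in your split case (iii), so this is an inconsistency rather than a systematic misunderstanding; just be careful to write out the long exact sequence explicitly and read the indices off it.

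Your concern about the $N$ versus $N+1$ shift is legitimate: the inductive hypothesis for $\coker F$ literally gives $n \geq \max(N+1, \ldots)$ while the proposition states $\max(N, \ldots)$. The paper glosses over this (``the ranges follow from Inductive Hypothesis~\ref{hyp2}''), and in the subsequent use of this proposition the $N+1$ always dominates anyway, so the discrepancy is immaterial for the main theorem. You need not agonise over it.
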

\begin{proof}
If $F$ is split, then there is a retraction $\Sigma F \to F$ of coefficient systems. This induces a map on $Rel_i^{(-)}(A,n)$ splitting the map in question. If $F$ is a coefficient system of $G_\infty^{ab}$-modules, then the same splitting gives retractions $(\Sigma F)_n^\circ \to F_n^\circ$. This proves parts (iv) and (iv$^\prime$).

Now suppose that $F$ is not necessarily split. There are short exact sequences of coefficient systems
\begin{align*}
0 \lra \ker F \lra F \lra \s_X(F) \lra 0\\
0 \lra \s_X(F) \lra \Sigma F \lra \coker F \lra 0
\end{align*}
which yield long exact sequences on $Rel_i^{(-)}(A,n)$ (see e.g.~\cite[Lem.~2.4]{Dwy80}). By definition, $\ker F$ has degree $-1$ at $N$, so $(\ker F)_n=0$ for $n \geq N$. Thus the long exact sequence for the first extension shows that $Rel_i^{F}(A,n) \to Rel_i^{\s_X(F)}(A,n)$ is an isomorphism as long as $n \geq N$. In this range, the long exact sequence for the second extension is
$$\cdots \lra Rel_{i+1}^{\coker F}(A,n) \lra Rel_i^{F}(A,n) \lra Rel_i^{\Sigma F}(A,n) \lra Rel_i^{\coker F}(A,n) \lra  \cdots.$$
Thus the map in question is surjective for those $n \geq N$ for which $Rel_i^{\coker F}(A,n)=0$, and injective for those $n \geq N$ for which $Rel_{i+1}^{\coker F}(A,n)=0$. As $\coker F$ has degree $(r-1)$ at $(N-1)$, the ranges in the proposition follow from Inductive Hypothesis \ref{hyp2}.
\end{proof}

We now study the second map in the composition \eqref{factorization}.

\begin{prop}\label{prop:TwistedSSArgument}
Suppose that Inductive Hypothesis \ref{hyp2} is satisfied.

If $F$ is a coefficient system of degree $r$ at $N$, then the map
$$Rel_i^{\Si F}(A,n)\sta{(\Si_X,id)}{\rar} Rel_i^F(A,n+1)$$
is 
\begin{tabular}[t]{rl}
 {(i)}& surjective for $n \geq \max(N,k(i+r) - (k-1))$ \\ 
 {(ii)}& injective for $n \geq \max(N+1,k(i+r))$ \\  
 {(iii)}& surjective for $n \geq \max(N,ki+r-(k-1))$ if $F$ is split\\ 
 {(iv)}&injective for $n \geq \max(N+1,ki+r)$ if $F$ is split. 
\end{tabular}\\
Similarly, if $F$ is a coefficient system of $G_\infty^{ab}$-modules of degree $r$ at $N$, then the map 
$$Rel_i^{\Si F^\circ}(A,n)\sta{(\Si_X,id)}{\rar} Rel_i^{F^\circ}(A,n+1)$$
is 
\begin{tabular}[t]{rl}
 { (i$\,^\prime$)}& surjective for $n \geq \max(2N-2,k(i+r)-1)$ \\
 { (ii$\,^\prime$)}& injective for $n \geq \max(2N+1,k(i+r)+1)$\\
 { (iii$\,^\prime$)}&  surjective for $n \geq \max(2N-2,ki+2r-1)$ if $F$ is split\\
 { (iv$\,^\prime$)}& injective for $n \geq \max(2N+1,ki+2r+k-2)$ if $F$ is split. 
\end{tabular}
\end{prop}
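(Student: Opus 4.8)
The plan is to analyze the map $(\Si_X, id): Rel_i^{\Si F}(A,n) \to Rel_i^F(A,n+1)$ by means of a semi-simplicial resolution of the module $F_{n+1}$ (resp. $F_{n+1}^\circ$) built from the highly-connected semi-simplicial set $W_{n+1}(A,X)_\bullet$, following the template of van der Kallen \cite{vdK80} and Ivanov \cite{Iva93}. Concretely, I would form the augmented chain complex $\widetilde C_*(W_{n+1}) \otimes_{\bZ} F_{n+1}$ (with diagonal $G_{n+1}$-action, using the given $G_{n+1}$-module structure on $F_{n+1}$, and its internalised variant $F_{n+1}^\circ$ in the $G_\infty^{ab}$ case), and run the spectral sequence of the double complex $E_\bullet G_{n+1} \otimes_{G_{n+1}} (\widetilde C_*(W_{n+1}) \otimes_{\bZ} F_{n+1})$, exactly as in the proof of Theorems~\ref{stabthm} and \ref{abstabthm}. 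By LH3 the $E^\infty$-page vanishes in a range of total degree roughly $\tfrac{n-1}{k}$; by LH1 and LH2 the group $G_{n+1}$ acts transitively on $p$-simplices with stabiliser $\St(\sigma_p) \cong G_{n-p}$ acting on the fibre, and — crucially — by Shapiro's lemma each $E^1_{p,q}$ is a homology group of $G_{n-p}$ with coefficients in a restriction of $F_{n}$ or a suspension thereof. The key point is that the $d^1$-differential $E^1_{0,q} \to E^1_{-1,q}$ is (up to conjugation, which is trivial on homology — this is where pre-braidedness and observation (d) of Section~\ref{cstsec} enter, and where the subtle $[\phi']^{-1}$-twist from Proposition~\ref{prop:TwoCompIsZero} must be tracked in the $G_\infty^{ab}$ case) the upper stabilisation map, and the columns $p \geq 1$ of the $E^1$-page assemble into complexes computing $Rel_*$ for the \emph{suspended} coefficient system $\Si F$ shifted down in $n$.

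The second, and technically heavier, step is to pass from this spectral sequence of absolute homology groups to one of \emph{relative} homology groups $Rel_*^{(-)}(A,\ast)$. Following \cite[\S3]{vdK80}, I would set up the mapping-cone version of the above double complex comparing the suspension maps $\Si^X\colon G_n \to G_{n+1}$, obtaining a spectral sequence converging to zero in a range (since both the domain and the augmented-chain-complex directions are acyclic there) whose $E^1$-page has entries $E^1_{p,q} = Rel_q^{\Si^{?} F}(A, n-p)$ for $p \geq 1$ and $E^1_{0,q} = Rel_q^{\Si F}(A,n)$, $E^1_{-1,q} = Rel_q^{F}(A,n+1)$ — with the $d^1$ out of the $0$-column being exactly the map $(\Si_X, id)$ we wish to control (after identifying the lower suspension with the upper one up to the conjugation/internalisation twist). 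Then the standard spectral-sequence bookkeeping — exactly the $\EI{1}$, $\EI{2}$, $\II{1}$, $\II{2}$, $\II{3}$ pattern used in Section~\ref{cstsec} — shows surjectivity of $(\Si_X,id)$ provided the $E^2$-terms feeding into $E^1_{-1,i}$ from columns $p\geq 1$ vanish, and injectivity provided moreover the $d^1$ out of $E^1_{1,i}$ vanishes and $E^1_{0,i}$ is in the stable range. The vanishing of those $E^2$-terms reduces, via the identification $E^1_{p,q} \cong Rel_q^{\Si F}(A,n-p)$ together with Lemma~\ref{suspdeg} (which says $\Si F$ has degree $r$ at $N-1$ when $F$ has degree $r$ at $N$), to Inductive Hypothesis~\ref{hyp2} applied to $\Si F$, which has the \emph{same} degree $r$ but strictly smaller homological degrees appearing, so the induction is legitimate. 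Tracking the arithmetic of the bounds — an extra $-1$ from the shift $N \mapsto N-1$, a factor-of-$k$ loss from the stability slope, and the $+k-2$ and $2N$-versus-$N$ shifts in the abelian case coming from Theorem~\ref{abstabthm}'s worse range — gives precisely the four (resp. four primed) bullet points in the statement; the split case is identical but uses the better "$ki+r$"-type ranges of the split branch of Inductive Hypothesis~\ref{hyp2} and the splitting of the suspension map.

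For the $G_\infty^{ab}$-coefficient case, the same machinery applies to the internalised modules $F_n^\circ$, but I would be careful about two points that already surfaced in Proposition~\ref{prop:TwoCompIsZero}: first, Shapiro's lemma and the identification $\St(\sigma_p) \cong G_{n-p}$ still go through because $F_{n+1}^\circ = \bZ[G_\infty^{ab}] \otimes_{\bZ[G_\infty^{ab}]} F_{n+1}$ with the diagonal action restricts correctly; second, the conjugation maps $c_{\phi}$ appearing in the $d^1$-differentials are \emph{not} inner on $F_{n+1}^\circ$ but differ from inner automorphisms by multiplication by a unit $[\phi] \in \bZ[G_\infty^{ab}]$, so wherever the constant-coefficient argument invoked "conjugation is trivial in homology" I must instead invoke "conjugation differs from the identity by the invertible operator $[\phi]\cdot(-)$", which still preserves vanishing and injectivity/surjectivity of maps but shifts nothing essential — exactly as in the bottom-left-corner computation of Proposition~\ref{prop:TwoCompIsZero}.

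I expect the main obstacle to be bookkeeping rather than conceptual: assembling the relative-homology spectral sequence so that its $E^1$-columns are genuinely $Rel^{\Si F}_*(A, n-p)$ (as opposed to absolute homology groups) requires comparing two double complexes — one for $G_n$ and one for $G_{n+1}$, each with its $W_\bullet$-resolution — and checking that the horizontal face maps of $W_{n+1}$ are compatible with the stabilisation $G_n \to G_{n+1}$ in the way needed for the mapping cone to have the claimed $E^1$-page; this is where the pre-braided structure is genuinely used (to make the lower suspension $\Si_X$, and hence the map $\sigma_n$, well-defined on the resolution) and where sign and indexing errors are easiest to make. The degree-shift arithmetic feeding into the explicit bounds is the other place demanding care, but it is entirely mechanical once Lemma~\ref{suspdeg} is in hand.
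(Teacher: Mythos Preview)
Your overall architecture is right and matches the paper's: a relative spectral sequence built from the mapping cone of the map of double complexes for $(G_{n+1},W_{n+1})\to(G_{n+2},W_{n+2})$ (note: $n+1$ and $n+2$, not $n$ and $n+1$), with $E^1_{p,q}\cong Rel_q^{\Sigma^{p+1}F}(A,n-p)$ for $p\ge 0$ and $E^1_{-1,q}=Rel_q^F(A,n+1)$, and the inductive hypothesis applied to the iterated suspensions $\Sigma^{p+1}F$ (same degree $r$, smaller $N$) to kill the relevant $E^2$-terms. The surjectivity arguments go through as you describe.

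There is, however, a genuine gap in your treatment of the $\II{3}$-analogue, i.e.\ showing $d^1:E^1_{1,i}\to E^1_{0,i}$ is zero. You invoke ``the $\II{3}$ pattern used in Section~\ref{cstsec}'', but that pattern does \emph{not} transfer directly to non-constant coefficients. In Section~\ref{cstsec} the element $h_1$ centralises $\St(\sigma_1)$ and acts trivially on the (constant) coefficient module, so the two face-maps literally agree. Here $h_1$ still centralises $\St(\sigma_1)$, but it acts on $F_{n+1}$ by the non-trivial automorphism $F(h_1)$; the two maps making up $d^1$ therefore differ by this automorphism of the source $Rel_i^{\Sigma^2 F}(A,n-1)$. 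What saves you is that $F(h_1)$ \emph{is} the identity on the submodule $\sigma_X^2(F_{n-1})\subset F_{n+1}$, so the two maps agree on the image of
\[
Rel_i^{F}(A,n-1)\lra Rel_i^{\Sigma F}(A,n-1)\lra Rel_i^{\Sigma^2 F}(A,n-1).
\]
To conclude $d^1=0$ you must know this composite is \emph{surjective} --- and that is precisely Proposition~\ref{prop:FirstMapRange}, which you never invoke. This is not bookkeeping: it is an additional ingredient (already proved, using the inductive hypothesis at degree $r-1$), and it is what produces the specific bounds in (ii) and (iv). In the $G_\infty^{ab}$-case the same mechanism applies, but one needs the longer element $h_1=(A\op X\op b_{X,X}^{-1}\op X^{\op n-2})\circ(A\op b_{X,X}\op X^{\op n-1})$, which centralises $\St(\sigma_2)$ and is trivial in $G_\infty^{ab}$; one then precomposes with a map out of $Rel_i^{F^\circ}(A,n-2)$, whose surjectivity requires both Proposition~\ref{prop:FirstMapRange} and the already-established surjectivity half of the present proposition. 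Your ``differs by $[\phi]\cdot(-)$'' remark handles the identification of differentials but not this vanishing step.
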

\begin{proof} 
Let $W_n=W_n(A,X)_\bullet$ be the semi-simplicial set of Definition~\ref{simpdef}, having $p$-simplices $\Hom(X^{p+1},A\op X^{\op n})$. Just as in the proof of Theorem~\ref{stabthm} we have that, by LH1, the group $G_n:=\Aut(A\op X^{\op n})$ acts transitively on the set of $p$-simplices of $W_n$ for $0 \leq p<n$, and that, by LH2, in this range the stabiliser of a $p$-simplex is isomorphic to $G_{n-p-1}=\Aut(A\op X^{\op n-p-1})$. As in the proof of Theorem~\ref{stabthm}, because we are only interested in homological degrees up to $\tfrac{n-1}{k}$ we shall ignore the fact that this fails for $p\geq n$. 
The set of $p$-simplices of $W_n$ is hence isomorphic to $G_n/G_{n-p-1}$ as a $G_n$-set. We will consider a spectral sequence associated to the action of $G_{n+1}$ on  $W_{n+1}$, and of $G_{n+2}$ on  $W_{n+2}$.  (This spectral sequence is a relative version of the one used in the proof of Theorem~\ref{stabthm}.) 
Note that our assumption implies that $W_{n+1}$ and $W_{n+2}$ are at least $(\frac{n-1}{k})$-connected, the latter being in fact at least  $(\frac{n}{k})$-connected. 

Let $F_n:=F(A\op X^{\op n})$. The upper suspension map induces a map $F(\s^X):F_{n+1}\to F_{n+2}$ which is $G_{n+1}$-equivariant. Choose  projective resolutions $P_\bullet$ and $Q_\bullet$ of $F_{n+1}$ and $F_{n+2}$ as $G_{n+1}$- and $G_{n+2}$-modules. Recall that there is a map of resolutions $P_\bullet\to Q_\bullet$ compatible with the suspension, and the cone of the chain map 
$$\Z \ot_{\Z G_{n+1}} P_\bullet\rar \Z \ot_{\Z G_{n+2}} Q_\bullet$$ 
computes the relative homology group $H_*(G_{n+2},G_{n+1};F_{n+2},F_{n+1})$.

Let $\wt C_*(W_n)$ denote the augmented cellular chain complex of $W_n$.  Post-composing by the upper suspension $\s^X$ induces a map $W_{n+1}\to W_{n+2}$ which is also $G_{n+1}$-equivariant with respect to upper suspension. Hence  we get a map of double complexes 
$$\bZ \ot_{\Z G_{n+1}} (\wt C_*(W_{n+1}) \ot_\bZ P_\bullet)  \rar \bZ \ot_{\Z G_{n+2}} (\wt C_*(W_{n+2}) \ot_\bZ Q_\bullet)$$
whose levelwise cone (in the $\bullet$ direction) 
$$C_{p,q}\ =\ \big( \Z\ot_{\Z G_{n+1}}(\wt C_p(W_{n+1}) \ot_\bZ P_{q-1}) \big)\ \bigoplus\ \big(\Z\ot_{\Z G_{n+2}} (\wt C_p(W_{n+2}) \ot_\Z Q_q)\big ) $$
is a double complex and hence has two  associated spectral sequences. Taking first the homology in the $p$-direction yields a spectral sequence with
$$E^1_{p,q} = H_p\big(\Z\ot_{\Z G_{n+2}} (\wt C_*(W_{n+2}) \ot_\Z Q_q)\big) \oplus H_{p}\big( \Z\ot_{\Z G_{n+1}}(\wt C_*(W_{n+1}) \ot_\bZ P_{q-1}) \big).$$
As $W_{n+1}$ is $(\frac{n-1}{k})$-connected and $W_{n+2}$ is $(\frac{n}{k})$-connected, applying the Universal Coefficient Theorem (twice) shows that $E^1_{p,q}=0$ for $p+q \le \frac{n}{k}$. It follows that both spectral sequences converge to zero in that range of total degrees. %, namely for $p+q\le (\frac{n-1}{k})$. 
By Shapiro's lemma, transitivity of the action on $p$-simplices for all $p$ implies that the other spectral sequence  has $E^1$-term 
\begin{align*}
E^1_{p,q}\cong H_q(\St(\s^X\!\circ\s_p),\St(\s_p);F_{n+2},F_{n+1})
\end{align*}
for $\s_p$ any $p$-simplex of $W_{n+1}$. We now analyse the terms in this second spectral sequence.  

For $p=-1$, the stabiliser is the whole group and  $$E^1_{-1,i}=Rel^F_i(A,n+1)=H_i(G_{n+2},G_{n+1};F_{n+2},F_{n+1})$$ 
is the target of the map we are interested in. 

For $p \geq 0$, we here choose as a standard $p$-simplex
$$\sigma_p =  \iota_{A} \op {X^{\op p+1}} \op \iota_{X^{\op n-p}}: X^{\op p+1} \to A \op X^{\op n+1} \in (W_{n+1})_p.$$
Then the map 
$$(\Sigma_X)^{p+1} : G_{n-p} \ \lra\  \St(\sigma_p) = \Fix(\iota_{A} \op {X^{\op p+1}} \op \iota_{X^{\op n-p}}) \leq G_{n+1}$$
is an isomorphism, as for $f\in G_{n-p}$ we have that $\Si_{X}^{p+1}(f)=(b_{X^{\op p+1},A}\op X^{\op n-p})\circ (X^{\op p+1}\op f)\circ (b^{-1}_{X^{\op p+1},A}\op X^{\op n-p})$, and the pre-braid relation gives that $\Si_X^{p+1}(f)\circ\s_p=\s_p$. 

Under this map, the module $F_{n+1}$ pulls back to $(\Sigma^{p+1} F)_{n-p}$. Likewise, for the action of $G_{n+2}$ on $W_{n+2}$, the simplex $\s^X\!\circ\sigma_p \in (W_{n+2})_p$ is $ \iota_{A} \op {X^{\op p+1}} \op \iota_{X^{\op n+1-p}}$, and
$$(\Sigma_X)^{p+1} : G_{n+1-p} \ \lra\ \St(\s^X\!\circ\s_p) = \Fix(\iota_{A} \op {X^{\op p+1}} \op \iota_{X^{\op n+1-p}}) \leq G_{n+2}$$
pulls back $F_{n+2}$ to $(\Sigma^{p+1} F)_{n+1-p}$. As these identifications commute with the upper suspension, we
get an isomorphism 
$$E^1_{p,q} \cong H_q(\St(\s^X\!\circ\s_p),\St(\s_p);F_{n+2},F_{n+1}) \cong Rel_q^{\Sigma^{p+1}F}(A, n-p).$$
This discussion goes through with only notational changes if we use internalisations $F_{n+1}^\circ$ and $F_{n+2}^\circ$ coming from a coefficient system of $G_\infty^{ab}$-modules.

In particular, for $p=0$, we have that $$E^1_{0,i}\cong H_i(\St(\s^X\!\circ\s_0),\St(\s_0);F_{n+2},F_{n+1})\cong Rel_i^{\Si F}(A,n)$$ identifies with the source of the map in the statement of the proposition.  
Moreover, 
the differential $d^1:E^1_{0,i}\to E^1_{-1,i}$, which is induced by the inclusion of the stabilisers into the groups,
is, when precomposed with this isomorphism, the lower suspension, i.e.~the map in the statement of the proposition. 

\vspace{2ex}

\noindent\textbf{Split case}. By Lemma \ref{suspdeg} the coefficient system $\Sigma^{p+1} F$ has degree $r$ at $N-p-1$, and so by Inductive Hypothesis \ref{hyp2}, $Rel_q^{\Sigma^{p+1}F}(A, n-p)=0$ for $n-p \geq \max(N-p,kq + r)$ and $q < i$. If $Rel^{\Sigma^{p+1} F}_q(A,n-p)=0$ for all $p+q=i$ and $q<i$ and $E_{-1,i}^\infty=0$, then the differential $d^1 : E^1_{0,i} \to E^1_{-1,i}$ must be onto; this happens when $n \geq \max(N,k(i-1)+1 + r)$.

Similarly, the sequence $E^1_{1,i} \overset{d^1}\to E^1_{0,i} \overset{d^1}\to E^1_{-1,i}$ must be exact in the middle position if $Rel^{\Sigma^{p+1} F}_q(A,n-p)=0$ for all $p+q=i+1$ and $q<i$ and $E_{0,i}^\infty=0$; this happens for $n \geq \max(N,ki + r)$. Finally, we will show that $d^1 : E^1_{1,i} \to E^1_{0,i}$ is zero for $n \geq \max(N+1,ki+r)$. It will then follow by exactness that $d^1 : E^1_{0,i} \to E^1_{-1,i}$ is injective for $n \geq \max(N+1,ki+r)$.

The differential $d^1 : E^1_{1,i} \to E^1_{0,i}$ is given as the difference of two maps, corresponding to the two 0-simplices $d_0\sigma_1$ and $d_1\sigma_1$ arising as the faces of the standard 1-simplex. We may observe that $d_1\sigma_1 = \sigma_0$, and $h_1 \cdot  d_0\sigma_1 = \sigma_0$ for some $h_1 \in G_{n+1}$. The first map defining $d^1$ is thus the map induced on homology by the composition of the map
$$(\St(\s^X\!\circ \s_1), \St(\s_1); F_{n+2}, F_{n+1}) \lra (\St(\s^X\!\circ d_0\s_1), \St(d_0\s_1); F_{n+2}, F_{n+1})$$
induced by inclusion of stabilisers, with the map
$$(\St(\s^X\!\circ d_0\s_1), \St(d_0\s_1); F_{n+2}, F_{n+1}) \lra (\St(\s^X\!\circ \s_0), \St(\s_0); F_{n+2}, F_{n+1})$$
induced by $(c_{\Sigma^X h_1}, c_{h_1})$. (Recall from the proof of Theorems~\ref{stabthm} and~\ref{abstabthm} that we write $c_{h_1} := (h_1(-)h_1^{-1}, h_1 \cdot (-)) : (\St(d_0\s_1), F_{n+1}) \to (\St(\s_0), F_{n+1})$ for the map induced by conjugation by $h_1$ on the groups and the action of $h_1$ on the modules, and similarly $c_{\Sigma^X h_1}$.) The second map defining $d^1$ is the map induced on homology by 
$$(\St(\s^X\!\circ \s_1), \St(\s_1); F_{n+2}, F_{n+1}) \lra (\St(\s^X\!\circ \s_0), \St(\s_0); F_{n+2}, F_{n+1}),$$
the inclusion of stabilisers. 

The choice $h_1 := A \oplus b_{X,X} \oplus X^{\op n-1} \in G_{n+1}$ has the additional property that it centralises $\St(\sigma_1)=\Fix(\iota_X\op X^{\op 2}\op \iota_{X^{\op n-1}})$; indeed, this follows from Proposition~\ref{H2sym} noting that $h_1 = (A\op b_{X^2,X^{n-1}}^{-1}) \circ (A \oplus X^{\op n-1}\oplus b_{X,X}) \circ (A\op b_{X^2,X^{n-1}})$.
Similarly $\Sigma^X(h_1) \in G_{n+2}$ centralises $\St(\s^X\!\circ \sigma_1)$. Thus the two homomorphisms
$$Rel_i^{\Sigma^2 F}(A, n-1) \lra Rel_i^{\Sigma F}(A, n)$$
only differ by the automorphism of the source induced by $(F(\Sigma^X h_1), F(h_1))$ acting on $(F_{n+2}, F_{n+1})$. This automorphism is trivial on $(F(\sigma_X^2)(F_{n}), F(\sigma_X^2)(F_{n-1})) \subset (F_{n+2}, F_{n+1})$. 
Thus, as the composition
$$Rel_i^{F}(A, n-1) \lra Rel_i^{\Sigma F}(A, n-1) \lra Rel_i^{\Sigma^2 F}(A, n-1)$$
is surjective for $n-1 \geq \max(N,ki + r-1)$ by Proposition \ref{prop:FirstMapRange} (iii), it follows that the two homomorphisms agree for $n \geq \max(N+1,ki+r)$, and so their difference, $d^1 : E^1_{1,i} \to E^1_{0,i}$, is trivial

\vspace{2ex}

\noindent\textbf{Non-split case}. The argument is the same as the previous case, but the ranges are different; we thus go through it quickly. By Lemma \ref{suspdeg} and Inductive Hypothesis \ref{hyp2} we may assume that $Rel_q^{\Sigma^{p+1}F}(A, n-p)=0$ for $n-p \geq \max(N-p,k(q+r))$ and $q < i$. One checks that if $n \geq \max(N,k(i+r))$ then $d^1 : E^1_{0,i} \to E^1_{-1,i}$ is onto, and that if $n \geq \max(N,k(i+r))$ then $E^1_{1,i} \overset{d^1}\to E^1_{0,i} \overset{d^1}\to E^1_{-1,i}$ is exact in the middle. By the same argument as above, the differential $d^1 : E^1_{1,i} \to E^1_{0,i}$ is zero in the range that
$$Rel_i^{F}(A, n-1) \lra Rel_i^{\Sigma F}(A, n-1) \lra Rel_i^{\Sigma^2 F}(A, n-1)$$
is surjective, which by Proposition \ref{prop:FirstMapRange} (i) is when $n-1 \geq \max(N,k(i+r-1))$, or in other words when $n \geq \max(N+1,k(i+r) -(k-1))$. Thus if $n \geq \max(N+1,k(i+r))$ then $d^1: E^1_{0,i} \to E^1_{-1,i}$ is injective.

\vspace{2ex}

\noindent\textbf{Internalised split coefficient system of $G_\infty^{ab}$-modules case}. The argument for surjectivity at $E^1_{-1,i}$ and exactness at $E^1_{0,i}$ is the same as in the previous cases. We deduce that $d^1 : E^1_{0,i} \to E^1_{-1,i}$ is onto for $n \geq \max(2N-2, ki+2r-1)$ and  that $E^1_{1,i} \overset{d^1}\to E^1_{0,i} \overset{d^1}\to E^1_{-1,i}$ is exact in the middle for $n \geq \max(2N-3,ki+2r)$.

To show that $d^1 : E^1_{1,i} \to E^1_{0,i}$ is zero, our description of this differential in the previous sections still holds, but the first map making up this differential is the composition of the map induced by inclusion of stabilisers followed by
$$(c_{\Sigma^X h_1}, c_{h_1}) =(\Sigma^X h_1(-)\Sigma^X h_1^{-1}, h_1(-)h_1^{-1}; [\Si^X_\infty h_1] \otimes F(\Sigma^X h_1), [\Si^X_\infty h_1] \otimes F(h_1))$$
where $[\Si^X_\infty h_1] \otimes F(h_1) : \mathbb{Z}[G_\infty^{ab}] \otimes_{\mathbb{Z}[G_\infty^{ab}]} F_{n+1} \to \mathbb{Z}[G_\infty^{ab}] \otimes_{\mathbb{Z}[G_\infty^{ab}]} F_{n+1}$ is the map induced on $F_{n+1}^\circ$ by multiplication by $[\Si^X_\infty h_1] \in G_\infty^{ab}$ on the first factor and by $F(h_1)$ on the second; similarly for the map on $F_{n+2}^\circ$.

Consider precomposing $d^1 : E^1_{1,i} \to E^1_{0,i}$ with the map induced on homology by
$$\phi: (\St(\s^X\!\circ \s_2), \St(\s_2); F_{n-1}^\circ, F_{n-2}^\circ) \lra (\St(\s^X\!\circ \s_1), \St(\s_1); F_{n+2}^\circ, F_{n+1}^\circ),$$
given by the inclusion between the groups, and by
$$id \otimes F(\sigma_X^3) : F_{n-2}^\circ = \mathbb{Z}[G_\infty^{ab}] \otimes_{\mathbb{Z}[G_\infty^{ab}]} F_{n-2} \lra \mathbb{Z}[G_\infty^{ab}] \otimes_{\mathbb{Z}[G_\infty^{ab}]} F_{n+1} = F_{n+1}^\circ$$
on coefficients (with the analogous map $F_{n-1}^\circ \to F_{n+2}^\circ$).

The choice $h_1 := (A \op X \op b^{-1}_{X,X} \op X^{\op n-2}) \circ (A \op b_{X,X} \op X^{\op n-1}) \in G_{n+1}$ takes $d_0\sigma_1$ to $\sigma_0$, centralises $\St(\sigma_2) =\Fix(\iota_X\op X^{\op 3}\op \iota_{X^{\op n-2}})$, and $\Sigma^X h_1 \in G_{n+2}$ centralises $\St(\s^X\circ \sigma_2)$, using just as before Proposition~\ref{H2sym}. In addition, it satisfies $[\Si^X_\infty  h_1]=0 \in G_\infty^{ab}$, which may be seen by abelianising (\ref{braidrelation}). 

The map $(F(\Sigma^X h_1), F(h_1))$ restricts to the identity on the submodules 
$$(F(\sigma_X^3)(F_{n-1}), F(\sigma_X^3)(F_{n-2})) \subset (F_{n+2}, F_{n+1}).$$ 
Thus after precomposing with $H_i(\phi)$, the two maps whose difference forms $d^1 : E^1_{1,i} \to E^1_{0,i}$ become equal, and so this differential is zero when $H_i(\phi)$ is surjective. The map $H_i(\phi)$ is the composition
$$Rel_i^{F^\circ}(A, n-2) \overset{(id, id \otimes F(\sigma_X^3))}\lra Rel_i^{\Sigma^3 F^\circ}(A, n-2) \overset{(\Sigma_X, id)}\lra Rel_i^{\Sigma^2 F^\circ}(A,n-1)$$
so by the surjectivity part of this proposition, which is already established, and Proposition \ref{prop:FirstMapRange} (iii$^\prime$), it is surjective as long as $n \geq \max(2N+1,ki+2r+k-2)$ (here we have used our assumption that $k \geq 3$, so $k-2 \geq 1$). Thus $d^1 : E^1_{0,i} \to E^1_{-1,i}$ in injective in this range of degrees.

\vspace{2ex}

\noindent\textbf{Internalised non-split coefficient system of $G_\infty^{ab}$-modules case}. The argument is the same as the previous case, but the ranges are different. By Inductive Hypothesis \ref{hyp2} we may assume that $Rel_q^{\Sigma^{p+1}F^\circ}(A, n-p)=0$ for $n-p \geq \max(2N-2p-1, k(q+r)+k-2)$ and $q < i$. It thus follows that if $n \geq \max(2N-2,k(i+r)-1)$ then $d^1 : E^1_{0,i} \to E^1_{-1,i}$ is onto, and that if $n \geq \max(2N-3,k(i+r))$ then $E^1_{1,i} \overset{d^1}\to E^1_{0,i} \overset{d^1}\to E^1_{-1,i}$ is exact in the middle. By the same argument as above, the differential $d^1 : E^1_{1,i} \to E^1_{0,i}$ is zero in the range that the composition
$$Rel_i^{F^\circ}(A, n-2) \overset{(id, id \otimes F(\sigma_X^3))}\lra Rel_i^{\Sigma^3 F^\circ}(A, n-2) \overset{(\Sigma_X, id)}\lra Rel_i^{\Sigma^2 F^\circ}(A,n-1)$$
is surjective, which by Proposition \ref{prop:FirstMapRange} (i$^\prime$) and the surjectivity part of this proposition is when $n \geq \max(2N+1,k(i+r)+1)$. Thus in this range $d^1: E^1_{0,i} \to E^1_{-1,i}$ is injective.
\end{proof}

Let us now conclude the proof of Theorem \ref{twistrange}. We have shown in Proposition \ref{prop:TwoCompIsZero} that the composition
\begin{equation*}
    \xymatrix{
      & & Rel^F_i(A,n) \ar[r]^-{(id,F(\s_X))} & Rel_i^{\Si F}(A,n) 
  \ar@{->} `r[d] `[ll] `^dl[lll] `^r[dll]  [dll]^-{(\Sigma_X, id)} \\
& Rel_i^F(A,n+1) \ar[r]^-{(id,F(\s_X))} & Rel_i^{\Si F}(A,n+1) \ar[r]^-{(\Si_X,id)} & Rel_i^F(A,n+2)
    }
\end{equation*}
is zero, and similarly with internalised $G_\infty^{ab}$-module coefficients. Hence $Rel_i(A,n)$ must be zero in the range in which those four maps are all injective. We check this in each case. Note that the ranges for which we know that the first and second maps are injective are always included in the ranges for which we know that the third and fourth maps are injective, so it is enough to consider the first two maps. 

\vspace{1ex}

\noindent\textbf{In the split case}: The first map is injective, as the coefficient system is split, and the second map is injective for $n \geq \max(N+1, ki+r)$ by Proposition \ref{prop:TwistedSSArgument} (iv). It follows that $Rel_i^F(A,n)=0$ for $n \geq \max(N+1, ki+r)$.

\noindent\textbf{In the non-split case}: The first map is injective for $n \geq \max(N, k(i+r))$ by Proposition~\ref{prop:FirstMapRange} (ii)  and the second map 
 for $n \geq \max(N+1, k(i+r))$ by Proposition \ref{prop:TwistedSSArgument} (ii). Thus $Rel_i^F(A,n)=0$ for $n \geq \max(N+1, k(i+r))$.

\vspace{1ex}

In the case of an internalised coefficient system of $G_\infty^{ab}$-modules, we have shown in Proposition \ref{prop:TwoCompIsZero} that the analogous composition is zero.

\vspace{1ex}

\noindent\textbf{In the split case}:  The first map is injective by the splitness of the coefficient system, and the second map is injective for $n \geq \max(2N+1, ki+2r+k-2)$ by Proposition \ref{prop:TwistedSSArgument} (iv$^\prime$), hence $Rel_i^{F^\circ}(A,n)=0$ in that range.

\noindent\textbf{In the non-split case}: The first map is injective for $n \geq \max(2N-1, k(i+r)+k-2)$ by Proposition~\ref{prop:FirstMapRange} (ii$^\prime$)  and the second map for $n \geq \max(2N+1, k(i+r)+1)$ by Proposition \ref{prop:TwistedSSArgument} (ii$^\prime$). Thus $Rel_i^F(A,n)=0$ for $n \geq \max(2N+1, k(i+r)+k-2)$.

\section{Examples}\label{examples}

In this section, we apply our theory to prove or reprove stability theorems with twisted coefficients for classical families of groups. We will construct homogeneous categories from braided monoidal groupoids which satisfy the hypotheses of Theorem~\ref{universal}. In most cases considered here, the construction of the category and hence its associated semi-simplicial sets and simplicial complexes will be essentially automatic. The only non-trivial property we will have to check is the high-connectivity of the resulting semi-simplicial sets. This will be achieved by using the results of Section \ref{sec:SxCxSSets} to relate these to simplicial complexes already studied in the literature.

\subsection{Symmetric groups}\label{setex}

The simplest example is obtained from the groupoid $\Sigma$ of finite sets and bijections. This groupoid is symmetric monoidal, with the sum induced by the disjoint union of finite sets, and the symmetry given by the canonical bijection $A\sqcup B\to B\sqcup A$. The unit is the empty set. Finite sets have cancellation, have no zero divisors, and $\Aut(A)\to \Aut(A\sqcup B)$ is injective for all $A$ and $B$. Hence Theorem~\ref{universal} provides an associated homogeneous category $U\Sigma=\langle\Sigma,\Sigma\rangle$.

This homogeneous category has objects the finite sets. A morphism from $A$ to $B$ is an equivalence class of pairs $(X,f)$ with $X$ a finite
set and $f:X\sqcup A\to B$ a bijection. Two pairs $(X,f)$ and $(X',f')$ are equivalent if there is an isomorphism $g:X\to X'$ such that 
$$f=f'\circ (g'\sqcup A):X\sqcup A\to X'\sqcup A\to B.$$ 
In particular, we must have $f|_A=f'|_A$. In fact,  the morphism $[X,f]$ is exactly determined by this injection $f|_A$ and $U\Sigma$ 
identifies with the category $FI$ of finite sets and injections, our first example of a homogeneous category in Section~\ref{homcatsec}. 
Note that the automorphism groups of objects are (still) their groups of symmetries, and indeed the groupoid $\Sigma$ satisfies the hypothesis of Proposition~\ref{underlying}. 

\smallskip

As the category $U\Sigma$, or $FI$, is symmetric monoidal and locally standard, by Proposition \ref{prop:SymMonBuilding} the semi-simplicial sets $W_n(A,X)_\bullet$ satisfy condition (A). Hence, by Theorem \ref{caseA} these are $(\tfrac{n-a}{k})$-connected as long as the associated simplicial complexes $S_n(A,X)$ are $(\tfrac{n-a}{k})$-connected for some $a, k\ge 1$ for all $n\ge 0$. The interesting case is when $A=\emp$ and $X=\{*\}$, in which case the simplicial complex $S_n := S_n(\emp,\{*\})$ has vertices the inclusions of $X=\{*\}=[1]$ inside $[n]=\{*\}\sqcup\dots\sqcup\{*\}\cong\{1,\dots,n\}$, so the vertices of $S_n$ can be identified with the numbers $1,\dots,n$. Now for any collection of distinct vertices $1\le i_0,\dots,i_p\le n$, there is an associated injection $f:[p+1]\to [n]$ that takes $j$ to $i_j$. Hence any collection of vertices defines a simplex in $S_n$ and so $S_n$ may be identified with $\De^{n-1}$. In particular it is contractible for all $n\ge 1$ and hence at least $(n-2)$-connected for all $n\ge 0$; Theorem \ref{caseA} then implies that $W_n(\emp,\{*\})_\bullet$ is also $(n-2)$-connected, and hence at least $(\frac{n-2}{2})$-connected for all $n\ge 1$. 
 
\medskip

Applying Theorems \ref{stabthm}, \ref{abstabthm}, and~\ref{twistrange} with the above choices of $X$ and $A$ gives homological stability for the symmetric groups, with trivial coefficients, abelian coefficients, and coefficients in finite degree coefficient systems, including finitely generated FI-modules as explained in Example~\ref{FIex}. Let us unwrap all these results in this case.

\pagebreak

\begin{thm}
Let $F : FI \to R\operatorname{-Mod}$ be a coefficient system, which may be constant. Then the map
$$H_i(\Si_n;F(n))\lra H_i(\Si_{n+1};F(n+1))$$
is: 
\begin{enumerate}[(i)]
\item an epimorphism for $i \leq \tfrac{n}{2}$ and an isomorphism for $i \leq \tfrac{n-1}{2}$, if $F$ is constant;

\item an epimorphism for $i \leq \tfrac{n-r}{2}$ and an isomorphism for $i \leq \tfrac{n-r-2}{2}$, if $F$ is split of degree $r$ at 0; 

\item an epimorphism for $i \leq \tfrac{n}{2}-r$ and an isomorphism for $i \leq \tfrac{n-2}{2}-r$, if $F$ is of degree $r$ at 0;
\end{enumerate}

If $A_n \subset \Sigma_n$ denotes the alternating groups, then the map
$$H_i(A_n;F(n))\lra H_i(A_{n+1};F(n+1))$$
is:
\begin{enumerate}[(i)]
\item an epimorphism for $i \leq \tfrac{n-1}{3}$ and an isomorphism for $i \leq \tfrac{n-3}{3}$, if $F$ is constant;

\item an epimorphism for $i \leq \tfrac{n-2r-1}{3}$ and an isomorphism for $i \leq \tfrac{n-2r-4}{3}$, if $F$ is split of degree $r$ at 0;

\item an epimorphism for $i \leq \tfrac{n-1}{3}-r$ and an isomorphism for $i \leq \tfrac{n-4}{3}-r$, if $F$ is of degree $r$ at 0;
\end{enumerate}
In both cases, if $F$ is of degree $r$ at $N>0$, then (ii) holds for all $n\ge N+1$ and (iii) for all $n\ge 2N+1$. 
\end{thm}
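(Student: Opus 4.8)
The plan is to realise the symmetric groups as automorphism groups in a pre-braided homogeneous category, verify the connectivity axiom LH3 there, and then read off the six stability statements from Theorems~\ref{stabthm}, \ref{twistrange} and Corollaries~\ref{cor:B}, \ref{cor:CommutatorSubGp} via the long exact sequences of the pairs $(\Si_{n+1},\Si_n)$ and $(A_{n+1},A_n)$.

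First I would fix the category. The groupoid $\Si=\coprod_n\Si_n$ of finite sets and bijections is symmetric monoidal under disjoint union, has cancellation, and has $\Aut(A)\to\Aut(A\sqcup B)$ injective, so Theorem~\ref{universal} produces a symmetric monoidal homogeneous category $U\Si$, which is a skeleton of $FI$. Taking $A=\emp$ and $X=\{*\}$ gives $G_n=\Aut(\emp\op X^{\op n})=\Si_n$ and $F_n=F(n)$ for a coefficient system $F$ of degree $r$ at $N$ in the sense of Definition~\ref{fdegcoef} (Example and Proposition~\ref{FIex} reformulating the $FI$-module hypothesis of Corollary~\ref{cor:E} in these terms). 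Next I would verify LH3. Since $U\Si$ is symmetric monoidal it is locally standard at $(\emp,\{*\})$ (immediate) and, by Proposition~\ref{prop:SymMonBuilding}, each $W_n(\emp,\{*\})_\bullet$ satisfies condition~(A). The complex $S_n(\emp,\{*\})$ has vertex set $\Inj([1],[n])=\{1,\dots,n\}$ with every subset spanning a simplex, so it is the full simplex $\De^{n-1}$, hence contractible and in particular $(n-2)$-connected for all $n\ge0$. Theorem~\ref{caseA} then gives that $W_n(\emp,\{*\})_\bullet$ is $(n-2)$-connected, hence $(\tfrac{n-2}{k})$-connected, for every $n\ge1$ and every $k\ge1$; in particular LH3 holds with slope $k=2$ and with slope $k=3$.

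The symmetric-group statements then follow mechanically. Slope $k=2$ in Theorem~\ref{stabthm} gives (i). For $F$ of degree $r$ at $N$, Theorem~\ref{twistrange}(i) gives $Rel^F_i(\emp,n)=H_i(\Si_{n+1},\Si_n;F_{n+1},F_n)=0$ once $n\ge\max(N+1,2(i+r))$; plugging this into
$$\cdots\rar H_i(\Si_n;F_n)\rar H_i(\Si_{n+1};F_{n+1})\rar Rel^F_i(\emp,n)\rar H_{i-1}(\Si_n;F_n)\rar\cdots$$
yields (iii), surjectivity coming from the vanishing of $Rel^F_i$ and injectivity from that of $Rel^F_{i+1}$; when $F$ is split, Theorem~\ref{twistrange}(ii) improves the vanishing to $n\ge\max(N+1,2i+r)$, giving the sharper ranges of (ii). These hold for all $n\ge1$ when $N=0$ and for all $n\ge N+1$ otherwise. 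For the alternating groups I would use that $A_n=\Si_n'$ and $H_1(\Si_n;\bZ)\cong\bZ/2\cong G_\infty^{ab}$ for $n\ge2$ (equivalently, by (i) in degree $1$), so that Shapiro's lemma applied to $\overline F(-)=F(-)\otimes_\bZ\bZ[G_\infty^{ab}]$ identifies $H_*(\Si_n;\overline F_n^\circ)$ with $H_*(A_n;F_n)$; then Corollary~\ref{cor:CommutatorSubGp} with $k=3$ gives (i) for $A_n$, Corollary~\ref{cor:B} (equivalently Theorem~\ref{twistrange}(iii), with vanishing for $n\ge\max(2N+1,3(i+r)+1)$) gives (iii) for $A_n$, and Theorem~\ref{twistrange}(iv) (vanishing for $n\ge\max(2N+1,3i+2r+1)$) gives (ii) for $A_n$, the twisted $A_n$ statements being valid for $n\ge2N+1$.

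I do not expect a real obstacle. The only computation is $S_n(\emp,\{*\})\cong\De^{n-1}$, which is immediate, and everything else is converting vanishing ranges for relative homology into epimorphism/isomorphism ranges through the long exact sequence. The single point that needs attention is the alternating-group case, which goes through the abelian-coefficient machinery of Sections~\ref{cstsec} and~\ref{sec:twist} and therefore requires slope $k\ge3$ together with the stabilisation of $H_1$ --- both available here precisely because $W_n(\emp,\{*\})_\bullet$ is maximally connected and $\Si_\infty^{ab}=\bZ/2$.
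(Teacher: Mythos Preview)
Your proposal is correct and follows essentially the same route as the paper: realise $\Si_n$ as automorphism groups in $U\Si\simeq FI$, identify $S_n(\emp,\{*\})$ with the full simplex $\De^{n-1}$ to obtain $(n-2)$-connectivity of $W_n$ via Theorem~\ref{caseA}, and then read off the six ranges from Theorems~\ref{stabthm}, \ref{abstabthm}, \ref{twistrange} (and their corollaries for the commutator subgroups $A_n=\Si_n'$). The one small sloppiness is the claim that local standardness is ``immediate'' from being symmetric monoidal --- it is not automatic, but for $FI$ the axioms LS1 and LS2 are trivially verified, so there is no actual gap.
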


For the symmetric group with constant coefficients, this result is originally due to
Nakaoka \cite{Nak60}. The particular proof we give here in that case is the one given in \cite[Thm.~2]{Ker05}. (Nakaoka though shows more, namely that the map is always injective.)

For the symmetric group with a split coefficient system of finite degree we recover Betley's Theorem 4.3 in \cite{Bet02} (up to a minor difference in the range). 
Betley's coefficients are functors $T:\Ga\to \A$ of degree $r$, for $\Ga$ the category of finite pointed sets, where degree $r$ here means that $T([0])=0$ and $(r+1)$st cross-effect is trivial, i.e.\ it is polynomial in the classical sense. As explained in Remark~\ref{polrem}, such functors have trivial iterated co\-kernels.  There is a functor $FI\to \Ga$ taking  a set $A$ to $A\sqcup *$ with $*$ its basepoint, and extending injections between finite sets to pointed injections between pointed sets, so a functor from $\Ga$
can be pulled-back to a functor from $FI$, and this preserves the property of the cokernels. Furthermore, as all injective morphisms in $\Gamma$ are split injective, the coefficient systems so obtained are always split. 

As we have explained in Example~\ref{FIex}, an $FI$-module $F : FI \to R\operatorname{-Mod}$ which is generated in degrees $\leq k$ and related in degrees $\leq d$ is a coefficient system of degree $k$ at $d+\min(k,d)$. This proves Corollary \ref{cor:E}. Many such $FI$-modules do not extend to functors from $\Ga$ because they are not split. 

For the alternating groups with constant coefficients, we recover a theorem of Hausmann \cite{Hausmann}, who proved stability of the homology of the alternating groups by direct calculation. This has been recently reproved by Palmer \cite{Palmer}. The remaining cases are new.

\medskip

By the calculations of Hausmann \cite[Proposition B]{Hausmann}, we see that the slope of $\tfrac{1}{3}$ is sharp for the homology of alternating groups with constant coefficients. Furthermore, the stable homology of the symmetric groups can be calculated by group-completion, where the theorem of Barratt--Priddy--Quillen--Segal identifies it with the homology of $Q_0(S^0)$, the basepoint component of the free infinite loop space on a point. Thus the discussion in Section \ref{sec:StabHomology} shows that the homology of the infinite alternating group is that of the universal cover of ${Q_0(S^0)}$. Thus we get the following computation:  

\begin{thm}
Let $\widetilde{Q_0S^0}$ denote the universal cover of $Q_0S^0$ and $A_n$ the $n$th alternating group. Then $H_i(A_n)\cong H_i(\widetilde{Q_0S^0})$ for all $i\le \tfrac{n-3}{3}$.  
\end{thm}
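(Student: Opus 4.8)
The plan is to combine three ingredients already available: homological stability for the alternating groups with constant coefficients (which we have just proved), the group-completion computation of the stable homology, and the Barratt--Priddy--Quillen--Segal theorem. First I would recall that the previous theorem gives $H_i(A_n) \cong H_i(A_{n+1})$ for $i \leq \tfrac{n-3}{3}$, so that the canonical map $H_i(A_n) \to H_i(A_\infty)$ is an isomorphism in this range, where $A_\infty = \Sigma_\infty' = \colim_n A_n$ is the infinite alternating group. Thus it suffices to identify $H_*(A_\infty)$ with $H_*(\widetilde{Q_0 S^0})$.

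Next I would apply the discussion of Section~\ref{sec:StabHomology} to the symmetric monoidal groupoid $\Sigma$ with $X = \{*\}$. The topological monoid $\vert \mathcal{X} \vert = \coprod_{n \geq 0} B\Sigma_n$ is homotopy commutative, and its homotopical group completion $\Omega B_\oplus \vert \mathcal{X} \vert$ is, by the Barratt--Priddy--Quillen--Segal theorem, the free infinite loop space $Q(S^0) = \Omega^\infty \Sigma^\infty S^0$; restricting to basepoint components gives $\Omega_0 B_\oplus \vert \mathcal{X} \vert \simeq Q_0 S^0$. As explained in that section, the comparison map $B\Sigma_\infty \to \Omega_0 B_\oplus \vert \mathcal{X} \vert \simeq Q_0 S^0$ is an acyclic map, and hence the commutator subgroup $\Sigma_\infty' = A_\infty$ has the homology of the universal cover of $Q_0 S^0$, i.e.\ $H_*(A_\infty) \cong H_*(\widetilde{Q_0 S^0})$. (Concretely, an acyclic map induces an isomorphism on homology with all local coefficient systems, and applied to the abelian coefficient system $\bZ[\Sigma_\infty^{ab}] = \bZ[\bZ/2]$ pulled back from $\pi_1(Q_0 S^0) = \bZ/2$, together with Shapiro's lemma, this identifies $H_*(A_\infty) = H_*(\Sigma_\infty; \bZ[\bZ/2])$ with the homology of the corresponding double cover of $Q_0 S^0$, which is its universal cover since $\pi_1(Q_0 S^0) \cong \pi_1^s(S^0) \cong \bZ/2$.)

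Combining the two displays, for $i \leq \tfrac{n-3}{3}$ we get $H_i(A_n) \cong H_i(A_\infty) \cong H_i(\widetilde{Q_0 S^0})$, which is the claim. I do not expect any real obstacle here: both the stability input and the group-completion input have been set up in the excerpt, and the only thing to be careful about is bookkeeping — namely that $Q_0 S^0$ has fundamental group exactly $\bZ/2$ (so that the cover corresponding to the commutator subgroup really is the universal cover) and that the acyclicity statement of Section~\ref{sec:StabHomology}, rather than merely a homology equivalence, is what licenses passing to covers. If one wanted to avoid invoking acyclicity on the nose, an alternative is to note directly that $\pi_1(Q_0 S^0)$ is abelian, so the universal cover corresponds to the commutator subgroup of $\pi_1$, and the homology equivalence $B\Sigma_\infty \to Q_0 S^0$ together with the fact that it induces an isomorphism on $\pi_1^{ab}$ already forces the induced map on universal covers to be a homology equivalence.
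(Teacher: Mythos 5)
Your proof is correct and takes essentially the same route as the paper: use the constant-coefficient stability theorem for $A_n$ to reduce to $A_\infty = \Sigma_\infty'$, then invoke the group-completion discussion of Section~\ref{sec:StabHomology} together with Barratt--Priddy--Quillen--Segal to identify $H_*(\Sigma_\infty')$ with $H_*(\widetilde{Q_0 S^0})$. The only thing you add beyond the paper's terse account is the explicit bookkeeping about $\pi_1(Q_0 S^0) = \bZ/2$ and the role of acyclicity, which is accurate and matches what Section~\ref{sec:StabHomology} sets up.
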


Given any group $G$, we can consider the groupoid $\Sigma\wr G:=\coprod_n \Si_n\wr G$ of wreath products of the symmetric groups with $G$ (or equivalently, the groupoid of finite sets and invertible {\em $G$-maps} in the language of \cite{SamSno14}). Applying our machine just as above yields simplicial complexes $S_n$ denoted $(\De^{n-1})^G$ in \cite{HatWah10},  
the simplicial complex $\De^{n-1}$ with the constant labelling system $L=G$ in the language of \cite[Ex.~3.3]{HatWah10} with vertices $\{0,\dots,n-1\}\times G$ and where vertices $(i_0,g_0),\dots,(i_p,g_p)$ form a simplex if and only if their first components form a simplex in $\De^{n-1}$. 
This complex is $(n-2)$-connected by Proposition 3.5 in that paper because $\De^{n-1}$ is wCM of dimension $n-1$ and $(\De^{n-1})^G$ is a complete join complex over it. 
Hence the same stability result holds for these groups, with the same stability range, which establishes Theorem \ref{thm:D} in the case $G_n = G \wr \Sigma_n$. The corresponding homogeneous category and its representations is studied in \cite{SamSno14}.

\subsection{Automorphisms of free groups and free products of groups}

Let $f\G$ be the groupoid of finitely generated groups and their isomorphisms, and consider the monoidal structure on $f\G$ induced by taking free products. Then $(f\G,*,e)$ is a symmetric monoidal groupoid without zero divisors. This satisfies the cancellation axiom C: this is part of Grushko's theorem giving the uniqueness of decomposition as a free product for finitely generated groups. Also, for any groups $H$ and $G$, the map $\Aut(H)\to \Aut(H*G)$ is injective. Hence by Theorem~\ref{universal} the category $(\U f \G,*,e)$ is homogeneous. Concretely, $\U f\G$ has objects the finitely generated groups and morphisms from $A$ to $B$ given by equivalence classes of pairs $(H,f)$ for $H$ a group and $f:H*A\to B$ an isomorphism.  Alternatively, such a morphism can be described as  a pair $(g,H)$ of an injective homomorphism $g:A\inc B$ and a subgroup $H\le B$ such that $B=H*f(A)$. Composition in these terms is defined by $(g,K)\circ (f,H)=(g\circ f,g(H)*K)$. The category $\U f\G$ is a symmetric monoidal category by Proposition~\ref{braidandsym} and we have that $\Aut_{\U f\G}(G)=\Aut(G)$ for any object $G$.

\medskip

We are left to consider the connectivity of the associated semi-simplicial sets. We will start by considering the full  subcategory $\U f\G_{free}$ generated by the finitely generated free groups, and then discuss what is known in the more general case\footnote{The category $\U f\G_{free}$ appears in the work of Djament--Vespa under the name $\G$ \cite[Def
3.1]{DjaVes15}, where they also verified that it satisfies H1 and H2---see the proof of Proposition 3.4 in \cite{DjaVes15}.}.

\subsubsection{Stability for $\Aut(F_n)$}
  
The category  $\U f\G_{free}$ is generated as a monoidal category by the free group on one generator $\Z$, which means that stabilisation by $\Z$ is the only interesting stabilisation in that category. 

The simplicial complex $S_n=S_n(e,\Z)$ is essentially the complex of split factorisations $SF_n$ of Hatcher--Vogtmann \cite[Sec.~6]{HatVog-cerf}: A $p$-simplex in $S_n$ can be written as  an equivalence class of choice of lifts of the simplex, i.e. a morphism $(f,H):\Z^{* p+1}\to \Z^{* n}$ in $Uf\G$ up to the equivalence relation defined by pre-composition by permutations of the $p+1$ factors in the domain. On the other hand, a $p$-simplex of $SF_n$ is an unordered factorisation $\Z^{* n}=\Z_0*\dots*\Z_{p}*H$ with each $\Z_i\cong \Z$. There is a forgetful map $S_n\to SF_n$, and the only data forgotten is an actual choice of isomorphism $\Z\to \Z_i$ for each factor. Proposition 6.4 in \cite{HatVog-cerf} says that $SF_n$ is $(\frac{n-3}{2})$-connected. 

\begin{prop} 
Let $(\C,\op,0)=(\U f\G_{free},*,e)$. Then $S_n(e,\Z)$ is $(\tfrac{n-3}{2})$-connected. 
\end{prop}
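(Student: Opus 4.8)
The plan is to reduce the connectivity statement for $S_n(e,\Z)$ to the known connectivity of Hatcher--Vogtmann's complex of split factorisations $SF_n$, proved to be $(\tfrac{n-3}{2})$-connected in \cite[Prop 6.4]{HatVog-cerf}. As observed just before the statement, there is a forgetful map of simplicial complexes $\rho \colon S_n(e,\Z) \to SF_n$: a $p$-simplex of $S_n(e,\Z)$ is a collection of $p+1$ vertices, each a morphism $\Z \to \Z^{*n}$ in $\U f\G_{free}$, which (after choosing a compatible lift, using H1 and the pre-braided structure) amounts to a free factorisation $\Z^{*n} = \Z_0 * \cdots * \Z_p * H$ together with a choice of generator of each $\Z_i$; the map $\rho$ forgets the choices of generators. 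First I would check that $\rho$ is well-defined and surjective on simplices of every dimension, and identify the fibres: over a $p$-simplex $\sigma$ of $SF_n$ the preimage $\rho^{-1}(\sigma)$ is the set of $(p+1)$-tuples of generators, one for each factor $\Z_i$ appearing in $\sigma$, so $\rho$ is a fibre bundle over each simplex with fibre a discrete set $(\Z/2)^{p+1}$ (the two generators of each $\Z \cong \Z_i$), after accounting for the simplicial-complex identification.

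Next, the main point: I would show that $\rho$ induces a surjection on $\pi_k$ for $k \le \tfrac{n-3}{2}$ and hence that $S_n(e,\Z)$ inherits the connectivity of $SF_n$. The cleanest route is to observe that $\rho$ is, up to subdivision, the projection associated to a simplicial complex over $SF_n$ whose restriction to each simplex is a (nonempty) covering-space-like gadget; more concretely, one can argue by a direct obstruction/lifting argument. Given a map $f \colon S^k \to |S_n(e,\Z)|$ with $k \le \tfrac{n-3}{2}$, the composite $\rho \circ f \colon S^k \to |SF_n|$ is null, bounding a map $g \colon D^{k+1} \to |SF_n|$; one then lifts $g$ back through $\rho$ simplex by simplex, which is possible because over each simplex the fibres are nonempty and the only constraint is consistency along faces of the triangulation — but a lift of $g$ restricted to $\partial D^{k+1}$ is already prescribed by $f$, and a choice of generator for each factor extends compatibly over simplices that share a face (a factor $\Z_i$ of a face is a factor of every coface, so the generator choice propagates). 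This is essentially the mechanism of Proposition~\ref{RWprop}: indeed, one could alternatively invoke local weak Cohen--Macaulayness of $SF_n$ and realise $S_n(e,\Z)$ as (a version of) the ordered complex $SF_n^{ord}$, or at least sandwich it between $SF_n$ and $SF_n^{ord}$, and apply the argument of that proposition.

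Concretely I expect the argument to run: (1) establish $\rho$ and its fibre structure; (2) note that $SF_n$ is locally weakly Cohen--Macaulay of dimension $\tfrac{n-1}{2}$ in the sense of Definition~\ref{CMdef} — this follows because links of simplices in $SF_n$ are again split-factorisation complexes of smaller rank, by the same reasoning as in Hatcher--Vogtmann, so $SF_n$ is weakly Cohen--Macaulay of the required dimension; (3) run the lifting argument of Proposition~\ref{RWprop} (or cite it after identifying $S_n(e,\Z)$ with the relevant ordered/decorated complex) to transport the $(\tfrac{n-3}{2})$-connectivity of $SF_n$ to $S_n(e,\Z)$.

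The hard part will be step (3): verifying carefully that the decoration-by-generators data on $S_n(e,\Z)$ behaves exactly like the vertex-ordering data in $Y^{ord}_\bullet$, so that the proof of Proposition~\ref{RWprop} applies verbatim. The subtlety is that here the ``extra data'' is a $(\Z/2)$-choice attached to each \emph{factor} (equivalently to each vertex, since a vertex of $S_n(e,\Z)$ records both a factor and a generator of it), which is compatible with passage to faces in the strong sense needed; once this compatibility is pinned down, the simplex-by-simplex extension of $g$ to a lift $G$ goes through as in Proposition~\ref{RWprop}, using that $g$ may be taken simplexwise injective on interior simplices via \cite[Thm 2.4]{GalRW14}. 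I would also need to double-check the exact connectivity bookkeeping: $SF_n$ being $(\tfrac{n-3}{2})$-connected corresponds to weak Cohen--Macaulay dimension $\tfrac{n-1}{2}$, and Proposition~\ref{RWprop} then yields $(\tfrac{n-1}{2} - 1) = \tfrac{n-3}{2}$-connectivity for the decorated complex, matching the claim.
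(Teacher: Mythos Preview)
Your approach and the paper's are essentially the same: both reduce to the weak Cohen--Macaulay property of $SF_n$ (connectivity from \cite[Prop~6.4]{HatVog-cerf}, links via $\link(\sigma)\cong SF_{n-p-1}$) and then transfer connectivity up through the forgetful map $\rho:S_n(e,\Z)\to SF_n$. The packaging differs: the paper observes that $S_n(e,\Z)$ is a \emph{complete join} over $SF_n$ in the sense of \cite[Def~3.2]{HatWah10} (the labelling is $\{+,-\}$, a choice of generator, independent of the ambient simplex) and then cites \cite[Prop~3.5]{HatWah10} as a black box; you instead propose to rerun the Proposition~\ref{RWprop} lifting argument by hand, which amounts to reproving that special case of \cite[Prop~3.5]{HatWah10}. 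Both routes work and yield the same bound.

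One point in your articulation should be tightened. When you speak of ``a choice of generator for each factor'' propagating over cofaces, this reads as choosing a global section $s:SF_n\to S_n$ of $\rho$, and then $s\circ g:D^{k+1}\to S_n$ is a lift of $g$ but need \emph{not} restrict to $f$ on $\partial D^{k+1}$ (there is no reason $f$ factors through $s$). The correct mechanism, exactly as in Proposition~\ref{RWprop}, is to choose lifts \emph{per vertex of the triangulation of $D^{k+1}$}: use $f$ on boundary vertices and arbitrary lifts on interior vertices. Simpliciality of the resulting $G$ then follows from two facts: the link condition $g(\link(v))\subset\link(g(v))$ for interior $v$ (from \cite[Thm~2.4]{GalRW14}, using that $SF_n$ is weakly Cohen--Macaulay) ensures that no interior vertex collides under $g$ with any of its neighbours; and the complete-join structure of $S_n$ over $SF_n$ ensures that whenever two boundary vertices $v,v'$ lie in a common simplex and satisfy $\rho f(v)=\rho f(v')$, one already has $f(v)=f(v')$. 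Also, note that $S_n(e,\Z)$ is not $SF_n^{ord}$ and does not sandwich between $SF_n$ and $SF_n^{ord}$ in any useful way---the analogy with Proposition~\ref{RWprop} is structural (extra vertex-level data compatible with faces), not literal.
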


\begin{proof}
$SF_n$ is weakly Cohen--Macaulay of dimension $(\frac{n-1}{2})$ (in the sense of Definition~\ref{CMdef}). Indeed, it is $(\frac{n-3}{2})$-connected \cite[Prop.~6.3]{HatVog-cerf}, and the link of $p$-simplex $\Z_0*\dots*\Z_{p}*H $ is isomorphic to the complex of split factorisations of $H$. But $H\cong \Z^{* n-p-1}$ by the uniqueness of free factorisation for finitely generated groups (Grushko decomposition theorem). Hence the link is isomorphic to $SF_{n-p-1}$ and hence is $(\frac{n-p-4}{2})$-connected, and so in particular $(\frac{n-1}{2}-p-2)$-connected. Now $S_n$ is a complete join over $SF_n$ in the terminology of \cite[Def.~3.2]{HatWah10}, with $(S_n)_x(\s)=\{+,-\}$ for each vertex $x$ and simplex $\s$ of $SF_n$. By Proposition 3.5 of that paper, we get that $S_n$ is also weakly Cohen--Macaulay, of the same dimension as $SF_n$, and in particular has the same connectivity.    
\end{proof}

Alternatively, we could have shown that $S_n(e,\Z)$ is isomorphic to the complex $V^{\pm}_{n,1}$ of \cite{HatVog04} and use Proposition 2 of that paper. 

\medskip 

It follows from Theorem \ref{caseA} that $W_n(\Z,\Z)_\bullet \cong W_{n+1}(e,\Z)_\bullet$ is $(\frac{n-2}{2})$-connected. From now on we shall not try to state every possible consequence of our Theorems \ref{stabthm}, \ref{abstabthm}, and~\ref{twistrange}, but rather focus on the results which are important or new. Applying Theorem~\ref{stabthm}, we recover the main theorem of \cite{HatVog-cerf}. Applying Theorem \ref{twistrange}, we get a stability theorem for automorphisms of free groups with twisted coefficients which does not seem to have been proved before: 

\begin{thm}\label{twistaut}
Let $F:\U f\G_{free}\to \bZ\operatorname{-Mod}$ be a coefficient system of degree $r$ at $0$. Then the map 
$$H_i(\Aut(F_n);F(F_n))\rar H_i(\Aut(F_{n+1});F(F_{n+1}))$$
is an epimorphism for $i\le \frac{n-1}{2}-r$ and an isomorphism for  $i\le \frac{n-3}{2}-r$. If $F$ is split then it is an epimorphism for $i\le \frac{n-r-1}{2}$ and an isomorphism for  $i\le \frac{n-r-3}{2}$.
\end{thm}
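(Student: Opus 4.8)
The plan is to deduce Theorem~\ref{twistaut} directly from the general machinery already assembled in the paper, applied to the homogeneous category $(\C,\op,0)=(\U f\G_{free},*,e)$ with the pair of objects $(A,X)=(e,\Z)$. The only genuinely new input needed is the high-connectivity hypothesis LH3, and that has just been established: the preceding proposition shows $S_n(e,\Z)$ is $(\tfrac{n-3}{2})$-connected for all $n\ge 0$, and since $\U f\G_{free}$ is symmetric monoidal, Proposition~\ref{prop:SymMonBuilding} gives that the $W_n(e,\Z)_\bullet$ satisfy condition~(A), so Theorem~\ref{caseA} (with $a=3$, $k=2$) upgrades this to: $W_n(e,\Z)_\bullet$ is $(\tfrac{n-3}{2})$-connected for all $n\ge 0$. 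Then, as in the text immediately following the proposition, shifting the basepoint from $(e,\Z)$ to $(e\op\Z,\Z)=(\Z,\Z)$ converts the connectivity bound $(\tfrac{n-3}{2})$ into $(\tfrac{n-2}{2})$: concretely $W_n(\Z,\Z)_\bullet\cong W_{n+1}(e,\Z)_\bullet$ is $(\tfrac{n-1}{2})$-connected, hence LH3 holds at $(\Z,\Z)$ with slope $k=2$.

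First I would record the setup: $G_n:=\Aut_{\C}(\Z\op\Z^{\op n})=\Aut(F_{n+1})$, and a coefficient system $F:\U f\G_{free}\to\bZ\operatorname{-Mod}$ of degree $r$ (at $N=0$, since ``degree $r$'' with no qualifier means at $0$) restricts to a coefficient system for $\C$ at $(\Z,\Z)$ of degree $r$ at $0$, with $F_n=F(\Z\op\Z^{\op n})=F(F_{n+1})$. Next I would invoke Theorem~\ref{twistrange}(i): with $k=2$ and $N=0$, $Rel_i^F(\Z,n)$ vanishes for $n\ge\max(1,2(i+r))$, i.e.\ for $n\ge 2(i+r)$ once $i+r\ge 1$. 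Feeding this into the long exact sequence
$$\cdots\rar H_i(G_n;F_n)\rar H_i(G_{n+1};F_{n+1})\rar Rel^F_i(\Z,n)\rar H_{i-1}(G_n;F_n)\rar\cdots$$
gives that $H_i(G_n;F_n)\to H_i(G_{n+1};F_{n+1})$ is surjective once both $Rel^F_i(\Z,n)=0$ and we use the map out; an epimorphism when $Rel^F_i(\Z,n)=0$, i.e.\ $2(i+r)\le n$, which rewrites as $i\le\tfrac{n}{2}-r$; and an isomorphism when in addition $Rel^F_{i+1}(\Z,n)=0$, i.e.\ $2(i+1+r)\le n$, i.e.\ $i\le\tfrac{n-2}{2}-r$. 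Finally I would translate back along $G_n=\Aut(F_{n+1})$, $G_{n+1}=\Aut(F_{n+2})$: reindexing $n\mapsto n-1$, the map $H_i(\Aut(F_n);F(F_n))\to H_i(\Aut(F_{n+1});F(F_{n+1}))$ is an epimorphism for $i\le\tfrac{n-1}{2}-r$ and an isomorphism for $i\le\tfrac{n-3}{2}-r$, exactly as claimed.

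For the split case I would repeat the argument using Theorem~\ref{twistrange}(ii) in place of (i): if $F$ is split of degree $r$ at $0$, then $Rel_i^F(\Z,n)$ vanishes for $n\ge\max(1,2i+r)$. The same long-exact-sequence bookkeeping then yields an epimorphism for $2i+r\le n$, i.e.\ $i\le\tfrac{n-r}{2}$, and an isomorphism for $2(i+1)+r\le n$, i.e.\ $i\le\tfrac{n-r-2}{2}$; reindexing $n\mapsto n-1$ gives epimorphism for $i\le\tfrac{n-r-1}{2}$ and isomorphism for $i\le\tfrac{n-r-3}{2}$, matching the statement. One should also remark that the $\Aut_{\U f\G_{free}}(F_n)=\Aut(F_n)$ identification is legitimate because the groupoid $f\G_{free}$ satisfies the no-zero-divisors hypothesis of Proposition~\ref{underlying} (a free product is trivial only if both factors are), so the automorphism groups in $\U f\G_{free}$ agree with the classical ones.

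I do not expect any serious obstacle here: all the hard work—constructing the homogeneous category, proving the connectivity of $SF_n$ hence of $S_n(e,\Z)$, and proving the general twisted stability theorem—is already done. The only point requiring a little care is the bookkeeping of the basepoint shift $(e,\Z)\rightsquigarrow(\Z,\Z)$ and the reindexing between the abstract groups $G_n$ and $\Aut(F_n)$, together with confirming that ``degree $r$'' is understood at $N=0$ so that the $\max(N+1,\cdot)$ terms in Theorem~\ref{twistrange} do not interfere with the stated ranges. If one instead wanted the cleanest possible statement one could apply Theorem~\ref{twistrange} directly at $(e,\Z)$ with the connectivity bound $(\tfrac{n-3}{2})$, but then $W_n$ is only $(\tfrac{n-2}{k})$-connected for $k>2$ in general, so working at $(\Z,\Z)$ with $k=2$ is what produces the sharp slope $\tfrac12$ quoted.
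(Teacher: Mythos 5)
Your proof is correct and follows the same route as the paper: establish LH3 at $(\Z,\Z)$ with slope $k=2$ via the connectivity of $S_n(e,\Z)$ (coming from Hatcher--Vogtmann's result on $SF_n$), upgrade to the semi-simplicial sets via Proposition~\ref{prop:SymMonBuilding} and Theorem~\ref{caseA}, then apply Theorem~\ref{twistrange} and reindex $G_n=\Aut(F_{n+1})$. One small arithmetic slip: $W_n(\Z,\Z)_\bullet \cong W_{n+1}(e,\Z)_\bullet$ is $(\tfrac{n-2}{2})$-connected, not $(\tfrac{n-1}{2})$-connected---which happens to contradict your own preceding sentence, and is exactly what LH3 with $k=2$ requires, so the conclusion is unaffected.
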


This establishes Theorem \ref{thm:G} in the case $G_n = \Aut(F_n)$. This was conjectured, in the split case with a similar range, by the first author in \cite[Conjecture A]{Ran10}. The stable homology with coefficients in a reduced finite degree coefficient system (i.e.\ one which vanishes on the trivial group) which factors through the category $gr$ of finitely generated free groups and all maps (via the forgetful
functor $\U f\G_{free}\to gr$) has been shown in \cite{DjaVes15} to be trivial. An example of such a functor is the abelianisation functor
$$Ab:\U f\G_{free}\rar \bZ\operatorname{-Mod} \ \ \ \textrm{with}\ \ \ Ab(F_n)= H_1(F_n;\bZ).$$
In general, though, the stable homology is non-trivial, as is for example
the stable homology with constant coefficients \cite{Gal11}. Another example where we know that the stable homology is non-trivial is the dual of the abelianisation functor: consider the functor $D:\U f \G_{free}\to gr^{op}$  which is the identity on objects and takes a morphism $(f,H):F_m\to F_n$ to the projection $F_n=H*f(F_m)\surj f(F_m)\sta{f^{-1}}\rar  F_m$. Now define 
$$Ab^{*}:=H^1(-;\Z)\circ D:\U f\G_{free}\rar \bZ\operatorname{-Mod}.$$ 
This is a degree 1 coefficient system, and Satoh \cite{Sat06} has computed $H_1(\Aut(F_n);Ab^*) \cong\Z$ for $n\ge 4$. 

Let $S\Aut(F_n)$ be the index two subgroup of $\Aut(F_n)$ of those automorphisms which induce an automorphism of determinant 1 on $H_1(F_n;\bZ)$. This is the commutator subgroup of $\Aut(F_n)$ as long as $n \geq 3$ (as $H_1(\Aut(F_n);\bZ)\cong \Z/2$ given by the determinant for $n\ge 3$), and Theorem \ref{abstabthm} gives the following.

\begin{thm}
The map
$$H_i(S\Aut(F_n);\bZ)\rar H_i(S\Aut(F_{n+1});\bZ)$$
is an epimorphism for $i\le \frac{n-2}{3}$ and an isomorphism for $i\le \frac{n-4}{3}$.
\end{thm}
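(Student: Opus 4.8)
The plan is to deduce the statement from Corollary~\ref{cor:CommutatorSubGp} applied to the homogeneous category $\C=\U f\G_{free}$ with $A=X=\Z$ (the free group of rank one), together with the classical identification $\Aut(F_m)'=S\Aut(F_m)$ for $m\ge 3$. No new connectivity input is needed: the relevant high-connectivity of the semi-simplicial sets has already been established above from Hatcher--Vogtmann's theorem on the complex of split factorisations.

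Concretely, I would first check that all hypotheses of Corollary~\ref{cor:CommutatorSubGp} hold for $(\C,A,X)=(\U f\G_{free},\Z,\Z)$ with slope $k=3$. The category $\U f\G_{free}$ is pre-braided (indeed symmetric monoidal) by Proposition~\ref{braidandsym}, and homogeneous by Theorem~\ref{universal}, since $(f\G_{free},*,e)$ satisfies cancellation and the maps $\Aut_\G(F_a)\to\Aut_\G(F_a*F_b)$ are injective. With $A=X=\Z$ we have $G_n=\Aut_{\C}(\Z*\Z^{*n})=\Aut(F_{n+1})$. We have already shown (via Theorem~\ref{caseA} and the connectivity of split factorisation complexes) that $W_n(\Z,\Z)_\bullet\cong W_{n+1}(e,\Z)_\bullet$ is $(\tfrac{n-2}{2})$-connected for all $n$; since $\lfloor\tfrac{n-2}{3}\rfloor\le\lfloor\tfrac{n-2}{2}\rfloor$ for all $n\ge 1$, the space $|W_n(\Z,\Z)_\bullet|$ is in particular $(\tfrac{n-2}{3})$-connected, so $\C$ satisfies LH3 at $(\Z,\Z)$ with slope $k=3$. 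Hence Corollary~\ref{cor:CommutatorSubGp} with $k=3$ applies and gives that $H_i(\Aut(F_{n+1})';\Z)\to H_i(\Aut(F_{n+2})';\Z)$ is an epimorphism for $i\le\tfrac{n-1}{3}$ and an isomorphism for $i\le\tfrac{n-3}{3}$. Reindexing by $m=n+1$, the map $H_i(\Aut(F_m)';\Z)\to H_i(\Aut(F_{m+1})';\Z)$ is an epimorphism for $i\le\tfrac{m-2}{3}$ and an isomorphism for $i\le\tfrac{m-4}{3}$.

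It then remains to identify $\Aut(F_m)'$ with $S\Aut(F_m)$. For $m\ge 3$ one has $H_1(\Aut(F_m);\Z)\cong\Z/2$, detected by the determinant of the induced map on $H_1(F_m;\Z)$, so $\Aut(F_m)'=S\Aut(F_m)$; substituting into the displayed range yields the claimed statement for all $m\ge 3$. For $m\le 2$ the asserted range only involves $H_0$ (or is vacuous), so the claim is trivially true regardless of the choice of description of $S\Aut(F_m)$.

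The only genuinely external ingredient is the classical computation $H_1(\Aut(F_m);\Z)\cong\Z/2$ for $m\ge 3$, which is what allows the commutator subgroup to be identified with $S\Aut(F_m)$; everything else is a direct application of results proved above. (Alternatively, one could extract the stabilisation of $H_1(\Aut(F_m);\Z)$ from Theorem~\ref{stabthm} and pin down its value by a single low-rank computation, but quoting the classical answer is cleanest.) The main points requiring care are therefore the floor-function comparison of the connectivity slopes used to pass from slope $2$ to slope $3$, and the low-rank edge cases $m\le 2$ where the ``commutator subgroup'' and ``determinant-one'' descriptions need not coincide but the stated range is empty anyway.
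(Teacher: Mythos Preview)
Your proposal is correct and takes essentially the same approach as the paper: the paper simply states that $S\Aut(F_n)=\Aut(F_n)'$ for $n\ge 3$ (using $H_1(\Aut(F_n);\bZ)\cong\bZ/2$) and then invokes Theorem~\ref{abstabthm}, which with the established connectivity of $W_n(\Z,\Z)_\bullet$ at slope $k=3$ is exactly what your application of Corollary~\ref{cor:CommutatorSubGp} unpacks. Your explicit verification of the slope-$3$ hypothesis via the floor comparison and your handling of the low-rank cases $m\le 2$ just make explicit what the paper leaves to the reader.
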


Furthermore, the discussion in Section \ref{sec:StabHomology} and Galatius' theorem \cite{Gal11} identifying the stable homology of $\Aut(F_n)$ with that of $Q_0(S^0)$ shows that the stable homology of $S\Aut(F_n)$ is identified with that of the universal cover of $Q_0(S^0)$. We also have that the inclusion $A_n\to S\Aut(F_n)$ of the alternating groups into $S\Aut(F_n)$ induces an isomorphism in homology in the range of the theorem. Finally, Theorem \ref{twistrange} also establishes homological stability for the groups $S\Aut(F_n)$ with coefficients in a coefficient system of finite degree.
 
\medskip

Another interesting subgroup of $\Aut(F_n)$ is the symmetric automorphisms group $\Si\Aut(F_n)$, which will be discussed in Section~\ref{3mfdex} (see Remark~\ref{symaut}).

\subsubsection{More general free product stabilisation}\label{genfreeprod}

It does not seem to be known yet whether for each pair $(H,G)$ in the homogeneous category $\U f\G$, the associated semi-simplicial sets $W_n(H,G)_\bullet$ are highly-connected, though stability with constant coefficients is known in almost all cases (see \cite[Th{\'e}or{\`e}me 1.2]{ColDjaGri}), and expected to hold in general. It is likewise to be expected that these semi-simplicial sets are always highly-connected, but so far we only know it in the special cases treatable via the methods of \cite{HatWah10}, namely for groups that are fundamental groups of certain 3-manifolds. We briefly explain here how this can be seen. 

\medskip
 
Suppose that  $G=\pi_1P$ is the fundamental group of  an orientable prime 3-manifold $P\neq D^3$, whose diffeomorphism group surjects onto the automorphism group of its fundamental group. Examples of such groups are $\Z,\Z/2,\Z/3,\Z/4,\Z/6$, $\pi_1(S_g)$ for $S_g$ a closed surface of genus $g$, or $\pi_1(M)$ for $M$ hyperbolic, finite volume, with no orientation reversing isometry (see the introduction of \cite{HatWah10}). Pick likewise groups $H_i=\pi_1P_i$, for $1\le i\le k$, of the same sort. Now let $M=P_1\#\cdots\#P_k$ be the connected sum of such prime 3-manifolds, with 
 $H=\pi_1M=H_1*\dots*H_k$ and write $P=P_0$. Assume that we have the following additional properties 
\begin{enumerate}
\item $\del P_i\neq \emp$ for some $i\in \{1,\dots,k\}$ with $\del P_i=S^2$ for at most one $i$,
\item each $P_i$ which is not closed has incompressible boundary, i.e.~there is no disc inside $P_i$ with boundary a non-trivial curve in $\del P_i$, 
\item any two $P_i,P_j$ satisfying that $\pi_1 P_i\cong \pi_1 P_j$ also satisfy that $P_i\cong P_j$ by an orientation
preserving diffeomorphism.
\end{enumerate}
Then \cite[Prop.~2.1,2.2]{HatWah10} imply that the map 
$$\pi_0\Dif(M\# P\#\cdots\#P \ \textrm{rel}\ \del M)\to \Aut(\pi_1(M\# P\#\cdots\# P))=\Aut(H*G*\dots*G)$$
is surjective, with kernel generated by Dehn twists along 2-spheres. Note that one can always just take  $M=D^3$, giving $H=e$, with $P$ some prime manifold as above.

\begin{lem}
Let $(\C,\op,0)=(\U f\G,*,e)$.  For $H$ and $G$ satisfying the hypothesis above, the complex $S_n(H,G)$ is 
$(\frac{n-3}{2})$-connected. 
\end{lem}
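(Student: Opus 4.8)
The plan is to identify $S_n(H,G)$, up to a \emph{complete join} in the sense of \cite[Def 3.2]{HatWah10}, with a complex of embedded $2$-spheres in a $3$-manifold whose connectivity is already established in \cite{HatWah10}, following the same pattern used above to treat $S_n(e,\Z)$ via the Hatcher--Vogtmann complex $SF_n$ of \cite{HatVog-cerf}. First I would set up the geometric model: with $P_0:=P$, let $M_n$ be the $3$-manifold $M\#P\#\cdots\#P$ ($n$ copies of $P$), which has $\pi_1(M_n)\cong H*G^{*n}$. Under the standing hypotheses (1)--(3) on the prime summands, \cite[Prop 2.1, 2.2]{HatWah10} show that $\pi_0\Dif(M_n\ \textrm{rel}\ \del_0 M)$ surjects onto $\Aut(\pi_1 M_n)$ with kernel generated by twists along embedded $2$-spheres. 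Let $\mathcal{S}_n$ be the simplicial complex whose $p$-simplices are isotopy classes (rel boundary) of systems of $p+1$ disjoint embedded $2$-spheres in $M_n$, no two isotopic and none bounding a ball, each separating off a punctured copy of $P$. Sending such a sphere system to the free-product decomposition of $\pi_1(M_n)$ it induces defines a map from $\mathcal{S}_n$ to the complex $\overline{S}_n(H,G)$ whose $p$-simplices are unordered decompositions $H*G^{*n}=G_0*\cdots*G_p*K$ with each $G_i\cong G$; surjectivity on simplices is the content of \cite[Prop 2.1, 2.2]{HatWah10}, while injectivity follows from the uniqueness of prime decomposition of $3$-manifolds together with Laudenbach's theorem that homotopic embedded $2$-spheres are isotopic (see \cite{HatWah10}). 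Thus $\mathcal{S}_n\cong\overline{S}_n(H,G)$ as simplicial complexes.

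Next I would show that $\mathcal{S}_n$ is weakly Cohen--Macaulay of dimension $\tfrac{n-1}{2}$ in the sense of Definition~\ref{CMdef}. Its $(\tfrac{n-3}{2})$-connectivity is the connectivity theorem of \cite{HatWah10} serving as the geometric input to their $3$-manifold stability theorem. For the links: cutting $M_n$ along the spheres of a $p$-simplex yields, by prime decomposition and hypotheses (1)--(3), a manifold of the form $M'\#P^{\#(n-p-1)}$ with $M'$ again satisfying (1)--(3), so the link is isomorphic to $\mathcal{S}_{n-p-1}$, which is $(\tfrac{n-p-4}{2})$-connected, and $\tfrac{n-p-4}{2}\geq\tfrac{n-1}{2}-p-2$. (Via the isomorphism with $\overline{S}_n(H,G)$, this is exactly the Grushko/uniqueness-of-decomposition computation of links carried out above for $SF_n$.) Then, just as for $S_n(e,\Z)$, the forgetful map $S_n(H,G)\to\mathcal{S}_n$ --- which sends a morphism $G\to H*G^{*n}$ to the sphere it determines, discarding both the chosen isomorphism onto its image factor and the chosen complement --- exhibits $S_n(H,G)$ as a complete join over $\mathcal{S}_n$, the fibre over each vertex being the set of such ``decorations'', which is simplex-independent by uniqueness of free-product decomposition. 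By Proposition 3.5 of \cite{HatWah10}, $S_n(H,G)$ is therefore also weakly Cohen--Macaulay of dimension $\tfrac{n-1}{2}$, hence in particular $(\tfrac{n-3}{2})$-connected, as required.

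The main obstacle is the simplicial identification $\mathcal{S}_n\cong\overline{S}_n(H,G)$ of the first step. One must verify that the hypotheses (1)--(3) are precisely what is needed for \cite[Prop 2.1, 2.2]{HatWah10} to apply, so that \emph{every} algebraic free splitting of $H*G^{*n}$ is realised by an embedded sphere system; that the relevant uniqueness results in $3$-manifold topology (prime decomposition, incompressibility of boundary, Laudenbach's isotopy theorem) upgrade this to a bijection on simplices; and that these properties are inherited by the pieces obtained after cutting along a sphere system, so that the link computation goes through. Once $\mathcal{S}_n$ is correctly matched with the sphere complex of \cite{HatWah10}, the remainder is the formal complete-join and weak-Cohen--Macaulay bookkeeping already carried out above for $\Aut(F_n)$.
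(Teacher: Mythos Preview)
Your approach has a genuine gap at the step where you claim an isomorphism $\mathcal{S}_n\cong\overline{S}_n(H,G)$. A separating sphere $S\subset M_n$ does \emph{not} determine a free-product decomposition $H*G^{*n}=G_0*K$ with $G_0$ an actual subgroup: to identify $\pi_1$ of the piece cut off by $S$ as a subgroup of $\pi_1(M_n,x_0)$ one must choose a path from the basepoint $x_0\in\del_0M$ across $S$, and different homotopy classes of such paths produce conjugate but generically distinct free factors. Thus the assignment ``sphere $\mapsto$ decomposition'' is only well-defined up to conjugacy, so the map $\mathcal{S}_n\to\overline{S}_n(H,G)$ you describe is not well-defined, and the complete-join step built on top of it cannot proceed. (Laudenbach's theorem and prime decomposition address injectivity of a map that does not exist in the first place.)

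The paper's proof fixes exactly this by using the complex $X^A(M\#P^{\#n},P,\del_0M)$ of \cite[Sec~4]{HatWah10}, whose vertices are isotopy classes of pairs $(f,a)$ with $f:P^0\hookrightarrow M_n$ an embedding and $a$ an arc from $x_0$ to $f(\del_0P^0)$. The arc is precisely the missing datum: it pins down the free factor as a genuine subgroup rather than a conjugacy class, and the embedding $f$ (not just its image) already records the isomorphism $G\cong G_0$. Under the hypotheses (1)--(3) one then gets a \emph{direct} isomorphism $S_n(H,G)\cong X^A$ (for $G\neq\Z$; the case $G=\Z$ uses the analogous complex of \cite[Prop~4.5]{HatWah10}), so no complete-join argument is needed, and the connectivity follows immediately from \cite[Prop~4.2, 4.5]{HatWah10}. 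If you want to salvage your outline, you should replace $\mathcal{S}_n$ by an arc-decorated complex; but once you do that, the complete-join over $\overline{S}_n$ becomes superfluous, since the arc-decorated complex already matches $S_n(H,G)$ on the nose.
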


\begin{proof}
For $G\neq \Z$, so that $P\neq S^1\x S^2$, the statement follows from \cite[Prop.~4.2]{HatWah10} as, under the assumption, the complex $S_n(G,H)$
identifies with the complex $X^A=X^A(M\#P\#\cdots\# P,P,\del_0M)$ of \cite[Sec.~4]{HatWah10}. 
Indeed, a vertex in $X^A$ is an isotopy class of  pairs $(f,a)$ of an embedding 
$$f:P^0:=P\minus D^3\ \inc \ M\#P\#\cdots\# P$$
with $f(\del P)\subset \del (M\#P\#\cdots\# P)\minus \del M$, together with an embedded arc 
$$a:I\ \inc \ M\#P\#\cdots\# P$$
satisfying that $a(0)=* \in \del_0M$ is a given basepoint and $a(I)\cap f(P^0)=a(1)=f(0)\in f(\del_0P^0)$ is the basepoint of the boundary sphere in $P^0$ coming from
the removed ball. A $p$-simplex is then a collection of $p+1$ such pairs such that the embeddings can be chosen disjoint. 

Now a pair $(f,a)$ induces a map $$G=\pi_1P=\pi_1P^0\ \rar \pi_1(M\#P\#\cdots\# P)=H*G*\cdots*G$$
where we take the basepoint of $P^0$ on its boundary sphere, and that of $M\#P\#\cdots\# P$ to be the marked point $*\in \del_0M$, and use the arc $a$
to identify the basepoints. 
Also, if we let $K=\pi_1(M\#P\#\cdots\# P\minus f(P^0))$, we have that $ \pi_1(M\#P\#\cdots\# P)=K*\pi_1(f(P^0))$. 
In particular, a vertex of $X^A$ determines a vertex of $S_n(H,G)$. This association defines an isomorphism of simplicial complexes 
$$X^A(M\#P\#\cdots\# P,P,\del_0M)\ \rar \ S_n(H,G).$$
Indeed, the map is surjective on the set of $p$--simplicies for every $p$ by the transitivity of the action of the automorphism group (the same on both side) and the fact that the ``standard'' $p$-simplex $\iota_H* \iota_{G^{*(n-p-1}}*G^{*(p+1)}$ is the image of the corresponding standard $p$--simplex in 
$X^A(M\#P\#\cdots\# P,P,\del_0M)$. Injectivity follows likewise from the fact that the stabilisers of the action are isomorphic, and identified by the map.

For $G=\Z$, we can take $P=S^1\x S^2$ and the proposition follows similarly from  \cite[Prop.~4.5]{HatWah10}. 
\end{proof}

Applying Theorem \ref{twistrange} to $(H*G,G)$ for $H$ and $G$ as above, we thus get the following generalisation of Theorem~\ref{twistaut}:

\begin{thm}\label{freetwist}
Let $F:\U f\G_{H,G}\to \bZ\operatorname{-Mod}$ be a coefficient system of degree $r$, and $H$ and $G$ be groups satisfying the hypothesis above. 
Then the map $$H_i(\Aut(H*G^{*n});F(H*G^{*n}))\rar H_i(\Aut(H*G^{* n+1});F(H*G^{* n+1}))$$
is an epimorphism for $i\le \frac{n-1}{2}-r$ and an isomorphism for $i\le \frac{n-3}{2}-r$. If $F$ is split then the map is an epimorphism for $i\le \frac{n-r-1}{2}$ and an isomorphism for $i\le \frac{n-r-3}{2}$.
\end{thm}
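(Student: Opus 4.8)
The plan is to realise $\{\Aut(H*G^{*n})\}_{n}$ as a stabilising family of automorphism groups in the homogeneous category $\U f\G$ and then invoke Theorem~\ref{twistrange} (equivalently Theorem~\ref{main}). The only genuine work is to verify axiom LH3 at the relevant pair of objects; the connectivity input needed for this has just been supplied by the Lemma above.

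First I would record the categorical set-up: by Proposition~\ref{braidandsym} the category $(\U f\G,*,e)$ is symmetric monoidal (and in particular pre-braided), and by Theorem~\ref{universal} it is homogeneous, since $f\G$ satisfies cancellation (the Grushko decomposition theorem) and $\Aut(K)\to\Aut(K*L)$ is injective for all finitely generated $K,L$. I would also check that $\U f\G$ is locally standard at $(H,G)$: LS2 is a consequence of cancellation, and LS1 holds because the two canonical $G$-summands of $H*G*G$ are distinct subgroups when $G\neq e$ (the case $G=e$ being trivial). Hence, by Proposition~\ref{prop:SymMonBuilding}, the semi-simplicial sets $W_n(H,G)_\bullet$ satisfy condition~(A).

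The next step is to transfer high connectivity. The Lemma above shows $S_n(H,G)$ is $(\tfrac{n-3}{2})$-connected for all $n\ge 0$; since condition~(A) holds, Theorem~\ref{caseA} (with $a=3$, $k=2$) upgrades this to: $W_n(H,G)_\bullet$ is $(\tfrac{n-3}{2})$-connected for all $n\ge 0$. As $W_n(H*G,G)_\bullet$ and $W_{n+1}(H,G)_\bullet$ have the same $p$-simplices $\Hom(G^{*p+1},H*G^{*n+1})$ with matching face maps, they are canonically isomorphic, so $W_n(H*G,G)_\bullet$ is $(\tfrac{n-2}{2})$-connected for all $n\ge 1$; that is, $\U f\G$ satisfies LH3 at $(H*G,G)$ with slope $k=2$.

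Finally I would apply Theorem~\ref{twistrange} to $(\U f\G,H*G,G)$ and $F$, then do the index bookkeeping. With $G_m:=\Aut\big((H*G)*G^{*m}\big)=\Aut(H*G^{*m+1})$ the theorem yields that $H_i(G_m;F_m)\to H_i(G_{m+1};F_{m+1})$ is an epimorphism for $i\le\tfrac{m}{2}-r$ and an isomorphism for $i\le\tfrac{m-2}{2}-r$, with the sharper ranges $i\le\tfrac{m-r}{2}$ and $i\le\tfrac{m-r-2}{2}$ when $F$ is split; substituting $m=n-1$, so that $G_m=\Aut(H*G^{*n})$, gives exactly the claimed ranges. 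The main obstacle in the whole argument — the high connectivity of $S_n(H,G)$ — has already been overcome in the preceding Lemma, which rests on the Hatcher--Wahl connectivity results; within this theorem the only subtlety is the reindexing by one, which is what turns $\tfrac{n}{2}-r$ and $\tfrac{n-2}{2}-r$ into $\tfrac{n-1}{2}-r$ and $\tfrac{n-3}{2}-r$.
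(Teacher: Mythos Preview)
Your proposal is correct and follows exactly the route the paper takes: the paper simply says ``Applying Theorem~\ref{twistrange} to $(H*G,G)$ for $H$ and $G$ as above'', and you have unwound what that one-line application entails (verifying local standardness, invoking Theorem~\ref{caseA} to pass from $S_n$ to $W_n$, and doing the reindex $m=n-1$), just as was done explicitly for the case $H=e$, $G=\bZ$ in the preceding subsection. One small point: your claim that LS2 ``is a consequence of cancellation'' is a bit glib---what you actually need is that $K_1*G_{\mathrm{last}}=K_2*G_{\mathrm{last}}$ as subgroups of $H*G^{*n}$ forces $K_1=K_2$ when $K_1,K_2\le H*G^{*n-1}$, which follows from the normal form in free products rather than from cancellation of isomorphism types---but the conclusion is correct and the paper does not spell this out either.
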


Note that by iterating the theorem, the statement holds more generally for $G$ a free product of such groups. Theorems \ref{abstabthm} and~\ref{twistrange} may also be used to obtain stability for the commutator subgroups $\Aut(H*G^{*n})'$ with constant or twisted coefficients.

\subsection{General linear groups}\label{GLnsec}

Let $R$ be a ring, and $(fR\operatorname{-Mod},\op,0)$ denote the groupoid of finitely generated right $R$-modules and their isomorphisms, with symmetric monoidal structure given by direct sum. As a convenient piece of language, in this section we shall say an $R$-module map is a \emph{splittable injection} if it admits a right inverse, and a \emph{split injection} if it is furthermore equipped with a choice of right inverse. The associated category $\U fR\operatorname{-Mod}$ has objects the finitely generated right $R$-modules, and the morphisms from $M$ to $N$ consist of pairs $(X, f)$ where $X$ is a finitely generated $R$-module and $f : X \oplus M \overset{\sim}\to N$ is an isomorphism. Alternatively, morphisms can be described as pairs of an injective map $g := f\vert_{M} : M \to N$ along with a submodule $H := f(X \oplus 0) \leq N$ which is a complement to $\mathrm{Im}(f)$; i.e.\ the morphisms are the split injections.

The groupoid $fR\operatorname{-Mod}$ has no zero divisors, but it will not generally satisfy the cancellation axiom C (though there are interesting special cases in which it does: for $R$ a PID or more generally a Dedekind domain, the classification of finitely generated modules implies cancellation). Consequently, neither will the associated category $\U fR\operatorname{-Mod}$ generally satisfy H1 (though it does satisfy H2).

It is for this reason that we have introduced the axiom LC of local cancellation, and the associated axioms LH1 and LH2. Recall that a vector $v\in R^n$ is called \emph{unimodular} if there exists a homomorphism $f:R^n\to R$ such that $f(v)=1$. Then a ring $R$ satisfies Bass' condition $(SR_{n})$ \cite[Definition V.3.1]{Bass} if for every $m \geq n$ and every  unimodular vector $(x_1, x_2, \ldots, x_m) \in R^{m}$, there are elements $r_1, \ldots, r_{m-1}\in R$ such that the vector $(x_1 + r_1 x_m, x_2+r_2 x_m, \ldots, x_{m-1} + r_{m-1} x_m) \in R^{m-1}$ is also unimodular. Following Vaserstein \cite{Vaserstein}, 
we define the \emph{stable rank} of $R$ to be
$$sr(R) := \min\{k \in \bN \, \vert \, R \text{ satisfies }(SR_{k+1})\}.$$
For example, $sr(R) \leq 2$ if $R$ is a PID. More generally, the basic estimate \cite[Theorem V.3.5]{Bass} is that if a ring $R$ is finitely generated as a module over a commutative ring whose spectrum of maximal ideals is Noetherian of dimension $d$, then $sr(R) \leq d+1$. In the following we assume that $sr(R)$ is finite.

\begin{rem}
If $sr(R)$ is finite then $R$ has the Invariant Basis Number property, and hence the full subgroupoid of $fR\operatorname{-Mod}$ consisting of the free modules satisfies the cancellation axiom C (and hence its associated homogeneous category satisfies H1 by Theorem \ref{universal}). This result is due to Veldkamp, but we could not find a reference and so we give a proof following Exercise I.1.5 (e) of \cite{Kbook}.

If $\phi : R^s \to R^t$ were an isomorphism with $s < t$, then by taking (stabilised) compositions we can find an isomorphism $\psi : R^{N} \to R^{N+n}$ with $n \geq sr(R)$, given by a $(N+n) \times N$ matrix $B$ which has a left inverse. By \cite[Theorem 3$^\prime$]{Vaserstein} we may left-multiply $B$ by an invertible matrix to put it in the form $\left( \begin{smallmatrix} B'\\ u \end{smallmatrix} \right)$ where $u$ is a $1 \times N$ matrix and $B'$ is a $(N+n-1) \times N$ matrix having a left inverse. As $B'$ has a left inverse, we may perform row operations to reduce $\left( \begin{smallmatrix} B'\\ u \end{smallmatrix} \right)$ to the form $\left( \begin{smallmatrix} B'\\ 0 \end{smallmatrix} \right)$, which still represents $\psi$ in some bases. But then $\psi$ is not surjective, a contradiction.
\end{rem}

\begin{prop}\label{prop:ModCancellation}
If $N \op R \cong R^{n+1}$ and $n \geq sr(R)$, then $N \cong R^n$.
\end{prop}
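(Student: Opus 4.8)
The plan is to deduce this from the transitivity of $GL_{n+1}(R)$ on unimodular vectors that is afforded by the stable range condition. First I would fix an isomorphism $\phi : N \oplus R \overset{\sim}{\to} R^{n+1}$ and set $e := \phi(0,1) \in R^{n+1}$; replacing $N$ by $\phi(N \oplus 0)$ we may assume $N \leq R^{n+1}$ is a submodule with $R^{n+1} = N \oplus Re$. (Note in passing that $N$ is then automatically finitely generated and projective, being a direct summand of $R^{n+1}$, hence a legitimate object of $fR\operatorname{-Mod}$.) The composite $R^{n+1} \overset{\phi^{-1}}{\to} N \oplus R \to R$, where the second map is the projection onto the $R$-summand, sends $e$ to $1$, so $e$ is a unimodular vector in $R^{n+1}$.

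Next I would use the hypothesis $n \geq sr(R)$: by the definition of $sr(R)$ recalled above this says exactly that $R$ satisfies Bass' condition $(SR_{n+1})$. By Bass' transitivity theorem (see \cite[\S V.3]{Bass}, \cite{Vaserstein}), under this condition $GL_{n+1}(R)$ — in fact already its elementary subgroup $E_{n+1}(R)$ — acts transitively on the set of unimodular vectors in $R^{n+1}$. Hence there is $\alpha \in GL_{n+1}(R)$ with $\alpha(e) = \epsilon$, where $\epsilon = (0,\dots,0,1)$ denotes the last standard basis vector of $R^{n+1}$.

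Finally, applying $\alpha$ to the decomposition $R^{n+1} = N \oplus Re$ gives $R^{n+1} = \alpha(N) \oplus R\epsilon$. The coordinate projection $p : R^{n+1} \to R^{n}$ forgetting the last coordinate has kernel exactly $R\epsilon$, so its restriction to the complementary summand $\alpha(N)$ is an isomorphism onto $R^n$. Composing the isomorphisms yields $N \cong \alpha(N) \cong R^n$, as required.

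I do not expect a genuine obstacle in this argument: the mathematical content is entirely Bass' stable range transitivity statement, and the only point requiring care is the bookkeeping of indices — one must line up the paper's convention $sr(R) = \min\{k : R \text{ satisfies } (SR_{k+1})\}$ with the formulation of transitivity being cited, so that $n \geq sr(R)$ does produce the instance of $(SR_{n+1})$ needed for unimodular vectors of length $n+1$. If one prefers to avoid quoting transitivity as a black box, the same conclusion can be reached by hand: $(SR_{n+1})$ together with a few elementary operations brings $e$ to $\epsilon$, which is essentially the proof of the transitivity theorem in this length anyway.
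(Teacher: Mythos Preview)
Your proposal is correct and follows essentially the same approach as the paper: both reduce to showing that the unimodular vector $\phi(0,1)$ lies in the same $GL_{n+1}(R)$-orbit as the last standard basis vector, then read off $N \cong R^n$ from the resulting decomposition. The only difference is presentational: the paper elects to write out the transitivity argument by hand (using $(SR_{n+1})$ and elementary row operations to move $\phi(0,1)$ to $(0,\ldots,0,1)$) rather than cite it, which is exactly the option you flag in your final sentence.
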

\begin{proof}
Let $\varphi : N \oplus R \to R^{n+1}$ be the isomorphism: this gives a unimodular vector $\varphi(0,1) = (x_1, x_2, \ldots, x_{n+1}) \in R^{n+1}$, and $(0,0,\ldots,0,1) \in R^{n+1}$ is also unimodular. The quotient of $R^{n+1}$ by the span of these vectors gives $N$ and $R^n$ respectively: thus if these vectors are in the same $GL_{n+1}(R)$-orbit then $N \cong R^n$, as required.

By assumption $R$ satisfies Bass' condition $SR_{n+1}$: this means that there are elements $r_1, \ldots, r_n \in R$ such that $(x_1 + r_1 x_{n+1}, \ldots, x_n + r_n x_{n+1})$ is unimodular. Thus we may operate on $(x_1, x_2, \ldots, x_{n+1})$ by an element of $GL_{n+1}(R)$ so that the new unimodular vector $(x'_1, x'_2, \ldots, x'_{n+1})$ has $x'_{n+1}=x_{n+1}$ and the sequence $(x'_1, x'_2, \ldots, x'_{n})$ is itself unimodular, that is, there are $y_1, \ldots, y_n$ for which $\sum_{i=1}^n y_i x'_i = 1$. Thus, by a suitable sequence of row operations we may transform $(x'_1, x'_2, \ldots, x'_{n+1})$ to $(x'_1, x'_2, \ldots, x'_n, 1)$, which may then easily be transformed to $(0,0,\ldots,0,1)$, as required.
\end{proof}

This implies that the pair $(R^{sr(R)},R)$ in $fR\operatorname{-Mod}$ satisfies the axiom LC. Hence, by Theorem \ref{universal}, the category $\U fR\operatorname{-Mod}$ satisfies LH1; it is easy to see that it satisfies LH2 (in fact, it satisfies H2), so is locally homogeneous at $(R^{sr(R)},R)$. To verify LH3, we shall deduce the connectivity of $W_n(R^{sr(R)},R)_\bullet$ from the connectivity of a similar complex considered by van der Kallen \cite{vdK80}. (Note that Charney considers in \cite{Cha84} a version of $W_n(R^{sr(R)},R)_\bullet$ for congruence subgroups. An alternative argument for the connectivity of the complex (yielding the same connectivity bound) can be obtained by adapting the proof of Theorem 3.5 in that paper.) 

\begin{lem}
The semi-simplicial set $W_n(R^{sr(R)},R)_\bullet$ is $(\frac{n-2}{2})$-connected. 
\end{lem}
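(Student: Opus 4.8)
The plan is to deduce the connectivity of $W_n(R^{sr(R)},R)_\bullet$ from that of a simplicial complex already studied by van der Kallen. First I would identify the relevant simplicial complex $S_n := S_n(R^{sr(R)},R)$: its vertices are the $0$-simplices of $W_n(R^{sr(R)},R)_\bullet$, namely the split injections $R \to R^{sr(R)} \op R^{\op n}$, equivalently the unimodular vectors in $R^{sr(R)+n}$ together with a choice of complement; and a collection of such vertices spans a simplex precisely when the corresponding maps can be assembled into a single split injection $R^{\op p+1} \to R^{sr(R)} \op R^{\op n}$, i.e.\ when the vectors form part of a unimodular sequence (span a free direct summand). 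Up to the choice of complements this is precisely van der Kallen's complex; more precisely $S_n$ is a ``complete join'' over van der Kallen's complex $U(R^{sr(R)+n})$ in the terminology of \cite{HatWah10} (the extra data at each vertex being the choice of complement, which forms a contractible parameter space, or at least one can check directly that it does not affect the connectivity argument). By \cite[Theorem 2.6]{vdK80} this complex is $(\tfrac{n-2}{2})$-connected, and the Cohen--Macaulay-type properties needed are also established there (the link of a $p$-simplex is again a complex of the same type with the rank dropped by $p+1$).

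Next I would invoke the machinery of Section~\ref{sec:SxCxSSets}. Since $fR\operatorname{-Mod}$ is symmetric monoidal, so is $\U fR\operatorname{-Mod}$, and by Proposition~\ref{prop:SymMonBuilding} (once local standardness is checked, which is straightforward: LS1 and LS2 hold because distinct standard inclusions of $R$ into a free module are distinct and remain so after adding a trivial summand) the semi-simplicial set $W_n(R^{sr(R)},R)_\bullet$ satisfies condition (A). Theorem~\ref{caseA} then says that $W_n(R^{sr(R)},R)_\bullet$ is $(\tfrac{n-a}{k})$-connected for all $n \geq 0$ if and only if $S_n(R^{sr(R)},R)$ is $(\tfrac{n-a}{k})$-connected for all $n \geq 0$. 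Taking $a = 2$ and $k = 2$, it therefore suffices to know that $S_n(R^{sr(R)},R)$ is $(\tfrac{n-2}{2})$-connected for all $n \geq 0$ — and for this I would feed in van der Kallen's connectivity result, together with the observation that $S_n$ is a complete join over his complex so has the same connectivity by \cite[Proposition 3.5]{HatWah10}, using that his complexes are weakly Cohen--Macaulay of the appropriate dimension.

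The main obstacle, and the step requiring genuine care, is the precise translation between $S_n(R^{sr(R)},R)$ and van der Kallen's complex: one must check that the simplices of $S_n$ — which a priori are collections of split injections $R \to R^{sr(R)+n}$ whose images together form a free direct summand of rank $p+1$ — correspond bijectively to the simplices of van der Kallen's complex of unimodular sequences, after remembering that a vertex of $S_n$ carries the extra datum of a chosen complement. The cleanest way to handle this is to observe (using Proposition~\ref{prop:ModCancellation} and $sr(R)$-arguments) that over a ring of stable rank $\leq sr(R)$ the complement of a free summand of $R^{sr(R)+n}$ of corank $\geq sr(R)$ is determined up to non-canonical isomorphism, so the fibres of the forgetful map $S_n \to (\text{van der Kallen's complex})$ are non-empty and the complex is a complete join; one then applies \cite[Prop.\ 3.5]{HatWah10}. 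Alternatively, as the parenthetical remark in the excerpt suggests, one can bypass van der Kallen entirely and run the argument of \cite[Theorem 3.5]{Cha84} directly, which gives the same bound $\tfrac{n-2}{2}$; I would present the van der Kallen route as the primary argument and mention Charney's as an alternative.

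Thus, combining: $W_n(R^{sr(R)},R)_\bullet$ is $(\tfrac{n-2}{2})$-connected for all $n \geq 1$, which is exactly axiom LH3 at $(R^{sr(R)},R)$ with slope $k = 2$, completing the proof of the lemma.
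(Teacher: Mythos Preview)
Your overall strategy---reduce $W_n$ to $S_n$ via Theorem~\ref{caseA}, then compare $S_n$ to van der Kallen's complex---matches the paper's, but the central step is wrong. You assert that $S_n(R^{sr(R)},R)$ is a \emph{complete join} over $U(R^{sr(R)+n})$, with the ``extra data at each vertex being the choice of complement''. This is false. Given a simplex $\langle v_0,\ldots,v_p\rangle$ of $U(R^{sr(R)+n})$ and arbitrary complements $H_i$ for each $v_i(R)$, the tuple $\langle (v_0,H_0),\ldots,(v_p,H_p)\rangle$ is a simplex of $S_n$ only if there is a \emph{common} complement $H$ with $H_i = H \oplus \bigoplus_{j\neq i}\langle v_j\rangle$; in particular each $H_i$ must contain all the other $v_j$. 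So the labels cannot be chosen independently, and \cite[Proposition~3.5]{HatWah10} does not apply.

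The paper handles this by showing instead that $\pi:S_n\to U(R^{sr(R)+n})$ is a \emph{join complex} in the sense of \cite[Definition~3.2]{HatWah10}, via a labelling system where the admissible labels at $v_i$ depend on the whole simplex (namely, complements of $v_i$ containing the other $v_j$). One then invokes \cite[Theorem~3.6]{HatWah10}, which requires checking that $U(R^{sr(R)+n})$ is weakly Cohen--Macaulay of dimension $n$ (it is $(n-1)$-connected by \cite[Theorem~2.6]{vdK80}, not $(\tfrac{n-2}{2})$-connected as you wrote) and that $\pi(\link_{S_n}(\sigma))$ is weakly Cohen--Macaulay of the right dimension for each simplex $\sigma$. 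The conclusion of Theorem~3.6 halves the connectivity of the base, which is precisely how the bound $\tfrac{n-2}{2}$ arises. Your proposal misses this halving entirely.
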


\begin{proof}
Let $s:=sr(R)$. Define the semi-simplicial set $X(R^{s+n})_\bullet$ having $p$-simplices the splittable injections $f: R^{p+1} \to R^{s+n}$, (recall that this means an injective homomorphism which \emph{admits} a splitting, though none is chosen), and with $i$th face map given by precomposing with the inclusion $R^i \op 0 \op R^{p-i} \to R^{p+1}$. There is a semi-simplicial map $W_n(R^s,R)_\bullet \to X(R^{s+n})_\bullet$ given by forgetting the choice of complement in a split injection.  

Let $U(R^{s+n})$ be the simplicial complex whose vertices are the splittable injections $v : R \to R^{s+n}$, and where a tuple $v_0, v_1, \ldots, v_p$ span a $p$-simplex if and only if the sum $v_0 \op \cdots \op v_p : R^{p+1} \to R^{s+n}$ is a splittable injection. (Note that $v$ is a vertex of $U(R^{s+n})$ if and only if $v(1)$ is a unimodular vector in $R^{s+n}$, hence the name $U$.) This simplicial complex bears the same relation to $X(R^{s+n})_\bullet$ as $S_n(R^s,R)$ does to $W_n(R^s,R)_\bullet$, namely $X(R^{s+n})_\bullet = U(R^{s+n})^{ord}_\bullet$. In particular there is a commutative diagram
\begin{equation*}
\xymatrix{
\vert W_n(R^s,R)_\bullet \vert \ar[r] \ar[d]& \vert X(R^{s+n})_\bullet \vert \ar[d]\\
\vert S_n(R^s,R)\vert \ar[r] & \vert U(R^{s+n})\vert
}
\end{equation*}
of maps between geometric realisations. The poset of simplices of  $X(R^{s+n})_\bullet$ is equal to the poset $\mathcal{O}(R^{s+n}) \cap \mathcal{U}$ of \cite[\S 2]{vdK80}, which has been shown in Theorem 2.6 (i) of that paper to be $(n-1)$-connected. The nerve of this poset of simplices is the barycentric subdivision of $X(R^{s+n})_\bullet$, so they have homeomorphic geometric realisations, and so $\vert X(R^{s+n})_\bullet \vert$ is also $(n-1)$-connected. Choosing a total order of the vertices of $U(R^{s+n})$ defines a right inverse of $\vert X(R^{s+n})_\bullet \vert \to \vert U(R^{s+n})\vert$, and hence $U(R^{s+n})$ is also $(n-1)$-connected.

Similarly, for a $p$-simplex $\sigma = \{ v_0, \ldots, v_p \} \in U(R^{s+n})$ the poset of simplices of $(\link_{U(R^{s+n})}(\sigma))^{ord}_\bullet$ agrees with the poset $\mathcal{O}(R^{s+n}) \cap \mathcal{U}_{(v_0, \ldots, v_p)}$  of \cite[\S 2]{vdK80}, which has been shown in Theorem 2.6 (ii) of that paper to be $(n-1-p-1)$-connected. By the right inverse argument above it follows that $ \link_{U(R^{s+n})}(\sigma)$ is $(n-1-p-1)$-connected, and hence $U(R^{s+n})$ is weakly Cohen--Macaulay of dimension $n$.

\medskip

We wish to apply \cite[Theorem 3.6]{HatWah10} to the map of simplicial complexes $\pi: S_n(R^s,R) \to U(R^{s+n})$, and we claim that the map $\pi$ represents $S_n(R^s,R)$ as a \emph{join complex} over $U(R^{s+n})$ in the sense of \cite[Definition 3.2]{HatWah10}. We show this using the language of \emph{labelling systems} \cite[Example 3.3]{HatWah10}: for a vertex $v_0$ of a simplex $\{ v_0, v_1, \ldots, v_p \} \in U(R^{s+n})$, let $L_{v_0}(\{ v_0, v_1, \ldots, v_p \})$ be the set of complementary submodules for $v_0(R)$ which contain $v_1(R), \ldots, v_p(R)$. This defines a labelling system, and the associated simplicial complex $U(R^{s+n})^L$ which we show now is isomorphic to $S_n(R^s,R)$. 

The complex $U(R^{s+n})^L$ has vertices pairs $(v,H)$ with $v$ a vertex of $U(R^{s+n})$ and $H$ a choice of complement for $v$, and this is the same as the vertices of $S_n(R^s,R)$. Note that to such a vertex $(v,H)$ there is a canonically associated $R$-module map $\pi_v : R^{s+n} \to R$ given by projection away from $H$, i.e.~the chosen left inverse to $v$.  By definition, a collection of vertices $\{(v_0, H_0), (v_1, H_1), \ldots, (v_p, H_p)\}$ forms a simplex of $U(R^{s+n})^L$ if $\{ v_0,\dots, v_p\}$ is a simplex of $U(R^{s+n})$, i.e.~$v_0 \op \cdots\op v_p : R^{p+1} \to R^{s+n}$ is a splittable injection, and for each $i$ the complement $H_i$ contains $v_j(R)$ whenever $j\neq i$. Let $H := \cap_{i=0}^p H_i$. We want to show that  $(v_0,\dots , v_p, H)$ is a $p$-simplex of $W_n(R^s,R)_\bullet$, i.e.~that $H$ is a complement to the splittable injection $v_0 \op\cdots\op v_p : R^{p+1} \to R^{s+n}$. This is a consequence of the following two statements (where we identify $v_i$ and $v_i(1)$): 
\begin{enumerate}[(i)]
\item If $x = \sum a_i v_i \in \lgl v_0,\dots, v_p\rgl \cap H$, then $\pi_{v_i}(x)= a_i$ as $v_j \in H_i = \ker(\pi_{v_i})$ for $j \neq i$. But on the other hand $x \in H \subset H_i$ so $\pi_{v_i}(x)=0$. Thus $a_i =0$ for each $i$, and hence $x=0$, so $\lgl v_0,\dots, v_p\rgl \cap H = \{0\}$.

\item For $x \in R^{s+n}$ write $x = \sum_{i=0}^n \pi_{v_i}(x) v_i + y$ for some $y$. Then $\pi_{v_i}(y)=0$ for all $i$, so $y \in H_i$ for all $i$, and hence $y \in H$, so $\lgl v_0,\dots, v_p\rgl + H = R^{s+n}$.
\end{enumerate}
It follows that $\{(v_0, H_0), (v_1, H_1), \ldots, (v_p, H_p)\}$ is also a simplex of $S_n(R^s,R)$. 

Conversely, if $\{(v_0, H_0), (v_1, H_1), \ldots, (v_p, H_p)\}$ is a $p$-simplex of $S_n(R^s,R)$, there must be a $p$-simplex  
$(v_0, \ldots, v_p, H)$ of $W_n(R^s,R)$ with $H_i = H \oplus\bigoplus_{j\neq i} \lgl v_j\rgl$. In this case $\{ (v_0, H_0), (v_1, H_1),\dots, (v_p, H_p)\}$ is a simplex of $U(R^{s+n})^L$.

We have already shown that the target of $\pi$ is weakly Cohen--Macaulay of dimension $n$, so to apply \cite[Theorem 3.6]{HatWah10} it remains to show that $\pi(\link_{S_n(R^s,R)}(\sigma))$ is weakly Cohen--Macaulay of dimension $(n-p-2)$ for each $p$-simplex $\sigma \leq S_n(R^s,R)$.

Let $\sigma = \{ (v_0, H_0), \ldots, (v_p, H_p) \} \in S_n(R^s,R)$ be a $p$-simplex, with $p < n$. Now the simplicial complex $\link_{S_n(R^s,R)}(\sigma)$ is analogous to $S_n(R^s,R)$, but consisting of vectors and complements inside $H := \cap_i H_i$. By the isomorphism $R^{p+1} \op H \cong R^{s+n}$ and the inequality $n+sr(R)-p-1 \geq sr(R)$ coming from our assumption $p < n$, Proposition \ref{prop:ModCancellation} implies that $H \cong R^{s+n-p-1}$ and so $\link_{S_n(R^s,R)}(\sigma) \cong S_{n-p-1}(R^s,R)$. Thus $\pi(\link_{S_n(R^s,R)}(\sigma)) \cong U(R^{s+n-p-1})$ which by the above is weakly Cohen--Macaulay of dimension $(n-p-1)$ (so also weakly Cohen--Macaulay of dimension $(n-p-2)$). Note that if $p \geq n$ then there is nothing to show, so $\pi(\link_{S_n(R^s,R)}(\sigma))$ is weakly Cohen--Macaulay of dimension $(n-p-1)$.

We may now apply \cite[Theorem 3.6]{HatWah10}, showing $S_n(R^s,R)$ is $(\frac{n-2}{2})$-connected. Theorem \ref{caseA} hence implies that $W_n(R^s,R)_\bullet$ is $(\frac{n-2}{2})$-connected.
\end{proof}

One can obtain a slight improvement of the theorem in the case of Euclidian rings using \cite[Cor.~III.4.5]{Maazen} for the connectivity of $X(R^{s+n})_\bullet$. 

\medskip

Applying Theorem~\ref{twistrange} to $(\U fR\operatorname{-Mod},\op,0)$, we recover the following theorem of van der Kallen \cite{vdK80}.

\begin{thm}\label{thm:GL}
Let $F:\U fR\operatorname{-Mod} \to \bZ\operatorname{-Mod}$ be a coefficient system of degree $r$ at 0. Then the map 
$$H_i(GL_n(R);F(R^n))\rar H_i(GL_{n+1}(R);F(R^{n+1}))$$
is an epimorphism for $i\le \frac{n-sr(R)}{2}-r$ and an isomorphism for $i\le \frac{n-sr(R)-2}{2}-r$. If $F$ is split then the map is an epimorphism for $i\le \frac{n-sr(R)-r}{2}$ and an isomorphism for $i\le \frac{n-sr(R)-r-2}{2}$.
\end{thm}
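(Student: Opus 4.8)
The plan is to verify that the pair $(R^{sr(R)},R)$ in the groupoid $fR\operatorname{-Mod}$ satisfies every hypothesis needed to apply Theorem~\ref{twistrange}, and then to read off the stated ranges from the resulting long exact sequence. The groupoid $fR\operatorname{-Mod}$ is symmetric monoidal under $\oplus$, so Proposition~\ref{braidandsym} makes $\U fR\operatorname{-Mod}$ (symmetric, hence) pre-braided; moreover the zero module has no non-trivial automorphisms and $fR\operatorname{-Mod}$ has no zero divisors, so Proposition~\ref{underlying} identifies $fR\operatorname{-Mod}$ with the underlying groupoid of $\U fR\operatorname{-Mod}$, and in particular $\Aut_{\U fR\operatorname{-Mod}}(R^{\oplus m}) = GL_m(R)$ for every $m$. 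Proposition~\ref{prop:ModCancellation} is precisely the assertion that $fR\operatorname{-Mod}$ satisfies local cancellation LC at $(R^{sr(R)},R)$, so Theorem~\ref{universal}(a) yields LH1 there; since $GL_a(R)\to GL_{a+1}(R)$ is visibly injective, Theorem~\ref{universal}(d) yields H2 and hence LH2 at every pair; and the Lemma just proved states that $W_n(R^{sr(R)},R)_\bullet$ is $(\tfrac{n-2}{2})$-connected, i.e.\ LH3 holds at $(R^{sr(R)},R)$ with slope $k=2$.

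Next I would take $A = R^{sr(R)}$ and $X=R$, so that $G_n = \Aut(A\oplus X^{\oplus n}) = GL_{sr(R)+n}(R)$ and $F_n = F(R^{sr(R)+n})$. A coefficient system of degree $r$ at $0$ is a fortiori of degree $r$ at $sr(R)$, so the remark following Definition~\ref{fdegcoef} shows that its restriction to $\mathcal{C}_{A,X} = \mathcal{C}_{0\oplus R^{\oplus sr(R)},R}$ is again of degree $r$ at $0$. Applying Theorem~\ref{twistrange} with $k=2$ and $N=0$, the relative group $Rel^F_i(A,n) = H_i(G_{n+1},G_n;F_{n+1},F_n)$ vanishes for $n\ge 2(i+r)$ in general and for $n\ge 2i+r$ when $F$ is split. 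Feeding this into the long exact sequence
$$\cdots\rar H_i(G_n;F_n)\rar H_i(G_{n+1};F_{n+1})\rar Rel^F_i(A,n)\rar H_{i-1}(G_n;F_n)\rar\cdots$$
shows that $H_i(G_n;F_n)\to H_i(G_{n+1};F_{n+1})$ is an epimorphism once $Rel^F_i(A,n)=0$ and an isomorphism once in addition $Rel^F_{i+1}(A,n)=0$. Writing $m := sr(R)+n$ and then renaming $m$ to $n$, this is exactly the non-split range ($i\le\tfrac{n-sr(R)}{2}-r$, resp.\ $i\le\tfrac{n-sr(R)-2}{2}-r$) and the split range ($i\le\tfrac{n-sr(R)-r}{2}$, resp.\ $i\le\tfrac{n-sr(R)-r-2}{2}$) asserted in the theorem.

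All the genuine work — Proposition~\ref{prop:ModCancellation} and the connectivity lemma for $W_n(R^{sr(R)},R)_\bullet$ — is already behind us, so I expect no real obstacle in the remaining argument, which is essentially bookkeeping. The one point that needs care is the index shift: the abstract stabilisation parameter $n$ counts the copies of $X=R$ adjoined to the basepoint $A = R^{sr(R)}$, whereas the subscript of $GL$ in the statement is the total rank $m = sr(R)+n$, and the appearance of $sr(R)$ in the ranges is simply an artefact of this translation (together with the verification, noted above, that "degree $r$ at $0$" is preserved by restriction to $\mathcal{C}_{R^{sr(R)},R}$). I would also record that for Euclidean rings one obtains a slightly better range by substituting Maazen's connectivity estimate for the lemma used here, but this does not alter the structure of the proof.
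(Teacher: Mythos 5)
Your proposal is correct and follows the same route as the paper: verify local homogeneity at $(R^{sr(R)},R)$ via Proposition~\ref{prop:ModCancellation}, Theorem~\ref{universal}, and the connectivity lemma for $W_n(R^{sr(R)},R)_\bullet$, then invoke Theorem~\ref{twistrange} with $k=2$, $N=0$, and translate $n \mapsto sr(R)+n$. Your bookkeeping of the ranges and the remark that degree $r$ at $0$ restricts to degree $r$ at $0$ on $\mathcal{C}_{R^{sr(R)},R}$ are both accurate and make the index shift explicit, which the paper leaves implicit.
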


The coefficient systems considered here are a slight generalisation of those of \cite{vdK80} in that we do not require the kernel to be zero and we do not suppose that the coefficient system is split (though our stability range is better if it is split). The range obtained by van der Kallen is better by one degree, using special properties of general linear groups (specifically, ``surjective stability for $K_1$") to start with an improved base case for the induction.

Finally, we remark that van der Kallen's choice \cite[\S 3.2]{vdK80} of intermediate group $E(R) \leq XL(R) \leq GL(R)$ corresponds to a choice of subgroup $U \leq H_1(GL(R);\bZ)$, as the elementary subgroup $E(R)$ is $GL(R)'$ by Whitehead's lemma. Thus van der Kallen's stability theorem, with constant or twisted coefficients, for $G_n := GL_n(R) \cap XL(R)$ may be recovered from our Theorems \ref{abstabthm} and~\ref{twistrange} as described in Section \ref{sec:Commutator}. However, van der Kallen obtains stability for these groups with slope $\tfrac{1}{2}$, whereas our theorem can only give slope $\tfrac{1}{3}$. This is again a special feature of general linear groups: the matrix $\left (\begin{smallmatrix} 0&-1\\ 1&0 \end{smallmatrix} \right)$ has stabilisation in $GL(R)'$ for any ring $R$ (i.e.~is trivial in $GL(R)^{ab}$), and sends the first copy of $R \op R$ to the second. Using this for the element $h_1$ rather than the element constructed from the braiding, the proof of Theorem \ref{twistrange} in the twisted abelian coefficients case can be done identically to the twisted coefficients case, and this argument gives slope $\tfrac{1}{2}$. The same can be done in the proof of Theorem \ref{abstabthm}.

\subsection{Unitary groups}\label{sec:Unitary}

Let $R$ be a ring with anti-involution $r \mapsto \overline{r}$, $\epsilon \in R$ be a central element such that $\epsilon \overline{\epsilon}=1$, and $\Lambda \subset R$ be an additive subgroup such that
$$\Lambda_{min} := \{r-\epsilon \overline{r} \, \vert \, r \in R\} \subset \Lambda \subset \Lambda_{max} := \{r \in R \, \vert \, \epsilon \overline{r} = -r\}$$
and $\overline{r} \Lambda r \subset \Lambda$ for all $r \in R$. The following definition agrees with the usual definition of a quadratic module when the underlying $R$-module is projective (cf.\ \cite[5.1.15]{HO}), but is more general.

\begin{Def}
A \emph{quadratic module} $(M, \lambda, \mu)$ over $(R,\epsilon,\Lambda)$ consists of a right $R$-module $M$, a sesquilinear form $\lambda : M \times M \to R$ (i.e.\ $R$-linear in the second variable and $R$-antilinear in the first), and a function $\mu : M \to R/\Lambda$, such that
\begin{enumerate}[(i)]
\item $\lambda$ is $\epsilon$-Hermitian, i.e.\ $\overline{\lambda(x,y)} = \epsilon \lambda(y,x)$,

\item $\mu(x+y) = \mu(x)+\mu(y) + [\lambda(x,y)]$, where $[\lambda(x,y)]$ is the class of $\lambda(x,y)$ modulo $\Lambda$,

\item $\mu(x r) = \overline{r} \mu(x) r$ for $r \in R$,

\item $\lambda(x,x) = \mu(x) + \epsilon\overline{\mu(x)}$, noting that the right-hand side gives a well-defined element of $R$, as $\Lambda \subset \Lambda_{max}$.
\end{enumerate}

A \emph{morphism} $f: (M, \lambda, \mu) \to (M', \lambda', \mu')$ of quadratic modules over $(R,\epsilon,\Lambda)$ is a right $R$-module homomorphism $f : M \to M'$ such that $\lambda'(f(x),f(y)) = \lambda(x,y)$ and $\mu'(f(x))=\mu(x)$. We let $(R,\epsilon,\Lambda)\operatorname{-Quad}$ denote the groupoid of quadratic modules over $(R,\epsilon,\Lambda)$ and their isomorphisms.
\end{Def}

The main example of a quadratic module we will consider, $H$, has underlying module $R^{\op 2}$ with basis $e:=(1,0)$ and $f:=(0,1)$, $\lambda$ given by $(\begin{smallmatrix} 0&1\\ \epsilon&0 \end{smallmatrix})$ in this basis, and $\mu(eA+fB) = \overline{A}B \mod \Lambda$. 
We then let $H^{\oplus n}$ be the orthogonal direct sum of $n$ copies of $H$, and, following \cite[Sec.~6]{MvdK}, let $U_n^\epsilon(R, \Lambda) := \Aut_{(R,\epsilon,\Lambda)\operatorname{-Quad}}(H^{\op n})$.

When $R$ is commutative, we can take the involution to be the identity. In this case, if $\epsilon=1$, we have $\Lambda_{min}=0\subset \Lambda_{max}=\{r\in R\ |\ 2r=0\}$ and $U_n^1(R,\Lambda_{min})=O_{n,n}(R)$. If $\epsilon=-1$, we have $\Lambda_{min}=2R\subset R=\Lambda_{max}$ and $U^{-1}_n(R,\Lambda_{max})=Sp_{2n}(R)$. For a non-trivial involution, $U^{-1}_n(R,\Lambda_{max})$ is the classical unitary group.

Mirzaii and van der Kallen have defined a condition $(US_n)$ for a tuple $(R,\epsilon,\Lambda)$, which is satisfied if $R$ satisfies Bass' condition $(SR_{n+1})$ and in addition for each $r \in R$ the subgroup of $U_{n+1}^\epsilon(R,\Lambda)$ generated by elementary matrices acts transitively on the set of unimodular vectors $x \in H^{\op n+1}$ with $\mu(x) = r + \Lambda$. The \emph{unitary stable rank} \cite[Def.~6.3]{MvdK} is then
$$usr(R,\epsilon,\Lambda) =  \min\{k \in \bN \, \vert \, (R,\epsilon,\Lambda) \text{ satisfies }(US_k)\}.$$
From now on we shall suppose that this is finite, and we shall shorten it to $usr(R)$.

Let us write $f(R,\epsilon,\Lambda)\operatorname{-Quad}$ for the subcategory of those quadratic modules which are finitely-generated as $R$-modules. This is a braided monoidal groupoid, has no zero-divisors, and has an associated pre-braided category $\U f(R,\epsilon,\Lambda)\operatorname{-Quad}$.

\begin{prop}
If $N \op H \cong H^{\op n+1}$ and $n \geq usr(R)$, then $N \cong H^{\op n}$.
\end{prop}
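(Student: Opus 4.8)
The plan is to mimic the proof of Proposition~\ref{prop:ModCancellation} for general linear groups, replacing the role of Bass' stable range condition by the unitary stable rank condition of Mirzaii--van der Kallen. The key observation is that a splitting $N \op H \cong H^{\op n+1}$ produces a \emph{unimodular} isotropic vector in $H^{\op n+1}$, namely the image of the distinguished vector $f = (0,1) \in H$ under the second summand inclusion, together with its ``partner'' $e$; and $N$ is recovered as a quotient of $H^{\op n+1}$ by a hyperbolic plane. So the statement reduces to showing that any two unimodular isotropic vectors of the same length (here both have $\mu = 0 \bmod \Lambda$) lie in the same orbit, and moreover that the hyperbolic planes they span are carried to one another by an automorphism, from which $N \cong H^{\op n}$ follows.

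Concretely, first I would fix the isomorphism $\varphi : N \op H \to H^{\op n+1}$ and set $v := \varphi(0, f) \in H^{\op n+1}$. This is a unimodular vector (since $f$ is unimodular in $H$ and $\varphi$ is an isomorphism) and it is isotropic with $\mu(v) = \mu(f) = 0$. The standard vector $f_{n+1} \in H^{\op n+1}$ (the $f$-basis vector of the last hyperbolic summand) is likewise unimodular isotropic with $\mu = 0$. By the definition of $usr(R)$ and the hypothesis $n \geq usr(R)$, the elementary subgroup $EU^\epsilon_{n+1}(R,\Lambda)$ acts transitively on unimodular vectors in $H^{\op n+1}$ of each fixed length (value of $\mu$); hence there is $g \in U^\epsilon_{n+1}(R,\Lambda)$ with $g(v) = f_{n+1}$. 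Then $gv$ spans, together with a suitable isotropic complement, a hyperbolic plane, and I would invoke the classical fact (Witt-type extension, as in \cite[Ch.~I]{HO} or the setup of \cite{MvdK}) that any unimodular isotropic vector with $\mu = 0$ extends to a hyperbolic pair, and that the automorphism group acts transitively on such hyperbolic pairs; this lets me further arrange $g$ so that $g \circ \varphi$ carries the summand $0 \op H \subset N \op H$ onto the last summand $H \subset H^{\op n+1}$. Taking orthogonal complements of these hyperbolic planes, $g\circ\varphi$ then restricts to an isomorphism $N \cong H^{\op n}$, as desired.

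The main obstacle I anticipate is the passage from ``the vectors $v$ and $f_{n+1}$ are in the same orbit'' to ``the hyperbolic \emph{planes} they determine are in the same orbit, compatibly with the given splitting''. In the $GL_n$ case (Proposition~\ref{prop:ModCancellation}) this is automatic because a unimodular vector already determines a free rank-one summand and its complement is forced; in the quadratic setting one must also match up the isotropic partner vector, i.e.\ produce an element of the stabiliser of $gv$ moving the partner into standard position. This is where the precise formulation of the unitary stable range condition $(US_k)$ — transitivity on unimodular vectors of each length, together with the relative transitivity statements on the relevant complexes in \cite{MvdK} — needs to be used carefully, and I would cite \cite[Sec~6]{MvdK} (and \cite{HO} for the underlying Witt cancellation for the form data) rather than reprove it. Once the hyperbolic plane containing $v$ is correctly identified and straightened, the cancellation $N \cong H^{\op n}$ is immediate, exactly parallelling the end of the proof of Proposition~\ref{prop:ModCancellation}.
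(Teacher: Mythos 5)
Your proposal is correct and is essentially the approach the paper takes: the paper's proof simply says to follow the proof of \cite[Cor 8.3]{MvdKV}, substituting the transitivity of $U_{n+1}^\epsilon(R,\Lambda)$ on unimodular vectors of a fixed length (part of the condition $(US_n)$) for \cite[Thm 8.1]{MvdKV}. The obstacle you correctly flag — passing from transitivity on unimodular isotropic vectors to carrying the hyperbolic plane spanned by $\varphi(0\op H)$ onto the standard one, so that orthogonal complements give $N \cong H^{\op n}$ — is precisely what the paper handles via its cryptic reference to ``Step 6 of the proof of Theorem 8.1'' in \cite{MvdKV}.
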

\begin{proof}[Proof sketch] 
Let $\phi: N \op H \to H^{\op n+1}$ be an isomorphism, and write $e, f$ for the hyperbolic basis for the copy of $H$ in the source and $e_1, f_1, \ldots, e_{n+1}, f_{n+1}$ for the hyperbolic bases for the copies of $H$ in the target. We follow the proof of \cite[Cor.~8.3]{MvdKV}. As $\phi(f)$ and $f_n$ are unimodular and $\mu (\phi(f)) = \mu(f) = 0+\Lambda = \mu(f_{n+1})$, by the transitivity part of $(US_{n})$ we may change $\phi$ by postcomposing with an automorphism and hence assume that $\phi(f)=f_{n+1}$. Then
$$\phi(e) = \sum_{i=1}^{n+1} e_i A_i + f_i B_i$$
with $A_{n+1}=1$. We then follow the reference in the proof of \cite[Corollary 8.3]{MvdKV} to Step 6 in the proof of \cite[Theorem 8.1]{MvdKV}, which gives an explicit sequence of automorphisms of $H^{\op n+1}$ which fix $f_{n+1}$ and take such a $\phi(e)$ to $e_{n+1}$. This gives a new isomorphism $\phi$ which is the identity on the last copy of $H$, so restricts to an isomorphism $N \cong H^{\op n}$ as required.
\end{proof}

This implies that the pair $(H^{\op k},H)$ in $f(R,\epsilon,\Lambda)\operatorname{-Quad}$ satisfies the axiom LC as long as $k \geq usr(R)$, so in particular by Theorem \ref{universal} the category $\U f(R,\epsilon,\Lambda)\operatorname{-Quad}$ satisfies LH1 at $(H^{\op usr(R)+1},H)$; it is easy to see that it satisfies H2, and hence LH2, so is locally homogeneous at $(H^{\op usr(R)+1},H)$. The following lemma establishes Axiom LH3.

\begin{lem}\label{connH}
The semi-simplicial set $W_n(H^{\op usr(R)+1}, H)_\bullet$ is $(\frac{n-2}{2})$-connected.
\end{lem}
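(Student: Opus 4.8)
The strategy follows exactly the pattern used for general linear groups in the previous subsection: relate $W_n(H^{\op s+1},H)_\bullet$ (where $s = usr(R)$) to the connectivity of a semi-simplicial set of unimodular ``$H$-vectors'' already studied in the literature, specifically by Mirzaii--van der Kallen \cite{MvdK}, and then pass back using the machinery of Section~\ref{sec:SxCxSSets}. First I would introduce the semi-simplicial set $X(H^{\op s+n})_\bullet$ whose $p$-simplices are split injective morphisms $f : H^{\op p+1} \to H^{\op s+n}$ of quadratic modules (equivalently, hyperbolic embeddings), with the usual face maps given by precomposition with the coordinate inclusions $H^{\op i} \op 0 \op H^{\op p-i} \to H^{\op p+1}$. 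There is a semi-simplicial map $W_n(H^{\op s},H)_\bullet \to X(H^{\op s+n})_\bullet$ forgetting the chosen complement, and an associated simplicial complex $U(H^{\op s+n})$ of unimodular isotropic vectors (more precisely, of morphisms $H \to H^{\op s+n}$ which are split injective as quadratic module maps), related to $X(H^{\op s+n})_\bullet$ by $X(H^{\op s+n})_\bullet = U(H^{\op s+n})^{ord}_\bullet$.

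Next I would invoke the connectivity result of Mirzaii--van der Kallen: the poset of such unimodular $H$-vectors, which is the poset of simplices of $X(H^{\op s+n})_\bullet$, is shown in \cite{MvdK} (the unitary analogue of van der Kallen's Theorem 2.6, under the hypothesis that $usr(R)$ is finite) to be $(n-1)$-connected, and moreover the links of its $p$-simplices are $(n-p-2)$-connected. Passing to nerves/barycentric subdivisions and using that a choice of total order on the vertices of $U(H^{\op s+n})$ splits the projection $\vert X(H^{\op s+n})_\bullet \vert \to \vert U(H^{\op s+n})\vert$, this gives that $U(H^{\op s+n})$ is $(n-1)$-connected and weakly Cohen--Macaulay of dimension $n$.

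Then I would realise $S_n(H^{\op s},H)$ as a join complex (in the sense of \cite[Def 3.2]{HatWah10}) over $U(H^{\op s+n})$ via the labelling system assigning to a vertex $v_0$ of a simplex $\langle v_0,\dots,v_p\rangle$ the set of complementary sub-quadratic-modules of $v_0(H)$ containing $v_1(H),\dots,v_p(H)$; the verification that $U(H^{\op s+n})^L \cong S_n(H^{\op s},H)$ is formally identical to the general linear case, using that a complement is determined by its projection and that intersections of the $H_i$ give the required complement. The cancellation result just proved (together with the bound $n + usr(R) - p - 1 \geq usr(R)$ valid for $p < n$) identifies $\link_{S_n(H^{\op s},H)}(\sigma) \cong S_{n-p-1}(H^{\op s},H)$ and hence $\pi(\link_{S_n}(\sigma)) \cong U(H^{\op s+n-p-1})$, which is weakly Cohen--Macaulay of the required dimension. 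Applying \cite[Theorem 3.6]{HatWah10} then yields that $S_n(H^{\op s},H)$ is $(\tfrac{n-2}{2})$-connected, and Theorem~\ref{caseA} upgrades this to the same connectivity for $W_n(H^{\op s},H)_\bullet$; a degree shift from $(H^{\op s},H)$ to $(H^{\op s+1},H)$ gives the statement as phrased.

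The main obstacle is making sure the input connectivity statement from \cite{MvdK} is available in exactly the form needed---both the $(n-1)$-connectivity of the poset of unimodular $H$-vectors in $H^{\op s+n}$ \emph{and} the corresponding statement for links of $p$-simplices---since unlike van der Kallen's $\mathcal{O}(R^{n+s}) \cap \mathcal{U}$ these are phrased for unitary groups and I would need to cite the precise analogue of Theorem 2.6(i)--(ii) of \cite{vdK80}. Everything after that is a routine transcription of the general linear argument, replacing ``split injective $R$-module map'' by ``split injective morphism of quadratic modules'' and ``$sr(R)$'' by ``$usr(R)$''.
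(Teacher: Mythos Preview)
Your plan contains a genuine error and an unnecessary detour, both stemming from the same oversight: in the category $\U f(R,\epsilon,\Lambda)\operatorname{-Quad}$ the monoidal sum is \emph{orthogonal} direct sum, so a split injective morphism of quadratic modules $H^{\op p+1}\to H^{\op s+n}$ has a \emph{unique} complement (the orthogonal complement, which exists and is a direct summand because the form on $H^{\op p+1}$ is non-degenerate). Consequently your ``forgetful'' maps $W_n(H^{\op s},H)_\bullet\to X(H^{\op s+n})_\bullet$ and $S_n(H^{\op s},H)\to U(H^{\op s+n})$ are isomorphisms, and the entire join-complex/labelling-system apparatus is vacuous: there is exactly one label (the orthogonal complement) for each vertex of each simplex. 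This is the key difference from the $GL_n$ case, where complements of free summands are genuinely non-unique and the join-complex argument does real work.

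This in turn makes your connectivity claim wrong. The poset of simplices of your $X(H^{\op s+n})_\bullet$ is not the analogue of van der Kallen's $\mathcal O(R^{n+s})\cap\mathcal U$ but rather Mirzaii--van der Kallen's poset $\mathcal{HU}(H^{\op s+n})$ of \emph{hyperbolic} unimodular sequences, and \cite[Theorem~7.4]{MvdK} shows this is only $\big(\tfrac{n-2}{2}\big)$-connected, not $(n-1)$-connected. There is no slope-$1$ input here to feed into \cite[Theorem~3.6]{HatWah10}; the halving has in effect already happened inside the Mirzaii--van der Kallen argument. If you tried to run the join-complex theorem with the correct slope-$\tfrac12$ input you would only get slope-$\tfrac14$ out, which is too weak.

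The paper's proof is therefore much shorter than you anticipate: one simply observes that the poset of simplices of $W_n(H^{\op usr(R)+1},H)_\bullet$ \emph{is} $\mathcal{HU}(H^{\op usr(R)+n+1})$ and cites \cite[Theorem~7.4]{MvdK} directly for its $\big(\tfrac{n-2}{2}\big)$-connectivity. No passage through $S_n$, no join complexes, no Theorem~\ref{caseA}.
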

\begin{proof}
The poset of simplices of the semi-simplicial set $W_n(H^{\op usr(R)+1}, H)_\bullet$ is equal to the poset $\mathcal{HU}(H^{\op usr(R)+n+1})$ of \cite[\S 7]{MvdK} (see also \cite[Sec.~3]{Cha87}), and hence they have homeomorphic geometric realisations. By \cite[Theorem 7.4]{MvdK} the poset $\mathcal{HU}(H^{\op usr(R)+n+1})$ is $(\frac{n-2}{2})$-connected. 
\end{proof}

When $R=\Z$ with the trivial involution, Theorem 3.2 of \cite{GalRW14} shows that $W_n(A,H)_\bullet$ is $(\frac{n-4}{2})$-connected for any quadratic module $A$. Even in the case of the trivial module $A$, this is a slight improvement of the above.

\medskip 

Applying Theorems \ref{stabthm} and ~\ref{twistrange} to $(\U f(R,\epsilon,\Lambda)\operatorname{-Quad}, \op, 0)$ gives the following.

\begin{thm}\label{thm:Unitary}
Let $F:\U f(R,\epsilon,\Lambda)\operatorname{-Quad} \to \bZ\operatorname{-Mod}$ be a coefficient system of degree $r$ at 0. Then the map 
$$H_i(U_n^\epsilon(R,\Lambda);F(H^{\op n}))\rar H_i(U_{n+1}^\epsilon(R,\Lambda);F(H^{\op n+1}))$$
is an epimorphism for $i\le \frac{n-usr(R)-1}{2}-r$ and an isomorphism for $i\le \frac{n-usr(R)-3}{2}-r$. If $F$ is split then the map is an epimorphism for $i\le \frac{n-usr(R)-1-r}{2}$ and an isomorphism for $i\le \frac{n-usr(R)-r-3}{2}$. If $F$ is constant then the map is an epimorphism for $i\le \frac{n-usr(R)-1}{2}$ and an isomorphism for $i\le \frac{n-usr(R)-2}{2}$.
\end{thm}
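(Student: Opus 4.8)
The plan is simply to feed the structural facts just assembled into the general stability machine. We have shown that $\U f(R,\epsilon,\Lambda)\operatorname{-Quad}$ is pre-braided (it arises from a braided monoidal groupoid via Proposition~\ref{braidandsym}), that it is locally homogeneous at $(H^{\op usr(R)+1},H)$, and, by Lemma~\ref{connH}, that it satisfies LH3 there with slope $k=2$. So I would set $A := H^{\op usr(R)+1}$ and $X := H$, so that $G_n := \Aut_{(R,\epsilon,\Lambda)\operatorname{-Quad}}(A\op X^{\op n}) = U^\epsilon_{n+usr(R)+1}(R,\Lambda)$ and $F_n := F(H^{\op n+usr(R)+1})$, and then invoke Theorem~\ref{twistrange} (with $N=0$ and $k=2$) together with its constant-coefficient companion Theorem~\ref{stabthm}.

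Concretely, Theorem~\ref{twistrange}(i) gives that $Rel^F_i(A,n) = H_i(G_{n+1},G_n;F_{n+1},F_n)$ vanishes for $n\ge 2(i+r)$, and part (ii) gives vanishing for $n\ge \max(1,2i+r)$ when $F$ is split. I would convert these into stability ranges exactly as Theorem~\ref{main} is deduced from Theorem~\ref{twistrange}: from the long exact sequence
$$\cdots \rar H_i(G_n;F_n)\rar H_i(G_{n+1};F_{n+1})\rar Rel^F_i(A,n)\rar H_{i-1}(G_n;F_n)\rar \cdots$$
vanishing of $Rel^F_i(A,n)$ forces the $i$th stabilisation map to be onto, and vanishing of both $Rel^F_i(A,n)$ and $Rel^F_{i+1}(A,n)$ forces it to be an isomorphism. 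In the $G_n$-indexing this yields: surjectivity for $i\le\tfrac{n}{2}-r$ and isomorphism for $i\le\tfrac{n}{2}-r-1$ in general; surjectivity for $i\le\tfrac{n-r}{2}$ and isomorphism for $i\le\tfrac{n-r-2}{2}$ when $F$ is split; and, from the constant-coefficient Theorem~\ref{stabthm} (together with Remark~\ref{deg0rem}, which allows a nonzero but trivially-acted-upon value module), surjectivity for $i\le\tfrac{n}{2}$ and isomorphism for $i\le\tfrac{n-1}{2}$ when $F$ is constant. Substituting $n\mapsto n-usr(R)-1$ to pass to the classical unitary index turns these into the three pairs of bounds asserted in the theorem.

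I do not anticipate any genuine obstacle: the real content — local cancellation for the pair $(H^{\op k},H)$ and the high connectivity of $W_n(H^{\op usr(R)+1},H)_\bullet$, the latter imported from \cite[Thm 7.4]{MvdK} — has already been handled above. The only points needing a little care are the bookkeeping of the shift by $usr(R)+1$ between the internal index used in Theorems~\ref{stabthm} and~\ref{twistrange} and the classical unitary-group index, and the observation that $(\tfrac{n-2}{2})$-connectivity is precisely LH3 with slope $k=2$, so that the resulting stability slope is $\tfrac12$ and no improvement on it is available from this argument.
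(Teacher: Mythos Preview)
Your proposal is correct and matches the paper's approach exactly: the paper simply states that the theorem follows by applying Theorems~\ref{stabthm} and~\ref{twistrange} to $(\U f(R,\epsilon,\Lambda)\operatorname{-Quad},\op,0)$, and you have carried out precisely that deduction, with the index shift $n\mapsto n-usr(R)-1$ handled correctly throughout.
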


For constant coefficients this slightly improves the stability theorem of Mirzaii--van der Kallen \cite{MvdK}, and, as they remark, their results on the connectivity of the poset they use suffice to prove stability with twisted coefficients: that is what we have done. Earlier, stability with split coefficient systems of finite degree in the case of $R$ a Dedekind domain, trivial involution, $\epsilon=\pm 1$, and $\Lambda =\Lambda_{max}$ (that is, the groups $O_{n,n}(A)$ for $\epsilon=1$ and $Sp_{2n}(A)$ for $\epsilon=-1$) was proved by Charney \cite{Cha87}. In terms of the \emph{absolute stable rank} of Magurn--van der Kallen--Vaserstein \cite{MvdKV} we have $asr(R) \leq 2$ for $R$ Dedekind \cite[Theorem 3.1]{MvdKV}, and with trivial involution we have $usr(R) \leq asr(R)$ \cite[Remark 6.4]{MvdK}. Thus for split coefficient systems we get: the map in Theorem~\ref{thm:Unitary} is an epimorphism for $i\le \frac{n-r-3}{2}$ and isomorphism for $i\le \frac{n-r-5}{2}$, which improves \cite[Theorem 4.3]{Cha87} slightly. In the case of constant coefficients we get: epimorphism for $i\le \frac{n-3}{2}$ and isomorphism for $i\le \frac{n-4}{2}$, which improves \cite[Corollary 4.5]{Cha87} slightly. Stabilisation by quadratic modules other than $H$ has been considered by Vogtmann \cite{Vog82}, Cathelineau \cite{Cat07}, and Collinet \cite{Col11}.

We can also apply Theorems \ref{abstabthm} and \ref{twistrange} to study the (twisted) homology of the commutator subgroups $U_n^\epsilon(R,\Lambda)'$. There is defined \cite[II \S1]{BassUnitary} a subgroup $EU_n^\epsilon(R,\Lambda) \leq U_n^\epsilon(R,\Lambda)$ of elementary matrices, and by \cite[II Proposition 5.1]{BassUnitary} the group $EU_n^\epsilon(R,\Lambda)$ is perfect for $n \geq 3$, and so $EU_n^\epsilon(R,\Lambda) \leq U_n^\epsilon(R,\Lambda)'$. These groups are stably equal by the unitary Whitehead lemma \cite[II Theorem 5.2]{BassUnitary}, and the composition
$$\frac{U_n^\epsilon(R,\Lambda)}{EU_n^\epsilon(R,\Lambda)} \lra \frac{U_n^\epsilon(R,\Lambda)}{U_n^\epsilon(R,\Lambda)'} \lra \frac{U^\epsilon(R,\Lambda)}{U^\epsilon(R,\Lambda)'} = \frac{U^\epsilon(R,\Lambda)}{EU^\epsilon(R,\Lambda)}$$
has been shown \cite{Sinchuk} to be an isomorphism for $n \geq sr(R)+1$. As the first map is surjective anyway, it follows that $EU_n^\epsilon(R,\Lambda) = U_n^\epsilon(R,\Lambda)'$ for $n \geq sr(R)+1$, so also for $n \geq usr(R)+1$.

\begin{thm}\label{thm:ElementaryUnitary}
Let $F:\U f(R,\epsilon,\Lambda)\operatorname{-Quad} \to \bZ\operatorname{-Mod}$ be a coefficient system of degree $r$ at 0. Then the map 
$$H_i(EU_n^\epsilon(R,\Lambda);F(H^{\op n}))\rar H_i(EU_{n+1}^\epsilon(R,\Lambda);F(H^{\op n+1}))$$
is an epimorphism for $i\le \frac{n-usr(R)-2}{3}-r$ and an isomorphism for $i\le \frac{n-usr(R)-5}{3}-r$. If $F$ is split then the map is an epimorphism for $i\le \frac{n-usr(R)-2-2r}{3}$ and an isomorphism for $i\le \frac{n-usr(R)-5-2r}{3}$. If $F$ is constant then the map is an epimorphism for $i\le \frac{n-usr(R)-2}{3}$ and an isomorphism for $i\le \frac{n-usr(R)-4}{3}$.
\end{thm}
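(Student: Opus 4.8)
The plan is to deduce Theorem~\ref{thm:ElementaryUnitary} from Theorem~\ref{twistrange} exactly as homological stability for the $U_n^\epsilon(R,\Lambda)$ themselves was deduced from Theorem~\ref{thm:Unitary}, now applied to an internalised coefficient system, following the template of Corollary~\ref{cor:B} and Section~\ref{sec:Commutator}. First I would fix the pre-braided category $\C:=\U f(R,\epsilon,\Lambda)\operatorname{-Quad}$, which is pre-braided monoidal by Proposition~\ref{braidandsym}, and which by the discussion preceding Lemma~\ref{connH} (the cancellation statement together with Theorem~\ref{universal}) is locally homogeneous at the pair $(H^{\op usr(R)+1},H)$. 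Lemma~\ref{connH} says $W_n(H^{\op usr(R)+1},H)_\bullet$ is $(\tfrac{n-2}{2})$-connected, hence a fortiori $(\tfrac{n-2}{3})$-connected for every $n\ge 1$; so $\C$ satisfies LH3 at $(H^{\op usr(R)+1},H)$ with slope $k=3$, the slope required by the abelian and internalised parts of Theorems~\ref{abstabthm} and~\ref{twistrange}. A coefficient system $F$ of degree $r$ at $0$ restricts to one of degree $r$ at $0$ on $\C_{H^{\op usr(R)+1},H}$, and remains split there if it was split.

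For the general (non-split) case I would apply Theorem~\ref{twistrange}(iii) to the internalised coefficient system $\overline F(-):=F(-)\otimes_{\bZ}\bZ[G_\infty^{ab}]$, which has degree $r$ at $0$, obtaining that $Rel^{\overline F^\circ}_i(H^{\op usr(R)+1},m)$ vanishes for $m\ge 3(i+r)+1$. Writing $G_m:=\Aut(H^{\op usr(R)+1}\op H^{\op m})=U^\epsilon_{usr(R)+1+m}(R,\Lambda)$, the text preceding the theorem identifies $G_m'$ with $EU^\epsilon_{usr(R)+1+m}(R,\Lambda)$ (as $usr(R)+1+m\ge usr(R)+1$), and Shapiro's lemma gives $H_*(G_m;\overline F_m^\circ)\cong H_*(G_m';F_m)$ as soon as $G_m^{ab}\to G_\infty^{ab}$ is an isomorphism, which by Theorem~\ref{stabthm} applied to first homology holds for $m\ge 4$. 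Feeding the vanishing of the relative groups into the long exact sequence then shows that $H_i(G_m';F_m)\to H_i(G_{m+1}';F_{m+1})$ is an epimorphism for $i\le\tfrac{m-1}{3}-r$ and an isomorphism for $i\le\tfrac{m-4}{3}-r$; the substitution $j:=usr(R)+1+m$ turns these into the asserted ranges $i\le\tfrac{j-usr(R)-2}{3}-r$ and $i\le\tfrac{j-usr(R)-5}{3}-r$.

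For the split refinement I would instead use Theorem~\ref{twistrange}(iv), which gives vanishing of $Rel^{\overline F^\circ}_i(H^{\op usr(R)+1},m)$ for $m\ge 3i+2r+1$; feeding this into the long exact sequence and applying the same Shapiro identification and index shift yields the split ranges in the statement. For constant coefficients ($r=0$) I would apply Corollary~\ref{cor:CommutatorSubGp} directly at $(H^{\op usr(R)+1},H)$ with $k=3$, giving epimorphism for $i\le\tfrac{m-1}{3}$ and isomorphism for $i\le\tfrac{m-3}{3}$, i.e.\ after the shift $i\le\tfrac{j-usr(R)-2}{3}$ and $i\le\tfrac{j-usr(R)-4}{3}$.

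The proof is therefore a pure assembly of earlier results, and I do not expect a genuine obstacle: the connectivity input (Lemma~\ref{connH}, from Mirzaii--van der Kallen) and the identification $EU_n^\epsilon(R,\Lambda)=U_n^\epsilon(R,\Lambda)'$ for $n\ge usr(R)+1$ (from Sinchuk and Bass) are already in place. The only points needing care are (a) noting that the $(\tfrac{n-2}{2})$-connectivity of Lemma~\ref{connH} a fortiori supplies the slope-$3$ hypothesis of the abelian and internalised theorems; (b) confirming that the Shapiro-lemma identification is available throughout the asserted range, i.e.\ that $H_1$ of the relevant unitary group has already stabilised there; and (c) keeping the index shift $j=usr(R)+1+m$ between the two natural indexings of the unitary groups consistent so that the ranges come out exactly as stated. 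I expect (c), the index bookkeeping, to be the only realistic source of error.
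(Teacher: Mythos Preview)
Your proposal is correct and follows exactly the route the paper intends: the theorem has no separate proof in the paper, being an immediate application of Theorems~\ref{abstabthm} and~\ref{twistrange} (via the internalised coefficient system $\overline F=F\otimes_\bZ\bZ[G_\infty^{ab}]$ and Shapiro's lemma, as in the derivation of Corollary~\ref{cor:B}) combined with the identification $EU_n^\epsilon(R,\Lambda)=U_n^\epsilon(R,\Lambda)'$ for $n\ge usr(R)+1$ established just before the statement. Your index bookkeeping and the observation that the slope-$2$ connectivity of Lemma~\ref{connH} a fortiori supplies slope $3$ are exactly what is needed; the only minor point to add is that $\overline F$ inherits degree $r$ at $0$ (and splitness) because $\bZ[G_\infty^{ab}]$ is $\bZ$-flat, so tensoring preserves the short exact sequences defining kernel and cokernel.
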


As in the case of general linear groups, the automorphism $\left (\begin{smallmatrix} 0&-\mathrm{Id}_H\\ \mathrm{Id}_H&0 \end{smallmatrix} \right)$ of $H^{\op 2}$ sends the first copy of $H$ to the second and has stabilisation in $U^\epsilon(R,\Lambda)'$ for any $(R,\epsilon, \Lambda)$. It can be used for $h_1$ in the proofs of Theorems \ref{abstabthm} and~\ref{twistrange} to improve the stability ranges to have slope $\tfrac{1}{2}$. Theorems \ref{thm:Unitary} and \ref{thm:ElementaryUnitary} imply Theorem \ref{thm:H}.

\subsection{Automorphism groups of direct products of groups}\label{directprodsec}

Consider again the groupoid $f\G$ of finitely generated groups and their isomorphisms, but now with the monoidal structure induced by direct product of
groups. Then $(f\G,\x,e)$ is a symmetric monoidal groupoid, and for every $H$ and $G$ in $f\G$,  the map $\Aut(H)\to \Aut(H\x G)$ extending automorphisms by the identity is injective. It has no zero divisors but it is however no longer the case that $f\G$ satisfies the cancellation property. Indeed, there exist for example finitely generated groups $G$ such that $G\cong G\x G$, or, more generally, groups isomorphic to a proper direct factor of themselves (see
\cite{Tyr74}). There are however natural subcategories of groups which do satisfy the cancellation property with respect to direct product, such as for example finitely generated abelian groups, right angled Artin groups, or finite groups. But the connectivity of the associated semi-simplicial sets is not always known, even when cancellation
holds. 

Homological stability  for finitely generated groups with respect to direct product is known in the case of products of centre-free groups: this follows from
Johnson's description of the automorphism groups of such groups \cite{Joh83} and the known stability for wreath products of symmetric groups with any other group
\cite[Prop.~1.6]{HatWah10}. On the other extreme, Theorem \ref{thm:GL} gives the result for abelian groups, as in this case the automorphism group is simply a general linear group (see \cite[Prop.~5.2]{GanWah15}). Stability in the case of right angled Artin groups is proved in \cite{GanWah15}. We conjecture that stability holds in general: 

\begin{con}
Let $H$ and $G$ be finitely generated groups, and let $F:\U f\G \to \bZ\operatorname{-Mod}$ have finite degree. Then the map 
$$H_i(\Aut(H\x G^n);F(H\x G^n)) \ \rar \ H_i(\Aut(H\x G^{n+1});F(H\x G^{n+1}))$$
is an isomorphism for $n$ large. 
\end{con}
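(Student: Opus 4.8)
The plan is to feed the family $\Aut(H\x G^n)$ into the general machinery of Sections~\ref{homcatsec}--\ref{sec:twist}, so that the conjecture is reduced to a single connectivity statement. Consider the symmetric monoidal groupoid $(f\G,\x,e)$ of finitely generated groups and their isomorphisms, with $A=H$ and $X=G$. Since $\Aut(K)\to\Aut(K\x G)$ is always injective (extend an automorphism by the identity on $G$), Theorem~\ref{universal}(b),(d) gives axiom LH2 (in fact H2), and Proposition~\ref{braidandsym} shows that $\U f\G$ is pre-braided---indeed symmetric monoidal. The groupoid $(f\G,\x,e)$ need not satisfy cancellation, but Theorem~\ref{twistrange} only requires \emph{local} homogeneity at $(H,G)$; and if even axiom LC fails at $(H,G)$ one replaces $f\G$ by the groupoid $\wt{f\G}$ of the remark following Theorem~\ref{universal}, which has the same automorphism groups (hence the same homology groups $H_i(\Aut(H\x G^n);F_n)$) at the cost of changing the semi-simplicial sets $W_n(H,G)_\bullet$. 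Thus $\U f\G$ (or $\U\wt{f\G}$) is locally homogeneous at $(H,G)$, the coefficient system $F$ has finite degree by hypothesis, and the only missing input is axiom LH3. Given LH3 at $(H,G)$ with some slope $k\ge 2$, Theorem~\ref{twistrange}/Theorem~\ref{main} immediately yields the conjectured isomorphism for $n$ large (and, with $k\ge3$, Corollary~\ref{cor:B} would give the analogous statement for the commutator subgroups).

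So the substantive step is to establish the connectivity estimate LH3: the realisation $\lvert W_n(H,G)_\bullet\rvert$ should be $\tfrac{n-2}{k}$-connected. Here I would use Section~\ref{sec:SxCxSSets}: because the ambient category is symmetric monoidal it is locally standard and the $W_n(H,G)_\bullet$ satisfy condition (A) (Proposition~\ref{prop:SymMonBuilding}), so by Theorem~\ref{caseA} it is enough to prove that the simplicial complexes $S_n(H,G)$ are $\tfrac{n-a}{k}$-connected for all $n\ge0$ and suitable $a\ge k\ge1$. A vertex of $S_n(H,G)$ is a $\x$-complemented copy of $G$ inside $H\x G^n$, and a set of vertices spans a simplex precisely when the corresponding subgroups generate an internal direct product admitting a complement. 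The natural attack is the Hatcher--Wahl link-induction scheme: one identifies the link of a $p$-simplex of $S_n(H,G)$ with $S_{n-p-1}(H,G)$ (modelled on the Grushko-type argument used in the free-product case, Section~\ref{genfreeprod}), presents $S_n(H,G)$ as a complete join or colourable complex over a more classical complex of ``partial direct-product decompositions'' of $H\x G^n$ analogous to Hatcher--Vogtmann's complex of split factorisations, and bootstraps high connectivity by induction on $n$ using \cite[Prop 3.5, Thm 3.6]{HatWah10}.

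The hard part---and the reason this remains a conjecture---is exactly this connectivity input. For free products it is available because Grushko's theorem supplies uniqueness of complements and Hatcher--Vogtmann/Hatcher--Wahl supply the connectivity of the relevant complexes; for direct products there is no comparable structure theory: cancellation itself can fail (there are finitely generated $G$ with $G\cong G\x G$), so neither the link identification $\link(\sigma)\cong S_{n-p-1}(H,G)$ nor the connectivity bound is available in general. The known instances of the conjecture are precisely those where such a theory exists---$G=\bZ$ or $G=\bZ/k$ via Theorem~\ref{thm:GL}, centre-free $G$ via Johnson's description of $\Aut$ plus wreath-product stability, right-angled Artin groups via \cite{GanWah15}. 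A realistic route to the general statement would therefore be to isolate the minimal structural hypothesis on $(H,G)$ (some ``direct-product cancellation with incompressible complements'' condition) under which $S_n(H,G)$ admits a colourable description over a provably highly connected complex, and only then invoke Theorem~\ref{twistrange}; without such a hypothesis the connectivity of $W_n(H,G)_\bullet$ is genuinely open.
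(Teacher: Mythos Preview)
The statement you were asked to ``prove'' is in fact a \emph{conjecture} in the paper, not a theorem; there is no proof in the paper to compare your proposal against. Your write-up is not really a proof attempt either---it is an honest analysis that correctly identifies the situation: the general framework of Sections~\ref{homcatsec}--\ref{sec:twist} reduces everything to axiom LH3 (high connectivity of $W_n(H,G)_\bullet$, equivalently of $S_n(H,G)$), and this connectivity is genuinely unknown for direct products of general finitely generated groups. Your summary of the known cases (centre-free $G$ via Johnson plus wreath-product stability, $G=\bZ$ or $\bZ/k$ via general linear groups, right-angled Artin groups via \cite{GanWah15}) matches exactly the discussion preceding the conjecture in the paper, and your diagnosis that the obstruction is the absence of a Grushko-type structure theory for direct products is the right one.

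One small correction: you write ``because the ambient category is symmetric monoidal it is locally standard and the $W_n(H,G)_\bullet$ satisfy condition (A)''. Proposition~\ref{prop:SymMonBuilding} takes local standardness as a \emph{hypothesis}, not a consequence of symmetry; symmetry only gives condition (A). Local standardness (axioms LS1--LS2) would need to be checked separately for $(H,G)$ in $\U f\G$, and while it is plausible in reasonable cases, it is not automatic---and in pathological direct-product situations (e.g.\ $G\cong G\times G$) even LS1 can fail. This is another manifestation of the same structural issue you already flagged.
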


\subsection{Braid groups and mapping class groups of surfaces}\label{surfaceex}

Let $\M_2$ denote the groupoid of  {\em decorated surfaces} $(S,I)$, where $S$ is a compact connected surface with at least one boundary component and $I:[-1,1]\inc \del S$ is a parametrised interval in its boundary (as in Figure~\ref{surfacesum}(a)). The morphisms in $\M_2$ are the isotopy classes of diffeomorphisms restricting to the identity on a neighbourhood of $I$. 
Note that the mapping class group of $S$ identifies with the group of isotopy classes of homeomorphisms of $S$, see e.g.\ \cite[Sec.~1.4.2]{FarMar12}. 
We will here work with smooth surfaces, though allowing corners in their boundary, and sometimes represent mapping classes using diffeomorphisms and sometimes using homeomorphisms, depending on which is most convenient for a given construction. 

We will show below that boundary connected sum induces a monoidal structure on $\M_2$ which is braided, and hence, by Proposition~\ref{braidandsym}, the associated category $\U\M_2$ is pre-braided. This category satisfies H2, though not H1. Indeed, a pair $(A,X)$ satisfies local cancellation LC (and so, by  Theorem~\ref{universal} (a), $\U\M_2$ satisfies LH1) if $X$ is orientable, but not if $X$ is non-orientable. In the latter case, LC only holds if we restrict to the subgroupoid $\M_2^{-}$ of non-orientable or genus 0 surfaces. We describe in the first section the monoidal categories $\M_2$ and $\M_2^{-}$, and give an alternative description of the associated categories $\U\M_2$ and $\U\M_2^{-}$ in the following section. We shall then show that these categories satisfy LH3 at $(A,X)$ in all the interesting cases, namely when $X$ is a cylinder or a punctured torus, or when $X$ is a M\"obius band and $A$ is any surface. For surface braid groups, we will also consider a subgroupoid $\cB_2$ of surfaces and ``braided morphisms''.

\subsubsection{The groupoids of surfaces $\M_2$, $\M_2^-$, and $\cB_2$} 
We want to show that the groupoid $\M_2$ defined above, as well as two subgroupoids $\M_2^-$ and $\cB_2$, is braided monoidal and satisfies the assumptions of Theorem~\ref{universal}. We start by recalling a few facts about isotopy classes of diffeomorphisms.

Given that, up to isotopy, fixing an interval in a boundary component is the same as fixing the whole boundary component, the endomorphisms of an object $(S,I)$  in $\M_2$ identifies with the mapping class group of $S$ fixing the boundary component $\del_0S$ containing $I$ pointwise. The other boundary components are freely moved by the mapping classes and can also be thought of as punctures. When $S$ is orientable, the orientation of $I$ specifies an orientation on $S$ and such mapping classes automatically preserve that orientation. Note also that the group $\Aut_{\M_2}(D^2,I)$ is trivial (by the Alexander trick and the fact that the mapping class defined using homeomorphisms instead is the same, see e.g.\ \cite[Lem.~2.1]{FarMar12}).  

A diffeomorphism $(S_1,\del_0 S_1)\arsim (S_2,\del_0S_2)$ induces a diffeomorphism $(\widetilde S_1,\del \widetilde S_1)\arsim (\widetilde S_2,\del\widetilde S_2)$ where $\widetilde S_i$ is obtained from $S_i$ by gluing a disc on all the boundary components of $S_i$ but $\del_0 S_i$. 
We will denote by $\cB_2$ the subgroupoid of $\M_2$ with the same objects and with morphisms those that become trivial when gluing discs in this way. 
By the parameterised isotopy extension theorem (see \cite[II 2.2.2 Corollaire 2]{Cer61}
), there is a homotopy fibre sequence
$$\Dif(S \ \textrm{rel}\ \del_0S)\rar \Dif(\widetilde S \ \textrm{rel}\ \del\widetilde S) \rar \Emb(\{1,2,\ldots,k\},\widetilde S\minus \del \widetilde S)/\Sigma_k$$
for $k+1$ the number of boundary components of $S$, where we can think of the space $\Emb(\{1,2,\ldots,k\}, \widetilde S\minus \del \widetilde S)/\Sigma_k$ as the space $\operatorname{Conf}(\mathrm{int}\widetilde S,k)$ of unordered configurations of $k$ points in the interior of $\widetilde S$.
Now using the associated long exact sequence of homotopy groups and the contractibility of the component of the diffeomorphism groups \cite[Thm.~1]{Gra73}, 
we get that the automorphism group of an object $(S,I)$ in $\cB_2$ is the surface braid group $\beta^{\widetilde{S}}_k=\pi_1 \operatorname{Conf}(\mathrm{int}\widetilde S,k)$. 

Boundary connected sum induces a monoidal product on $\M_2$:  
Given decorated surfaces $(S_1,I_1) $ and $(S_2,I_2)$, we define $(S_1\natural S_2,I_1\natural I_2)$ 
to be the surface obtained by gluing $S_1$ and $S_2$ along the right half-interval $I^+_1\in \del S_1$  and the left half-interval $I_2^-\in \del S_2$, defining $I_1\natural I_2=I_1^-\cup I_2^+$.  (See Figure~\ref{surfacesum} (b).) This product is an associative version of the pairs of pants multiplication: a neighbourhood of $\del_0S_1\cup \del_0S_2$ in $S_1\natural S_2$ is a pair of pants which may be assumed to be fixed by $\Aut(S_1,I_1)\x \Aut(S_2,I_2)$. Just like the pairs of pants multiplication, it is braided. (The braiding can be constructed by doing half a Dehn twist in this pair of pants neighbourhood of $\del_0S_1$ and $\del_0S_2$, as shown in Figure~\ref{surfacesum} (c).) The unit is the disc $(D^2,I)$. For it to be a strict unit, we define the sum with the disc to be the identity, i.e.~we set $(S_1\natural D^2,I_1\natural I):=(S_1,I_1)$ and $(D^2\natural S_2,I\natural I_2):=(S_2,I_2)$. This extends to morphisms in $\M_2$ as $(D_2,I)$ has no non-trivial automorphism. 
\begin{figure}[h]
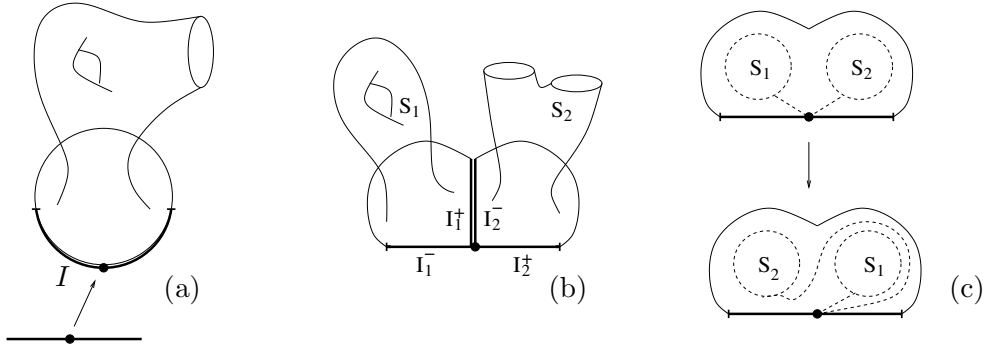

\begin{lpic}{surfacesum3(0.35,0.35)}
\lbl[b]{67,15;(a)}
\lbl[b]{22,22;$I$}
\lbl[b]{212,15;(b)}
\lbl[b]{362,15;(c)}
\end{lpic}
\caption{Braided monoidal structure for decorated surfaces}\label{surfacesum}
\end{figure}
This structure restricts to a braided monoidal structure on the  subgroupoid $\cB_2$ defined above. We will also consider the subgroupoid $\M_2^-$, again braided monoidal, which is the full subgroupoid of $\M_2$ with objects the non-orientable or genus 0 surfaces. Note that,  by the classification of surfaces, there are no zero divisors in $\cB_2,\M_2^-$ and $\M_2$. 

\begin{prop}\label{M2hom} Let $\cB_2\subset \M_2\supset \M_2^-$ be the groupoids described above. We have that: 
\begin{enumerate}[(i)]
\item The category $\U\M_2$ is pre-braided with underlying groupoid $\M_2$. It is locally homogeneous at $(A,X)$ for any orientable surface $X$.   
\item The category $\U\cB_2$ is pre-braided with underlying groupoid $\cB_2$.  It is locally homogeneous at $(A,X)$ for any orientable surface $X$. 
\item The category $\U\M_2^-$ is pre-braided homogeneous with underlying groupoid $\M_2^-$. 
\end{enumerate}
\end{prop}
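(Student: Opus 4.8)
The plan is to deduce all three statements from the machinery of Section~\ref{homcatsec}, reducing each to a concrete property of the groupoids $\M_2$, $\cB_2$, $\M_2^-$. Pre-braidedness is immediate: each of these three groupoids carries a braided monoidal structure (boundary connected sum, with the half-twist braiding pictured in Figure~\ref{surfacesum}), so Proposition~\ref{braidandsym}(ii) gives that $\U\M_2$, $\U\cB_2$, $\U\M_2^-$ are pre-braided monoidal categories. To identify each with its expected underlying groupoid I would apply Proposition~\ref{underlying}: the unit $(D^2,I)$ has no non-trivial automorphisms by the Alexander trick, and there are no zero divisors, since a boundary connected sum $S_1\natural S_2$ is a disc only if $S_1$ and $S_2$ both are (compare orientability type, genus or number of crosscaps, and number of boundary components, using $b(S_1\natural S_2)=b(S_1)+b(S_2)-1$). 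For $\M_2^-$ one also checks the easy point that it is closed under $\natural$ and under the braiding.

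For the homogeneity statements I would invoke Theorem~\ref{universal}. For (i) and (ii) this reduces the claim to the local cancellation axiom LC at $(A,X)$ in $\M_2$, resp.\ $\cB_2$, together with injectivity of the stabilisation maps $\Aut(A\natural X^{\natural m})\to\Aut(A\natural X^{\natural m+1})$, $f\mapsto f\natural X$; for (iii) it reduces to the global cancellation axiom C in $\M_2^-$ together with injectivity of all the maps $\Aut(A)\to\Aut(A\natural B)$. The injectivity input is a classical fact about mapping class groups---and, since the automorphism groups in $\cB_2$ are the surface braid groups $\beta_k^{\widetilde S}$, about surface braid groups---namely that extending a mapping class, resp.\ a braid, of a subsurface by the identity is injective; I would cite \cite{HatWah10} and the standard literature on geometric subgroups for it rather than reprove it.

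The genuine content is then the cancellation, which is elementary given the classification of compact surfaces with boundary (by orientability type, genus or number of crosscaps, and number of boundary components) and the additivity of genus, resp.\ crosscap number, under $\natural$, together with the relation $\Sigma_1\natural N_1\cong N_3$ converting orientable genus into crosscaps. For (i) and (ii): when $X$ is orientable the orientability type of $Y\natural X^{\natural p+1}$ agrees with that of $Y$, so comparing the remaining numerical invariants of $Y\natural X^{\natural p+1}$ with those of $A\natural X^{\natural n}$ forces $Y$ to have exactly the invariants of $A\natural X^{\natural n-p-1}$; since $0\le p<n$ these invariants describe an honest surface, so $Y\cong A\natural X^{\natural n-p-1}$, which is LC at $(A,X)$, and with the injectivity above Theorem~\ref{universal}(a),(b) gives that $\U\M_2$ and $\U\cB_2$ are locally homogeneous at $(A,X)$. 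For $\cB_2$ one uses in addition that it has the same isomorphism classes of objects as $\M_2$ (a $\cB_2$-morphism is in particular a diffeomorphism, and conversely any diffeomorphism is corrected into $\cB_2$ by an element arising from surjectivity of $\pi_0\Dif(S\,\textrm{rel}\,\del_0 S)\to\pi_0\Dif(\widetilde S\,\textrm{rel}\,\del_0\widetilde S)$), so the LC computation is verbatim the one for surfaces. For (iii) I would check C directly in $\M_2^-$: its objects are non-orientable or orientable of genus $0$, so the orientability type of $A\natural C$ plus its crosscap number (which is $0$ precisely for the genus-$0$ orientable ones) recovers the orientability type of $A$, and then the numerical invariants of $A$ cancel by additivity, giving $A\cong B$ whenever $A\natural C\cong B\natural C$; Theorem~\ref{universal}(c),(d) then gives that $\U\M_2^-$ is homogeneous. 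This same analysis explains the restrictions: cancelling a Möbius band $N_1$ from $\Sigma_1\natural N_1\cong N_2\natural N_1$ would force $\Sigma_1\cong N_2$, so C fails on all of $\M_2$, and it is exactly the positive-genus orientable surfaces, excluded from $\M_2^-$, that cause the failure, while LC at $(A,X)$ likewise breaks down when $X$ is non-orientable.

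The main obstacle is bookkeeping rather than ideas: one must be careful with how orientable genus and crosscap number interact under $\natural$ in the mixed cases (non-orientable $A$ together with orientable $X$), and with which surfaces do or do not lie in $\M_2^-$ in the cancellation analysis. The only ingredient we do not supply is the injectivity of the surface, non-orientable-surface, and surface-braid stabilisation homomorphisms, which we take from the literature.
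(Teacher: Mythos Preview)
Your proposal is correct and follows essentially the same route as the paper: Proposition~\ref{braidandsym} for pre-braidedness, Proposition~\ref{underlying} for the underlying groupoid, the classification of surfaces for (local) cancellation, and Theorem~\ref{universal} to conclude. The one substantive difference is the injectivity of $\Aut(S_1,I_1)\to\Aut(S_1\natural S_2,I_1\natural I_2)$: you cite it, whereas the paper proves it directly from the fibration
\[
\Dif(S_1\natural S_2\ \textrm{rel}\ S_2\cup\del_0)\lra \Dif(S_1\natural S_2\ \textrm{rel}\ \del_0)\lra \Emb((S_2,I_2),(S_1\natural S_2,I_1\natural I_2))
\]
together with Gramain's theorem that the components of the base are contractible; this single argument handles $\M_2$, $\M_2^-$, and (by restriction) $\cB_2$ at once, so you may find it cleaner than tracking down separate references for mapping class groups and surface braid groups.
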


\begin{proof}
As the above three groupoids are braided monoidal, by Proposition~\ref{braidandsym} we have that $\U\cB_2$, $\U\M_2$ and $\U\M_2^-$  are pre-braided monoidal categories. As $(D^2,I)$ has no non-trivial automorphisms and there are no zero divisors in $\M_2$, Proposition~\ref{underlying} implies that the underlying groupoids of $\U\cB_2$, $\U\M_2$, and $\U\M_2^{-}$ are $\cB_2,\M_2$ and $\M_2^{-}$. From the classification of surfaces, we have that $\M_2$, and hence $\cB_2$, satisfies local cancellation at any $(A,X)$ with $X$ orientable, and $\M_2^-$ satisfies cancellation: the only non-trivial case is if $X$ is orientable and $A$ non-orientable, in which case the existence of an isomorphism $Y \natural X \cong A \natural X^{\natural k}$ implies that $Y$ must be non-orientable and hence isomorphic to $ A \natural X^{\natural k-1}$ by the classification of surfaces, which verifies local cancellation.  
Finally, there is a fibration sequence
\begin{equation}\label{eq:FibSeq}
\Dif(S_1\natural S_2 \ \textrm{rel}\ S_2\cup \del_0)\lra \Dif(S_1\natural S_2 \ \textrm{rel}\ \del_0) \lra \Emb((S_2,I_2^+),(S_1\natural
S_2,I_2^+))
\end{equation}
(using again \cite[II 2.2.2 Corollaire 2]{Cer61})
and we may identify $\Dif(S_1\natural S_2 \ \textrm{rel}\ S_2\cup \del_0)$ with $\Dif(S_1 \ \textrm{rel}\ \del_0)$. The base of this fibration has contractible components. This is more generally true for $\Emb((S,I_2^+),(T,I_2^+))$ with $S \subset T$ and may be seen by induction over a handle decomposition of $S$ using a theorem of Gramain \cite[Thm.~5]{Gra73}. More precisely, if $S = S' \cup_{\partial D^1 \times D^1} D^1 \times D^1$ is obtained by attaching a handle to a surface $S'$, and $I_2^+ \subset S'$, then there is a fibration sequence
$$\Emb((S, S'),(T, S')) \lra \Emb((S,I_2^+),(T,I_2^+)) \lra \Emb((S',I_2^+),(T,I_2^+))$$
and $\Emb((S, S'),(T, S')) \cong \Emb((D^1 \times D^1, \partial D^1 \times D^1), (T \setminus \mathrm{int}S',\partial D^1 \times D^1))$. The restriction map to the space $\Emb((D^1 \times \{0\}, \partial D^1 \times \{0\}), (T \setminus \mathrm{int}S',\partial D^1 \times \{0\}))$ of embedded arcs is a weak homotopy equivalence onto the path components which it hits, because the space of thickenings of an embedded arc in a surface is homotopy equivalent to a space of paths on $GL_1(\bR)$, so either empty or contractible. This space of embedded arcs has contractible components by \cite[Thm.~5]{Gra73}. This inductively proves the contractibility of the components of $\Emb((S,I_2^+),(T,I_2^+))$. 

The fact that the map $\Aut_{\M_2}(S_1,I_1)\to \Aut_{\M_2}(S_1\natural S_2,I_1\natural I_2)$ is injective then follows using the long exact sequence of homotopy groups associated to the fibration sequence \eqref{eq:FibSeq} and the contractibility of the components of the base space. This gives injectivity for the automorphisms when we replace $\M_2$ by $\M_2^-$ or $\cB_2$ as both are subgroupoids. The result then follows from Theorem~\ref{universal}. 
\end{proof}

Unfortunately, $\U\M_2$ will not satisfy LH1 for any pair $(A,X)$ with $X$ non-orientable: by the classification of surfaces one can find an isomorphism $\phi: Y \natural X \overset{\sim}\to A \natural X^{\natural k}$ with $k \geq 2$ and $Y$ orientable, but then $Y$ cannot be isomorphic to $A \natural X^{\natural k-1}$ (and so $[Y, \phi]$ cannot be in the same $\Aut_{U\M_2}(A \natural X^{\natural k})$-orbit as the standard map from $X$ to $A \natural X^{\natural k}$). We thus need to restrict to the groupoid of non-orientable and genus 0 surfaces and corresponding homogeneous category $\U\M_2^-$ for stabilisation with $X$ non-orientable. As we shall see, this corresponds to what was done previously in proofs of homological stability for non-orientable surfaces. 

\medskip

By the classification of surfaces, any object in the categories of surfaces occurring here can be obtained from the disc by taking sums of copies of the following three basic building blocks: the cylinder, the torus with one boundary component, and the M\"{o}bius band. Indeed, boundary connected sum with the torus/M\"{o}bius band increases the genus/non-orientable genus, and boundary connected sum with the cylinder increases the number of  boundary components. As the surfaces are assumed to be connected, any surface in the category can be obtained that way. 
This means that these three objects are the interesting possible $X$'s one can stabilise with. We consider below these three possible stabilisations and verify LH3 in each case. To do this, we will identify the complexes $S_n(X,A)$ with complexes earlier studied in the literature. We start by giving a simpler description of the morphisms in the categories $\U\M_2$ and $\U\M_2^-$.

Let $(S_1,I_1)$ and $(S_2,I_2)$ be decorated surfaces and $[T,f]:(S_1,I_1)\to (S_2,I_2)$ a morphism in $\U\M_2$, with $f:T\natural S_1\arsim S_2$ the mapping class of a diffeomorphism (Fig.~\ref{surfacemorphism}). Consider $f|_{S_1}:S_1\to S_2$. This map is an embedding taking $I_1^+$ to $I_2^+$ and such that the complement of $S_1$ in $S_2$ is diffeomorphic to $T\minus I_T^+$ (except when $T=D_2$, in which case the complement is empty). It also has the property that it takes the boundary components $\del'S_1=\del S_1\minus \del_0S_1$ to components of $\del'S_2$ or in other words that the image of $S_1$ in $S_2$ is separated from its complement by a single arc, namely the image of $I_1^-$. 
We denote by $\Emb((S_1,\del'S_1;I_1^+),(S_2,\del'S_2;I_2^+))$ the space of such embeddings and $\Emb^-((S_1,\del'S_1;I_1^+),(S_2,\del'S_2;I_2^+))$ the subspace of such embeddings with non-orientable complement. (To be precise, an element of this space will take the arc $I_1^-$ to an arc in $S_2$ meeting the $\del S_2$ only at its endpoints, unless $S_1$ and $S_2$ are diffeomorphic, in which case $I_1^-$ is taken to $I_2^-$.)  
\begin{figure}[h]
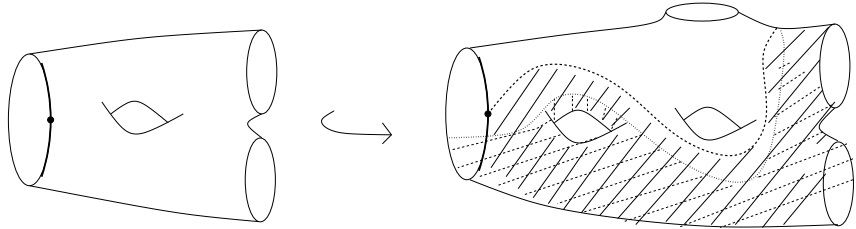

\begin{lpic}{surfacemorphism3(0.25,0.25)}
\end{lpic}
\caption{A morphism in $U\M_2$}\label{surfacemorphism}
\end{figure}

As the equivalence class $[T,f]$ is determined by $f|_{S_1}$, we have the following: 

\begin{lem}
Let $S_1,S_2$ be surfaces. Then
\begin{enumerate}[(i)]
\item $\Hom_{\U\M_2}((S_1,I_1),(S_2,I_2))=\pi_0\Emb((S_1,\del' S_1;I_1),(S_2,\del'S_2;I_2)).$
\item For $S_1,S_2\in \M_2^-$, 
$$\Hom_{\U\M^-_2}((S_1,I_1),(S_2,I_2))=\pi_0\Emb^-((S_1,\del' S_1;I_1),(S_2,\del'S_2;I_2)).$$
\end{enumerate}
\end{lem}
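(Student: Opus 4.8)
The plan is to prove part (i) first and then explain how part (ii) follows by restricting attention to a subspace. The key point is that a morphism $[T,f] : S_1 \to S_2$ in $\U\M_2$, that is an equivalence class of pairs with $f : T \natural S_1 \arsim S_2$ a diffeomorphism fixing $I$, is completely determined by the restriction $f|_{S_1} : S_1 \to S_2$, which I want to identify with an isotopy class of embeddings in the space $\Emb((S_1,\del' S_1;I_1),(S_2,\del'S_2;I_2))$ described just before the lemma. So the first step is to check that $f \mapsto f|_{S_1}$ is well-defined on equivalence classes: if $(T,f) \sim (T',f')$, there is a diffeomorphism $g : T \to T'$ with $f = f' \circ (g \natural S_1)$, and since $(g \natural S_1)|_{S_1} = \mathrm{id}_{S_1}$ (the monoidal product $\natural$ only modifies things on the $T$-summand side), we get $f|_{S_1} = f'|_{S_1}$. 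Next I would check that $f|_{S_1}$ genuinely lands in the asserted embedding space: it fixes $I_1^+$ (being the restriction of a diffeomorphism fixing $I = I_1^- \cup I_2^+$, with the convention that $\natural$ glues along $I_1^-$ on the $S_1$ side when $T \neq D^2$), it sends $\del' S_1$ to boundary components of $\del' S_2$, and the complement $S_2 \setminus f(S_1)$ is diffeomorphic to $T$ (empty when $T = D^2$). Passing to isotopy classes is exactly passing to $\pi_0$, so this gives a well-defined map $\Hom_{\U\M_2}((S_1,I_1),(S_2,I_2)) \to \pi_0\Emb((S_1,\del' S_1;I_1),(S_2,\del'S_2;I_2))$.

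The second step is to construct an inverse. Given an embedding $e : S_1 \inc S_2$ of the required type, the complement $T := S_2 \setminus \mathrm{int}(e(S_1))$ is a surface with a preferred interval $I_T$ in its boundary (coming from $I_1^-$), and collapsing/parametrizing a collar we obtain a canonical diffeomorphism $f_e : T \natural S_1 \arsim S_2$ extending $e$; the pair $(T, f_e)$ is a morphism in $\U\M_2$. One checks that isotopic embeddings give equivalent pairs — an isotopy of $e$ can be promoted (using isotopy extension, Palais) to an ambient isotopy of $S_2$ carrying one complement diffeomorphically to the other, which supplies the required $g$. Then one verifies that the two assignments are mutually inverse: starting from $[T,f]$, restricting and then re-building recovers a pair equivalent to $[T,f]$ because the complement of $f(S_1)$ in $S_2$ is canonically $f(T)$; starting from $e$, building $f_e$ and restricting returns $e$ on the nose. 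This establishes (i). For part (ii), one simply observes that a morphism in $\U\M_2^-$ is by definition a pair $[T,f]$ with $T$ (and hence $S_1, S_2$) in $\M_2^-$, and — since $S_1, S_2$ are non-orientable or genus $0$ — the complement $T$ is non-orientable precisely when the corresponding embedding lies in $\Emb^-$; so the bijection of (i) restricts to the claimed bijection on the subspaces where complements are non-orientable.

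The main obstacle I expect is purely bookkeeping about boundary conventions: being careful that ``fixing the interval $I$'' translates correctly across the monoidal product (which half-intervals $I^+, I^-$ get glued, and that $f|_{S_1}$ really does fix $I_1^+$ while its image is separated from its complement by the single arc $f(I_1^-)$), and handling the degenerate case $T = D^2$ (equivalently $S_1 \cong S_2$, empty complement, $I_1^-$ mapped to $I_2^-$) separately so that the parenthetical caveats in the statement are accounted for. The topological input — that isotopy classes of diffeomorphisms agree with isotopy classes of homeomorphisms, and isotopy extension to convert ambient-vs-embedding isotopies — is standard and already invoked in the surrounding text, so I would cite it rather than reprove it. Once the conventions are pinned down, both the map and its inverse are essentially forced, and the verification that they are mutually inverse is routine.
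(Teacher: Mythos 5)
Your argument fills in, in full detail, what the paper justifies in a single line ("As the equivalence class $[T,f]$ is determined by $f|_{S_1}$, we have the following"): same approach, identifying a morphism with the isotopy class of $f|_{S_1}$ and building the inverse from the complement of an embedding via isotopy extension. The care about the $T = D^2$ case and the gluing conventions matches the parenthetical caveats in the surrounding text, so this is a correct and more explicit version of the paper's proof.
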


Note that the lemma is also a consequence of a fibration of the type occurring e.g.~in the proof of Proposition~\ref{M2hom}. 

\begin{rem}
The category $U\M_2$ is closely related to the category of decorated surfaces defined by Ivanov in \cite[2.5]{Iva93} whose objects are orientable decorated surfaces, and the morphisms are pairs of an orientation preserving embedding $S_1\inc S_2$ and an arc between $I_1$ and $I_2$. 
\end{rem}

\subsubsection{Braid groups and stabilisation by punctures}\label{braidex}

Consider the pair $(A,X)$ of objects in $\U\M_2$ given by the cylinder
$$X=(S^1\x [0,1],I)$$ 
(which can be thought of as a once punctured disc), and any surface
$$A=(S,J).$$
The simplicial complex $S_n(A,X)$ has vertices the set of morphisms from $X$ to $A\natural X^{\natural n}$, i.e.~the set of isotopy classes of embeddings 
$$S^1\x [0,1]\cong D^2\minus \mathrm{int}{D}^2 \ \ \inc\ \ S\ \natural\ (S^1\x [0,1])^{\natural n}\cong S\minus (\sqcup_n \mathrm{int}{D}^2)=:S_{(n)}$$ 
taking the interval $I^+$ in the first boundary component of the cylinder to $J^+$ in $S$, and taking the other boundary component of the cylinder to some other boundary in $S_{(n)}$.  (See Figure~\ref{braidmorphism} for an example.)  Such an isotopy class of embeddings $f$ is determined by the isotopy class of the arc $f(\{0\}\x [0,1])$. 
Indeed, two embeddings $f_1,f_2$ yielding isotopic arcs can be isotoped to have the same image, namely a tubular neighbourhood of one of the arcs, and to agree on $I\cup \{0\}\x [0,1]$. Then they can be isotoped to agree on $S^1\x\{0,1\}\cup I$ by the Alexander trick in dimension 1, and then on the whole of $X$ by the Alexander trick in dimension 2 as the complement of $\{0\}\x [0,1]$ in $X$ is a disc.  
Note also that any isotopy class of arcs from the mid-point of $J$ to $\del S_{(n)}\minus \del_0 S_{(n)}$ determines such an embedding by picking a neighbourhood of the arc in $S$. 
More generally, a morphism from $X^{\natural k}$, which is a $k$-legged pair of pants, to $A\natural X^{\natural n}$ is determined by a collection of $k$ embedded disjoint such arcs. 
\begin{figure}
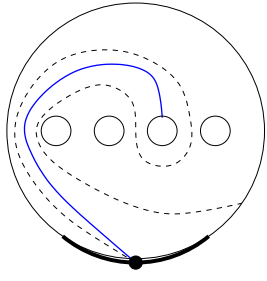

\begin{lpic}{braidmorphism2(0.55,0.55)}
\end{lpic}
\caption{A vertex in $S_4(D^2,X)$ for $X$ a cylinder, and arc representing it.}\label{braidmorphism}
\end{figure}
Note now that for this particular $X$, we have an isomorphism 
$$\Hom_{\U\cB_2}(X^{\natural p+1},S_{(n)})\cong \Hom_{\U\M_2}(X^{\natural p+1},S_{(n)})$$
as $\Hom_{\U\cB_2}(S,T)\cong\ker(\Hom_{\U\M_2}(S,T) \to \Hom_{\U\M_2}(\widetilde S,\widetilde T))$.  
So the complexes $S_n(A,X)$ and semi-simplicial sets $W_n(A,X)$ associated to $U\cB_2$ and $U\M_2$ are isomorphic in this case, a fact that was already used in a way in \cite{HatWah10} to prove stability for surface braid groups. 

The above gives an identification of this simplicial complex $S_n(A,X)$ in the present case with the arc complex denoted 
$A(\widetilde S;\{*\},\Lambda)$ in \cite{HatWah10}, where $\Lambda$ is  the set of centres of the discs glued onto $S$ to form $\widetilde S$.  
A connectivity bound for that complex is computed in that paper: 

\begin{lem}\cite[Prop.~7.2]{HatWah10}
For $X=(S^1\x [0,1],I)$ the cylinder and $A$ any object in $\U\M_2$ (or equivalently $\U\cB_2$), we have that $S_n^{\U\M_2}(A,X)\cong S_n^{\U\cB_2}(A,X)$ is $(n-2)$-connected. 
\end{lem}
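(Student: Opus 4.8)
The plan is to reduce the statement to the high-connectivity of an arc complex that has already been analysed in \cite{HatWah10}.

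First I would dispose of the isomorphism $S_n^{\U\M_2}(A,X) \cong S_n^{\U\cB_2}(A,X)$. Since $X = (S^1 \x [0,1], I)$ is orientable, the complement of an embedded copy of $X^{\natural p+1}$ in $S_{(n)} := S \minus (\sqcup_n \mathring{D}^2)$ is orientable precisely when $S$ is, so for this particular $X$ there is no difference between allowing arbitrary morphisms and ``braided'' morphisms: the natural map $\Hom_{\U\cB_2}(X^{\natural p+1}, S_{(n)}) \to \Hom_{\U\M_2}(X^{\natural p+1}, S_{(n)})$ is a bijection. This bijection is compatible with the face maps of Definition~\ref{simpdef} (precomposition with $X^{\natural i}\natural\iota_X\natural X^{\natural p-i}$), so it upgrades to an isomorphism of semi-simplicial sets $W_n^{\U\cB_2}(A,X)_\bullet \cong W_n^{\U\M_2}(A,X)_\bullet$, and hence of the associated simplicial complexes. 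It then suffices to treat $S_n := S_n^{\U\M_2}(A,X)$.

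Next I would make precise the dictionary sketched before the statement. Writing $A = (S,J)$ and fixing the midpoint $* \in J$, a morphism $X^{\natural p+1} \to A \natural X^{\natural n}$ is exactly an isotopy class of $p+1$ disjoint embedded arcs in $S_{(n)}$, each running from $*$ to one of the $n$ boundary circles produced by deleting discs, the circles hit being pairwise distinct (so that the complement is again $S$ with $n-p-1$ discs removed). A face map deletes one arc. One checks from this that $\U\M_2$ is locally standard at $(A,X)$, so by Proposition~\ref{standardprop} simplices of $W_n(A,X)_\bullet$ are determined by their distinct vertices; consequently $S_n$ is precisely the simplicial complex whose $p$-simplices are unordered families of $p+1$ disjoint such arcs. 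This identifies $S_n$ with the arc complex $A(S;\{*\},(p_1,\dots,p_n))$ of \cite[\S7]{HatWah10}, and then \cite[Prop 7.2]{HatWah10} asserts that this complex is $(n-2)$-connected, which gives the lemma.

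The only real content is the identification in the third step: one must verify carefully that the passage ``morphism $\mapsto$ defining family of arcs'' is a bijection compatible with faces, that no two distinct vertices of a simplex can be isotopic arcs, and that the combinatorics of the arc complex in \cite[\S7]{HatWah10} (where a simplex is a collection of disjoint, pairwise non-isotopic arcs with endpoints on distinct puncture circles) agrees on the nose with that of $S_n$. All of this is contained in the embedding/arc correspondence set up in the surrounding text and in \cite[\S7]{HatWah10}, so I do not expect a genuine obstacle beyond this bookkeeping; the connectivity estimate itself is imported wholesale.
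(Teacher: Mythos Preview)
Your overall strategy is exactly the paper's: identify $S_n(A,X)$ with the arc complex $A(S;\{*\},(p_1,\dots,p_n))$ of \cite{HatWah10} and import the $(n-2)$-connectivity from \cite[Prop~7.2]{HatWah10}. The paper in fact gives no further proof; the identification is carried out in the discussion preceding the lemma, and the connectivity is simply cited.

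There is, however, a genuine confusion in your first paragraph. The distinction between $\cB_2$ and $\M_2$ has nothing to do with orientability of complements: that is the $\M_2^-$ versus $\M_2$ story. The subgroupoid $\cB_2\subset\M_2$ consists of those isotopy classes of diffeomorphisms that become trivial after capping off the extra boundary components, so your orientability argument does not address the relevant question. The correct reason the two Hom-sets agree is the arc description itself: once one knows that a morphism $X^{\natural p+1}\to S_{(n)}$ in either category is completely encoded by the isotopy class of $p+1$ disjoint arcs from $*$ to distinct puncture circles (because the complement of these arcs in $X^{\natural p+1}$ is a disc, so the Alexander trick applies), the two Hom-sets are visibly the same set of arc systems. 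The paper simply asserts this isomorphism without further argument, relying on that arc description. So your plan is sound, but you should replace the orientability sentence by the observation that the arc dictionary already determines the morphism in both $\U\M_2$ and $\U\cB_2$, with no extra data on either side to distinguish them.
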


(The complex $S_n(A,X)$ is actually contractible when $A=D^2$ by \cite[Thm.~2.48]{Dam13} or \cite[Prop.~3.2]{HatVoginfinity}, but this does not improve our stability results.) Now, the semi-simplicial set $W_n(A,X)_\bullet= W_n^{\U\M_2}(A,X)_\bullet=W_n^{\cB_2}(A,X)_\bullet$ satisfies condition (B) of Section \ref{sec:SxCxSSets}, as any simplex of $S_n(A,X)=S_n^{\U\M_2}(A,X)=S_n^{\cB_2}(A,X)$ has a canonical ordering of its vertices induced by the local orientation of the surfaces near the parameterised interval in their boundary. Thus $\vert W_n(A, X)_\bullet \vert$ is homeomorphic to $S_n(A,X)$, so is also $(n-2)$-connected.

\medskip

When $X=(S^1\x [0,1],I)$ and $A=(S,J)$ as above,  we have that 
$$\Aut_{\U\M_2}(A\op X^{\op n})\cong\pi_0\Dif( S\minus (\sqcup_n \mathrm{int}{D}^2)\ \textrm{rel}\ \del_0S)=\pi_0\Dif(S_{(n)}\ \textrm{rel}\ \del_0S)$$
is the mapping class group of $S$ punctured $n$ times fixing $\del_0S$, while, under the additional assumption that $S$ has a single boundary component, 
$$\Aut_{\U\cB_2}(A\natural X^{\natural n})\cong\pi_1\operatorname{Conf}(\mathrm{int}S,n)$$
is the surface braid group $\beta_n^S$. When $A$ is a disc, both groups identify with the classical braid group on $n$ strands. 

The following is obtained by applying Theorems \ref{stabthm} and~\ref{twistrange} to such $X$ and $A$.

\begin{thm}
Let $F:(\U\M_2)_{S,S^1\x I}\to \Z\operatorname{-Mod}$ be a coefficient system, which may be constant. Then the map
$$H_i(\pi_0\Dif(S_{(n)} \textrm{rel}\ \del_0S);F(S_{(n)})) \rar H_i(\pi_0\Dif(S_{(n+1)}\textrm{rel}\ \del_0S);F(S_{(n+1)}))$$
is: an epimorphism for $i \leq \tfrac{n}{2}$ and an isomorphism for $i \leq \tfrac{n-1}{2}$, if $F$ is constant; an epimorphism for $i \leq \tfrac{n-r}{2}$ and an isomorphism for $i \leq \tfrac{n-r-2}{2}$, if $F$ is split of degree $r$; an epimorphism for $i \leq \tfrac{n}{2}-r$ and an isomorphism for $i \leq \tfrac{n-2}{2}-r$, if $F$ is of degree $r$. Moreover, for a coefficient system $G:(U\cB_2)_{S,S^1\x I}\to \Z\operatorname{-Mod}$ the same holds for the map 
$$H_i(\pi_1\operatorname{Conf}(\mathrm{int}S,n);G(S_{(n)})) \rar H_i(\pi_1\operatorname{Conf}(\mathrm{int}S,n+1);G(S_{(n+1)}))$$
under the additional assumption that $S$ has a single boundary component. 
\end{thm}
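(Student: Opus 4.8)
The plan is to apply the abstract machine built in Sections~\ref{cstsec} and~\ref{sec:twist} to the pre-braided monoidal categories $\U\M_2$ and $\U\cB_2$, specialised at the pair $(A,X)$ with $X=(S^1\x[0,1],I)$ the cylinder and $A=(S,J)$ the given surface. First I would record that the preceding proposition gives that $\U\M_2$ and $\U\cB_2$ are pre-braided categories, locally homogeneous at $(A,X)$ for this orientable $X$; indeed their underlying groupoids $\M_2$ and $\cB_2$ satisfy local cancellation at $(A,X)$ when $X$ is orientable, and the injectivity of $\Aut(S_1)\to\Aut(S_1\natural S_2)$ verifies the LH2 hypothesis of Theorem~\ref{universal}. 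Second, I would invoke the connectivity input: the lemma cited from \cite[Prop 7.2]{HatWah10} identifies $S_n(A,X)$ with the arc complex $A(S;\{*\},(p_1,\dots,p_n))$ and gives that it is $(n-2)$-connected for all $n\ge 0$ (with $S_0(A,X)$ empty, hence vacuously $(0-2)$-connected). Third, because the surfaces carry a local orientation along the parametrised interval $I$, the semi-simplicial set $W_n(A,X)_\bullet$ satisfies condition~(B) of Section~\ref{sec:SxCxSSets}, so $\vert W_n(A,X)_\bullet\vert$ is homeomorphic to $\vert S_n(A,X)\vert$ and is therefore $(n-2)$-connected for all $n\ge 0$; since $\lfloor\tfrac{n-2}{2}\rfloor\le n-2$ for $n\ge 1$, this verifies LH3 at $(A,X)$ with slope $k=2$.

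With LH1, LH2, and LH3 (slope $2$) in hand, Theorem~\ref{stabthm} gives stability with constant coefficients—an epimorphism for $i\le\tfrac n2$ and an isomorphism for $i\le\tfrac{n-1}{2}$—which by observation~(c) and the identification $\Aut_{\U\M_2}(A\natural X^{\natural n})\cong\pi_0\Dif(S_{(n)}\,\textrm{rel}\ \del_0S)$ is precisely the first claimed statement. For the polynomial cases I would apply Theorem~\ref{twistrange}(i) and~(ii) (equivalently the constant-$N$ cases of Theorem~\ref{main}, with $N=0$): a split coefficient system of degree $r$ gives $Rel^F_i(A,n)=0$ for $n\ge ki+r=2i+r$, which via the long exact sequence of the pair yields an epimorphism for $i\le\tfrac{n-r}{2}$ and an isomorphism for $i\le\tfrac{n-r-2}{2}$; a general coefficient system of degree $r$ gives $Rel^F_i(A,n)=0$ for $n\ge k(i+r)=2(i+r)$, hence an epimorphism for $i\le\tfrac n2-r$ and an isomorphism for $i\le\tfrac{n-2}{2}-r$. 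Here one must note that a coefficient system $F:(\U\M_2)_{S,S^1\x I}\to\Z\operatorname{-Mod}$ restricts to a coefficient system in the sense of Definition~\ref{def:CoeffSys} for the locally homogeneous category $\U\M_2$ at $(A,X)$, so the theorems apply verbatim.

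For the surface braid group statement, the extra hypothesis that $S$ has a single boundary component is exactly what makes $\Aut_{\U\cB_2}(A\natural X^{\natural n})\cong\pi_1\operatorname{Conf}(S,n)=\beta^S_n$, as recorded in the text. One then runs the identical argument inside the category $\U\cB_2$: by the remark after the connectivity lemma, the inclusion-induced isomorphism $\Hom_{\U\cB_2}(X^{\natural p+1},S_{(n)})\cong\Hom_{\U\M_2}(X^{\natural p+1},S_{(n)})$ shows $S_n^{\U\cB_2}(A,X)\cong S_n^{\U\M_2}(A,X)$ and $W_n^{\cB_2}(A,X)_\bullet\cong W_n^{\U\M_2}(A,X)_\bullet$, so LH1--LH3 transfer directly and Theorems~\ref{stabthm} and~\ref{twistrange} give the same three ranges for $H_i(\pi_1\operatorname{Conf}(S,n);G(S_{(n)}))$. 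I do not anticipate a genuine obstacle beyond bookkeeping: the only non-formal ingredient is the $(n-2)$-connectivity of the arc complexes, and that is quoted from \cite{HatWah10}; the mild subtlety to be careful about is condition~(B) (rather than~(A)) holding here, so that no Cohen--Macaulay/ordering argument from Theorem~\ref{caseA} is needed and one simply has a homeomorphism $\vert W_n(A,X)_\bullet\vert\cong\vert S_n(A,X)\vert$.
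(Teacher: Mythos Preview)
Your proposal is correct and follows essentially the same approach as the paper: the paper simply states that the theorem ``is obtained by applying Theorems~\ref{stabthm} and~\ref{twistrange} to such $X$ and $A$,'' having already established local homogeneity (the earlier proposition), the $(n-2)$-connectivity of $S_n(A,X)$ via the arc complex identification (\cite[Prop~7.2]{HatWah10}), and condition~(B) giving $|W_n(A,X)_\bullet|\cong|S_n(A,X)|$. Your write-up makes all of these steps explicit with the correct ranges, and your observation that condition~(B) (not~(A)) is the relevant one here is exactly what the paper uses.
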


For constant coefficients and $S=D^2$, this last theorem is due to Arnold \cite{Arn70}, and for general $S$ the first part of the theorem is the dimension 2 case of Proposition 1.5 in \cite{HatWah10}, while the second part is the dimension 2 case of \cite[Prop.~A.1]{Seg79}. 

An example of a degree 1 coefficient system to which our result can be applied 
is the Burau representation (see Examples \ref{Bureauex} and \ref{Bureau2}). The homology of the braid groups with coefficients in the reduced complexified Burau representation has recently been computed by Chen \cite{Che15}, and it follows from his computation that the stability slope for the rationalised Burau representation is actually 1. We do not know whether our integral slope is optimal, but such a difference between the rational and integral slopes is typical for constant coefficients.
  
A stability result has been proved by Church--Farb \cite[Cor.~4.4]{ChuFar13} for the twisted homology of braid groups for certain rational coefficient systems which factor through the category $FI$, with slope $\tfrac{1}{4}$.

\medskip

To state our results for abelian coefficients, let us restrict to the case $S=D^2$, where the group $\pi_0\Dif(S_{(n)} \textrm{rel}\ \del_0S)$ is identified with the braid group $\beta_n$. In this case it is well-known that the abelianisation of $\beta_n$ is $\bZ$ as long as $n \geq 2$, with $\beta_n \to \bZ$ given by the total winding number, and stability results for $\beta_n$ with abelian (twisted) coefficients give stability results for the commutator subgroup $\beta'_n$ with (twisted) coefficients. Theorems \ref{abstabthm} and~\ref{twistrange} give the following.

\begin{thm}\label{thm:CommutatorBraid}
Let $F:(\U\M_2)_{D^2,S^1 \times I}\to \Z\operatorname{-Mod}$ be a coefficient system, which may be constant. Then the map  
$$H_i(\beta'_n;F(D^2_{(n)})) \rar H_i(\beta'_{n+1};F(D^2_{(n+1)}))$$
is: an epimorphism for $i\le \frac{n-1}{3}$ and an isomorphism for $i\le \frac{n-3}{3}$, if $F$ is constant; an epimorphism for $i \leq \tfrac{n-2r-1}{3}$ and an isomorphism for $i \leq \tfrac{n-2r-4}{2}$, if $F$ is split of degree $r$; an epimorphism for $i \leq \tfrac{n-1}{3}-r$ and an isomorphism for $i \leq \tfrac{n-4}{3}-r$, if $F$ is of degree $r$.
\end{thm}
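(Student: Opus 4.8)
The plan is to deduce Theorem~\ref{thm:CommutatorBraid} from the general machinery by specialising the results already established for braid groups, in exactly the way Corollary~\ref{cor:B} is deduced from Theorem~\ref{main}. First I would recall that we have just verified in this subsection that $\C = \U\M_2$ (equivalently $\U\cB_2$) is locally homogeneous at $(A,X)$ for $X = (S^1 \times [0,1], I)$ the cylinder, and that the semi-simplicial set $W_n(A,X)_\bullet$ is $(n-2)$-connected for every $n \geq 0$; in particular it is $(\tfrac{n-2}{k})$-connected for any $k \geq 1$, so LH3 holds with slope $k = 3$. Taking $A = (D^2, I)$ the disc, we have $G_n = \Aut_{\U\M_2}(A \natural X^{\natural n}) = \beta_n$ the classical braid group, and $F_n = F(D^2_{(n)})$.

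Next I would invoke the well-known fact that $\beta_n^{ab} = \bZ$ for $n \geq 2$, with the map $\beta_n \to \bZ$ given by total winding number, so that $\beta_n^{ab} \to \beta_\infty^{ab}$ is an isomorphism for $n \geq 2$. This is the analogue of the input ``$G_n^{ab} \to G_\infty^{ab}$ is an isomorphism for $n \geq k+1$'' used in the derivation of Corollary~\ref{cor:B}; here we get it for free from the explicit structure of the braid groups rather than from Theorem~\ref{stabthm}, which in any case also applies since LH3 holds with $k=3$ and hence $H_1(\beta_n) \to H_1(\beta_\infty)$ is an isomorphism in the stated range. Then, following the proof of Corollary~\ref{cor:B} verbatim, I would form the induced coefficient system
$$\overline{F}(-) := F(-) \otimes_{\bZ} \bZ[\beta_\infty^{ab}] : (\U\M_2)_{D^2, S^1\times I} \lra \bZ[\beta_\infty^{ab}]\operatorname{-Mod},$$
which has the same degree $r$ at the same $N$ as $F$ (and is split if $F$ is split, by the same argument), and note that Shapiro's lemma gives $H_*(\beta_n; \overline{F}_n^\circ) \cong H_*(\beta_n'; F_n)$ once $\beta_n^{ab} \to \beta_\infty^{ab}$ is an isomorphism, i.e.\ for $n \geq 2$.

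Finally I would feed $\overline F$ into Theorem~\ref{twistrange} with $k = 3$: parts (iii) and (iv) of that theorem (together with the long exact sequence relating the relative groups $Rel^{\overline F^\circ}_i(D^2, n)$ to the homology of $\beta_n$ with coefficients in $\overline F_n^\circ$) give the vanishing ranges that translate into the claimed epimorphism and isomorphism ranges for $H_i(\beta_n'; F_n) \to H_i(\beta_{n+1}'; F_{n+1})$. Concretely, in the general (non-split) case $Rel^{\overline F^\circ}_i(D^2,n)$ vanishes for $n \geq \max(2N+1, 3(i+r)+1)$, which unwinds to an epimorphism for $i \leq \tfrac{n-1}{3}-r$ and an isomorphism for $i \leq \tfrac{n-4}{3}-r$ once we also use $k = 3$ in LH3; and in the split case the improved range $n \geq \max(2N+1, 3i + 2r + 1)$ yields epimorphism for $i \leq \tfrac{n-2r-1}{3}$ and isomorphism for $i \leq \tfrac{n-2r-4}{3}$, matching the statement. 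For the constant-coefficient case ($r = 0$, $N = 0$) one may instead quote Corollary~\ref{cor:CommutatorSubGp} directly. The only genuinely non-routine point is the bookkeeping of the stability ranges: one must check that the slope-$3$ substitution $k=3$ and the shift coming from passing to the commutator subgroup (the factor of $2$ in $2N+1$ and the $+1$/$+2$ offsets in Theorem~\ref{twistrange}(iii),(iv)) line up with the displayed bounds, and that $N=0$ is legitimate here since our coefficient systems are assumed of degree $r$ at $0$, so the $\max$ is always attained by the second argument for $i \geq 0$. I expect the slight discrepancy ``$\tfrac{n-2r-4}{2}$'' appearing in the isomorphism range of the split case in the statement to be a typo for $\tfrac{n-2r-4}{3}$, consistent with Theorem~\ref{thm:D} and Corollary~\ref{cor:E}, and I would phrase the proof so as to produce the slope-$\tfrac13$ bound.
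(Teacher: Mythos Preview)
Your proposal is correct and follows exactly the paper's approach: the paper simply says ``Theorems~\ref{abstabthm} and~\ref{twistrange} give the following'' before stating Theorem~\ref{thm:CommutatorBraid}, and you have correctly unpacked how those theorems are applied here, including the passage to $\overline{F} = F \otimes_\bZ \bZ[\beta_\infty^{ab}]$ and Shapiro's lemma as in the derivation of Corollary~\ref{cor:B}. You are also right that the displayed bound $\tfrac{n-2r-4}{2}$ in the split case is a typo for $\tfrac{n-2r-4}{3}$, as your computation from Theorem~\ref{twistrange}(iv) with $k=3$ confirms.
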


It is well-known that $(B\beta_\infty)^+ \simeq \Omega^2 S^3$, which may be proved by group-completion \cite{Segal}. Hence, the discussion of Section \ref{sec:StabHomology} shows that the stable homology of $\beta'_n$ with constant coefficients may be described as the homology of the universal cover of $\Omega^2 S^3$ (cf.\ \cite{CohenPakianathan}), and so for example has trivial rational homology. In fact, the homology of the groups $\beta'_n$ has been completely computed \cite{Frenkel, Callegaro}. In particular $\mathrm{dim}_\bQ H_n(\beta'_{3n};\bQ)=2$ and so the slope of $\tfrac{1}{3}$ in Theorem \ref{thm:CommutatorBraid} is optimal. (Incidentally, the abelian $\beta_n$-module $\bZ[H_1(\beta_n;\bZ)] = \bZ[t,t^{-1}]$ is precisely the determinant of the Burau representation described in Example \ref{Bureauex}.)

\medskip

To prove the second part of Theorem~\ref{thm:D} for a general group $G$, just as in the case of symmetric groups one replaces the groupoid $\cB_2$ by the groupoid $\cB_2^G$ with the same objects and with $\Hom_{\cB_2^G}((S_1,I_1),(S_2,I_2))=G\wr \Hom_{\cB_2}((S_1,I_1),(S_2,I_2))$, where the wreath product is formed using the functor from $\cB_2$ to the category $\Sigma$ of finite sets taking $(S,I)$ to $\del'S$, the boundary components of $S$ not containing $I$. One then checks that the resulting complex $S_n(A,X)$ for 
$A=(S,I)$ and $X$ the cylinder identifies with the complex denoted $A(\widetilde S;\{*\},\Lambda)^G$  in \cite{HatWah10}, where $\Lambda$ the set of centres of the discs glued on $S$ to form $\widetilde S$, and where we think of $G$ as a constant labelling system in the sense of \cite[Ex.~3.3]{HatWah10}. This is a complete join complex over  $A(\widetilde S;\{*\},\Lambda)$ and hence is $(n-2)$-connected by Propositions 7.2 and 3.5 of that paper. 

\begin{rem}\label{prebraidrem}{\rm 
The pre-braiding in the subcategory of punctured discs in $U\M_2$ does not define an actual braiding,  and $U\M_2$ and this subcategory are {\em not}
braided monoidal categories. Figure \ref{nonbraid} shows that the pre-braid relation holds, and that its symmetric version does not hold, showing that the category is not braided monoidal. 
\begin{figure}[h]
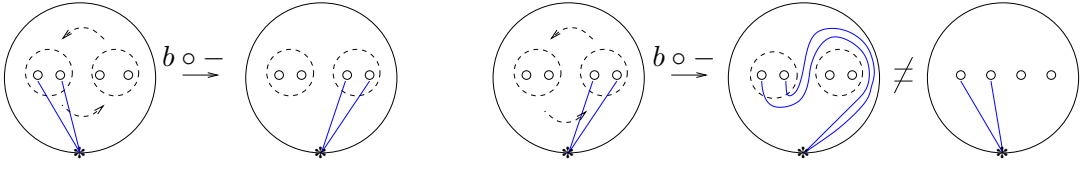

\begin{lpic}{notbraided3(0.5,0.5)}
\lbl[b]{50,26;$b\circ -$}
\lbl[b]{179,26;$b\circ - $}
\end{lpic}
\caption{Pre-braid relation and failure of the braiding on the pair $(\iota_2,id_2)$ in the homogeneous category associated to the braid groups.}\label{nonbraid}
\end{figure}
}\end{rem}

\subsubsection{Genus stabilisation for orientable surfaces}

We consider in this section the case when $X$ is the torus with one boundary, 
$$X=((S^1\x S^1)\minus \mathrm{int} D^2,I)$$ 
with 
$$A=(S,J)$$
any orientable surface. We can embed a graph $R=S^1\vee S^1\vee I$ in the torus in such a way that the complement of the graph in the surface is a disc (see Figure~\ref{surfacespin}).  Just as in the previous section, the Alexander trick then implies that an isotopy class of embedding from $X$ to $A\natural X^{\natural n}$ is determined by an isotopy class of embedding of $R$ in the surface fixed on the basepoints. 
\begin{figure}[h]
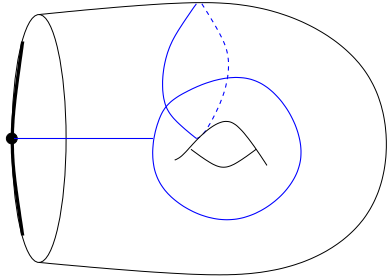

\begin{lpic}{surfacespin(0.45,0.45)}
\end{lpic}
\caption{Graph with complement is a disc in the torus with one boundary}\label{surfacespin}
\end{figure}

Moreover, a collection of $(p+1)$ such embeddings will define an embedding of $X^{\natural p+1}$ in $A\natural X^{\natural n}$ precisely when they can be made disjoint. This identifies $S_n(A,X)$ in this case with the complex of tethered chains studied by Hatcher--Vogtmann \cite{HatVoginfinity}. They compute a connectivity bound for this complex.

\begin{lem}(\cite[Prop.~5.5]{HatVoginfinity})
For  $X$ the torus with one boundary and $A$ any orientable surface, the complex $S_n(A,X)$ is $(\frac{n-3}{2})$-connected.
\end{lem}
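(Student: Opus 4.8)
The plan is to identify the simplicial complex $S_n(A,X)$, for $X = ((S^1\times S^1)\setminus \mathring D^2, I)$ the punctured torus and $A=(S,J)$ any orientable surface, with the complex of \emph{tethered chains} on the surface $A\natural X^{\natural n}$ studied by Hatcher--Vogtmann, and then to quote their connectivity computation.

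First I would fix a spine: embed the graph $R = S^1 \vee S^1 \vee I$ into $X$ with $I$ the parametrised boundary interval and with the complement of $R$ in $X$ an open disc, as in Figure~\ref{surfacespin}. Since by Proposition~\ref{underlying} the underlying groupoid of $U\M_2$ is $\M_2$ and $(D^2,I)$ has no non-trivial automorphisms, the Lemma identifying $\Hom_{\U\M_2}$ with $\pi_0\Emb$ shows that a morphism $X \to A\natural X^{\natural n}$ in $U\M_2$ is the same datum as an isotopy class of embedding $X \hookrightarrow A\natural X^{\natural n}$ carrying $I^+$ to the distinguished interval and with image separated from its complement by a single arc. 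By the Alexander trick, since the complement of $R$ in $X$ is a disc, such an isotopy class is determined by the isotopy class rel basepoints of the restricted embedding $R \hookrightarrow A\natural X^{\natural n}$, and conversely every such graph embedding thickens, uniquely up to isotopy, to an embedding of $X$; this identifies the vertices of $S_n(A,X)$ with the tethered chains on $A\natural X^{\natural n}$. For higher simplices, a $p$-simplex of $W_n(A,X)_\bullet$ is a morphism $X^{\natural p+1} \to A\natural X^{\natural n}$, i.e.\ by the same argument a collection of $p+1$ spine embeddings that can be isotoped to be pairwise disjoint (using that the complement of $R^{\natural p+1}$ in $X^{\natural p+1}$ is again a disc); hence a set of vertices spans a simplex of $S_n(A,X)$ exactly when the corresponding tethered chains are disjointly embeddable. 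This is precisely the definition of the tethered chain complex, so $S_n(A,X)$ is isomorphic to it. The conclusion that $S_n(A,X)$ is $(\tfrac{n-3}{2})$-connected is then immediate from \cite[Thm 6.6]{HatVoginfinity}.

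The main obstacle is the careful matching of the two combinatorial models: one must check that the separating-arc condition built into $\Hom_{\U\M_2}$ corresponds exactly to the tethering data in the Hatcher--Vogtmann complex, and that the Alexander-trick argument yields a bijection on isotopy classes at \emph{every} simplicial level rather than only on vertices. Once this identification is in place, the connectivity bound requires no further work beyond the cited theorem.
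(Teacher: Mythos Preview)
Your proposal is correct and follows essentially the same approach as the paper: the paper identifies $S_n(A,X)$ with the tethered chain complex of Hatcher--Vogtmann via exactly the spine-and-Alexander-trick argument you outline (done in the text immediately preceding the lemma), and then simply cites \cite[Thm 6.6]{HatVoginfinity} for the connectivity bound. Your write-up is, if anything, slightly more careful than the paper's in flagging that the identification must be checked at every simplicial level.
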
 

When $A=(S_{g,b},J)$ is an orientable surface of genus $g$ with $b\ge 1$ boundary components, and $X$ is still the torus with one boundary, we have that
$$A\natural X^{\natural n}=S_{g,b}\ \natural_n\ (S_1\x S_1)\minus \mathrm{int} D^2 \ \cong \ S_{g+n,b}$$ 
is a surface of genus $g+n$ with $b$ boundary component, and 
$$\Aut_{\U\M_2}(A\natural X^{\natural n})\cong \pi_0\Dif(S_{g+n,b}\ \textrm{rel}\ \del_0S)$$
is the mapping class group of $S_{g+n,b}$ fixing one of its boundary components pointwise. 

The most interesting choices of $A$ are those of genus 0, as connected sum with $X$ increases the genus. Applying Theorems~\ref{stabthm} and \ref{twistrange} to $U\M_2$ with the pair $(A\natural X,X)$, gives the following.

\begin{thm}
Let $X$ be the torus with one boundary, and $A=S_{0,b}$ a genus 0 orientable surface with $b\ge 1$ boundary components.  Let $F:(\U\M_2)_{A,X}\to \Z\operatorname{-Mod}$ be a coefficient system. Then the map 
$$H_i( \pi_0\Dif(S_{g,b}\ \textrm{rel}\ \del_0S);F(S_{g,b}))\rar  H_i(\pi_0\Dif(S_{g+1,b}\ \textrm{rel}\ \del_0S);F(S_{g+1,b}))$$
is: an epimorphism for $i \leq \tfrac{g-1}{2}$ and an isomorphism for $i \leq \tfrac{g-2}{2}$, if $F$ is constant; an epimorphism for $i \leq \tfrac{g-r-1}{2}$ and an isomorphism for $i \leq \tfrac{g-r-3}{2}$, if $F$ is split of degree $r$; an epimorphism for $i \leq \tfrac{g-1}{2}-r$ and an isomorphism for $i \leq \tfrac{g-3}{2}-r$, if $F$ is of degree $r$.
\end{thm}

\begin{rem}
For constant coefficients, the above theorem is part of Harer's classical stability theorem for mapping class groups \cite{Har85}. The range obtained here is better than Harer's original range, but is not the best known range, which can be found in \cite{Bol12,RW09}. For twisted stability, the result is due to Ivanov \cite[Thm.~4.1]{Iva93} for $b=1$. Ivanov's result was generalised by Cohen--Madsen to the case $b>1$ but for mapping class groups fixing all the boundary components \cite[Thm.~0.4]{CohMad09}. 
This last result can either be recovered from the above by Leray--Hochschild--Serre spectral sequence arguments, or via a modification of our framework allowing partial
monoids. 
\end{rem}

Unfortunately the mapping class group of an orientable surface of large enough genus is perfect, and so has no interesting abelian modules. Hence Theorem~\ref{abstabthm} and the second part of Theorem~\ref{twistrange} give no more information in this case.

\subsubsection{Genus stabilisation for non-orientable surfaces}
Finally we consider the case when
$$X=(M,I)$$
is a M\"{o}bius band and
$$A=(S,J)$$ 
is any object of the category $\U\M_2^-$. Again, we can characterise the maps $X^{\natural p}\to A\natural X^{\natural n}$ in terms of arcs: Note first that there is a unique isotopy class of arcs in $M$
with endpoints on the boundary whose complement in $M$ is a disc (see Figure~\ref{mobiusspin}).
\begin{figure}[h]
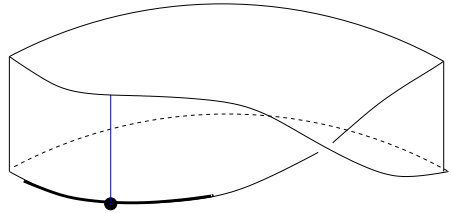

\begin{lpic}{mobius(0.4,0.4)}
\end{lpic}
\caption{Arc with complement is a disc in the M\"{o}bius band}\label{mobiusspin}
\end{figure}
Such an arc is 1-sided in the sense of \cite[Def.~2.1]{Wah08}, and again by the Alexander
trick, isotopy classes of embeddings of $M$ in $S\natural M^{\natural n}$ correspond to isotopy classes of embedded arcs. Now morphisms in $U\M_2^-$ with target a non-orientable surface were defined to be embeddings with non-orientable or genus 0 complement. This restriction corresponds to the notion of a {\em better} 1-sided arc in \cite[Sec.~3]{Wah08} as long as $n\ge 2$. (In \cite{Wah08} and  \cite{RW09}, a choice was made not to allow genus 0 complement as it did not help---or hinder. As we will see, it does not make a difference here either.) More generally, an embedding of $M\natural\cdots\natural M$ in $S\natural M^{\natural n}$ is modelled by a collection of such disjointly embedded 1-sided arcs attached next to one another at the marked interval. By sliding the start points of the arcs along the preceding arcs, one obtains an isomorphism between the $(n-2)$-skeleton of $S_n(A,X)$ and that of the complex denoted $\C_0(S)$ in  \cite{RW09}.  
A connectivity bound for $\C_0(S)$  is computed in that paper, and immediately yields the same connectivity for the relevant skeleton: 

\begin{lem}(\cite[Thm.~A.2]{RW09}) For $X$ the M\"{o}bius band and $A$ any surface, the complex $S_n(A,X)$ is $(\frac{n-5}{3})$-connected. 
\end{lem}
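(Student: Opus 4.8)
The plan is to identify the simplicial complex $S_n(A,X)$, for $X=(M,I)$ the M\"obius band and $A=(S,J)$, with a complex already analysed in \cite{RW09}, and then to quote the connectivity bound proved there.

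First I would make precise the description of simplices in terms of arcs sketched in the text above. A vertex of $S_n(A,X)$ is an isotopy class of embeddings $M\inc S\natural M^{\natural n}$ taking $I^+$ to $J^+$ and the second boundary component of $M$ to a boundary component of the complement, subject to the constraint built into the definition of morphisms in $\U\M_2^-$, namely that the complement of the image be non-orientable or of genus $0$. Since the complement of the core arc of $M$ is a disc, the Alexander trick shows that such an embedding is determined by the isotopy class of the image of this $1$-sided arc, and the complement constraint translates, as long as $n\ge 2$, into the condition that the arc be a \emph{better} $1$-sided arc in the sense of \cite[Sec 3]{Wah08}; the discrepancy about whether genus-$0$ complements are allowed is harmless, exactly as in the parenthetical remark in that paper. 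More generally a $p$-simplex corresponds to a collection of $(p+1)$ disjoint such arcs with endpoints in the prescribed ordering coming from the local picture near $J$, so that $S_n(A,X)$ is isomorphic to the $(n-2)$-skeleton of the complex $\C_0$ of the surface $S\natural M^{\natural n}$ from \cite{RW09}, a subcomplex of the arc complex $\G(S\natural M^{\natural n},\overrightarrow{\De})$ of \cite{Wah08}. Along the way one should check that this identification is compatible with the simplicial structure, i.e.\ that $W_n(A,X)_\bullet$ satisfies condition (B) in this case, with the canonical ordering of vertices coming from the endpoints of the arcs.

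Next I would invoke \cite[Thm A.2]{RW09}, which states that this complex $\C_0$ is $(\tfrac{n-5}{3})$-connected (in terms of the number $n$ of M\"obius bands attached, the constant $5$ absorbing the contribution of $A$). Finally, passing to the $(n-2)$-skeleton costs nothing in this range: an $l$-connected complex has $l'$-connected $m$-skeleton for every $l'\le\min(l,m-1)$, and $\tfrac{n-5}{3}\le n-3$ for all $n\ge 2$, while for $n\le 1$ the assertion is vacuous since then $\tfrac{n-5}{3}<-1$. Hence $S_n(A,X)$ is $(\tfrac{n-5}{3})$-connected.

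The only real work is in the first step: verifying that the ``non-orientable or genus $0$ complement'' condition defining $\U\M_2^-$ matches exactly the ``better $1$-sided arc'' condition of \cite{Wah08}, and that the correspondence between tuples of disjoint $1$-sided arcs and simplices of $W_n(A,X)_\bullet$ respects faces and the orderings of endpoints. Once that dictionary is in place, the connectivity statement is a direct citation together with the elementary skeleton argument above.
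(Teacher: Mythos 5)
Your proposal is correct and follows exactly the route the paper takes: identify $S_n(A,X)$ with the $(n-2)$-skeleton of the complex $\mathcal{C}_0$ from \cite{RW09} (a subcomplex of $\mathcal{G}(S,\overrightarrow{\Delta})$ from \cite{Wah08}), then quote \cite[Thm A.2]{RW09} for the connectivity of $\mathcal{C}_0$ and note that passing to the $(n-2)$-skeleton does not lose connectivity in the range $\frac{n-5}{3}\le n-3$. Your elaborations (the Alexander-trick reduction to arcs, the matching of the ``non-orientable or genus $0$ complement'' condition with ``better $1$-sided arcs'', condition (B), and the elementary skeleton estimate) are exactly the checks the paper's one-paragraph sketch leaves implicit.
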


When $A=(F_{h,b},J)$ is a non-orientable surface of non-orientable genus $h$ with $b\ge 1$ boundary components (i.e.~$F_{h,b}$ is a connected sum of $h$ projective planes with $b$ open discs removed), we have 
$$A\natural X^{\natural n}=F_{h,b}\natural M^{\natural n} \cong F_{h+n,b}$$ 
is a non-orientable surface of genus $h+n$ with $b$ boundary components. Also, 
$$\Aut_{U\M^-_2}(A\natural X^{\natural n})\cong \pi_0\Dif(F_{h+n,b}\ \textrm{rel}\ \del_0F)$$
is the mapping class group of $F_{h+n,b}$ fixing one boundary component of $F$ pointwise.

Again, the most interesting cases are when $A$ is a genus 0 surface. Applying Theorems~\ref{stabthm} and \ref{twistrange} to $(A\natural X^{\natural 3},X)$ for $X$ the M\"{o}bius band and $A$ is a genus 0 surface with $b$ boundary component gives the
following.

\begin{thm}\label{nmcgtwist}
Let $X$ be the M\"{o}bius band, and $A=F_{0,b}$ a genus 0 orientable surface with $b\ge 1$ boundary components. Let $F:(\U\M^-_2)_{A,X}\to \Z\operatorname{-Mod}$ be a coefficient system. Then the map 
$$H_i( \pi_0\Dif(F_{n,b}\ \textrm{rel}\ \del_0F);F(F_{n,b}))\rar  H_i(\pi_0\Dif(F_{n+1,b}\ \textrm{rel}\ \del_0F);F(F_{n+1,b}))$$
is: an epimorphism for $i \leq \tfrac{n-3}{3}$ and an isomorphism for $i \leq \tfrac{n-4}{3}$, if $F$ is constant; an epimorphism for $i \leq \tfrac{n-r-3}{3}$ and an isomorphism for $i \leq \tfrac{g-r-6}{3}$, if $F$ is split of degree $r$; an epimorphism for $i \leq \tfrac{n-3}{3}-r$ and an isomorphism for $i \leq \tfrac{n-6}{3}-r$, if $F$ is of degree $r$.
\end{thm}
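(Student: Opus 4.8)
The plan is to apply the general machinery developed in Sections~\ref{cstsec} and~\ref{sec:twist} to the pre-braided homogeneous category $\U\M_2^-$, with stabilisation object $X$ the M\"obius band. First I would recall from the preceding subsection that $\U\M_2^-$ is pre-braided homogeneous with underlying groupoid $\M_2^-$, so that for $A=F_{0,b}$ and $X=(M,I)$ the objects $A\natural X^{\natural n}$ are the surfaces $F_{n,b}$ and $\Aut_{\U\M_2^-}(A\natural X^{\natural n})\cong\pi_0\Dif(F_{n,b}\ \textrm{rel}\ \del_0 F)$. Local homogeneity at $(A,X)$ holds because $\M_2^-$ satisfies cancellation and the maps $\Aut(S_1,I_1)\to\Aut(S_1\natural S_2,I_1\natural I_2)$ are injective; this is exactly what the preceding proposition provides. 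The remaining hypothesis needed for Theorems~\ref{stabthm} and~\ref{twistrange} is LH3, i.e. the high-connectivity of the $W_n(A,X)_\bullet$.

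Second, I would establish LH3 with slope $k=3$ after a shift. By the lemma just quoted, the simplicial complex $S_n(A,X)$ identifies with the relevant $(n-2)$-skeleton of $\C_0(S)$ and is $(\tfrac{n-5}{3})$-connected by \cite[Thm A.2]{RW09}; equivalently $S_n(A,X)$ is $(\tfrac{n-a}{k})$-connected with $a=5$, $k=3$. Since the category is not symmetric monoidal in the non-orientable direction, I cannot invoke Proposition~\ref{prop:SymMonBuilding} directly, but the M\"obius-band morphisms carry a canonical ordering of arc endpoints, so $W_n(A,X)_\bullet$ satisfies condition~(B) of Section~\ref{sec:SxCxSSets} (or at worst condition~(A) after the obvious bookkeeping), hence Theorem~\ref{caseA} applies and $W_n(A,X)_\bullet$ is also $(\tfrac{n-5}{3})$-connected. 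Then, as in the Remark after Theorem~\ref{abstabthm}, replacing the pair $(A,X)$ by $(A\natural X^{\natural 3},X)$ turns this into the statement that $W_n(A\natural X^{\natural 3},X)_\bullet$ is $(\tfrac{n-2}{3})$-connected for all $n$, which is precisely LH3 at $(A\natural X^{\natural 3},X)$ with slope $k=3$.

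Third, I would feed this into Theorems~\ref{stabthm} and~\ref{twistrange} with $k=3$. For a coefficient system $F:(\U\M_2^-)_{A,X}\to\Z\operatorname{-Mod}$ of degree $r$ at $0$, Theorem~\ref{twistrange}(i) gives vanishing of $Rel_i^F(A,n)$ for $n\ge 3(i+r)$, and (ii) gives vanishing for $n\ge 3i+r$ when $F$ is split; the long exact sequence relating $Rel_i^F$ to the stabilisation maps then yields: in the polynomial case, epimorphism for $i\le\tfrac{n-3}{3}-r$ and isomorphism for $i\le\tfrac{n-6}{3}-r$; in the split polynomial case, epimorphism for $i\le\tfrac{n-r-3}{3}$ and isomorphism for $i\le\tfrac{n-r-6}{3}$; and for $F$ constant, the sharper statement of Theorem~\ref{stabthm} (together with the shift) gives epimorphism for $i\le\tfrac{n-3}{3}$ and isomorphism for $i\le\tfrac{n-4}{3}$. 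Here one must be careful to track the arithmetic of the shift by $X^{\natural 3}$ so that the connectivity $(\tfrac{n-5}{3})$ of $S_n$ becomes $(\tfrac{n-2}{3})$ after reindexing, and to match the various $\max(N+1,\dots)$ and $\max(\dots)$ bounds with $N=0$.

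The main obstacle I anticipate is not any deep new connectivity input — that is imported wholesale from \cite{RW09} — but rather the careful verification that the combinatorial setup genuinely fits: that $\U\M_2^-$ (as opposed to $\U\M_2$) is the correct category for M\"obius-band stabilisation, that $S_n(A,X)$ really is the skeleton of $\C_0(S)$ with the stated connectivity and not off by one in the indexing, and that condition~(A) or~(B) of Section~\ref{sec:SxCxSSets} holds so that Theorem~\ref{caseA} transfers connectivity from $S_n$ to $W_n$. The subtlety about ``better'' versus merely ``$1$-sided'' arcs (and the role of allowing genus-$0$ complements) needs to be checked to hold for $n\ge 2$, which is harmless since we only need the connectivity for large $n$ after shifting. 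Once these identifications are pinned down, the homological-stability conclusion is a formal consequence of the theorems already proved.
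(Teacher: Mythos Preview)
Your proposal is correct and follows exactly the approach the paper takes: the paper's proof is literally the one-line remark ``Applying Theorems~\ref{stabthm} and \ref{twistrange} to $(A\natural X^{\natural 3},X)$'', and you have unpacked precisely this, with the correct arithmetic on the shift $n\mapsto n-3$ turning the $(\tfrac{n-5}{3})$-connectivity of $S_n$ into LH3 with slope $k=3$.

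One small clarification on the passage from $S_n$ to $W_n$: it is condition~(B) that holds here (just as in the cylinder case), since the endpoints of the $1$-sided arcs inherit a canonical order from the parametrised boundary interval, and that order determines the order of the M\"obius summands in $M^{\natural p+1}$. Under condition~(B) one has $|W_n(A,X)_\bullet|\cong |S_n(A,X)|$ directly, so Theorem~\ref{caseA} (which is the condition~(A) statement) is not what you invoke---but the conclusion you draw is the same. Your hedge ``or at worst condition~(A)'' is unnecessary and slightly misleading: the category $\U\M_2^-$ is only pre-braided, not symmetric, so condition~(A) is not available.
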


For constant coefficients, this recovers the first part of Theorem A in \cite{Wah08}, with almost the improved range of \cite[\S 1.4]{RW09}. The stability with twisted coefficients had not been considered before as far as we know. 

\medskip

The subgroup $\mathcal{T}_{n,1} \subset \pi_0\Dif(F_{n,1}\ \textrm{rel}\ \del_0F)$ generated by Dehn twists has index 2, and for $n \geq 7$ is perfect \cite[Cor.~6.4, Thm.~8.1]{Stu09}. 
Thus, for $n \geq 7$, these groups are the commutator subgroups of $\pi_0\Dif(F_{n,1}\ \textrm{rel}\ \del_0F)$. Theorems~\ref{abstabthm} and \ref{twistrange} give the
following.

\begin{thm}\label{nmcgtwist2}
Let $X$ and $A$ be as above. Let $F:(\U\M^-_2)_{A,X}\to \Z\operatorname{-Mod}$ be a coefficient system. Then the map 
$$H_i(\mathcal{T}_{n,1} ;F(F_{n,1}))\rar  H_i(\mathcal{T}_{n+1,1};F(F_{n+1,1}))$$
is, for $n \geq 7$: an epimorphism for $i \leq \tfrac{n-4}{3}$ and an isomorphism for $i \leq \tfrac{n-6}{3}$, if $F$ is constant; an epimorphism for $i \leq \tfrac{n-2r-4}{3}$ and an isomorphism for $i \leq \tfrac{g-2r-7}{3}$, if $F$ is split of degree $r$; an epimorphism for $i \leq \tfrac{n-4}{3}-r$ and an isomorphism for $i \leq \tfrac{n-7}{3}-r$, if $F$ is of degree $r$.
\end{thm}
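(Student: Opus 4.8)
The plan is to deduce Theorem~\ref{nmcgtwist2} from the general results of the preceding sections, exactly as was done for the commutator subgroups of the braid groups (Theorem~\ref{thm:CommutatorBraid}) and of $\Aut(F_n)$. Two inputs lying outside our machinery are needed, and both are quoted. First, by Korkmaz \cite[Theorem 5.12]{Kor02}, for $n\ge 7$ the twist subgroup $\mathcal{T}_{n,b}$ is perfect of index $2$ in $G_n:=\pi_0\Dif(F_{n,b}\ \textrm{rel}\ \del_0F)=\Aut_{\U\M_2^-}(A\natural X^{\natural n})$, where $A=F_{0,b}$ and $X=M$ is the M\"obius band; hence $\mathcal{T}_{n,b}=G_n'$ and $G_n^{ab}\cong\bZ/2$ for $n\ge 7$, so $G_\infty^{ab}\cong\bZ/2$ and the canonical map $G_n^{ab}\to G_\infty^{ab}$ is an isomorphism for $n\ge 7$ (this last point also follows from the constant-coefficient case of Theorem~\ref{nmcgtwist} in homological degree $1$). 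Second, the complex $S_n(A,X)$ is the relevant skeleton of $\C_0(F_{n,b})$ and hence is $(\tfrac{n-5}{3})$-connected by \cite[Thm A.2]{RW09}, whence, as in the proof of Theorem~\ref{nmcgtwist}, the semi-simplicial set $W_n(A,X)_\bullet$ is $(\tfrac{n-5}{3})$-connected as well.

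I would then perform the standard index shift. Since $W_n(A\natural X^{\natural 3},X)_\bullet=W_{n+3}(A,X)_\bullet$ is $(\tfrac{(n+3)-5}{3})$-connected $=(\tfrac{n-2}{3})$-connected, the pre-braided homogeneous category $\U\M_2^-$ satisfies LH3 at $(\tilde A,X):=(F_{3,b},M)$ with slope $k=3$, and it is homogeneous (hence locally homogeneous) there by the structural proposition on the surface groupoids. Set $\tilde G_m:=\Aut_{\U\M_2^-}(\tilde A\natural X^{\natural m})=\pi_0\Dif(F_{m+3,b}\ \textrm{rel}\ \del_0F)$, so that $\tilde G_m=G_{m+3}$ and $\tilde G_m'=\mathcal{T}_{m+3,b}$ once $m+3\ge 7$. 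The pair $(\tilde A,X)$ now satisfies all the hypotheses of Corollary~\ref{cor:B}, of Theorem~\ref{twistrange}, and (for $\bZ$-coefficients) of Corollary~\ref{cor:CommutatorSubGp}; substituting $n=m+3$ throughout converts the stability ranges stated in those results for $\tilde G_m'$ into the ranges claimed for $H_i(\mathcal{T}_{n,b};F_n)\to H_i(\mathcal{T}_{n+1,b};F_{n+1})$, with the hypothesis $n\ge 7$ absorbing the numerical constraint coming from those results in the case $N=0$.

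Concretely, for $F$ constant the sharper range is obtained from Theorem~\ref{abstabthm}, equivalently Corollary~\ref{cor:CommutatorSubGp} (using the degree-$0$ refinement of Remark~\ref{deg0rem} if the $G_n$-action on $F$ is non-trivial), applied to $(\tilde A,X)$ with $k=3$. For a coefficient system $F:(\U\M_2^-)_{A,X}\to\bZ\operatorname{-Mod}$ of degree $r$ at $N$, I would restrict $F$ to $(\U\M_2^-)_{\tilde A,X}$, form the induced coefficient system of $G_\infty^{ab}$-modules $\overline F(-):=F(-)\otimes_{\bZ}\bZ[G_\infty^{ab}]$ — which has the same degree $r$ at $N$ and is split whenever $F$ is — and invoke the Shapiro isomorphism $H_*(\tilde G_m;\overline F^\circ_m)\cong H_*(\tilde G_m';F_m)$, valid because $\tilde G_m^{ab}\to G_\infty^{ab}$ is an isomorphism for $m+3\ge 7$. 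Feeding $\overline F$ into Corollary~\ref{cor:B} then gives the polynomial range, while feeding it into the split clause (iv) of Theorem~\ref{twistrange} and reading off the long exact sequence for the relative groups $Rel^{\overline F^\circ}_i(\tilde A,m)$ gives the split-polynomial range.

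I do not anticipate a genuine obstacle: the whole argument is an application of the general theorems, with all of the geometry — the high connectivity of $\C_0(F_{n,b})$ and the perfectness and index of $\mathcal{T}_{n,b}$ — imported from \cite{RW09} and \cite{Kor02}. The only delicate point is bookkeeping: correctly propagating $(\tfrac{n-5}{3})$-connectivity through the shift by $X^{\natural 3}$, translating the ranges from $\tilde G_m$ back to $G_{m+3}$, and checking that the Shapiro identification $H_*(G_n;\overline F^\circ_n)\cong H_*(\mathcal{T}_{n,b};F_n)$ is already available on the range of degrees in which it is used — which is exactly why I record the isomorphism $G_n^{ab}\cong G_\infty^{ab}$ for $n\ge 7$ at the outset.
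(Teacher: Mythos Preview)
Your proposal is correct and follows exactly the route the paper takes: the paper simply states that Theorems~\ref{abstabthm} and~\ref{twistrange} give the result, having recorded just before the statement that $\mathcal{T}_{n,b}$ is the commutator subgroup for $n\ge 7$ by \cite[Theorem 5.12]{Kor02}, and having already established the relevant connectivity via \cite[Thm A.2]{RW09}. Your write-up spells out the index shift and the Shapiro-lemma identification that the paper leaves implicit, and the ranges you obtain match those in the statement.
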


Theorems \ref{nmcgtwist} and \ref{nmcgtwist2} imply Theorem \ref{thm:I}. There are few calculations of stable homology available, but the functor $H_1(-;\bZ) : (U\M^-_2)_{F_{0,1}, M} \to \Z\operatorname{-Mod}$ defines a split coefficient system of degree 1 and Stukow \cite{Stukow} has shown that
$$H_1( \pi_0\Dif(F_{n,1}\ \textrm{rel}\ \del_0F);H_1(F_{n,1};\bZ))\cong \bZ/2 \oplus \bZ/2$$
for $n \geq 7$.

\subsection{Mapping class groups of 3-manifolds}\label{3mfdex}

As we have seen in Section~\ref{genfreeprod}, mapping class groups of oriented 3-manifolds are closely related to automorphisms of free products of
groups. Homological stability with constant coefficients for mapping class groups of orientable 3-manifolds was shown to hold in great generality in \cite{HatWah10}. 
One can combine the connectivity results of that paper with Theorem~\ref{twistrange} to obtain versions of many of the stability theorems of \cite{HatWah10}
with twisted coefficients. We briefly sketch how this can be done. 

\medskip

Let $\M_3^+$ denote the groupoid with objects the pairs $(M,D)$ consisting of an oriented 3-manifold $M$ with boundary $\partial M$, and a marked disc $D\inc \del M$, and with
morphisms given by isotopy classes of orientation preserving diffeomorphisms restricting to the identity on the disc.  Just as in the case of surfaces
considered above, boundary connected sum along half-discs defines a monoidal structure $\natural$ on $\M_3^+$, with unit $(D^3,D)$. 
 The mapping class group of $D^3$ relative to its boundary, or a disc in its boundary, is trivial \cite[p.\ 3]{Cerf68}, so we can again make the unit strict by defining connected sum with $(D^3,D)$ to be the identity. 
A zero divisor would be a simply connected 3-manifold with boundary a sphere, but such a manifold is necessarily the 3-disc by the affirmed Poincar{\'e} conjecture, 
there are again no zero divisors. 
One can show that this monoidal structure is symmetric. (The braiding is a higher dimensional analogue of half a Dehn twist in a neighbourhood of the marked discs, whose square is isotopic to the identity.) 

Let $\U\M_3^+$ be the associated pre-braided category. Just as for surfaces, a morphism $[X,f]:(M_1,D_1)\to (M_2,D_2)$ in $\U\M_3^+$ is determined by the embedding $f|_{M_1}:(M_1,D_1)\inc (M_2,D_2)$, and morphisms of $\U\M_3^+$ correspond exactly to such embeddings taking $D_1^+$ to $D_2^+$ and $\del'M_1$ to $\del'M_2$, where $\del' M_i=\del M_i\minus \del_0M_i$ is the union of the components of $\del M_i$ that do not contain $D_i$. Moreover the image of $M_1$ in $M_2$ is separated from its complement by a disc inside $M_2$, namely the image of $D_1^-$,  half of whose boundary is the ``equator'' of $D_2$ and the other half in $\del_0 M_2\minus D_2$. 

In \cite{HatWah10}, two main types of stabilisations are considered: stabilisation by connected sum (denoted $\#$) and stabilisation by boundary connected sum (denoted $\natural$). As already mentioned in the introduction, connected sum with a manifold $N$ is the same as boundary connected sum with $N\minus D^3$ along a disc in $\del D^3$, so both types of connected sums actually arise from the monoidal structure considered here. The interesting stabilising directions $X$ to choose are the building blocks in the category, which are the objects $(P\minus D^3,D)$ with $D\inc \del D^3$, and $(P,D)$  with $D\inc \del_0P\not\cong S^2$. These correspond respectively to the connected sum, and boundary connected sum stabilisation of \cite{HatWah10} and we will show now that the complexes $S_n(A,X)$ identify with the complexes used to prove stability in \cite{HatWah10} (though under the assumption that $A$ is irreducible in the second case).

Let $P$ be a prime manifold, $P^0:=P\minus D^3$ and fix a disc $D\inc \del D_3\subset \del P^0$. Let $(M,D_M)$ be an object of $\U\M_3^+$, and denote by $\del_0M$ the boundary component of $M$ containing $D_M$. A vertex in $S_n((M,D_M),(P^0,D))$ is an embedding $$f:(P^0,D)\ \inc\ (M\natural (P^0)^{\natural n},D_M)\cong  (M\# P^{\# n},D_M)$$ taking  $D^+$ to $D_M^+$ and $\del'P^0$ to $\del' M$. The image of $P^0$ in $M$ is separated from the rest of $M$ by the image of $D^-$ whose boundary lies inside $D_M^+\cup f(\del_0P\minus D)\cong D^2$, and hence can be pushed to lie in $\del D_M$. Embedding such a disc is equivalent to embedding a sphere inside $M$ connected to $D_M$ by a framed arc. This shows that the vertices of $S_n((M,D_M),(P^0,D))$ for $P\neq S^1\x S^2$, identify with the vertices of the simplicial complex denoted $X^{FA}(M\# P^{\# n},P,\del_0M,x_0)$ with $x_0$ a chosen point in $D_M$. The same holds for $P=S^1\x S^2$, though in this case one uses that the embedding of $P^0$ is determined by a non-separating disc and an arc transverse to it, much in the spirit of what we did with surfaces in the previous section. 
One checks that the simplicial complexes $S_n((M,D_M),(P^0,D))$  and $X^{FA}(M\# P^{\# n},P,\del_0M,x_0)$  are  isomorphic in both cases. As Propositions 4.2 and 4.5 of \cite{HatWah10} show that $X^{FA}$ is $(\frac{n-3}{2})$-connected, this establishes LH3 with $k=2$ for the pairs $((M \# P,D_M),(P^0,D))$.

One can likewise work with the groupoid $\overline{\M}_3^+$ where we replace the isotopy classes of diffeomorphisms by such isotopy classes modulo Dehn twists along spheres and discs. The same Propositions 4.2 and 4.5 of \cite{HatWah10} apply to show that $\U\overline{\M}_3^+$ satisfies LH3 with $k=2$ with respect to all pairs $((M \# P,D_M),(P^0,D))$ as above.

Suppose now that $M$ is irreducible and $P$ is a prime manifold boundary (hence also irreducible) with $D\inc \del_0 P\not \cong S^2$. As before, a vertex in $S_n((M,D_M),(P,D))$ is an isotopy class of embedding $(P,D)\inc (M,D_M)$ taking $D^+$ to $D_M^+$ and $\del'P$ to $\del' M$, and such that the image of $P$ is separated from its complement in $M$ by a disc, the image of $D^-$, half of whose boundary lies in $D_M$. Pushing this disc a little away from $D_M$  and replacing the intersection with $D_M$ in $\del_0M$  by an arc, when $P\neq S^1\x D^2$, we get exactly a vertex of the complex denoted $Y^A(M\natural P^{\natural n},P,D_M,x_0)$ in \cite{HatWah10}, and the same holds for $P=S^1\x D^2$ though again with a slightly different description of the embeddings. The complexes  $S_n((M,D_M),(P,D))$  and $Y^A(M\natural P^{\natural n},P,D_M,x_0)$ are isomorphic and Theorems 8.3 and 8.5 in that paper show that $Y^A$ is  $(\frac{n-3}{2})$-connected, which establishes LH3 with $k=2$ for the pairs $((M \natural P,D_M),(P,D))$ in this case too.

Local cancellation holds for this groupoid for all pairs $((M,D_M),(P^0,D))$ with $D\inc \del D^3$ by the prime decomposition theorem for 3-manifolds, and it holds for all pairs $((M,D_M),(P,D))$ with $M$ and $P$ irreducible, using the existence and uniqueness of compression bodies in irreducible 3-manifolds \cite[Thm.~2.1]{Bon83}. 
The hypothesis of Theorem \ref{universal} (b)  is established in \cite[Proposition 2.3]{HatWah10} for pairs $((M,D_M),(P^0,D))$ and in Section 8 of that paper for pairs $((M,D_M),(P,D))$ with $M$ and $P$ irreducible, so this theorem shows that $\U\M_3^+$ and $\U\overline{\M}_3^+$ are locally homogeneous categories in all cases. 
 Moreover their underlying groupoids are $\M_3^+$ and $\overline \M_3^+$ by Proposition~\ref{underlying} and the classification of 3-manifolds. 

Applying Theorems~\ref{stabthm} and \ref{twistrange} to $\U\M_3^+$ and $\U\overline{\M}_3^+$ repeatedly on prime summands, we obtain the following.

\begin{thm}\label{3mfdstab}
Let $(N,D)$ and $(M,D_M)$ be objects of $\U\M_3^+$. If $D\inc \del_0N\not\cong S^2$, assume that $M$ is irreducible. Let $F:(\U\M_3^+)_{(M,D_M),(N,D)}\to \Z\operatorname{-Mod}$ be a coefficient system. Then the map
\begin{align*}
H_i( \pi_0\Dif(M\natural N^{\natural n}\ \textrm{rel}\ D_M),&F(M\natural N^{\natural n},D_M)\\
&\rar  H_i(\pi_0\Dif(M\natural N^{\natural n+1} N\ \textrm{rel}\ D_M),F(M\natural N^{\natural n+1},D_M)
\end{align*}
is: an epimorphism for $i \leq \tfrac{n-1}{2}$ and an isomorphism for $i \leq \tfrac{n-2}{2}$, if $F$ is constant; an epimorphism for $i \leq \tfrac{n-r-1}{2}$ and an isomorphism for $i \leq \tfrac{g-r-3}{2}$, if $F$ is split of degree $r$; an epimorphism for $i \leq \tfrac{n-1}{2}-r$ and an isomorphism for $i \leq \tfrac{n-3}{2}-r$, if $F$ is of degree $r$.

Similarly, assuming that $D\inc \del_0N\cong S^2$, and $\overline{F}:(\U\overline{\M}_3^+)_{(M,D_M),(N,D)}\to \Z\operatorname{-Mod}$ is a coefficent system, the map 
\begin{align*}
H_i( \pi_0\Dif(M\natural N^{\natural n}\ \textrm{rel}\ \del_0M)&/_{\textrm{twists}},\overline{F}(M\natural N^{\natural n}))\\
&\rar  H_i(\pi_0\Dif(M\natural N^{\natural n+1}\ \textrm{rel}\
\del_0M)/_{\textrm{twists}},\overline{F}(M\natural N^{\natural n+1}))
\end{align*}
is an epimorphism or isomorphism in the same range of degrees.
\end{thm}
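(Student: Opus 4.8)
The plan is to deduce the theorem from Theorems~\ref{stabthm} and~\ref{twistrange} applied to the pre-braided categories $\U\M_3^+$ and $\U\overline{\M}_3^+$, first for stabilisation by a single prime summand and then, for a general $N$, by working one prime summand at a time. All of the structural input has been assembled in the discussion preceding the statement: $\U\M_3^+$ and $\U\overline{\M}_3^+$ are pre-braided by Proposition~\ref{braidandsym} (boundary connected sum being symmetric monoidal), with underlying groupoids $\M_3^+$ and $\overline{\M}_3^+$ by Proposition~\ref{underlying}; local cancellation LC holds at the pairs $((M\natural P,D_M),(P\minus D^3,D))$ with $D\inc\del D^3$, by the prime decomposition theorem, and at $((M\natural P,D_M),(P,D))$ with $D\inc\del_0P\not\cong S^2$ and $M$ irreducible, by the existence and uniqueness of compression bodies \cite[Thm 2.1]{Bon83}; the injectivity hypothesis of Theorem~\ref{universal}(b) is established by \cite[Prop 2.3]{HatWah10} in the first case and by \S 8 of that paper in the second; and LH3 with slope $k=2$ holds at each such pair, since the identifications $S_n\cong X^{FA}$ and $S_n\cong Y^A$ of \cite{HatWah10} give $(\tfrac{n-3}{2})$-connectivity of $S_n$, which is transported to $W_n$ by Proposition~\ref{prop:SymMonBuilding} and Theorem~\ref{caseA} (local standardness LS1, LS2 being immediate, as distinct spheres-with-arcs, resp.\ discs-with-arcs, determine distinct morphisms).

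First I would run the single-prime case. Fix a prime $P$ and a base object $(M',D')$, taken irreducible when $\del_0P\not\cong S^2$, and put $A=(M'\natural P,D')$ with $X=(P\minus D^3,D)$ or $X=(P,D)$ accordingly. The ambient category is then locally homogeneous at $(A,X)$ and satisfies LH3 there with $k=2$, so Theorem~\ref{stabthm} yields stability with constant coefficients, and for a coefficient system $F$ of degree $r$ (resp.\ a split one) Theorem~\ref{twistrange}(i) (resp.\ (ii)) yields the vanishing of $Rel^F_i(A,n)$ in the relevant range; inserting this into the long exact sequence relating the homology of the mapping class groups of $M'\natural P$ and $M'\natural P\natural P$ with the relative groups $Rel^F_*$ gives the claimed epimorphism/isomorphism ranges for the single-prime stabilisation map, and identically for $\U\overline{\M}_3^+$. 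Here one must keep track of the shift of base object built into Theorems~\ref{stabthm} and~\ref{twistrange}, so that the connectivity hypothesis reads $(\tfrac{n-2}{2})$-connectivity of $W_n(A,X)$ and the numerical ranges come out as stated.

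Finally, for a general $N$ I would use a decomposition of $N$ into building blocks of the two types above and factor the stabilisation $M\natural N^{\natural n}\to M\natural N^{\natural n+1}$ as a composite of single-prime stabilisations. For constant coefficients this reduces the theorem immediately to the previous paragraph, since a composite of maps which are epimorphisms (resp.\ isomorphisms) in a range is again such and the irreducibility hypotheses propagate to the intermediate objects. For twisted coefficients, however, the intermediate objects do not lie in the domain of the given coefficient system $F$, so one cannot simply compose; instead I would apply Theorem~\ref{twistrange} to the pair $(A,N)$ with $N$ possibly reducible. This is the step I expect to be the main obstacle: it requires verifying LH1, LH2 and, above all, LH3 for such a pair---that is, establishing high connectivity of $W_n(A,N)$ for reducible $N$---which I would do by stacking the single-prime complexes of \cite{HatWah10} via an iterated complete-join argument (in the sense of \cite[Def 3.2]{HatWah10}), together with the local-cancellation bookkeeping for composite stabilising objects. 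The single-prime high-connectivity itself, the only genuinely geometric ingredient, is already provided by \cite{HatWah10}.
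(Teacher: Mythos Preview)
Your proposal is correct and follows the paper's (very terse) argument, which is literally the single sentence ``Applying Theorems~\ref{stabthm} and~\ref{twistrange} to $\U\M_3^+$ and $\U\overline{\M}_3^+$ repeatedly on prime summands, we obtain the following.'' For the single-prime case you reproduce exactly this, with the base-object shift handled correctly.

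You in fact go further than the paper in isolating a genuine subtlety for reducible $N$ with twisted coefficients: when $N = P_1 \natural \cdots \natural P_k$ and one factors the stabilisation map through intermediate objects such as $M \natural P_1^{\natural n+1} \natural P_2^{\natural n} \natural \cdots \natural P_k^{\natural n}$, these do not lie in $(\U\M_3^+)_{(M,D_M),(N,D)}$ and so $F$ is not defined on them. The paper's one-line proof does not address this. Note that the introduction's Theorem~\ref{thm:J} sidesteps the issue by taking $F$ defined on all of $U(\M_3^+)$, where the iteration is unproblematic; one reading is that Theorem~\ref{3mfdstab} is intended either for prime $N$ or for coefficient systems admitting such an extension, and that the ``repeatedly on prime summands'' is meant at that level of generality. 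If one wants the statement exactly as written, your proposed route---establishing LH3 directly at $((M\natural N,D_M),(N,D))$ for reducible $N$ and then applying Theorem~\ref{twistrange} once---is the correct one, and a complete-join or join-type argument stacking the single-prime complexes from \cite{HatWah10} is a reasonable strategy, though it would require genuine additional work not carried out in the paper.
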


\begin{rem}\label{symaut}
The above theorem generalises to twisted coefficients a slight variation of Theorem 1.1 (i) and well as Theorem 1.8 (i) of \cite{HatWah10}. (The variation is that, in Theorem 1.1 of \cite{HatWah10}, some boundary components of $M$ containing $\del_0M$ are assumed to be fixed by the diffeomorphisms. In the case when $\del_0M\cong S^2$ is the only fixed component, these groups are though isomorphic.) In the case when $M=D^3$ and $N=(S^1\x D^2)\minus D^3$, the above theorem gives for example a twisted stability theorem for symmetric automorphisms of free groups, the subgroup $\Sigma\Aut(F_n) \subset \Aut(F_n)$ generated by permutations of the generators, taking the inverse of a generator, and conjugating a generator by another one. This establishes Theorem \ref{thm:G} in the case $G_n = \Sigma\Aut(F_n)$.  
\end{rem}

Theorems~\ref{abstabthm} and \ref{twistrange} also give results for abelian coefficient systems for this family of groups, but little is known about their abelianisations.
Theorem~\ref{thm:J} follows from these and Theorem~\ref{3mfdstab}.

\bibliographystyle{plain}

\bibliography{biblio}

\end{document}